\newcommand{\quotient}[2]{{\raisebox{.2em}{$#1$}\left/\raisebox{-.2em}{$#2$}\right.}}
\newcommand{\SL}{\operatorname{SL}}
\newcommand{\End}{\operatorname{End}}
\newcommand{\Rep}{\operatorname{Rep}}
\newcommand{\tr}{\operatorname{Tr}}
\newcommand{\Mod}{\operatorname{Mod}}
\newcommand{\sslash}{\mathbin{/\mkern-6mu/}}
\newcommand{\Hom}{\operatorname{Hom}}
\newcommand{\Res}{\operatorname{Res}}
\newcommand{\lt}{\operatorname{lt}}
\newcommand{\St}{\operatorname{St}}
\newcommand{\ord}{\operatorname{ord}}
\newcommand{\id}{\operatorname{id}}
\newcommand{\heightexch}[3]{
	\begin{tikzpicture}[baseline=-0.4ex,scale=0.5, >=stealth]
	\draw [fill=gray!60,gray!45] (-.7,-.75)  rectangle (.4,.75)   ;
	\draw[#1] (0.4,-0.75) to (.4,.75);
	\draw[line width=1.2] (0.4,-0.3) to (-.7,-.3);
	\draw[line width=1.2] (0.4,0.3) to (-.7,.3);
	\draw (0.65,0.3) node {\scriptsize{$#2$}}; 
	\draw (0.65,-0.3) node {\scriptsize{$#3$}}; 
	\end{tikzpicture}
}
\newcommand{\resneg}{
\tikz[scale=0.1,baseline=2ex]{
\draw (2,0)  arc (0:180:1);
\draw (0,3) arc  (180:360:1);
}}
\newcommand{\respos}{
\tikz[scale=0.2,baseline=0ex]{
\draw (0,0)  arc (90:-90:0.6);
\draw (1.5,0) arc  (90:270:0.6);
}}
\newcommand{\crosspos}{
\tikz[scale=0.1,baseline=1ex]{
\draw (-1,-1) -- (1,1);
\draw (-1,1) -- (-0.5,0.5);
\draw (0.5,-0.5) -- (1,-1);
}}
\newcommand{\crossneg}{
\tikz[scale=0.1,baseline=1ex]{
\draw (-1,1) -- (1,-1);
\draw (-1,-1) -- (-0.5,-0.5);
\draw (0.5,0.5) -- (1,1);
}}
\newcommand{\heightexchright}[3]{
	\begin{tikzpicture}[baseline=-0.4ex,scale=0.5, >=stealth]
	\draw [fill=gray!60,gray!45] (-.7,-.75)  rectangle (.4,.75)   ;
	\draw[#1] (-0.7,-0.75) to (-0.7,.75);
	\draw[line width=1.2] (0.4,-0.3) to (-.7,-.3);
	\draw[line width=1.2] (0.4,0.3) to (-.7,.3);
	\draw (-1,0.3) node {\scriptsize{$#2$}}; 
	\draw (-1,-0.3) node {\scriptsize{$#3$}}; 
	\end{tikzpicture}
}
\newcommand{\heightcurve}{
\begin{tikzpicture}[baseline=-0.4ex,scale=0.5]
\draw [fill=gray!20,gray!45] (-.7,-.75)  rectangle (.4,.75)   ;
\draw[-] (0.4,-0.75) to (.4,.75);
\draw[line width=1.2] (-.7,-0.3) to (-.4,-.3);
\draw[line width=1.2] (-.7,0.3) to (-.4,.3);
\draw[line width=1.15] (-.4,0) ++(-90:.3) arc (-90:90:.3);
\end{tikzpicture}
}
\newcommand{\heightcurveright}{
\begin{tikzpicture}[baseline=-0.4ex,scale=0.5]
\draw [fill=gray!20,gray!45] (-.7,-.75)  rectangle (.4,.75)   ;
\draw[-] (-0.7,-0.75) to (-.7,.75);
\draw[line width=1.2] (0.1,-0.3) to (.4,-.3);
\draw[line width=1.2] (0.1,0.3) to (.4,.3);
\draw[line width=1.15] (.1,0) ++(90:.3) arc (90:270:.3);
\end{tikzpicture}
}
\begin{document}

\theoremstyle{plain}
\newtheorem{theorem}{Theorem}[section]
\newtheorem{main_theorem}[theorem]{Main Theorem}
\newtheorem{proposition}[theorem]{Proposition}
\newtheorem{corollary}[theorem]{Corollary}
\newtheorem{corollaire}[theorem]{Corollaire}
\newtheorem{lemma}[theorem]{Lemma}
\theoremstyle{definition}
\newtheorem{notations}[theorem]{Notations}
\newtheorem*{notations*}{Notations}
\newtheorem{definition}[theorem]{Definition}
\newtheorem{Theorem-Definition}[theorem]{Theorem-Definition}
\theoremstyle{remark}
\newtheorem{remark}[theorem]{Remark}
\newtheorem{example}[theorem]{Example}


\title{Classical shadows of stated skein representations at roots of unity}
\author{Julien Korinman}

\affil[1]{\normalsize Institut Montpelli\'erain Alexander Grothendieck - UMR 5149 Universit\'e de Montpellier. Place Eug\'ene Bataillon, 34090 Montpellier France
	\\ \small email: \texttt{julien.korinman@gmail.com}}

\author{Alexandre Quesney}
\affil[2]{\normalsize Universidad Polit\'ecnica de Madrid (UPM)
                Campus de Montegancedo / Avenida de Montepr\'incipe s.n.
                28660 Boadilla del Monte, Madrid,
                Spain
	\\ \small email: \texttt{alexandre.quesney@upm.es}}
%
%

\date{}
\maketitle


\begin{abstract} 
We extend some results of Bonahon-Wong, Bullock and Turaev concerning the skein algebras of closed surfaces to L\^e's stated skein algebras associated to open surfaces. We prove that the stated skein algebra with deforming parameter $+1$ embeds canonically into the center of the stated skein algebra whose deforming parameter is an odd root unity. We also construct an isomorphism between the stated skein algebra at $+1$ and the algebra of regular functions of the relative $\SL_2$-character variety of the surface. As a result, we associate to each isomorphism class of irreducible or local representations of the stated skein algebra, an invariant which is a point in the relative character variety.
\vspace{2mm}
\\
\textbf{Keywords}: Stated skein algebras, Character varieties.
\\ \textbf{Mathematics Subject Classification}: $57$R$56$, $57$N$10$, $57$M$25$.
\end{abstract}


\section{Introduction}

 A \textit{punctured surface} is a pair $\mathbf{\Sigma}=(\Sigma,\mathcal{P})$, where $\Sigma$ is a compact oriented surface and $\mathcal{P}$ is a (possibly empty) finite subset of $\Sigma$ which intersects non-trivially each boundary component.  We write $\Sigma_{\mathcal{P}}:= \Sigma \setminus \mathcal{P}$.
  The set $\partial \Sigma\setminus \mathcal{P}$ consists of a disjoint union of open arcs which we call \textit{boundary arcs}.
   \\ \textbf{Warning:} In this paper, the punctured surface $\mathbf{\Sigma}$ will be called open if the surface $\Sigma$ has non empty boundary and closed if $\Sigma$ is closed. This convention differs from the traditional one, where some authors refer to open punctured surface a punctured surface $\mathbf{\Sigma}=(\Sigma, \mathcal{P})$ with $\Sigma$ closed and $\mathcal{P}\neq \emptyset$ (in which case $\Sigma_{\mathcal{P}}$ is not closed).
\vspace{2mm}
\par We will consider two related objects associated to a punctured surface, namely the Kauffman-bracket skein algebra and the $\SL_2(\mathbb{C})$-character variety. These objects have been well studied in the case where the punctured surface is closed. They were recently generalized to open punctured surfaces {in such a way that they have} a nice behaviour relatively to the operation of gluing two boundary arcs together. The goal of this paper is to extend some classical results concerning skein algebras and character varieties to the case of open punctured surfaces. Before we state the main results, let us give a brief historical background.

\subsubsection*{Historical background}
\textit{Closed surfaces.} 
In \cite{CullerShalenCharVar}, Culler and Shalen defined the $\SL_2(\mathbb{C})$ character variety $\mathcal{X}_{\SL_2}(M)$ of a manifold $M$ whose fundamental group is finitely generated. This affine variety is closely related to the moduli space of flat connections on a trivial $\SL_2(\mathbb{C})$ bundle over $M$ and, therefore, it is related to Chern-Simons topological quantum field theory, gauge theory and low-dimensional topology; see \cite{LabourieCharVar, MarcheCours09, MarcheCharVarSkein} for surveys.  
If $\Sigma$ is a closed oriented surface, the smooth part of $\mathcal{X}_{\SL_2}(\Sigma)$  carries a symplectic form, first defined in \cite{AB} in the context of gauge theory. 
This symplectic structure was used by Goldman \cite{Goldman86} to equip the algebra of regular functions $\mathbb{C}[\mathcal{X}_{\SL_2}(\mathbf{\Sigma})]$ with a Poisson bracket.  
A similar Poisson structure for character varieties of punctured closed surfaces was introduced by Fock and Rosly in \cite{FockRosly} (see also \cite{AlekseevKosmannMeinrenken} for an alternative construction) in the differential geometric context. 
\vspace{2mm}
\par	 Turaev \cite{Tu88}, Hoste and Przytycki \cite{HP92} independently defined the \textit{Kauffman-bracket skein algebra} $\mathcal{S}_A(\mathbf{\Sigma})$ as a tool to study the Jones polynomial and the $\mathrm{SU}(2)$ Witten-Reshetikhin-Turaev TQFTs. 
Skein algebras are defined for any commutative unital ring $\mathcal{R}$ together with an invertible element $A\in \mathcal{R}^{\times}$ and a closed punctured surface $\mathbf{\Sigma}$. 
\vspace{2mm}
\par Skein algebras are deformations of the algebra of regular functions of character varieties of closed punctured surfaces. 
In particular, this means that there is an isomorphism of Poisson algebras between $\mathcal{S}_{+1}(\mathbf{\Sigma})$ and $\mathbb{C}[\mathcal{X}_{\SL_2}(\mathbf{\Sigma})]$. 
In more details, this relies  on a (non canonical) isomorphism from $\mathcal{S}_{+1}(\mathbf{\Sigma})$ to $\mathcal{S}_{-1}(\mathbf{\Sigma})$ (\cite{Barett}). The latter algebra carries a natural Poisson bracket (see Section 2.5). 
An isomorphism of algebras between $\mathcal{S}_{-1}(\mathbf{\Sigma})$ and $\mathbb{C}[\mathcal{X}_{\SL_2}(\mathbf{\Sigma})]$  was defined by Bullock \cite{Bullock}, assuming that the skein algebra is reduced, \textit{i.e.} that its nilradical is null. This latter fact was later proved independently in \cite{PS00} and \cite{ChaMa}. Turaev showed in \cite{Turaev91} that Bullock's isomorphism is Poisson.

\vspace{2mm}
\par In TQFT,  skein algebras appear through their non-trivial finite dimensional representations. Skein algebras admit such representations if and only if the parameter $A$ is a root of unity. A recent result of Bonahon and Wong in \cite{BonahonWong1} states, in particular, that when $A$ has odd order, there exists an embedding of $\mathcal{S}_{+1}(\mathbf{\Sigma})$ into the center of $\mathcal{S}_A(\mathbf{\Sigma})$. Since each simple representation induces a character 
on the center of the skein algebra, using Bullock's isomorphism, one can associate to each isomorphism class of simple representation a point in the character variety. This invariant is called \textit{the classical shadow} of the representation. 
\vspace{2mm} \\
\textit{Open surfaces.} 
In \cite{LeStatedSkein}, L\^e generalized the Kauffman-bracket skein algebras to open punctured surfaces based on the original work of Bonahon-Wong in \cite{BonahonWongqTrace}.
We call it  \textit{stated skein algebra} and  denote it  by  $\mathcal{S}_{\omega}(\mathbf{\Sigma})$. 
It depends on an invertible element $\omega \in \mathcal{R}^{\times}$. When the surface is closed, it coincides with the classical skein algebra with parameter $A=\omega^{-2}$.  
An important feature of the stated skein algebra is its behavior under gluing of surfaces. 
More precisely, let $a$ and $b$ be two boundary arcs of an open punctured surface $\mathbf{\Sigma}$, and let us denote by  $\mathbf{\Sigma}_{|a\#b}$ the surface obtained from $\mathbf{\Sigma}$ by gluing $a$ and $b$. 
L\^e showed that there is an injective algebra morphism 
\begin{equation}\label{eq: le morph}
i_{|a\#b} : \mathcal{S}_{\omega}(\mathbf{\Sigma}_{| a\#b}) \hookrightarrow \mathcal{S}_{\omega}(\mathbf{\Sigma}) 
\end{equation}
which is coassociative in that it does not depend on the order we glue the arcs \textit{i.e.} for four distinct boundary arcs $a,b,c,d$, one has $i_{|a\#b}\circ i_{|c\#d} = i_{|c\#d}\circ i_{|a\#b}$.
In particular, for each topological triangulation $\Delta$ of $\mathbf{\Sigma}$, one has an injective morphism of algebras 
\begin{equation}\label{eq: le morph2}
i^{\Delta} : \mathcal{S}_{\omega}(\mathbf{\Sigma}) \hookrightarrow \otimes_{\mathbb{T} \in F(\Delta)} \mathcal{S}_{\omega}(\mathbb{T}). 
\end{equation}
Here $\mathbb{T}$ denotes the triangle, \textit{i.e.} a disc with three punctures on its boundary. A punctured surface is \textit{triangulable} if it can be obtained from a disjoint union of triangles by gluing some pair of boundary arcs (=faces of triangles) together. A \textit{topological triangulation}  is the data of such a union of triangles together with the pairs of glued boundary arcs. In Equation \eqref{eq: le morph2}, the tensor product runs over the faces of the triangulation; see Section 2 for precise definitions.  

As applications, L\^e provided a simple proof that the algebra $\mathcal{S}_{\omega}(\mathbf{\Sigma})$ has no zero divisor  (the case of closed triangulable punctured surfaces was proved earlier in \cite{BonahonWongqTrace} using the quantum trace and the case of closed unpunctured surfaces was proved in \cite{PrzytyckiSikora_SkeinDomain}) and he obtained a simpler formulation of Bonahon and Wong's quantum trace map of \cite{BonahonWongqTrace}.  
\vspace{2mm}
\par Motivated by L\^e's construction, the first author defined in \cite{KojuTriangularCharVar} a generalization of character varieties to open punctured surfaces. We denote it by $\mathcal{X}_{\SL_2}(\mathbf{\Sigma})$. This (relative) character variety is a Poisson affine variety which coincides with the classical one when the surface is closed. 
It shares a similar gluing property than the stated skein algebra, namely, there exist injective Poisson morphisms $i_{|a\#b}: \mathbb{C}[\mathcal{X}_{\SL_2}(\mathbf{\Sigma}_{|a\#b})] \hookrightarrow \mathbb{C}[\mathcal{X}_{\SL_2}(\mathbf{\Sigma})] $ and $i^{\Delta} : \mathbb{C}[\mathcal{X}_{\SL_2}(\mathbf{\Sigma})] \hookrightarrow \otimes_{\mathbb{T}\in F(\Delta)} \mathbb{C}[\mathcal{X}_{\SL_2}(\mathbb{T})]$ between the Poisson algebras of regular functions. 
However, the Poisson structure on $\mathbb{C}[\mathcal{X}_{\SL_2}(\mathbf{\Sigma})]$ depends on a choice of an orientation $\mathfrak{o}$ of the boundary arcs of the punctured surface. We denote by $\{\cdot, \cdot\}^{\mathfrak{o}}$ its Poisson bracket.

\subsubsection*{Main results}

Let $\mathbf{\Sigma}$ be a punctured surface. 
 L\^e's morphism \eqref{eq: le morph2} embeds the skein algebra of a triangulated surface into a tensor product of the skein algebras of the triangle. However, it does not provide a full description of the stated skein algebra in terms of these smaller pieces. In a first result we provide such a description; it goes as follows.  
Remark that \eqref{eq: le morph} endows the skein algebra of the bigon  $\mathbb{B}$ (\textit{i.e.} a disc with two punctures on its boundary) with a bialgebra structure. It is in fact a Hopf algebra and one can show that it is canonically isomorphic to the classical quantum $\SL_2$ algebra $\mathcal{O}_q[\SL_2]$ described in \cite{Kassel, ChariPressley} (with $q= \omega^{-4}$). 
Note also that \eqref{eq: le morph} induces Hopf comodule maps: 
$\Delta_a^L : \mathcal{S}_{\omega}(\mathbf{\Sigma}) \rightarrow \mathcal{S}_{\omega}(\mathbb{B}) \otimes \mathcal{S}_{\omega}(\mathbf{\Sigma})$ 
and 
$\Delta_b^R : \mathcal{S}_{\omega}(\mathbf{\Sigma}) \rightarrow \mathcal{S}_{\omega}(\mathbf{\Sigma}) \otimes \mathcal{S}_{\omega}(\mathbb{B})$ 
obtained by gluing a bigon on a boundary arc, $a$ or $b$, of $\mathbf{\Sigma}$; see Section $2.2$ for details. 

\begin{theorem}\label{theorem1} 
	The following sequence is exact:
	$$ 0 \rightarrow \mathcal{S}_{\omega}(\mathbf{\Sigma}_{|a\#b}) \xrightarrow{i_{|a\#b}} \mathcal{S}_{\omega}(\mathbf{\Sigma}) \xrightarrow{\Delta_a^L - \sigma \circ \Delta_b^R} \mathcal{S}_{\omega}(\mathbb{B}) \otimes \mathcal{S}_{\omega}(\mathbf{\Sigma}),$$
	where $\sigma(x\otimes y)= y\otimes x$.
\end{theorem}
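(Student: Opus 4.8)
The plan is to recognize this exact sequence as an instance of the standard "equalizer/cotensor" description of comodules over a Hopf algebra, transported through skein theory. Concretely, gluing $a$ to $b$ of $\mathbf{\Sigma}$ can be factored: first glue a bigon $\mathbb{B}$ along $a$ (producing the comodule map $\Delta_a^L$), likewise along $b$ (producing $\Delta_b^R$), and then observe that the surface $\mathbf{\Sigma}_{|a\#b}$ is precisely the equalizer of the two resulting ways of "inserting a bigon in the middle." On the algebra side, since $\mathcal{S}_\omega(\mathbb{B})\cong \mathbb{C}_q[\SL_2]$ is a Hopf algebra and $\mathcal{S}_\omega(\mathbf{\Sigma})$ is simultaneously a left and right $\mathcal{S}_\omega(\mathbb{B})$-comodule via $\Delta_a^L,\Delta_b^R$ (with the two coactions commuting, by coassociativity of L\^e's gluing map \eqref{eq: le morph}), the claim is that $\mathcal{S}_\omega(\mathbf{\Sigma}_{|a\#b})$ is the cotensor product $\mathcal{S}_\omega(\mathbf{\Sigma})\,\square_{\mathbb{C}_q[\SL_2]}\,\mathbb{C}_q[\SL_2]\cong \{x : \Delta_a^L(x) = \sigma\circ\Delta_b^R(x)\}$, which is exactly exactness of the displayed sequence.

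First I would make precise the factorization of the gluing: identify $\mathbf{\Sigma}_{|a\#b}$ with the surface obtained by gluing a bigon $\mathbb{B}$ simultaneously to $a$ and to $b$ of $\mathbf{\Sigma}$ (gluing a bigon to a single arc gives back the same surface up to canonical isomorphism, the unit property of the comodule structure), and use coassociativity to get the commuting square of gluing morphisms. This realizes $i_{|a\#b}$ as the composite landing inside the equalizer. Injectivity of $i_{|a\#b}$ is already given by L\^e's theorem, so the content is: (i) the image of $i_{|a\#b}$ lies in the kernel of $\Delta_a^L - \sigma\circ\Delta_b^R$, and (ii) anything killed by $\Delta_a^L - \sigma\circ\Delta_b^R$ lies in the image.

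For step (i), I would argue diagrammatically: a stated skein in $\mathbf{\Sigma}_{|a\#b}$, pushed into $\mathbf{\Sigma}$ and then having a bigon reglued on $a$ versus on $b$, gives the same diagram in the doubly-glued surface because the two bigon insertions differ only by sliding the bigon across the already-glued arc — this is a routine isotopy/skein-relation computation using the defining relations of the stated skein algebra and the explicit form of L\^e's gluing map. For step (ii), the main obstacle, I would use the standard Hopf-algebraic fact that for a Hopf algebra $H$ and a right $H$-comodule algebra $A$ whose coaction is "compatible with a second commuting left coaction," the relevant equalizer is computed by the counit: given $x$ with $\Delta_a^L(x)=\sigma\circ\Delta_b^R(x)$, one applies $(\id\otimes\varepsilon)$ (where $\varepsilon:\mathcal{S}_\omega(\mathbb{B})\to\mathbb{C}$ is the counit, i.e. capping off the bigon) to recover a preimage, and the comodule axioms force it to be a genuine element of $\mathcal{S}_\omega(\mathbf{\Sigma}_{|a\#b})$. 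Topologically, capping the inserted bigon with its counit is exactly the "ungluing followed by regluing" operation, so one must check this cap-off map is a well-defined inverse on the equalizer; the delicate point is checking that the counit-capped element actually comes from the glued surface rather than merely from $\mathbf{\Sigma}$, which is where the equalizer condition $\Delta_a^L(x)=\sigma\circ\Delta_b^R(x)$ gets used in full. I expect this verification — essentially that the stated skein algebra of the glued surface is faithfully flat or at least "Hopf-Galois-like" over the bigon, so that descent along the counit is exact — to be the technical heart; L\^e's explicit basis of stated skeins in terms of the triangulation, or a direct filtration argument on the number of strands meeting the glued arc, should make it tractable.
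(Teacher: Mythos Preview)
Your step (i) is fine and matches the paper: the inclusion $\mathrm{Im}(i_{|a\#b})\subset\ker(\Delta_a^L-\sigma\circ\Delta_b^R)$ is indeed just coassociativity of the gluing maps.

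Your step (ii), however, has a real gap. The counit idea does not produce a preimage in $\mathcal{S}_\omega(\mathbf{\Sigma}_{|a\#b})$. Applying $\varepsilon\otimes\id$ to $\Delta_a^L(x)\in\mathcal{S}_\omega(\mathbb{B})\otimes\mathcal{S}_\omega(\mathbf{\Sigma})$ simply returns $x\in\mathcal{S}_\omega(\mathbf{\Sigma})$ by the comodule axiom; it does not land in the smaller algebra $\mathcal{S}_\omega(\mathbf{\Sigma}_{|a\#b})$. There is no natural retraction $\mathcal{S}_\omega(\mathbf{\Sigma})\to\mathcal{S}_\omega(\mathbf{\Sigma}_{|a\#b})$: L\^e's gluing maps go from the glued surface into the cut one, not the other way, and ``capping off the bigon'' does not reverse that direction. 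The analogy with cotensor products is also misleading here: in the cotensor setting $M\square_H N\subset M\otimes N$ one is cutting down a tensor product of two comodules, whereas here one must identify a subspace of a single bicomodule $M$ with an \emph{a priori} different algebra, and no formal Hopf-algebra manipulation will do that for you. The faithfully-flat/Hopf--Galois remark is a red herring for the same reason.

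What the paper actually does is exactly the ``direct filtration argument on the number of strands meeting the glued arc'' that you mention only as a fallback. Concretely: filter $\mathcal{S}_\omega(\mathbf{\Sigma})$ by $|\partial_a D|+|\partial_b D|$, compute the leading terms of $\Delta_a^L$ and $\sigma\circ\Delta_b^R$ in L\^e's basis $\mathcal{B}^{\mathfrak{o}}$ (this uses the height-exchange relations to reduce to $\mathfrak{o}$-increasing states, picking up explicit $q$-polynomial coefficients), and observe that the equalizer condition on leading terms forces $n_a(D)=n_b(D)$, forces the states on $a$ and $b$ to match, and forces the coefficients to be state-independent. This exactly means the leading term of $X$ equals the leading term of $i_{|a\#b}(Y)$ for some explicit $Y$, and one finishes by induction on the filtration degree. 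That computation \emph{is} the proof; there is no softer route via the counit.
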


\par Theorem \ref{theorem1} can be reformulated using  coHochschild  cohomology, whose zeroth group (see Definition \ref{def_coHochschild} and \cite{HPS_CohochschildHom}) computes the skein algebra: 
$$ \mathcal{S}_{\omega}(\mathbf{\Sigma}_{|a\#b}) \cong \mathrm{coHH}^0 (\mathcal{O}_q[\SL_2] , \quad {}_a{\mathcal{S}_{\omega}(\mathbf{\Sigma})}_b),$$ where  $_a{\mathcal{S}_{\omega}(\mathbf{\Sigma})}_b$ is seen as a bicomodule over $\mathcal{O}_q[\SL_2]$ via the comodule maps $\Delta_a^L$ and $\Delta_b^R$.
\par  Theorem \ref{theorem1} provides, for  any topological triangulation $\Delta$ of $\mathbf{\Sigma}$,  an isomorphism of algebras  
\begin{equation*}
\mathcal{S}_{\omega}(\mathbf{\Sigma})\cong \mathrm{coHH}^0\left( \otimes_{e\in \mathring{\mathcal{E}}(\Delta)} \mathcal{O}_q[\SL_2],\quad \otimes_{\mathbb{T}\in F(\Delta)}  \mathcal{S}_{\omega}(\mathbb{T})\right),
\end{equation*} 
where the first tensor product runs over the inner edges of the triangulation and the second over the faces of the triangulation. Hence $\mathcal{S}_{\omega}(\mathbf{\Sigma})$ is completely determined by the combinatoric of the triangulation together with $\mathcal{S}_{\omega}(\mathbb{T})$ and its appropriated structures of comodule over $\mathcal{O}_q[\SL_2]$. This is a key feature in the proofs of the next two theorems.

\vspace{2mm}
\par Our second result is a generalization to open punctured surfaces of Bonahon and Wong's main theorem in \cite{BonahonWong1} in the case where the root of unity has odd order. Given $N\geq 1$, denote by $T_N(X)$ the $N$-th Chebyshev polynomial of first kind.
\begin{theorem}\label{theorem2} 
	Suppose that  $\omega$ is a root of unity of odd order $N\geq 1$. There exists an embedding 
	\begin{equation*}
	j_{\mathbf{\Sigma}} : \mathcal{S}_{+1}(\mathbf{\Sigma}) \hookrightarrow \mathcal{Z}\left( \mathcal{S}_{\omega}(\mathbf{\Sigma}) \right)
	\end{equation*}
	of the (commutative) stated skein algebra with parameter $+1$ into the center of the stated skein algebra with parameter $\omega$. Moreover, the morphism $j_{\mathbf{\Sigma}}$  is characterized by the property that it sends a closed curve $\gamma$ to $T_N(\gamma)$ and a stated arc $\alpha_{\varepsilon \varepsilon'}$ to $\alpha_{\varepsilon \varepsilon'}^{(N)}$, where $\alpha_{\varepsilon \varepsilon'}^{(N)}$ is the tangle made by stacking $N$ parallel copies of $\alpha_{\varepsilon \varepsilon'}$ on top of the others.
\end{theorem}
\par In Theorem \ref{theorem2}, we restrict ourselves to roots of unity of odd orders for simplicity.  Theorem \ref{theorem2} should be compared to \cite[Theorem $8.1$]{LePaprocki2018}.  A marked $3$-manifold is a pair $(M,\mathcal{N})$ where $M$ is 
an oriented $3$-manifold and $\mathcal{N}\subset \partial M$ is an oriented sub-manifold whose connected component are diffeomorphic to $[0,1]$. To such a pair and $\zeta \in \mathbb{C}^*$, the authors of \cite{LePaprocki2018} associate a vector space $\mathcal{S}_{\zeta}(M,\mathcal{N})$, which generalizes the M\"uller algebra. In \cite[Theorem $8.1$]{LePaprocki2018} and for a root of unity $\zeta$ such that $\zeta^4$ has arbitrary order $N>1$ (not necessary odd), the authors defined an injective linear map $\Phi_{\zeta}:\mathcal{S}_{(\zeta)^{N^2}}(M,\mathcal{N})\hookrightarrow \mathcal{S}_{\zeta}(M,\mathcal{N})$. If $(\Sigma, \mathcal{P})$ is a punctured surface with no inner punctures and non-trivial boundary, $(M,\mathcal{N}):=(\Sigma\times (0,1), \mathcal{P}\times(0,1))$ is a marked $3$-manifold and $\mathcal{S}_{\zeta}(M,\mathcal{N})$ is a subalgebra of the stated skein algebra $\mathcal{S}_{\zeta}(\Sigma, \mathcal{P})$. If $\zeta$ has odd order $N>1$, the embedding $j_{\mathbf{\Sigma}} $ of Theorem \ref{theorem2} restricts to the embedding $\Phi_{\zeta}$ of \cite[Theorem $8.1$]{LePaprocki2018}. A generalization of Theorem \ref{theorem2} for roots of unity of even order has been recently proved by Blooquist-L\^e in \cite[Theorem $1.2$]{BloomquistLe} though in this case the source of $j_{\mathbf{\Sigma}}$ is the skein algebra at $\eta:= \omega^{N^2}$ and the image is not always central but rather spanned by $(-1)^{1+N'}$-transparent elements, where $N':= \ord(\omega^4)$ (see \cite[Theorem $4.10$]{BloomquistLe} for details). Also a generalization of Theorem \ref{theorem2} for skein algebras of arbitrary connected reductive groups $G$ and for marked surfaces having $0$ or $1$ boundary arc was found by Ganev-Jordan-Safronov in \cite{GanevJordanSafranov_FrobeniusMorphism}.

\vspace{2mm}
	\par In the last result we generalize to open punctured surfaces Bullock's isomorphism of \cite{Bullock} and Turaev's theorem of \cite{Turaev91}; we  prove that the stated skein algebra is  a deformation of the relative character variety.  
	The fundamental result in this direction is as follows.  
\vspace{2mm}
	\par The $\mathbb{C}[[\hbar]]$-module $\mathcal{S}_{+1}(\mathbf{\Sigma})[[\hbar]]:= \mathcal{S}_{+1}(\mathbf{\Sigma})\otimes_{\mathbb{C}}\mathbb{C}[[\hbar]]$  is endowed with a star product $\star_{\hbar}$.
	The latter is obtained by pulling-back the product of $\mathcal{S}_{+1}(\mathbf{\Sigma})$ along an isomorphism  $\mathcal{S}_{+1}(\mathbf{\Sigma})[[\hbar]] \xrightarrow{\cong} \mathcal{S}_{\omega_{\hbar}}(\mathbf{\Sigma})$ of vector spaces, where $\omega_{\hbar}:= \exp\left( -\hbar/4 \right)$ (see Section \ref{sec_Poisson_skein} for details). 
	This equips  $\mathcal{S}_{+1}(\mathbf{\Sigma})$  with a Poisson algebra structure; its Poisson bracket $\left\{ \cdot, \cdot \right\}^{s}$ is defined  by  
	$$ f \star_{\hbar} g - g\star_{\hbar} f = \hbar \{ f, g\}^s \pmod{\hbar^2} \mbox{, for all }f,g \in \mathcal{S}_{+1}(\mathbf{\Sigma}). $$
	The superscript $s$ stands for "skein". See Section \ref{sec: explicit formula of bracket} for an explicit description.

\begin{theorem}\label{theorem3} 
	Suppose that  $\mathbf{\Sigma}$ has a topological triangulation $\Delta$. 
 Let $\mathfrak{o}_{\Delta}$ be an orientation of the edges of $\Delta$ and $\mathfrak{o}$ be the induced orientation of the boundary arcs of $\mathbf{\Sigma}$.
    There exists an isomorphism of Poisson algebras 
$$\Psi^{(\Delta, \mathfrak{o}_{\Delta})} : \left( \mathcal{S}_{+1}(\mathbf{\Sigma}), \{ \cdot, \cdot \}^s \right) \xrightarrow{\cong} \left( \mathbb{C}[\mathcal{X}_{\SL_2}(\mathbf{\Sigma})], \{\cdot, \cdot \}^{\mathfrak{o}} \right). $$
Moreover, the above isomorphism exists for small punctured surfaces (see Definition \ref{def small}), for which it only depends on  $\mathfrak{o}$. 
\end{theorem}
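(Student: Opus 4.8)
The plan is to build the isomorphism $\Psi^{(\Delta,\mathfrak o_\Delta)}$ triangle by triangle, using the structural description of both sides provided by Theorem \ref{theorem1}: both $\mathcal{S}_{+1}(\mathbf\Sigma)$ and $\mathbb{C}[\mathcal{X}_{\SL_2}(\mathbf\Sigma)]$ sit inside tensor products over the faces of $\Delta$ as the zeroth coHochschild cohomology with respect to the comodule structures over $\mathbb{C}_q[\SL_2]$, which at $q=\omega^{-4}=+1$ degenerates to $\mathbb{C}[\SL_2]$ (the ordinary coordinate ring). So first I would treat the base case $\mathbf\Sigma = \mathbb{T}$: exhibit an explicit Poisson isomorphism $\mathcal{S}_{+1}(\mathbb{T}) \xrightarrow{\cong} \mathbb{C}[\mathcal{X}_{\SL_2}(\mathbb{T})]$ compatible with the two $\mathbb{C}[\SL_2]$-comodule structures (left on arc $a$, right on arc $b$, and for the triangle a third one). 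On the skein side $\mathcal{S}_{+1}(\mathbb{T})$ has an explicit presentation (it is a quotient/localization related to $\mathcal{O}(\SL_2)$-type generators coming from stated arcs); on the character-variety side $\mathcal{X}_{\SL_2}(\mathbb{T})$ is likewise explicitly computed in \cite{KojuTriangularCharVar}. I would match the stated arcs $\alpha_{\varepsilon\varepsilon'}$ with the matrix-coefficient functions and check this is a comodule map by a direct computation on generators.

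Next I would check Poisson-compatibility in the base case. Here I use the explicit formula for $\{\cdot,\cdot\}^s$ from Section \ref{sec: explicit formula of bracket} (the semiclassical limit of the stated skein product) and compare it with Goldman-type bracket $\{\cdot,\cdot\}^{\mathfrak o}$ on $\mathbb{C}[\mathcal{X}_{\SL_2}(\mathbb{T})]$, which depends on the boundary orientation $\mathfrak o$; the point is that for the triangle both brackets are boundary-term brackets of the same Fock–Rosly type, so matching them reduces to matching signs dictated by $\mathfrak o_\Delta$. Then, for a general triangulated $\mathbf\Sigma$, I would glue: given the face decomposition, $\mathcal{S}_{+1}(\mathbf\Sigma) = \mathrm{coHH}^0\!\big(\otimes_{e}\mathbb{C}[\SL_2],\, \otimes_{\mathbb{T}}\mathcal{S}_{+1}(\mathbb{T})\big)$ and similarly for the character variety, so the tensor product of the base-case isomorphisms over all faces restricts to an isomorphism between the two coHochschild $\mathrm{H}^0$'s, because it intertwines the relevant comodule maps along each inner edge. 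This uses functoriality/naturality of $\mathrm{coHH}^0$ and the compatibility of the gluing morphisms $i_{|a\#b}$ on both sides (stated earlier in the excerpt for both skein algebras and character varieties). Poisson-compatibility then propagates from the faces to $\mathbf\Sigma$ because the gluing maps $i_{|a\#b}$ are Poisson on both sides and $\Psi^{(\Delta,\mathfrak o_\Delta)}$ is a restriction of a Poisson map. This also makes transparent that the resulting map depends on $\mathfrak o_\Delta$ only through the induced boundary orientation $\mathfrak o$ together with the interior edge orientations — and for small punctured surfaces, where one can take a triangulation with no interior edges (or argue that different choices are related by an automorphism fixing the bracket), the dependence collapses to $\mathfrak o$ alone.

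Finally, I would verify that $\Psi^{(\Delta,\mathfrak o_\Delta)}$ is independent of the chosen triangulation in the appropriate sense or, more precisely in the statement as given, simply establish existence for each $(\Delta,\mathfrak o_\Delta)$ and separately record the refined statement for small surfaces; the small-surface case can be handled by listing the finitely many small punctured surfaces and checking directly that the isomorphism produced is triangulation-independent and orientation-dependent only via $\mathfrak o$, using the explicit presentations. The main obstacle I anticipate is the Poisson-compatibility check in the base case: one must pin down the precise semiclassical bracket $\{\cdot,\cdot\}^s$ on stated arcs of $\mathbb{T}$ (including the subtle signs coming from the framing/height order in L\^e's stated skein relations at $\omega_\hbar = \exp(-\hbar/4)$) and show it matches the Fock–Rosly/Goldman bracket $\{\cdot,\cdot\}^{\mathfrak o}$ on $\mathbb{C}[\mathcal{X}_{\SL_2}(\mathbb{T})]$ under the chosen identification — this is where the orientation $\mathfrak o$ genuinely enters and where sign bookkeeping is delicate. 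The exactness statement of Theorem \ref{theorem1}, together with the fact that $\mathrm{coHH}^0$ is left exact, is what guarantees the glued map is a well-defined isomorphism rather than merely a map between ambient tensor products.
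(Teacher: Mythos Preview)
Your proposal is correct and follows essentially the same approach as the paper: establish the Poisson isomorphism explicitly for the bigon and the triangle (Lemmas \ref{lemma_poisson_bigone} and \ref{lemma_poisson_triangle}), verify compatibility with the comodule maps $\Delta^L,\Delta^R$, and then use the two exact sequences (Theorem \ref{theorem1} on the skein side, its analogue for character varieties) to obtain $\Psi^{(\Delta,\mathfrak o_\Delta)}$ as the restriction of $\otimes_{\mathbb T}\Psi^{\mathfrak o_{\mathbb T}}$.

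One small correction: small punctured surfaces (Definition \ref{def small}) are precisely those that do \emph{not} admit a triangulation, so the phrase ``take a triangulation with no interior edges'' does not apply; the bigon is handled directly by Lemma \ref{lemma_poisson_bigone}, and the remaining small surfaces (sphere with one or two punctures, disc with one boundary puncture) have trivial or commutative skein algebras and are immediate.
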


\par The isomorphism $\Psi^{(\Delta, \mathfrak{o}_{\Delta})}$ induces, by tensoring with $\mathbb{C}[[\hbar]]$, an isomorphism of vector spaces $\mathbb{C}[\mathcal{X}_{\SL_2}(\mathbf{\Sigma})] [[\hbar]] \xrightarrow{\cong}\mathcal{S}_{+1}(\mathbf{\Sigma})[[\hbar]] $. Denote by $\star_{(\Delta, \mathfrak{o}_{\Delta})}$ the product on $\mathbb{C}[\mathcal{X}_{\SL_2}(\mathbf{\Sigma})] [[\hbar]]$ obtained by pulling back the product $\star_{\hbar}$ by this isomorphism.

\begin{corollary}\label{corollary1} For any triangulable punctured surface $\mathbf{\Sigma}$, the algebra 
\\ $\left( \mathbb{C}[\mathcal{X}_{\SL_2}(\mathbf{\Sigma})] [[\hbar]] , \star_{(\Delta, \mathfrak{o}_{\Delta})} \right)$ is a deformation quantization of the character variety with Poisson structure given by $\mathfrak{o}$.
\end{corollary}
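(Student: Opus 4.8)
The statement is essentially a formal consequence of Theorem \ref{theorem3} together with the construction of the star product $\star_\hbar$ on $\mathcal{S}_{+1}(\mathbf{\Sigma})[[\hbar]]$ described in Section $2.5$. The plan is to unwind the two layers of transport of structure and check that the resulting product on $\mathbb{C}[\mathcal{X}_{\SL_2}(\mathbf{\Sigma})][[\hbar]]$ satisfies the defining axioms of a deformation quantization of the Poisson algebra $\bigl(\mathbb{C}[\mathcal{X}_{\SL_2}(\mathbf{\Sigma})],\{\cdot,\cdot\}^{\mathfrak{o}}\bigr)$.

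\emph{First step.} Recall that $\star_\hbar$ is, by construction, an associative $\mathbb{C}[[\hbar]]$-bilinear product on $\mathcal{S}_{+1}(\mathbf{\Sigma})[[\hbar]]$ which reduces modulo $\hbar$ to the commutative product of $\mathcal{S}_{+1}(\mathbf{\Sigma})$ and whose order-one antisymmetrization is $\{\cdot,\cdot\}^s$; being obtained by pulling back an honest algebra product along a $\mathbb{C}$-linear isomorphism of vector spaces, associativity and $\hbar$-adic continuity are automatic. Since $\Psi^{(\Delta,\mathfrak{o}_\Delta)}$ is by definition an isomorphism of algebras $\mathcal{S}_{+1}(\mathbf{\Sigma})\xrightarrow{\cong}\mathbb{C}[\mathcal{X}_{\SL_2}(\mathbf{\Sigma})]$ carrying $\{\cdot,\cdot\}^s$ to $\{\cdot,\cdot\}^{\mathfrak{o}}$, its $\mathbb{C}[[\hbar]]$-linear extension $\widehat{\Psi}:\mathcal{S}_{+1}(\mathbf{\Sigma})[[\hbar]]\xrightarrow{\cong}\mathbb{C}[\mathcal{X}_{\SL_2}(\mathbf{\Sigma})][[\hbar]]$ is a $\mathbb{C}[[\hbar]]$-module isomorphism, and $\star_{(\Delta,\mathfrak{o}_\Delta)}$ is defined by $F\star_{(\Delta,\mathfrak{o}_\Delta)}G:=\widehat{\Psi}\bigl(\widehat{\Psi}^{-1}(F)\star_\hbar\widehat{\Psi}^{-1}(G)\bigr)$.

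\emph{Second step: verification of the axioms.} Associativity and $\mathbb{C}[[\hbar]]$-bilinearity of $\star_{(\Delta,\mathfrak{o}_\Delta)}$ follow immediately since it is the transport of the associative $\mathbb{C}[[\hbar]]$-bilinear product $\star_\hbar$ along an isomorphism of $\mathbb{C}[[\hbar]]$-modules. For the classical limit: reducing modulo $\hbar$, $\widehat{\Psi}$ reduces to $\Psi^{(\Delta,\mathfrak{o}_\Delta)}$ and $\star_\hbar$ reduces to the product of $\mathcal{S}_{+1}(\mathbf{\Sigma})$, so $\star_{(\Delta,\mathfrak{o}_\Delta)}\bmod\hbar$ is the transport of the commutative product of $\mathcal{S}_{+1}(\mathbf{\Sigma})$ along the algebra isomorphism $\Psi^{(\Delta,\mathfrak{o}_\Delta)}$, which is precisely the commutative product of $\mathbb{C}[\mathcal{X}_{\SL_2}(\mathbf{\Sigma})]$. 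Finally, for $F,G\in\mathbb{C}[\mathcal{X}_{\SL_2}(\mathbf{\Sigma})]$ write $f=\Psi^{(\Delta,\mathfrak{o}_\Delta)}{}^{-1}(F)$, $g=\Psi^{(\Delta,\mathfrak{o}_\Delta)}{}^{-1}(g)$; then
$$ F\star_{(\Delta,\mathfrak{o}_\Delta)}G - G\star_{(\Delta,\mathfrak{o}_\Delta)}F = \widehat{\Psi}\bigl( f\star_\hbar g - g\star_\hbar f\bigr) = \hbar\,\widehat{\Psi}\bigl(\{f,g\}^s\bigr) \pmod{\hbar^2} = \hbar\,\{F,G\}^{\mathfrak{o}} \pmod{\hbar^2}, $$
using the defining property of $\{\cdot,\cdot\}^s$ and the fact that $\Psi^{(\Delta,\mathfrak{o}_\Delta)}$ is Poisson. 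This is exactly the first-order compatibility required of a deformation quantization.

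I do not expect a genuine obstacle here: the corollary is a formal bookkeeping statement, so the only point requiring care is to make sure the definitions from Section $2.5$ of "star product" and "deformation quantization" (in particular whether associativity is imposed a priori or must be checked, and the precise normalization $f\star g-g\star f=\hbar\{f,g\}+O(\hbar^2)$) are matched literally. One should also note that $\star_{(\Delta,\mathfrak{o}_\Delta)}$ a priori depends on the auxiliary data $(\Delta,\mathfrak{o}_\Delta)$, whereas the Poisson structure it quantizes depends only on $\mathfrak{o}$; this is consistent with the general fact that a given Poisson structure admits many (gauge-equivalent) deformation quantizations, and no uniqueness is claimed.
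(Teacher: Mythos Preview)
Your proposal is correct and matches the paper's approach: the corollary is stated in the introduction without a separate proof, being regarded as an immediate formal consequence of Theorem \ref{theorem3} and the definition of $\star_{(\Delta,\mathfrak{o}_\Delta)}$ as the pull-back of $\star_\hbar$ along the $\mathbb{C}[[\hbar]]$-linear extension of $\Psi^{(\Delta,\mathfrak{o}_\Delta)}$. Your write-up simply makes explicit the verification that transporting a star product along a Poisson algebra isomorphism yields a deformation quantization of the target Poisson algebra, exactly as intended.
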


\vspace{2mm} \par Theorems \ref{theorem2} and \ref{theorem3}  allow us to extend Bonahon and Wong's \emph{classical shadow} \cite{BonahonWong1} to open punctured surfaces. 
Indeed, suppose that $\omega$ is a root of unity of odd order. 
A finite dimensional representation  $\mathcal{S}_{\omega}(\mathbf{\Sigma}) \rightarrow \End(V)$ that sends each element of the image of  $j_{\mathbf{\Sigma}} : \mathcal{S}_{+1}(\mathbf{\Sigma}) \hookrightarrow  \mathcal{S}_{\omega}(\mathbf{\Sigma})$ to scalar operators, induces a character on the algebra $\mathcal{S}_{+1}(\mathbf{\Sigma}) \cong \mathbb{C}[\mathcal{X}_{\SL_2}(\mathbf{\Sigma})]$, hence defines a point in $\mathcal{X}_{\SL_2}(\mathbf{\Sigma})$. 
To sum up, and calling \emph{central} these representations, one has the following.

\begin{corollary}\label{corollary2}
	When $\omega$ is a root of unity of odd order and $\mathbf{\Sigma}$ is triangulable, to each isomorphism class of central representations of the stated skein algebra $\mathcal{S}_{\omega}(\mathbf{\Sigma})$, one can associate an invariant which is a point in the relative character variety $\mathcal{X}_{\SL_2}(\mathbf{\Sigma})$.
\end{corollary}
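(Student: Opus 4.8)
The plan is to read off the invariant by concatenating the two embeddings provided by the previous theorems. Fix a central representation $\rho \colon \mathcal{S}_{\omega}(\mathbf{\Sigma}) \to \End(V)$, so that, by definition of centrality, $\rho$ sends every element of the image of $j_{\mathbf{\Sigma}}$ to a scalar operator. Since $j_{\mathbf{\Sigma}} \colon \mathcal{S}_{+1}(\mathbf{\Sigma}) \hookrightarrow \mathcal{Z}(\mathcal{S}_{\omega}(\mathbf{\Sigma}))$ of Theorem~\ref{theorem2} is a morphism of algebras and $\mathcal{S}_{+1}(\mathbf{\Sigma})$ is commutative, the composite $\rho \circ j_{\mathbf{\Sigma}} \colon \mathcal{S}_{+1}(\mathbf{\Sigma}) \to \mathbb{C}\cdot\id_V \cong \mathbb{C}$ is a character $\chi_{\rho}$ of $\mathcal{S}_{+1}(\mathbf{\Sigma})$. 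This is the first step and it is purely formal.

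Next I would transport $\chi_\rho$ to the character variety. Choosing a topological triangulation $\Delta$ of $\mathbf{\Sigma}$ with an orientation $\mathfrak{o}_\Delta$ of its edges (or, for a small punctured surface, just an orientation $\mathfrak{o}$ of the boundary arcs), Theorem~\ref{theorem3} supplies an isomorphism of commutative algebras $\Psi^{(\Delta,\mathfrak{o}_\Delta)} \colon \mathcal{S}_{+1}(\mathbf{\Sigma}) \xrightarrow{\ \cong\ } \mathbb{C}[\mathcal{X}_{\SL_2}(\mathbf{\Sigma})]$, so $\chi_\rho \circ (\Psi^{(\Delta,\mathfrak{o}_\Delta)})^{-1}$ is a character of $\mathbb{C}[\mathcal{X}_{\SL_2}(\mathbf{\Sigma})]$. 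Here I would invoke that $\mathcal{S}_{+1}(\mathbf{\Sigma})$, hence $\mathbb{C}[\mathcal{X}_{\SL_2}(\mathbf{\Sigma})]$, is a finitely generated $\mathbb{C}$-algebra; the Nullstellensatz then identifies its characters with the closed points of the affine variety $\mathcal{X}_{\SL_2}(\mathbf{\Sigma})$, and $\chi_\rho$ produces a point $[\rho]\in\mathcal{X}_{\SL_2}(\mathbf{\Sigma})$.

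Finally I would verify that $[\rho]$ depends only on the isomorphism class of $\rho$: if $\rho' = C\,\rho(\cdot)\,C^{-1}$ with $C\in\GL(V)$, then for every $x\in\mathcal{S}_{+1}(\mathbf{\Sigma})$ the operator $\rho(j_{\mathbf{\Sigma}}(x))$ is scalar, hence commutes with $C$, so $\rho'(j_{\mathbf{\Sigma}}(x)) = \rho(j_{\mathbf{\Sigma}}(x))$ and $\chi_{\rho'}=\chi_\rho$, giving $[\rho']=[\rho]$. The one point genuinely requiring care — the main obstacle — is to check that $[\rho]$ does not depend on the auxiliary data $(\Delta,\mathfrak{o}_\Delta)$: although the Poisson bracket transported by $\Psi^{(\Delta,\mathfrak{o}_\Delta)}$ does depend on these choices, I expect the \emph{underlying morphism of commutative rings} to be canonical, since it is determined by its values on simple closed curves and stated arcs (loops mapping to trace functions, stated arcs to the associated matrix-coefficient functions). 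Any two such isomorphisms would then differ by an automorphism of $\mathbb{C}[\mathcal{X}_{\SL_2}(\mathbf{\Sigma})]$ fixing these generators, i.e.\ the identity; so the point is well defined. (Should the independence of $(\Delta,\mathfrak{o}_\Delta)$ fail to hold in the stated generality, one would instead attach the invariant to a fixed such choice and deduce its naturality under gluings from the compatibility of $j_{\mathbf{\Sigma}}$ and $\Psi^{(\Delta,\mathfrak{o}_\Delta)}$ with the morphisms $i_{|a\#b}$.)
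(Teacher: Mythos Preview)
Your argument is correct and follows essentially the same route as the paper: compose a central representation with $j_{\mathbf{\Sigma}}$ to obtain a character of $\mathcal{S}_{+1}(\mathbf{\Sigma})$, then transport it through $\Psi^{(\Delta,\mathfrak{o}_\Delta)}$ to a point of $\mathcal{X}_{\SL_2}(\mathbf{\Sigma})$. You are in fact more careful than the paper on two counts: you spell out the Nullstellensatz step and the isomorphism-class independence, whereas the paper simply asserts ``by definition, the classical shadow only depends on the isomorphism class of $r$''. Regarding your main worry --- independence of $(\Delta,\mathfrak{o}_\Delta)$ --- the paper does not address it either: in Section~3.3 it just \emph{fixes} a pair $(\Delta,\mathfrak{o}_\Delta)$ and defines the classical shadow relative to that choice, so your fallback is exactly the position the paper takes.
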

\par Central representations include the families of irreducible representations, local representations and representations induced by simple modules of the balanced Chekhov-Fock algebras using the quantum trace map (see Section $3.3$ for details).

\vspace{2mm}
\par Soon after the prepublication of this paper on arXiv, Costantino and Le prepublished independently in \cite{CostantinoLe19} some results similar to Theorems \ref{theorem1} and \ref{theorem3}. More precisely, \cite[Theorem $4.7$]{CostantinoLe19} is identical to Theorem \ref{theorem1} whereas \cite[Theorem $8.12$]{CostantinoLe19} is closely related, though different, to our Theorem \ref{theorem3}. Instead of using the generalized character variety $\mathcal{X}_{\SL_2}(\mathbf{\Sigma})$ defined in \cite{KojuTriangularCharVar}, the authors defined a twisted character variety $\chi(\mathbf{\Sigma})$ (without Poisson structure) and constructed a canonical algebra isomorphism between the stated skein algebra in $+1$ and the algebra of regular functions of $\chi(\mathbf{\Sigma})$, whereas our isomorphism in Theorem \ref{theorem3} depends on the non-canonical choice $(\Delta, \mathfrak{o}_{\Delta})$ of a triangulation and an orientation of the edges (and is Poisson). Inspired by their enlightening approach, in this new version of the paper we add the following clarification of the isomorphism in Theorem \ref{theorem3}. As explained before, when the punctured surface is closed the "standard" isomorphisms between $\mathcal{S}_{+1}(\mathbf{\Sigma})$ and  $\mathbb{C}[\mathcal{X}_{\SL_2}(\mathbf{\Sigma})]$ are indexed by spin structures. In Section \ref{sec_relativespin}, we define the notion of \textit{relative spin structure} for punctured surfaces, which coincides with the standard definition when the punctured surface is closed. The motivation for this definition is its good behavior for the operation of gluing boundary arcs together. In particular we associate to each combinatorial data $(\Delta, \mathfrak{o}_{\Delta})$, appearing in Theorem \ref{theorem3}, a relative spin structure and prove the

\begin{theorem}\label{theoremN}
The isomorphism $\Psi^{(\Delta, \mathfrak{o}_{\Delta})}$ of Theorem \ref{theorem3} only depends on the relative spin structure associated to $(\Delta, \mathfrak{o})$.
\end{theorem}

In fact, in Theorem \ref{theorem5}, we provide explicit formulas for the value of $\Psi^{(\Delta, \mathfrak{o}_{\Delta})}$ on stated arcs and closed curves in terms of the relative spin structure. When the punctured surface is closed, we show that our isomorphism coincides with the standard isomorphism associated to classical spin structures. We also give, in Section \ref{sec_comparaisonCL}, a detailed comparison between the isomorphism in Theorem \ref{theorem3} and Costantino and L\^e isomorphism in  \cite[Theorem $8.12$]{CostantinoLe19}.
\vspace{2mm}
\par Even though our proof of Theorem \ref{theorem2} makes uses of triangulations, the theorem is proved for arbitrary punctured surfaces, including (non triangulable) closed surfaces without punctures thus providing an alternative proof of the results in \cite{BonahonWong1}. However, our proof of Theorem \ref{theorem3} only works for triangulable punctured surfaces (and for the bigon), so it does not provide an alternative proof of the result of \cite{Bullock} for closed unpunctured surfaces.

\subsubsection*{Plan of the paper}

In the second section we briefly recall from \cite{LeStatedSkein} the definition and general properties of the stated skein algebra and prove Theorem \ref{theorem1}. We then use the triangular decomposition to reduce the proof of Theorem \ref{theorem2} to the cases of the bigon and the triangle for which the proof is a simple computation. We eventually characterize the Poisson bracket arising in skein theory. In the third section, we briefly recall from \cite{KojuTriangularCharVar} the definition of character varieties for open surfaces. Again, using triangular decompositions, we reduce the proof of Theorem \ref{theorem3} to the cases of the bigon and the triangles for which the proof is elementary. We then introduce and study the notion of relative spin structure and give in Theorem \ref{theorem5} an explicit description of the isomorphism of \ref{theorem3}, from which Theorem \ref{theoremN} is a straightforward consequence. In the appendix, we prove a technical result needed in the proof of Theorem \ref{theorem2} and derive a generalization of the main theorem of \cite{BonahonMiraculous}.

\paragraph{Acknowledgments.} The  authors  thank F.Bonahon, F.Costantino, L.Funar, T.Q.T.L\^e and J.Toulisse for useful discussions. The first author also thanks the University of South California and the Federal University of S\~ao Carlos for their kind hospitality during the beginning of this work.  He acknowledges  support from the grant ANR  ModGroup, the GDR Tresses, the GDR Platon, CAPES,  the GEAR Network, the CNRS, the JSPS and the ERC DerSymp (Grant Agreement No. $768679$).  
The second author was supported by PNPD/CAPES-2013 during the first period of this project, and by "grant \#2018/19603-0, S\~ao Paulo Research Foundation (FAPESP)"  during the second period. The authors also thank the anonymous referees for valuable corrections and suggestions that improved the quality of the paper. The authors warmly thank T.Q.T.L\^e for correcting a mistake in a former version of the paper.

\begin{notations*}	All along the paper we reserve the following notations: $A:=\omega^{-2}$ and $q:=\omega^{-4}$.
\end{notations*}

\section{Stated skein algebras}

\subsection{Definitions and general properties of the stated skein algebras}\label{sec_definitions}

\par We briefly review from \cite{LeStatedSkein} the definition and main properties of the stated skein algebras.
\begin{definition}
A \textit{punctured surface} is a pair $\mathbf{\Sigma}=(\Sigma,\mathcal{P})$ where $\Sigma$ is a compact oriented surface and $\mathcal{P}$ is a finite subset of $\Sigma$ which intersects non-trivially each boundary component. A \textit{boundary arc} is a connected component of $\partial \Sigma \setminus \mathcal{P}$. The punctured surface is \textit{open} when $\partial \Sigma \neq \emptyset$ and \textit{closed} otherwise.
\end{definition}

\textbf{Definition of stated skein algebras}
\\   Let $\mathbf{\Sigma}=(\Sigma, \mathcal{P})$ be a punctured surface and write $\Sigma_{\mathcal{P}}:= \Sigma \setminus \mathcal{P}$.  A \textit{tangle} in $ \Sigma_{\mathcal{P}}\times (0,1)$   is a  compact framed, properly embedded $1$-dimensional manifold $T\subset \Sigma_{\mathcal{P}}\times (0,1)$ such that for every point of $\partial T \subset \partial \Sigma_{\mathcal{P}}\times (0,1)$ the framing is parallel to the $(0,1)$ factor and points to the direction of $1$.  Here, by framing, we refer to a thickening of $T$ to an oriented surface. Define the \textit{height} of a point $(v,h)\in \Sigma_{\mathcal{P}}\times (0,1)$ to be $h$.  If $b$ is a boundary arc and $T$ a tangle, the points of $\partial_b T := \partial T \cap b\times(0,1)$ are totally ordered by their height and we impose that no two points in $\partial_bT$ have the same height. A tangle has \textit{vertical framing} if for each of its points, the framing is parallel to the $(0,1)$ factor and points in the direction of $1$. Two tangles are isotopic if they are isotopic through the class of tangles that preserves the partial boundary height orders. By convention, the empty set is a tangle only isotopic to itself.

\vspace{2mm}
\par Every tangle is isotopic to a tangle with vertical framing. We can further isotope a tangle such that it is in general position with the standard projection  $\pi : \Sigma_{\mathcal{P}}\times (0,1)\rightarrow \Sigma_{\mathcal{P}}$ with $\pi(v,h)=v$, that is such that $\pi_{\big| T} : T\rightarrow \Sigma_{\mathcal{P}}$ is an immersion with at most transversal double points in the interior of $\Sigma_{\mathcal{P}}$. We call \textit{diagram} of $T$ the image $D=\pi(T)$ together with the over/undercrossing information at each double point. An isotopy class of diagram $D$ together with a total order of $\partial_b D=\partial D\cap b$ for each boundary arc $b$, define uniquely an isotopy class of tangle. Here isotopy of diagrams refers to isotopies where endpoints of diagrams are not allowed to cross. When choosing an orientation $\mathfrak{o}(b)$ of a boundary arc $b$ and a diagram $D$, the set $\partial_bD$ receives a natural total order $\leq_{\mathfrak{o}}$ by setting that the points are increasing when going in the direction of $\mathfrak{o}(b)$. We will represent tangles by drawing a diagram and an orientation (an arrow) for each boundary arc. When a boundary arc $b$ is oriented,   $\partial_b D$ is ordered by $\leq_{\mathfrak{o}}$ so according to the orientation. The data of an isotopy class of diagram $D$ and a choice $\mathfrak{o}$ of orientations of the  boundary arcs define uniquely an isotopy class of tangle $T$ by imposing that 
for every boundary arc $a$, for $v,w \in \partial_a D$ such that $v\leq_{\mathfrak{o}}w $ then the endpoint of $\partial_aT$ corresponding to $w$ has higher height than the endpoint corresponding to $v$.
 A \textit{state} of a tangle is a map $s:\partial T \rightarrow \{-, +\}$. A pair $(T,s)$ is called a \textit{stated tangle}. We define a \textit{stated diagram} $(D,s)$ in a similar manner.
\vspace{2mm}
\par \vspace{2mm}
\par  Let $\mathcal{R}$ be a commutative unital ring and $\omega\in \mathcal{R}^{\times}$ an invertible element.

\begin{definition}\label{def_stated_skein} 
  The \textit{stated skein algebra}  $\mathcal{S}_{\omega}(\mathbf{\Sigma})$ is the  free $\mathcal{R}$-module generated by isotopy classes of stated tangles in $\Sigma_{\mathcal{P}}\times (0, 1)$ modulo the relations \eqref{eq: skein 1} and \eqref{eq: skein 2}, which are,    
\\ 
the Kauffman bracket relations:
	\begin{equation}\label{eq: skein 1} 
\begin{tikzpicture}[baseline=-0.4ex,scale=0.5,>=stealth]	
\draw [fill=gray!45,gray!45] (-.6,-.6)  rectangle (.6,.6)   ;
\draw[line width=1.2,-] (-0.4,-0.52) -- (.4,.53);
\draw[line width=1.2,-] (0.4,-0.52) -- (0.1,-0.12);
\draw[line width=1.2,-] (-0.1,0.12) -- (-.4,.53);
\end{tikzpicture}
=\omega^{-2}
\begin{tikzpicture}[baseline=-0.4ex,scale=0.5,>=stealth] 
\draw [fill=gray!45,gray!45] (-.6,-.6)  rectangle (.6,.6)   ;
\draw[line width=1.2] (-0.4,-0.52) ..controls +(.3,.5).. (-.4,.53);
\draw[line width=1.2] (0.4,-0.52) ..controls +(-.3,.5).. (.4,.53);
\end{tikzpicture}
+\omega^{2}
\begin{tikzpicture}[baseline=-0.4ex,scale=0.5,rotate=90]	
\draw [fill=gray!45,gray!45] (-.6,-.6)  rectangle (.6,.6)   ;
\draw[line width=1.2] (-0.4,-0.52) ..controls +(.3,.5).. (-.4,.53);
\draw[line width=1.2] (0.4,-0.52) ..controls +(-.3,.5).. (.4,.53);
\end{tikzpicture}
\hspace{.5cm}
\text{ and }\hspace{.5cm}
\begin{tikzpicture}[baseline=-0.4ex,scale=0.5,rotate=90] 
\draw [fill=gray!45,gray!45] (-.6,-.6)  rectangle (.6,.6)   ;
\draw[line width=1.2,black] (0,0)  circle (.4)   ;
\end{tikzpicture}
= -(\omega^{-4}+\omega^{4}) 
\begin{tikzpicture}[baseline=-0.4ex,scale=0.5,rotate=90] 
\draw [fill=gray!45,gray!45] (-.6,-.6)  rectangle (.6,.6)   ;
\end{tikzpicture}
;
\end{equation}
the boundary relations:
\begin{equation}\label{eq: skein 2} 
\begin{tikzpicture}[baseline=-0.4ex,scale=0.5,>=stealth]
\draw [fill=gray!45,gray!45] (-.7,-.75)  rectangle (.4,.75)   ;
\draw[->] (0.4,-0.75) to (.4,.75);
\draw[line width=1.2] (0.4,-0.3) to (0,-.3);
\draw[line width=1.2] (0.4,0.3) to (0,.3);
\draw[line width=1.1] (0,0) ++(90:.3) arc (90:270:.3);
\draw (0.65,0.3) node {\scriptsize{$+$}}; 
\draw (0.65,-0.3) node {\scriptsize{$+$}}; 
\end{tikzpicture}
=
\begin{tikzpicture}[baseline=-0.4ex,scale=0.5,>=stealth]
\draw [fill=gray!45,gray!45] (-.7,-.75)  rectangle (.4,.75)   ;
\draw[->] (0.4,-0.75) to (.4,.75);
\draw[line width=1.2] (0.4,-0.3) to (0,-.3);
\draw[line width=1.2] (0.4,0.3) to (0,.3);
\draw[line width=1.1] (0,0) ++(90:.3) arc (90:270:.3);
\draw (0.65,0.3) node {\scriptsize{$-$}}; 
\draw (0.65,-0.3) node {\scriptsize{$-$}}; 
\end{tikzpicture}
=0,
\hspace{.2cm}
\begin{tikzpicture}[baseline=-0.4ex,scale=0.5,>=stealth]
\draw [fill=gray!45,gray!45] (-.7,-.75)  rectangle (.4,.75)   ;
\draw[->] (0.4,-0.75) to (.4,.75);
\draw[line width=1.2] (0.4,-0.3) to (0,-.3);
\draw[line width=1.2] (0.4,0.3) to (0,.3);
\draw[line width=1.1] (0,0) ++(90:.3) arc (90:270:.3);
\draw (0.65,0.3) node {\scriptsize{$+$}}; 
\draw (0.65,-0.3) node {\scriptsize{$-$}}; 
\end{tikzpicture}
=\omega
\begin{tikzpicture}[baseline=-0.4ex,scale=0.5,>=stealth]
\draw [fill=gray!45,gray!45] (-.7,-.75)  rectangle (.4,.75)   ;
\draw[-] (0.4,-0.75) to (.4,.75);
\end{tikzpicture}
\hspace{.1cm} \text{ and }
\hspace{.1cm}
\omega^{-1}
\heightexch{->}{-}{+}
- \omega^{-5}
\heightexch{->}{+}{-}
=
\heightcurve.
\end{equation}
According to our graphical conventions, in these skein relations, the boundary points are consecutive in the height order
The product of two classes of stated tangles $[T_1,s_1]$ and $[T_2,s_2]$ is defined by  isotoping $T_1$ and $T_2$  in $\Sigma_{\mathcal{P}}\times (1/2, 1) $ and $\Sigma_{\mathcal{P}}\times (0, 1/2)$ respectively and then setting $[T_1,s_1]\cdot [T_2,s_2]=[T_1\cup T_2, s_1\cup s_2]$. 
\end{definition}

\vspace{2mm}
\par 
\textbf{Bases for stated skein algebras}
\\ A closed component of a diagram $D$ is trivial if it bounds an embedded disc in $\Sigma_{\mathcal{P}}$. An open component of $D$ is trivial if it can be isotoped, relatively to its boundary, inside some boundary arc. A diagram is \textit{simple} if it has neither double points nor trivial component. The empty set is considered as a simple diagram. Let $\mathfrak{o}$ be an orientation of the boundary arcs of $\mathbf{\Sigma}$ and denote by $\leq_{\mathfrak{o}}$ the total orders induced on each boundary arc. A state $s: \partial D \rightarrow \{ - , + \}$ is $\mathfrak{o}-$\textit{increasing} if for any boundary arc $b$ and any points $x,y \in \partial_bD$, then $x<_{\mathfrak{o}} y$ implies $s(x)< s(y)$. Here we choose the convention $- < +$. 
We denote by $[D,s]\in \mathcal{S}_{\omega}(\mathbf{\Sigma})$ the class of the stated tangle associated to $(D,s)$ (note that $[D,s]$ depends on the orientation $\mathfrak{o}$).

\begin{definition}\label{def_basis}
We denote by $\mathcal{B}^{\mathfrak{o}}\subset \mathcal{S}_{\omega}(\mathbf{\Sigma})$ the set of classes  $[D,s]$ such that $D$ is simple and $s$ is $\mathfrak{o}$-increasing. 
\end{definition}

\begin{theorem}(\cite[Theorem $2.11$]{LeStatedSkein})\label{theorem_basis}  The set $\mathcal{B}^{\mathfrak{o}}$ is an $\mathcal{R}$-module  basis of $\mathcal{S}_{\omega}(\mathbf{\Sigma})$. 
\end{theorem}

\vspace{2mm}
\par 
\textbf{Height exchange moves}
\\ Important properties that we will use all along the paper are the following \textit{height exchange moves} \eqref{eq: height exch 1} and \eqref{eq: height exch corr} proved in 
\cite[Lemma $2.4$]{LeStatedSkein}. 
Note that the formula (20) of Lemma 2.4 of \textit{loc. cit.} contains a misprint. It is corrected here in \eqref{eq: height exch corr}. 
\begin{equation}\label{eq: height exch 1}
\heightexch{<-}{+}{+}=\omega^{2} \heightexch{->}{+}{+}, ~~~
\heightexch{<-}{+}{-}=\omega^{-2} \heightexch{->}{+}{-}, ~~~
\heightexch{<-}{-}{-}=\omega^{2} \heightexch{->}{-}{-}
\end{equation}
\begin{equation}\label{eq: height exch corr}
\omega^{-3}\heightexch{<-}{-}{+}-\omega^{3}\heightexch{->}{-}{+}=(\omega^{-4} -\omega^{4}) \heightcurve.  
\end{equation}

	\begin{remark}
	An important case that we will be led to consider is the stated skein algebra at parameter $\omega=+1$. As shown in \cite[Corollary 2.5]{LeStatedSkein} it is commutative; this is a direct consequence of \eqref{eq: skein 1} and the height exchange formulas \eqref{eq: height exch 1} and  \eqref{eq: height exch corr}. 
	\end{remark}

\vspace{2mm}
\par 
\textbf{Trivial arcs relations}
\\ We will also use the following \textit{trivial arcs relations}. Set 
$$ C= \begin{pmatrix} C_+^+ & C_-^+ \\ C_+^- & C_-^- \end{pmatrix} := \begin{pmatrix} 0 & \omega \\ -\omega^{5} & 0 \end{pmatrix} \mbox{ and }C^{-1}= -A^3 C =  \begin{pmatrix} 0 & -\omega^{-5} \\ \omega^{-1} & 0 \end{pmatrix}.$$

\begin{lemma}[\cite{LeStatedSkein} Lemma $2.3$] One has the following \textit{trivial arcs relations}:

\begin{equation}\label{trivial_arc_rel}
\begin{tikzpicture}[baseline=-0.4ex,scale=0.5,>=stealth]
\draw [fill=gray!45,gray!45] (-.7,-.75)  rectangle (.4,.75)   ;
\draw[->] (0.4,-0.75) to (.4,.75);
\draw[line width=1.2] (0.4,-0.3) to (0,-.3);
\draw[line width=1.2] (0.4,0.3) to (0,.3);
\draw[line width=1.1] (0,0) ++(90:.3) arc (90:270:.3);
\draw (0.65,0.3) node {\scriptsize{$i$}}; 
\draw (0.65,-0.3) node {\scriptsize{$j$}}; 
\end{tikzpicture}
= C^i_j 
\hspace{.2cm}
\begin{tikzpicture}[baseline=-0.4ex,scale=0.5,>=stealth]
\draw [fill=gray!45,gray!45] (-.7,-.75)  rectangle (.4,.75)   ;
\draw[-] (0.4,-0.75) to (.4,.75);
\end{tikzpicture}
\hspace{.4cm} \mbox{and}  \hspace{.4cm}
\begin{tikzpicture}[baseline=-0.4ex,scale=0.5,>=stealth]
\draw [fill=gray!45,gray!45] (-.7,-.75)  rectangle (.4,.75)   ;
\draw[->] (-0.7,-0.75) to (-.7,.75);
\draw[line width=1.2] (-0.7,-0.3) to (-0.3,-.3);
\draw[line width=1.2] (-0.7,0.3) to (-0.3,.3);
\draw[line width=1.15] (-.4,0) ++(-90:.3) arc (-90:90:.3);
\draw (-0.9,0.3) node {\scriptsize{$i$}}; 
\draw (-0.9,-0.3) node {\scriptsize{$j$}}; 
\end{tikzpicture}
=(C^{-1})^i_j 
\hspace{.2cm}
\begin{tikzpicture}[baseline=-0.4ex,scale=0.5,>=stealth]
\draw [fill=gray!45,gray!45] (-.7,-.75)  rectangle (.4,.75)   ;
\draw[-] (-0.7,-0.75) to (-0.7,.75);
\end{tikzpicture}.
\end{equation}

\end{lemma}

\vspace{2mm}
\par 
\textbf{Splitting morphisms}
\\ Suppose that $\mathbf{\Sigma}$ has two boundary arcs, say $a$ and $b$. Let   $\mathbf{\Sigma}_{|a\#b}$ be the punctured surface obtained from $\mathbf{\Sigma}$ by gluing $a$ and $b$. Denote by $\pi : \Sigma_{\mathcal{P}} \rightarrow (\Sigma_{|a\#b})_{\mathcal{P}_{|a\#b}}$ the projection and $c:=\pi(a)=\pi(b)$. Let $(T_0, s_0)$ be a stated framed tangle of ${\Sigma_{|a\#b}}_{\mathcal{P}_{|a\#b}} \times (0,1)$ transversed to $c\times (0,1)$ and such that the heights of the points of $T_0 \cap c\times (0,1)$ are pairwise distinct and such that framings of the points of $c\times (0,1)$ are vertical. Let $T\subset \Sigma_{\mathcal{P}}\times (0,1)$ be the framed tangle obtained by cutting $T_0$ along $c$. 
Using the partition $\partial T = \partial_a T \bigsqcup \pi^{-1}(\partial T_0) \bigsqcup \partial_b T$, a state on $T$ can be written $(s_a, s, s_b)$ where $s_a$, $s$ and $s_b$ are states on $\partial_a T$, $\partial T_0$ and $\partial_b T$ respectively.
Both the sets $\partial_a T$ and $\partial_b T$ are in canonical bijection with the set $T_0\cap c$ by the map $\pi$. Hence the two sets of states $s_a$ and $s_b$ are both in canonical bijection with the set $\mathrm{St}(c):=\{ s: c\cap T_0 \rightarrow \{-,+\} \}$. Let $i_{|a\#b}: \mathcal{S}_{\omega}(\mathbf{\Sigma}_{|a\#b}) \rightarrow \mathcal{S}_{\omega}(\mathbf{\Sigma})$ be the linear map given, for any $(T_0, s_0)$ as above, by: 
$$ i_{|a\#b} \left( [T_0,s_0] \right) := \sum_{s \in \mathrm{St}(c)} [T, (s, s_0 , s) ].$$

\begin{theorem}\label{theorem_gluingmap}\cite[Theorem $3.1$]{LeStatedSkein} 
The linear map $i_{|a\#b}$ is an injective morphism of algebras. Moreover the gluing operation is coassociative in the sense that if $a,b,c,d$ are four distinct boundary arcs, then we have $i_{|a\#b} \circ i_{|c\#d} = i_{|c\#d} \circ i_{|a\#b}$.
\end{theorem}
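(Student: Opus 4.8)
The plan is to prove this by reducing everything to the explicit basis $\mathcal{B}^{\mathfrak{o}}$ from Definition \ref{def_basis} and analysing how cutting along $c$ interacts with simple diagrams and the skein relations. First I would check that $i_{|a\#b}$ is well-defined, i.e.\ that it respects the defining relations \eqref{eq: skein 1} and \eqref{eq: skein 2}: since each relation is supported in a small disc of $(\Sigma_{|a\#b})_{\mathcal{P}}$, one may isotope the relevant tangles so that the disc is disjoint from $c$, and then cutting and summing over $s\in\mathrm{St}(c)$ acts trivially on the relation — so $i_{|a\#b}$ descends to a linear map on $\mathcal{S}_\omega(\mathbf{\Sigma}_{|a\#b})$. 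That it is an \emph{algebra} morphism is similarly local in the vertical $(0,1)$-direction: stacking $T_0$ above $T_0'$ and then cutting along $c\times(0,1)$ gives the same result as cutting each and stacking, because the bijections $\partial_a T\cong c\cap T_0$ are height-preserving; here one uses that the sum over states on $c$ factors as a product of the sums over the strands coming from $T_0$ and from $T_0'$, which is exactly the statement that $i_{|a\#b}$ is multiplicative on classes and hence (by linearity) everywhere.

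For injectivity I would argue via bases. Choose an orientation $\mathfrak{o}_{|a\#b}$ of the boundary arcs of $\mathbf{\Sigma}_{|a\#b}$ and lift it to an orientation $\mathfrak{o}$ of $\mathbf{\Sigma}$ (so that $\mathfrak{o}$ restricted to $a$ and to $b$ agrees under $\pi$ with $\mathfrak{o}_{|a\#b}$ on $c$, which forces $a$ and $b$ to be oriented oppositely as boundary arcs of $\Sigma$). Given a basis element $[D_0,s_0]\in\mathcal{B}^{\mathfrak{o}_{|a\#b}}$, put it in minimal position with respect to $c$ (so $D_0\cap c$ has the minimal number of points in its isotopy class); then $i_{|a\#b}([D_0,s_0])$ is a sum of diagrams $[D,(s,s_0,s)]$ over $s\in\mathrm{St}(c)$, and the point is to show that the ``top'' term, namely the unique $\mathfrak{o}$-increasing state on the cut arc endpoints — concretely, one such $s$ produces an $\mathfrak{o}$-increasing stated simple diagram — appears with invertible coefficient and that the map $[D_0,s_0]\mapsto$ (this leading basis element) is injective. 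After rewriting each $[D,(s,s_0,s)]$ in the basis $\mathcal{B}^{\mathfrak{o}}$ via the height-exchange moves \eqref{eq: height exch 1}, \eqref{eq: height exch corr} and the boundary relations \eqref{eq: skein 2}, one gets a triangular change of basis: distinct $[D_0,s_0]$ have distinct leading terms (because minimal-position diagrams on $\mathbf{\Sigma}$ that close up to distinct simple diagrams on $\mathbf{\Sigma}_{|a\#b}$ are themselves distinct simple diagrams, up to the trivial-arc ambiguity along $c$), and the transition matrix is upper-triangular with invertible diagonal entries (powers of $\omega$). Hence $i_{|a\#b}$ sends a basis to a linearly independent family, so it is injective. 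This is essentially the content of \cite[Theorem $3.1$]{LeStatedSkein}, whose proof I would follow.

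For coassociativity, the key observation is that when $a,b,c,d$ are four \emph{distinct} boundary arcs, the curves along which one cuts for the $a\#b$ gluing and for the $c\#d$ gluing are disjoint embedded arcs in the triply-glued surface. One isotopes a representing stated tangle to be transverse to both curves simultaneously with pairwise distinct heights along each; then cutting along one curve and summing over its states commutes with cutting along the other and summing over its states, simply because the two sums range over independent finite sets and the cut operations are performed in disjoint regions. Writing both $i_{|a\#b}\circ i_{|c\#d}$ and $i_{|c\#d}\circ i_{|a\#b}$ out on such a representative gives literally the same double sum $\sum_{s_{ab},s_{cd}}[T,(\dots)]$, so the two composites agree.

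The main obstacle is the triangularity argument underlying injectivity: one must be careful that ``minimal position with respect to $c$'' is well-behaved (a standard fact for curves on surfaces, but it requires knowing that the number of intersection points of a simple diagram with $c$ is an isotopy invariant once minimized, and that the closed-up diagram determines the cut diagram up to isotopies supported near $c$), and one must track precisely how the height-exchange and boundary relations reorganise the $2^{|D_0\cap c|}$ terms into the basis $\mathcal{B}^{\mathfrak{o}}$ so as to identify the leading term and see it occurs with a unit coefficient. Everything else — well-definedness, multiplicativity, coassociativity — is a routine locality argument.
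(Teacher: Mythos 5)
The paper does not actually prove this statement: it is quoted with a citation to L\^e, so there is no in-paper proof to compare against. Your outline follows the standard route (and L\^e's): locality of the defining relations for well-definedness and multiplicativity, disjointness of the two cutting arcs for coassociativity, and a leading-term/triangularity argument in the basis $\mathcal{B}^{\mathfrak{o}}$ for injectivity. The same leading-term machinery is precisely what the present paper sets up in Lemmas \ref{lemma_leading_term} and \ref{lemmA} in order to prove Theorem \ref{theorem1}, which refines injectivity by identifying the image of $i_{|a\#b}$.

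One point in your injectivity step needs correcting. After rewriting $\sum_{s\in\mathrm{St}(c)}[D,(s,s_0,s)]$ in the basis $\mathcal{B}^{\mathfrak{o}}$, the coefficient of a top-degree basis element $[D,(s',s_0,s')]$ with $s'$ increasing is not a power of $\omega$ but the quantity $H_{s'}(q)=\sum_{s\,:\,s^{\uparrow}=s'}q^{2k(s)}$ of the paper, a Gaussian-binomial-type sum which can vanish when $q$ is a root of unity (already $1+q^{2}=0$ for $q^{2}=-1$). So ``upper-triangular with invertible diagonal entries (powers of $\omega$)'' fails for a generic increasing state. The repair is to anchor the induction on the extreme state $s'=(-,\dots,-)$ (or all $+$): only $s=s'$ itself contributes to that basis element at top filtration degree, so its coefficient is exactly $1$, distinct maximal-degree $(D_0,s_0)$ produce distinct such basis elements, and induction on the degree $d$ then yields injectivity. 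Your orientation bookkeeping (lifting $\mathfrak{o}_{|a\#b}$ so that the orders on $\partial_a D$ and $\partial_b D$ agree under the identification with $D_0\cap c$) is indeed needed so that the single state $s$ occurring on both sides of $[D,(s,s_0,s)]$ is increasing on $a$ if and only if it is on $b$. The remaining items --- well-definedness, multiplicativity, coassociativity --- are routine locality arguments, as you say.
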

Note that the splitting morphism $i_{a\#b}$ does not depend on any choice of the boundary arcs.

\vspace{2mm}
\par 
\textbf{Triangulations}
\\ 

\begin{definition}\label{def small}
	A \emph{small} punctured surface is one of the following four connected punctured surfaces: the sphere with one or two punctures; the disc with only one puncture (on its boundary); the bigon (disc with two punctures on its boundary).   
\end{definition}

\begin{definition}
	A punctured surface is said to \emph{admit a triangulation} if each of its connected components has at least one puncture and is not small. 
\end{definition}
\begin{definition}\label{def_triangulation} Suppose $\mathbf{\Sigma}=(\Sigma, \mathcal{P})$ admits a triangulation.  
		A \textit{topological triangulation} $\Delta$ of $\mathbf{\Sigma}$ is a collection $\mathcal{E}(\Delta)$ of arcs in $\Sigma$ (named edges) which satisfy the following conditions: 
		the endpoints of the edges belong to $\mathcal{P}$;  
		the interior of the edges are pairwise disjoint and do not intersect $\mathcal{P}$; 
		the edges are not contractible and are pairwise non isotopic in $\Sigma_{\mathcal{P}}$,  if fixed their endpoints; 
		the boundary arcs of $\mathbf{\Sigma}$ belong to $\mathcal{E}(\Delta)$. 
	Moreover, the collection $\mathcal{E}(\Delta)$ is required to be maximal for these properties.  
\end{definition}

\par Each connected component of  $\Sigma \setminus \mathcal{E}(\Delta)$ is called a \emph{face} and the set of faces is denoted by $F(\Delta)$.
    Given a topological triangulation $\Delta$, the punctured surface is obtained from the disjoint union
    $\bigsqcup_{\mathbb{T}\in F(\Delta)} \mathbb{T}$ of triangles by gluing the triangles along the boundary arcs corresponding to the edges of the triangulation.
    Very often, we will let $\mathbb{T}$ be both a face (which is an open contractible space) and the triangle (which is a disc with exactly three punctures on its boundary). We hope that this abuse of notation is harmless.
    By composing the associated splitting morphisms, one obtains an injective morphism of algebras:
    $$i^{\Delta} : \mathcal{S}_{\omega}(\mathbf{\Sigma}) \hookrightarrow \otimes_{\mathbb{T} \in F(\Delta)} \mathcal{S}_{\omega}(\mathbb{T}). $$
\vspace{2mm}
\par 
\textbf{Filtrations}
\\ 

\par The stated skein algebra has natural filtrations defined as follows. Let $S=\{a_1, \ldots, a_n\}$ be a set of boundary arcs of $\mathbf{\Sigma}$ and fix an orientation $\mathfrak{o}$ of the boundary arcs of $\mathbf{\Sigma}$. For a basis element $[D,s]$ of $\mathcal{B}^{\mathfrak{o}}$, write $d([D,s]):= \sum_{a\in S} \big| \partial_a D \big|$. The map $d$ extends to a  map $d: \mathcal{S}_{\omega}(\mathbf{\Sigma}) \rightarrow \mathbb{Z}^{\geq 0}$ by the formula $d(\sum_i x_i [D_i, s_i]):= \max_{i | x_i\neq 0}d([D_i,s_i])$. It follows from the relations \eqref{eq: skein 1} and \eqref{eq: skein 2}  that for each $x,y\in \mathcal{S}_{\omega}(\mathbf{\Sigma})$, we have $d(xy)\leq d(x)+d(y)$. Given $m\geq 0$, denote by $\mathcal{F}_m\subset \mathcal{S}_{\omega}(\mathbf{\Sigma})$ the sub-vector space of those vectors $x$ satisfying $d(x)\leq m$. These sub-spaces satisfy $\mathcal{F}_{m}\subset \mathcal{F}_{m+1}$, $\mathcal{S}_{\omega}(\mathbf{\Sigma}) = \bigcup_{m\geq 0} \mathcal{F}_m$ and $\mathcal{F}_{m_1} \cdot \mathcal{F}_{m_2} \subset \mathcal{F}_{m_1+m_2}$, hence they form an algebra filtration of the stated skein algebra.

\begin{definition}\label{def_filtration}
The sequence $(\mathcal{F}_m)_{m\geq 0}$ is called the \textit{filtration} of $\mathcal{S}_{\omega}(\mathbf{\Sigma})$ associated to the orientation $\mathfrak{o}$ and the set $S$ of boundary arcs. For an element $X= \sum_{i\in I} x_i [D_i, s_i] \in \mathcal{S}_{\omega}(\mathbf{\Sigma})$, developed in the basis $\mathcal{B}^{\mathfrak{o}}$, we call \textit{leading term} of $X$ the element:
$$\lt (X) := \sum_{j\in I | d([D_j, s_j])= d(X)} x_j [D_j, s_j]. $$
\end{definition}

\subsection{Alternative bases}\label{sec_basecheloud}

In the next subsection, we will need alternative bases of $\mathcal{S}_{\omega}(\mathbf{\Sigma})$ which we now introduce. We fix an arbitrary orientation $\mathfrak{o}$ for each boundary arcs. Recall that $\mathfrak{o}$ induces a total order $\leq_{\mathfrak{o}}$ on each boundary arc that we use to associate a tangle to a diagram.

\begin{notations}
Let $\mathcal{D}(\mathbf{\Sigma})$ be the set of isotopy classes of simple diagrams and $\mathcal{CD}(\mathbf{\Sigma})$ be its subset of classes of connected diagrams. Fix an arbitrary total order $\prec$ on $\mathcal{CD}(\mathbf{\Sigma})$ and fix an orientation $\mathfrak{o}$ of the boundary arcs of $\mathbf{\Sigma}$ as before.
For $[D]\in \mathcal{CD}(\mathbf{\Sigma})$, we denote by $[T(D)]$ the isotopy class of the tangle $T(D)$ with vertical framing whose projection is $D$ and such that if $\partial T(D) = \{ v_1, v_2\}$ with $v_1, v_2$ in the same boundary arc $a$ with $v_1 \leq_{\mathfrak{o}} v_2$, then $h(v_1)<h(v_2)$. 
 For a general class of diagram $[D] \in \mathcal{D}(\mathbf{\Sigma})$ with connected components $D= \bigsqcup_{i=1}^n D_i$, where $[D_i] \preceq [D_{i+1}]$, we denote by $[T(D)]$ the class of the tangle $T(D):= \bigsqcup_{i=1}^n T(D_i)$ in $\Sigma_{\mathcal{P}}\times (0,1)$, where $T(D_{i+1})$ is on the top of $T(D_i)$ in the height direction. See Figure \ref{fig_TD} for an illustration.
 Let $\nu: \partial D \xrightarrow{\cong} \partial T(D)$ be the unique bijection such that, for $a$ a boundary arc, $\nu$ restricts to a bijection $\nu_{| a}: \partial_a D \to \partial_a T(D)$ which preserves the
 order $\leq_{\mathfrak{o}}$ on $\partial_a D$ and the 
  height order on $\partial_a T(D)$. Recall that $\partial_aD=D\cap a$ and that $\partial_aT(D)= T(D)\cap a\times (0,1)$.
 A state $s$ on $D$ defines a state $s\circ \nu^{-1}$ on $T(D)$ and we denote by $[T(D),s]$ the class of the stated tangle $(T(D),s\circ \nu^{-1})$.
\end{notations}

\begin{figure}[!h] 
\centerline{\includegraphics[width=5cm]{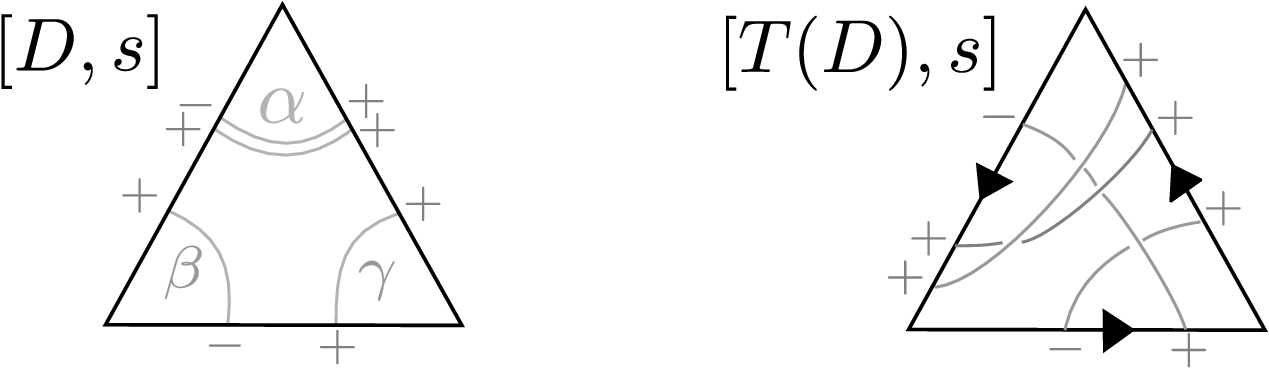} }
\caption{A stated diagram $[D,s]$ in the triangle and its associated stated tangle $[T(D), s]$. Here, we use the order $\gamma \prec \beta \prec \alpha$. Here $s$ is $\mathfrak{o}$-increasing so $[T(D),s]\in \mathcal{TB}^{\mathfrak{o}}$. } 
\label{fig_TD} 
\end{figure} 

\begin{definition}
We denote by $\mathcal{TB}^{\mathfrak{o}} \subset \mathcal{S}_{\omega}(\mathbf{\Sigma})$ the set of classes $[T(D),s]$ with $[D]\in \mathcal{D}(\mathbf{\Sigma})$ and $s$ an $\mathfrak{o}$-increasing state.
\end{definition}

Note that in our pictures the orientation $\mathfrak{o}$ is never represented, the arrows always refer to the height order and not to $\mathfrak{o}$.
The following lemma was proved in \cite{LeStatedSkein}, during the proof of Theorem $4.6$, in the particular case where $\mathbf{\Sigma}$ is a triangle. 

\begin{proposition}\label{prop_alternative_basis}
The set $\mathcal{TB}^{\mathfrak{o}} $ is a basis of $\mathcal{S}_{\omega}(\mathbf{\Sigma})$.
\end{proposition}

As an immediate consequence of Proposition \ref{prop_alternative_basis}, we get the:

\begin{corollary}
The stated skein algebra is algebraically generated by the classes of closed curves and stated arcs.
\end{corollary}

Here by closed curves and stated arcs we mean connected stated diagrams with no crossing which are closed and open respectively. 
Obviously, it is sufficient to prove Proposition \ref{prop_alternative_basis} in the case where $\Sigma$ is connected. If $\partial \Sigma=\emptyset$ or if $\mathbf{\Sigma}$ is a disc with one puncture on its boundary or a bigon whose boundary arcs points towards the same puncture, then $\mathcal{TB}^{\mathfrak{o}}=\mathcal{B}^{\mathfrak{o}}$ so the proposition follows from Theorem \ref{theorem_basis} in those cases. For the bigon whose boundary arcs points towards distinct punctures, Proposition \ref{prop_alternative_basis} was proved in the Step $1$ of the proof of Theorem $4.1$ in \cite{LeStatedSkein}.
So we now assume that $\mathbf{\Sigma}$ admits a topological triangulation $\Delta$ that we fix.
The proof of Proposition \ref{prop_alternative_basis}  is an easy adaption of L\^e's argument from the case of the triangle to the case of a triangulable punctured surface. The key feature is to consider a suitable filtration that we now introduce. 
\vspace{2mm}
\par
For a diagram $D$ and an edge $e\in \mathcal{E}(\Delta)$, we denote by $i(D, e) \in \mathbb{N}$ the geometric intersection of $D$ with $e$, that is the minimal number of intersection points when isotoping $D$ in such a way that it intersects $e$ transversally. We write
$$ |D| := \sum_{e \in \mathcal{E}(\Delta)} i(D,  e), $$
and, for $i\in \mathbb{N}$, we set

$$ \mathcal{F}_i := \mathrm{Span} \left\{ [D,s] \mbox{, such that }|D|\leq i  \right\}.$$

\begin{lemma}\label{lemma_filtration_s}
\begin{enumerate}
\item One has $\mathcal{F}_i \cdot \mathcal{F}_j \subset \mathcal{F}_{i+j}$.
\item The submodule $\mathcal{F}_i$ has basis the set $B_i$ of elements $[D,s]\in \mathcal{B}^{\mathfrak{o}}$ such that $|D|\leq i$.
\item For $[D,s] \in \mathcal{B}^{\mathfrak{o}}$, there exists $n\in \mathbb{Z}$ such that 
$$ [T(D), s] \equiv A^n[D,s] \pmod{\mathcal{F}_{|D|-2}}. $$
\end{enumerate}
\end{lemma}

\begin{proof}

$(1)$ Let $[D_1,s_1]$ and $[D_2,s_2]$ be two classes such that: 

\begin{itemize}
\item[(i)]  $D_1\cup D_2$ has only transversed double intersection points in the interior of $\Sigma_{\mathcal{P}}$ away from the edges of $\Delta$,  and
\item[(ii)] $D_1$ and $D_2$ are transversed to the edges of $\mathcal{E}(\Delta)$ with minimal intersection such that $|D_i| = |D_i \cap \mathcal{E}(\Delta)|$ for $i=1,2$.
\end{itemize}

 Let $D$ denote the diagram obtained by stacking $D_1$ on top of $D_2$ and $s$ the state corresponding to $s_1,s_2$ so that $[D,s]=[D_1,s_1][D_2,s_2]$. Then $|D| \leq |D \cap \mathcal{E}(\Delta)| = |D_1| + |D_2|$. Therefore $[D_1,s_1][D_2,s_2] \in \mathcal{F}_{|D_1|+|D_2|}$ and the first assertion is proved.
 \vspace{2mm}
 \par $(2)$  To prove the second assertion, first note that since $B_i$ is a subset of $\mathcal{B}^{\mathfrak{o}}$, it is free. We need to show that $B_i$ generates $\mathcal{F}_i$.  
 We proceed in two steps:
 
 \textit{Step 1:} We first prove that any class of stated diagram $[D,s]$ is a linear combination of elements $[D_i,s_i]$ with $|D_i|= |D|$ and such that $D_i$ has no crossing.

\textit{Step 2:} We then prove that any $[D,s]$ where $D$ has no crossing, is a linear combination of elements of $B_{|D|}$. 

The two steps imply that $B_i$ generates $\mathcal{F}_i$ and conclude the proof of the second assertion.

To prove the first step, fix an arbitrary stated diagram $(D,s)$. A \textit{resolution} of $D$ is a diagram obtained from $D$ by replacing each crossing 
$\begin{tikzpicture}[baseline=-0.4ex,scale=0.5,>=stealth]	
\draw [fill=gray!45,gray!45] (-.6,-.6)  rectangle (.6,.6)   ;
\draw[line width=1.2,-] (-0.4,-0.52) -- (.4,.53);
\draw[line width=1.2,-] (0.4,-0.52) -- (0.1,-0.12);
\draw[line width=1.2,-] (-0.1,0.12) -- (-.4,.53);
\end{tikzpicture}$
by either 
$\begin{tikzpicture}[baseline=-0.4ex,scale=0.5,>=stealth] 
\draw [fill=gray!45,gray!45] (-.6,-.6)  rectangle (.6,.6)   ;
\draw[line width=1.2] (-0.4,-0.52) ..controls +(.3,.5).. (-.4,.53);
\draw[line width=1.2] (0.4,-0.52) ..controls +(-.3,.5).. (.4,.53);
\end{tikzpicture}$
(positive resolution of the crossing )or $\begin{tikzpicture}[baseline=-0.4ex,scale=0.5,rotate=90]	
\draw [fill=gray!45,gray!45] (-.6,-.6)  rectangle (.6,.6)   ;
\draw[line width=1.2] (-0.4,-0.52) ..controls +(.3,.5).. (-.4,.53);
\draw[line width=1.2] (0.4,-0.52) ..controls +(-.3,.5).. (.4,.53);
\end{tikzpicture}$
(negative resolution).
Write $\mathrm{Res}(D)$ the set of resolutions and for $D_0 \in \mathrm{Res}(D)$, denote by $n(D_0)$ the difference between the numbers of positive and negative resolution crossings in $D_0$. Then, by the Kauffman-bracket skein relation \eqref{eq: skein 1}, one has 
$$
[D,s] = \sum_{D_i \in \mathrm{Res}(D)} A^{n(D_i)} [D_i, s], 
$$
where for each resolution $D_i$, one has $|D_i \cap \mathcal{E}(\Delta)|=|D\cap \mathcal{E}(\Delta)|=|D|$, so $|D_i| = |D|$ and Step $1$ is proved.
\vspace{2mm}
\par
 To prove the second step, consider a stated diagram $(D,s)$ where $D$ has no crossing. If $s$ is $\mathfrak{o}$-increasing, let $(D^{\prime}, s)$ be the stated diagram obtained from $(D,s)$ by removing its trivial components, so $|D^{\prime}| \leq |D|$. Then there exists a scalar $c$ such that $[D,s]=c [D^{\prime}, s]$ and $[D^{\prime}, s]\in B_{|D|}$.
 Else, we show by induction on the number $m(D,s)$ of pairs of points $v<_{\mathfrak{o}} w$ in $\partial D$ lying in the same boundary arc such that $(s(v),s(w))=(+,-)$, that $(D,s)$ is a linear combination of elements of $B_{|D|}$. Consider such a pair $(v,w)$ of points which are consecutive for $<_{\mathfrak{o}}$, and let $s'$ be the state on $D$ which agrees with $s$ on $\partial D \setminus \{v,w\}$ and such that $(s'(v), s'(w))=(-,+)$. The following skein relations:
\begin{equation*}
\heightcurve =
\omega^{-1}
\heightexch{->}{-}{+}
- \omega^{-5}
\heightexch{->}{+}{-}
, \hspace{.2cm} 
\heightcurveright
= \omega
\heightexchright{->}{+}{-}
-\omega^5 
\heightexchright{->}{-}{+}, 
\end{equation*}
shows that there exists $n\in \mathbb{Z}$ such that $[D,s]\equiv \omega^n [D, s'] \pmod{\mathcal{F}_{|D|-1}}$ (because the stated diagram representing either the term $\heightcurve$ or $\heightcurveright$ is in $\mathcal{F}_{|D|-1}$).
 Since $m(D,s')< m(D,s)$, we conclude by decreasing induction on $m$ that $[D,s]$ is a linear combination of elements $[D_i,s_i]$ where $D_i$ has no crossing and $s_i$ is $\mathfrak{o}$-increasing. Now writes $[D_i,s_i]= c_i [D_i^{\prime}, s_i]$, where $c_i$ is a scalar and $(D_i^{\prime},s_i)$ is obtained from $(D_i,s_i)$ by removing its trivial components so that $[D^{\prime}_i,s_i]\in B_{|D|}$. This concludes Step 2 and the proof of the second item.

\vspace{2mm}
\par 
$(3)$ Let us first make an obvious but useful remark. Let $D$ be a diagram transversed to $\mathcal{E}(\Delta)$. We say that $D$ \textit{contains a returning arc} if there exists a face $\mathbb{T}$ such that $D\cap \mathbb{T}$ contains a connected component that is an arc with both endpoints in the same edge. If $D$ contains a returning arc, then $D$ is not in minimal intersection position with respect to $\mathcal{E}(\Delta)$ so for all states $s$, $[D,s]\in \mathcal{F}_{|D|-2}$.

Now consider $[D,s]\in \mathcal{B}^{\mathfrak{o}}$ and denote by $TD$ the projection diagram of the tangle $T(D)$ so that $[T(D),s]=[TD,s]$ (think of Figure \ref{fig_TD}). We further suppose that $TD$ is transversed to $\mathcal{E}(\Delta)$ in minimal position and has its crossings outside $\mathcal{E}(\Delta)$. 
In the decomposition
$$ [TD,s] = \sum_{D_i \in \mathrm{Res}(TD)} A^{n(D_i)} [D_i, s], $$
we claim that there is exactly one resolution $D_0 \in \mathrm{Res}(TD)$ such that $D_0=D$ and that any other resolution $D_i\neq D_0$ contains a returning arc, so satisfies $[D_i, s_i] \in \mathcal{F}_{|D|-2}$. Since resolving a crossing is a local operation, it is sufficient to prove the claim in the case of the triangle; this was done by L\^e in \cite[Lemma $4.7$]{LeStatedSkein}. Recall that L\^e's proof consists noting that if $[T(D),s]$ has two connected components, it has $0$ or $1$ crossing (after eventually isotoping $TD$) and when there is one crossing in $TD$, exactly one of the two resolutions does not contain returning arc. The results then follows by induction on the number of components of $T(D)$ using the fact that the arcs in $T(D)$ are stacked on top of each other.

So we have $[T(D),s] \equiv A^{n(D)} [D,s] \pmod{\mathcal{F}_{|D|-2}}$ and the proof is completed.

\end{proof}

Obviously one has $\mathcal{F}_i \subset \mathcal{F}_{i+1}$ and $\cup_{i\geq 0} \mathcal{F}_i = \mathcal{S}_{\omega}(\mathbf{\Sigma})$. The first assertion of Lemma \ref{lemma_filtration_s} implies that $(\mathcal{F}_i)_{i\geq 0}$ forms an algebra filtration of $\mathcal{S}_{\omega}(\mathbf{\Sigma})$.
 Consider the graded algebra $\mathbf{Gr}_{\bullet}$ associated to the filtration. In other words,  we set $\mathbf{Gr}_0:= \mathcal{F}_0$,  $\mathbf{Gr}_i := \quotient{\mathcal{F}_i}{\mathcal{F}_{i-1}}$ for $i\geq 1$ and $\mathbf{Gr}_{\bullet} := \oplus_{i\geq 0} \mathbf{Gr}_i$. It follows from the second assertion of  Lemma \ref{lemma_filtration_s} that $\mathbf{Gr}_i$ has basis the set $\mathcal{B}_i^{\mathfrak{o}}$ of classes $[D,s] \in \mathcal{B}^{\mathfrak{o}}$ such that $|D|=i$. Since the set $\{ \mathcal{B}_i^{\mathfrak{o}} \}_{i\geq 0}$ forms a partition of $\mathcal{B}^{\mathfrak{o}}$, the natural graded morphism $\psi : \mathcal{S}_{\omega}(\mathbf{\Sigma}) \rightarrow \mathbf{Gr}_{\bullet}$ is an isomorphism.
To prove Proposition \ref{prop_alternative_basis}, we will derive from the third assertion of Lemma \ref{lemma_filtration_s} that the image of  $\mathcal{TB}^{\mathfrak{o}} $ through $\psi$ is a basis of $\mathbf{Gr}_{\bullet}$.

\begin{proof}[Proof of Proposition \ref{prop_alternative_basis}]
As noted previously,  if $\mathbf{\Sigma}$ is closed or if $\mathbf{\Sigma}$ is bigon or a disc with one puncture on its boundary, then $\mathcal{TB}^{\mathfrak{o}}=\mathcal{B}^{\mathfrak{o}}$ so the lemma follows from Theorem \ref{theorem_basis}. Else, we can consider a topological triangulation and consider the associated graded isomorphism $\psi :  \mathcal{S}_{\omega}(\mathbf{\Sigma}) \rightarrow \mathbf{Gr}_{\bullet}$. Let $\mathcal{TB}^{\mathfrak{o}}_i \subset \mathcal{TB}^{\mathfrak{o}}$ be the subset of elements $[T(D), s]$ such that $|D|=i$. Since $\psi(\mathcal{B}^{\mathfrak{o}}_i)$ is a basis of $\mathbf{Gr}_i$, the third assertion of Lemma \ref{lemma_filtration_s} implies that the image $\psi( \mathcal{TB}^{\mathfrak{o}}_i)$ is also a basis of $\mathbf{Gr}_i$. Therefore $\psi (\mathcal{TB}^{\mathfrak{o}})$ is a basis of $\mathbf{Gr}_{\bullet}$, so $\mathcal{TB}^{\mathfrak{o}}$ is a basis of $\mathcal{S}_{\omega}(\mathbf{\Sigma})$.

\end{proof}

 \subsection{Removing a puncture}\label{sec_removingpuncture}
Let $\mathbf{\Sigma}=(\Sigma, \mathcal{P})$ and consider a punctured surface $\mathbf{\Sigma}':=(\Sigma, \mathcal{P}\cup \{p_0\} )$ obtained from $\mathbf{\Sigma}$ by adding a puncture $p_0 \in \Sigma_{\mathcal{P}}$ to $\mathcal{P}$. The goal of this subsection is to define and study a map $\varphi : \mathcal{S}_{\omega}(\mathbf{\Sigma}') \rightarrow \mathcal{S}_{\omega}(\mathbf{\Sigma})$. Let $\mathcal{T}(\mathbf{\Sigma})$ denote the set of stated tangles in $\Sigma_{\mathcal{P}}\times (0,1)$ and denote by $\mathcal{J}(\mathbf{\Sigma}) \subset \mathcal{R}[\mathcal{T}(\mathbf{\Sigma})]$ the ideal generated the skein relations \eqref{eq: skein 1}, \eqref{eq: skein 2} and by the elements $(T,s)-(T',s)$, where $T, T'$ are isotopic; so by definition, one has $\mathcal{S}_{\omega}(\mathbf{\Sigma}) := \quotient{\mathcal{R}[\mathcal{T}(\mathbf{\Sigma})]}{\mathcal{J}(\mathbf{\Sigma})}$.
 The inclusion map $\iota : \Sigma_{\mathcal{P}\cup \{p_0\}} \times (0,1) \hookrightarrow \Sigma_{\mathcal{P}} \times (0,1)$ induces a linear map $\underline{\varphi} : \mathcal{R}[\mathcal{T}(\mathbf{\Sigma}')] \rightarrow \mathcal{R}[\mathcal{T}(\mathbf{\Sigma})]$ sending a stated tangle $(T,s)$ to $(\iota(T), s\circ \iota^{-1})$. 

\vspace{2mm}
\par First suppose that $p_0$ is in the interior of $\Sigma_{\mathcal{P}}$. In this case, $\underline{\varphi}$ obviously sends isotopic stated tangles to isotopic stated tangles and skein relations to skein relations, so it sends $\mathcal{J}(\mathbf{\Sigma}')$ to $\mathcal{J}(\mathbf{\Sigma})$ and it induces a linear map $\varphi : \mathcal{S}_{\omega}(\mathbf{\Sigma}') \rightarrow \mathcal{S}_{\omega}(\mathbf{\Sigma})$ by passing to the quotient. It is clear that $\varphi$ is a morphism of algebras. Moreover,  since any tangle in $\Sigma_{\mathcal{P}}\times (0,1)$ can be isotoped in $\Sigma_{\mathcal{P}\cup \{p_0\}} \times (0,1)$, the map $\varphi$ is surjective.
\vspace{2mm}
\par When $p_0$ lies in some boundary arc, say $a$, of $\mathbf{\Sigma}$, the construction is more subtle. Denote by $b$ and $c$ the two boundary arcs of $\mathbf{\Sigma}'$ which are the connected components of $a\setminus \{p_0\}$. The linear map $\varphi$ still sends skein relations to skein relations, however if $(T,s)$ and $(T',s')$ are two isotopic stated tangles, then $\underline{\varphi}(T,s)$ and $\underline{\varphi}(T',s')$ are no longer necessarily isotopic. Indeed, recall that in our definition of isotopy, for any boundary arc $d$,  the height order of $\partial_d T$ should be preserved. Now if we choose $T$ and $T'$ isotopic in $\Sigma_{\mathcal{P}\cup \{p_0\}}\times (0,1)$, the isotopy relating $T$ to $T'$ preserves the height orders of $\partial_b T$ and $\partial_cT$, but not necessarily the height order of $\partial_a T$, so $\underline{\varphi}(T,s)$ and $\underline{\varphi}(T',s')$ might not be isotopic.

Even worse: $T$ might have two endpoints in $\partial_b T$ and $\partial_c T$ with the same height, so that $\iota(T)$ is not a tangle in our sense since it would have two points in $\partial_a \iota(T)$ with the same height.

 To remedy this problem, we introduce the subset $\mathcal{T}^0(\mathbf{\Sigma}') \subset \mathcal{T}(\mathbf{\Sigma}')$ of stated tangles $(T,s)$ in $\Sigma_{\mathcal{P}\cup \{p_0\} }$ such that for any two points $v \in \partial_b(T)$ and $v' \in \partial_c(T)$, one has $h(v)<h(v')$ ($h$ is the height function). Since any stated tangle $(T,s) \in \mathcal{T}(\mathbf{\Sigma}')$ is isotopic to a stated tangle in $\mathcal{T}^0(\mathbf{\Sigma}')$, one has $\mathcal{S}_{\omega}(\mathbf{\Sigma}') = \quotient{\mathcal{R}[\mathcal{T}^0(\mathbf{\Sigma}')]}{\mathcal{J}(\mathbf{\Sigma}' )\cap \mathcal{R}[\mathcal{T}^0(\mathbf{\Sigma}')]}$. Now, the restriction $\underline{\varphi}^0 : \mathcal{R}[\mathcal{T}^0(\mathbf{\Sigma}')] \rightarrow \mathcal{R}[\mathcal{T}(\mathbf{\Sigma})]$ of $\underline{\varphi}^0$ preserves skein relations and $(T,s)$ is isotopic to $(T',s')$ implies that $\underline{\varphi}^0(T,s) $ is isotopic to $\underline{\varphi}^0(T',s')$; therefore $\varphi^0$ induces a linear map $\varphi : \mathcal{S}_{\omega}(\mathbf{\Sigma}') \rightarrow \mathcal{S}_{\omega}(\mathbf{\Sigma})$ which is obviously an algebra morphism and is surjective.

\begin{definition}\label{def_off_puncture} The \textit{off-puncture ideal} $\mathcal{I}_{p_0} \subset \mathcal{S}_{\omega}(\mathbf{\Sigma}')$ is the ideal generated by 
\begin{enumerate}
\item the elements $\gamma - \gamma'$, where $\gamma$ and $\gamma'$ are non-contractible simple closed curves in $\Sigma_{\mathcal{P}\cup \{p_0\}}$ which are isotopic in $\Sigma_{\mathcal{P}}$;
\item the elements $\alpha_{\varepsilon \varepsilon'} - \alpha'_{\varepsilon \varepsilon'}$, where $\alpha_{\varepsilon \varepsilon'}$ and $\alpha'_{\varepsilon \varepsilon'}$ are non-trivial simple stated arcs in $\Sigma_{\mathcal{P}\cup \{p_0\}}$ which are isotopic in $\Sigma_{\mathcal{P}}$;  
\item when $p_0$ is an inner puncture, the element $\gamma_{p_0} +q +q^{-1}$, where $\gamma_{p_0}$ is a peripheral curve encircling $p_0$ (recall that $q=\omega^{-4}$); 
\item when $p_0$ is on the boundary of $\Sigma_{\mathcal{P}}$, the elements ${\alpha_{p_0}}_{\mu \mu'} - C^{\mu}_{\mu'}$, where ${\alpha_{p_0}}$ is the trivial arc encircling $p_0$ as follows  ${\alpha_{p_0}}_{\mu \mu'} =\adjustbox{valign=c}{\includegraphics[width=1.2cm]{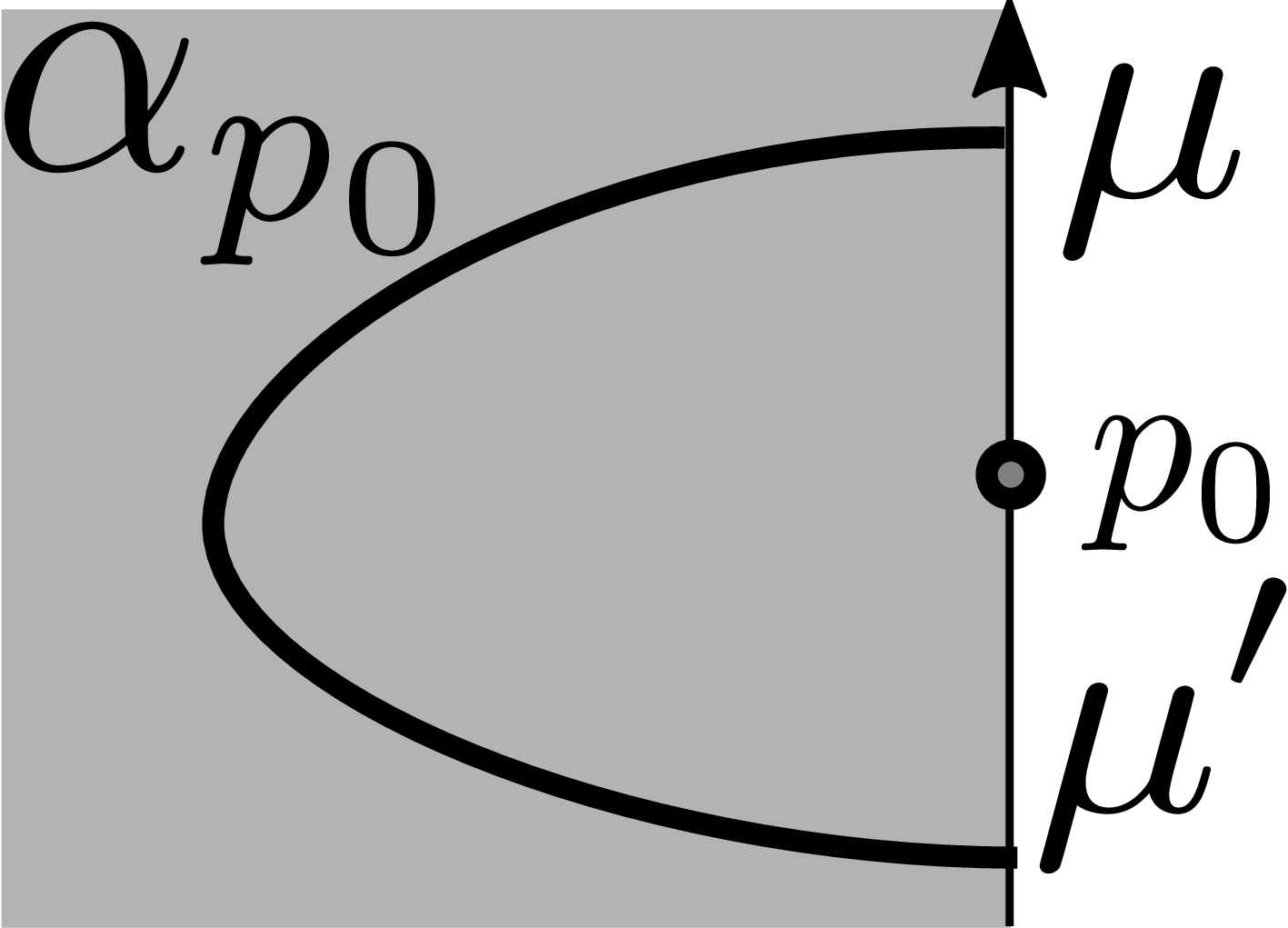}}$,  such that the endpoint with state $\mu$ has bigger height than the endpoint with state $\mu'$.

\end{enumerate}
\end{definition}

The purpose of this subsection it to prove the

\begin{proposition}\label{prop_off_puncture}
The following sequence is exact:
\begin{equation}\label{eq_off_puncture}
 0 \rightarrow \mathcal{I}_{p_0} \rightarrow \mathcal{S}_{\omega}(\mathbf{\Sigma}') \xrightarrow{\varphi} \mathcal{S}_{\omega}(\mathbf{\Sigma}) \rightarrow 0.
\end{equation}
\end{proposition}

The surjectivity of $\varphi$ follows from the preceding discussion and the inclusion $\mathcal{I}_{p_0} \subset \ker (\varphi)$ is an immediate consequence of the definitions and the trivial arcs relations \eqref{trivial_arc_rel} (where the equalities $\varphi({\alpha_{p_0}}_{\mu \mu'} ) = C^{\mu}_{\mu'}$ are proved), so we need to prove the inclusion $\ker(\varphi) \subset \mathcal{I}_{p_0}$.

\begin{notations}
\begin{itemize}
\item
Let $(D,s)$ be a connected simple stated diagram in $\Sigma_{\mathcal{P}\cup \{p_0\}}$ (so either a closed curve or a stated arc or the empty diagram) and define a scalar $c(D,s)\in \mathcal{R}$ as follows. If $\iota(D)$ is simple in $\Sigma_{\mathcal{P}}$, set $c(D,s)=1$. If $p_0$ is an inner puncture and $(D,s)=\gamma_{p_0}$ is a peripheral curve around $p_0$, write $c(\gamma_{p_0})=-q-q^{-1}$. If $p_0$ is on the boundary of $\Sigma_{\mathcal{P}}$ and $\iota(D)$ is a trivial arc encircling $p_0$, let $c(D,s)$ be the unique element $C^{\mu}_{\mu'}$ or $(C^{-1})^{\mu}_{\mu'}$ such that $\varphi(D,s)=c(D,s)$ (using the trivial arcs relations \eqref{trivial_arc_rel}). 
\item For a non-necessarily connected stated diagram $(D,s)= \bigsqcup_{i\in I} (D_i, s_i)$,  where the $(D_i, s_i)$ are its connected components, write $c(D,s)=\prod_{i\in I} c(D_i,s_i)$. Let $J\subset I$ be the subset of indices $j\in I$ such that $\iota(D_j)$ is simple. The \textit{reduction of} $D$ is the simple diagram $D^{red}:= \bigsqcup_{j\in J}D_j$. By definition, one has 
\begin{equation}\label{eq_reduction_sd}
\varphi([T(D),s])= c(D,s) \varphi([(T(D^{red}),s)]).
\end{equation}
 \end{itemize}

\end{notations}

\begin{lemma}\label{lemma_morphism_modules}
Let $M, M'$ be two free $\mathcal{R}$-modules  with respective bases $\mathcal{B}$ and $\mathcal{B}'$. Let $\pi : \mathcal{B}' \rightarrow \mathcal{B}$ and $c: \mathcal{B}' \rightarrow \mathcal{R}$ two maps and suppose that there exists ${\mathcal{B}'}^{red} \subset \mathcal{B}'$ such that the restriction $\pi_{| {\mathcal{B}'}^{red}} {\mathcal{B}'}^{red} \rightarrow \mathcal{B}$ is surjective and such that $c({b'}^{red})=1$ for all ${b'}^{red}\in {\mathcal{B}'}^{red}$. Consider the linear morphism  $\varphi : M' \rightarrow M$ defined by $\varphi(b'):=c(b')\pi(b')$, for $b'\in \mathcal{B}'$. Then 
$$\ker(\varphi) = \mathrm{Span} \left\{ b'- c(b'){b'}^{red},\mbox{ such that }\pi({b'}^{red})=\pi(b'), {b'}^{red}\in {\mathcal{B}'}^{red}, b'\in \mathcal{B}' \right\}.$$

\end{lemma}

\begin{proof}
Let $V\subset M'$ be the submodule linearly spanned by the elements $b'- c(b')b'^{red}$ with $\pi({b'}^{red})=\pi(b')$ and ${b'}^{red}\in {\mathcal{B}'}^{red}, b'\in \mathcal{B}'$.
By definition, $\varphi( b' - c(b') {b'}^{red}) = c(b') (\pi(b') - \pi({b'}^{red}))=0$ so the inclusion $V \subset \ker(\varphi)$ is obvious. Conversely, consider an arbitrary element $x= \sum_{b' \in \mathcal{B}'} \alpha_{b'} b' \in \ker(\varphi)$. Fix a right inverse $\iota :
\mathcal{B} \to {\mathcal{B}'}^{red}$ to $\pi$, that is a map such that $\pi \circ \iota=\id$. 
For $b\in \mathcal{B}$, write $x_b:= \sum_{b' \in \pi^{-1}(b)} \alpha_{b'} b'$ so that $x= \sum_{b\in \mathcal{B}} x_b$. Since $\mathcal{B}$ is a basis, the elements $\varphi(x_b)$ are linearly independent so $\varphi(x)=0$ implies that $\varphi(x_b)=0$ for all $b\in \mathcal{B}$. Let $b\in \mathcal{B}$ be such that $x_b\neq 0$ and let us prove that $x_b\in V$. Let ${b'}^{red}:= \iota(b) \in {\mathcal{B}'}^{red}$. Since $\varphi(x_b)=0$, one has $ \sum_{b' \in \pi^{-1}(b)} \alpha_{b'} c(b')=0$. Now
\begin{align*}
& x_b = \sum_{b' \in \pi^{-1}(b)} \alpha_{b'}b' = \sum_{b' \in \pi^{-1}(b)} \alpha_{b'}(b'-c(b'){b'}^{red}) + \left( \sum_{b' \in \pi^{-1}(b)}\alpha_{b'}c(b')\right){b'}^{red}
\\ &=  \sum_{b' \in \pi^{-1}(b)} \alpha_{b'}(b'-c(b'){b'}^{red}) \in V.\end{align*}

This concludes the proof.
\end{proof}

\begin{proof}[Proof of Proposition \ref{prop_off_puncture}]
Applying Lemma \ref{lemma_morphism_modules} to $M=\mathcal{S}_{\omega}(\mathbf{\Sigma})$, $M'=\mathcal{S}_{\omega}(\mathbf{\Sigma}')$, $\mathcal{B}=\mathcal{TB}^{\mathfrak{o}}(\mathbf{\Sigma})$, $\mathcal{B}'=\mathcal{TB}^{\mathfrak{o}}(\mathbf{\Sigma}')$, ${\mathcal{B}'}^{red}$ the subset of $\mathcal{B}'$ of diagrams $(T(D), s)$ such that $D^{red}=D$ and $\pi$ the reduction map, we obtain that $\ker(\varphi)$ is spanned by elements of the form $[T(D), s] - c(D,s)[T(D^{red}),s]$. By definition, the off-puncture ideal is the ideal generated by the elements $[T(D), s] - c(D,s)[T(D^{red}),s]$, where $D$ is connected. Let us prove by induction on the number of connected components of $D$ that $[T(D), s] - c(D,s)[T(D^{red}),s]\in \mathcal{I}_{p_0}$. If $D$ is connected or reduced, this is immediate. Else, $(D,s)$ contains a connected component $(D_0,s_0)$ so that $\iota(D_0)$ is either contractible or a trivial arc. Decompose $(D,s)= (D_1,s_1) \bigsqcup (D_0,s_0) \bigsqcup (D_2,s_2)$ so that for any connected component $C_1\subset D_1$, one has $C_1\preceq D_0$ and for any connected component $C_2\subset D_2$ one has $D_0 \preceq C_2$ (recall that $\preceq$ was defined in Section \ref{sec_basecheloud}). By definition, one has $[T(D),s]= [T(D_2), s_2] [T(D_0),s_0] [T(D_1), s_1]$ in $\mathcal{S}_{\omega}(\mathbf{\Sigma}')$ (this is where working with the basis $\mathcal{TB}^{\mathfrak{o}}$ is important), where $s_i$ are the restriction of $s$ to $D_i$. Therefore

\begin{align*}
 &[T(D), s] - c(D,s)[T(D^{red}), s] =  [T(D_2), s_2] \left( [T(D_0),s_0] - c(D_0,s_0) \right) [T(D_1), s_1] \\
 & + c(D_0,s_0) \left( [T(D_2 \cup D_1), s_2\cup s_1] - c(D_2\cup D_1, s_2\cup s_1) [T((D_2\cup D_1)^{red}), s]\right) \\
 & \equiv c ( [T(D', s')]- c(D',s')[T({D}^{\prime red}), s]) \pmod{\mathcal{I}_{p_0}}.
 \end{align*}

Where $c=c(D_0,s_0)$ and  $D'=D_2 \cup D_1$ has one connected component less than $D$, so we can apply the induction hypothesis to prove that $ [T(D), s] - c(D,s)[T(D^{red}), s]  \in \mathcal{I}_{p_0}$. This concludes the proof.

\end{proof}

 \subsection{Hopf comodule maps}

 \par Recall that the bigon $\mathbb{B}$ is a disc with two punctures on its boundary. It has two boundary arcs, say $b_L$ and $b_R$. Consider the simple diagram $\alpha$ made of a single arc joining $b_L$ and $b_R$. For $n\geq 0$, denote by $\alpha^{(n)}$ the diagram made of $n$ parallel copies of $\alpha$. Denote by $\alpha_{\varepsilon \varepsilon'}$ the class in $\mathcal{S}_{\omega}(\mathbb{B})$ of the stated diagram $(\alpha, s)$ where $s(\alpha\cap b_L)=\varepsilon$ and $s(\alpha\cap b_R)=\varepsilon'$. It is proved in \cite[Theorem $4.1$]{LeStatedSkein} that the  stated skein algebra $\mathcal{S}_{\omega}(\mathbb{B})$ is  presented by the four generators $\alpha_{\varepsilon \varepsilon'}$, with $\varepsilon, \varepsilon' = \pm$, and the following relations, where we put $q:= \omega^{-4}$:
\begin{align*}\label{relbigone}
\alpha_{++}\alpha_{+-} &= q^{-1}\alpha_{+-}\alpha_{++} & \alpha_{++}\alpha_{-+}&=q^{-1}\alpha_{-+}\alpha_{++}
\\ \alpha_{--}\alpha_{+-} &= q\alpha_{+-}\alpha_{--} & \alpha_{--}\alpha_{-+}&=q\alpha_{-+}\alpha_{--}
\\ \alpha_{++}\alpha_{--}&=1+q^{-1}\alpha_{+-}\alpha_{-+} &  \alpha_{--}\alpha_{++}&=1 + q\alpha_{+-}\alpha_{-+} 
\\ \alpha_{-+}\alpha_{+-}&=\alpha_{+-}\alpha_{-+} & &
\end{align*}
\par Consider a disjoint union $\mathbb{B}\bigsqcup \mathbb{B}'$ of two bigons. When gluing the boundary arcs $b_R$ with $b'_L$, we obtain another bigon. Denote by $\Delta : \mathcal{S}_{\omega}(\mathbb{B})\rightarrow \mathcal{S}_{\omega}(\mathbb{B}) \otimes \mathcal{S}_{\omega}(\mathbb{B})$ the composition: 
$$\Delta: \mathcal{S}_{\omega}(\mathbb{B}) \xrightarrow{i_{|b_R\#b_L'}} \mathcal{S}_{\omega}(\mathbb{B}\bigsqcup \mathbb{B}') \xrightarrow{\cong} \mathcal{S}_{\omega}(\mathbb{B}) \otimes \mathcal{S}_{\omega}(\mathbb{B}).$$
\par The map $\Delta$ is characterized by the formula $\Delta( \alpha_{\varepsilon \varepsilon'})= (\alpha_{\varepsilon +}\otimes \alpha_{+ \varepsilon'}) + (\alpha_{\varepsilon -} \otimes \alpha_{- \varepsilon'})$. Define an algebra morphism $\epsilon: \mathcal{S}_{\omega}(\mathbb{B}) \rightarrow \mathcal{R}$ and an anti-algebra morphism (that is $S$ is linear and $S(xy)=S(y)S(x)$) $S : \mathcal{S}_{\omega}(\mathbb{B}) \rightarrow \mathcal{S}_{\omega}(\mathbb{B})$ by the formulas $ \epsilon(\alpha_{\varepsilon \varepsilon'})= \delta_{\varepsilon \varepsilon'}$, $S(\alpha_{++}) = \alpha_{--}, S(\alpha_{--})=\alpha_{++}, S(\alpha_{+-})=-q \alpha_{+-}$ and $S(\alpha_{-+})=-q^{-1}\alpha_{-+}$. The coproduct  $\Delta$, the counit $\epsilon$ and the antipode $S$ endow $\mathcal{S}_{\omega}(\mathbb{B})$ with a structure of Hopf algebra. This Hopf algebra is canonically isomorphic to the so-called \textit{quantum }$\SL_2$ Hopf algebra $\mathcal{O}_q[\SL_2]$ 
 as defined in (\cite{Manin_QGroups}, \cite[Chapter $IV$ Section $6$]{Kassel}, \cite[Definition $7.1.1$]{ChariPressley}, \cite[Definition $I.1.10$]{BrownGoodearl}) where the generators $\alpha_{++}, \alpha_{-+}, \alpha_{+-}$ and $\alpha_{--}$ are denoted by $a,b,c$ and $d$. 
\\ For later use, let us write the coproduct, counit and antipode by the following more compact form: 
 
 \begin{equation*}
 \begin{pmatrix} \Delta (\alpha_{++}) & \Delta (\alpha_{+-}) \\ \Delta(\alpha_{-+}) & \Delta(\alpha_{--}) \end{pmatrix} 
 = 
 \begin{pmatrix} \alpha_{++} & \alpha_{+-} \\ \alpha_{-+} & \alpha_{--} \end{pmatrix} 
 \otimes 
 \begin{pmatrix} \alpha_{++} & \alpha_{+-} \\ \alpha_{-+} & \alpha_{--} \end{pmatrix} 
 \end{equation*}
 \begin{equation*}
 \begin{pmatrix} \epsilon(\alpha_{++}) & \epsilon(\alpha_{+-}) \\ \epsilon(\alpha_{-+}) & \epsilon(\alpha_{--}) \end{pmatrix} =
\begin{pmatrix} 1 &0 \\ 0& 1 \end{pmatrix}  
\text{ and }
\begin{pmatrix} S(\alpha_{++}) & S(\alpha_{+-}) \\ 	S(\alpha_{-+}) & S(\alpha_{--}) \end{pmatrix} 
	= 
	\begin{pmatrix} \alpha_{--} & -q\alpha_{+-} \\ -q^{-1}\alpha_{-+} & \alpha_{++} \end{pmatrix} .
 \end{equation*}
 Remark that when $q=+1$, we recover the Hopf algebra of regular functions of $\SL_2(\mathbb{C})$. 
 
 \vspace{2mm}
 \par Consider a punctured surface $\mathbf{\Sigma}$ with boundary arc $a$. When gluing the boundary $a$ of $\mathbf{\Sigma}$ with the boundary arc $b_L$ of $\mathbb{B}$ we obtain the same punctured surface $\mathbf{\Sigma}$. Define a left Hopf comodule map (see \textit{e.g.} \cite[Definition $III.7.1$]{Kassel}) $\Delta_a^L : \mathcal{S}_{\omega}(\mathbf{\Sigma})\rightarrow \mathcal{S}_{\omega}(\mathbb{B}) \otimes \mathcal{S}_{\omega}(\mathbf{\Sigma})$ as the composition:
 $$ \Delta_a^L : \mathcal{S}_{\omega}(\mathbf{\Sigma}) \xrightarrow{i_{|a\# b_L }} \mathcal{S}_{\omega}(\mathbb{B} \bigsqcup \mathbf{\Sigma}) \xrightarrow{\cong} \mathcal{S}_{\omega}(\mathbb{B}) \otimes \mathcal{S}_{\omega}(\mathbf{\Sigma}). $$
 \par Similarly, define a right Hopf comodule map $\Delta_a^R : \mathcal{S}_{\omega}(\mathbf{\Sigma})\rightarrow \mathcal{S}_{\omega}(\mathbf{\Sigma})\otimes  \mathcal{S}_{\omega}(\mathbb{B}) $ as the composition:
 $$ \Delta_a^R : \mathcal{S}_{\omega}(\mathbf{\Sigma}) \xrightarrow{i_{|b_R\# a }} \mathcal{S}_{\omega}(\mathbf{\Sigma} \bigsqcup \mathbb{B}) \xrightarrow{\cong} \mathcal{S}_{\omega}(\mathbf{\Sigma}) \otimes \mathcal{S}_{\omega}(\mathbb{B}). $$
\par The coassociativity of  $\Delta_a^L$ and $\Delta_a^R$  follows from the coassociativity of the splitting morphisms.

 Figure \ref{fig_comodule} illustrates the coproduct and the (left) comodule map.

\begin{figure}[!h] 
\centerline{\includegraphics[width=6cm]{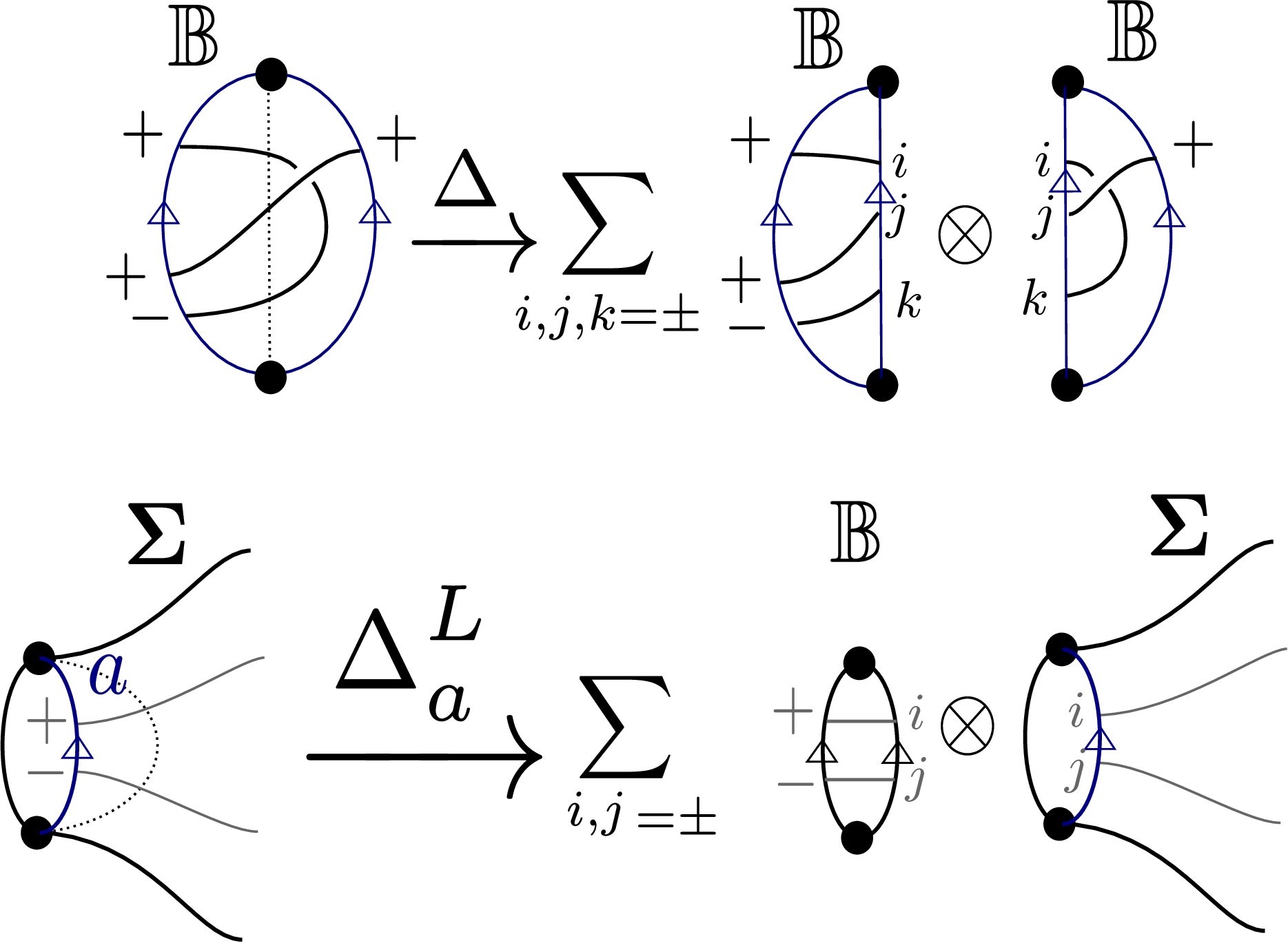} }
\caption{On the top: the coproduct in $\mathcal{S}_{\omega}(\mathbb{B})$. On the bottom: the comodule map. } 
\label{fig_comodule} 
\end{figure}

 \subsection{The image of the splitting morphism}
 
 The goal of this subsection is to prove Theorem \ref{theorem1} that we rewrite here for convenience of the reader:
 
 \begin{theorem}\label{theorem1prime} Let $\mathbf{\Sigma}$ be a punctured surface, $a,b$ two distinct boundary arcs. Then the following sequence is exact:
 $$ 0 \rightarrow \mathcal{S}_{\omega}(\mathbf{\Sigma}_{|a\#b}) \xrightarrow{i_{|a\#b}} \mathcal{S}_{\omega}(\mathbf{\Sigma}) \xrightarrow{\Delta_a^L - \sigma \circ \Delta_b^R} \mathcal{S}_{\omega}(\mathbb{B}) \otimes \mathcal{S}_{\omega}(\mathbf{\Sigma}),$$
	where $\sigma(x\otimes y)= y\otimes x$.
 \end{theorem}
 
 All along this subsection, we fix an orientation $\mathfrak{o}$ of its boundary arcs (though  Theorem \ref{theorem1prime} is obviously independent on this choice). 
\begin{notations} For  a boundary arc $a$ and a diagram $D$, we write $n_a(D):= |\partial_a D|$. Given $n\geq 1$, define the set $\St(n):= \{ -, + \}^n$ and the subset $\St^{\uparrow}(n)\subset \St(n)$ which consists of $n$-tuples $(\varepsilon_1, \ldots, \varepsilon_n)$ such that $i<j$ implies $\varepsilon_i \leq \varepsilon_j$. 
	 If $s=(\varepsilon_1, \ldots, \varepsilon_n)\in \St(n)$, denote by $s^{\uparrow}=(\varepsilon'_1, \ldots, \varepsilon'_n)\in \St^{\uparrow}(n)$ the unique element such that the number of indices $i$ such that $\varepsilon_i=+$ is equal to the number of indices $j$ such that $\varepsilon'_j=+$. 
Given $s=(\varepsilon_1, \ldots, \varepsilon_n)\in \St(n)$, denote by $k(s)$ the number of pairs $(i,j)$ such that $i<j$ and $\varepsilon_i > \varepsilon_j$. 
	For $s\in \St^{\uparrow}(n)$, let 
 $$ H_s(q) := \sum_{s'\in \St(n)| s'^{\uparrow}=s} q^{2 k(s')}.$$
 \end{notations}

 \par Let  $a$ and $b$ be two boundary arcs of $\mathbf{\Sigma}$ and consider the filtration associated to $S:=\{a, b\}$ and $\mathfrak{o}$ of Definition \ref{def_filtration}.
 
 \begin{lemma}\label{lemma_leading_term}
 Let $(D,s)$ be an $\mathfrak{o}$-oriented simple stated diagram and consider $v_1, v_2$ two points which both belong either to $\partial_a D$ or to $\partial_b D$. Suppose that $v_1<_{\mathfrak{o}} v_2$ and that there is no $v\in \partial D$ such that $v_1 <_{\mathfrak{o}} v <_{\mathfrak{o}} v_2$. Further assume that $s(v_1)=+$ and $s(v_2)=-$. Let $s'$ be the state of $D$ such that $s'(v_1)=-$, $s'(v_2)=+$ and $s'(v)=s(v)$ if $v\in \partial D \setminus \{v_1, v_2\}$. 
 Then one has $\lt ([D,s])= q \lt ([D, s'])$, where the leading term $\lt$ is defined in Definition \ref{def_filtration}.
 \end{lemma}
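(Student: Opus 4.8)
\emph{The plan.} I would combine a purely local use of the boundary relations with a degree count for the filtration of Definition~\ref{def_filtration} associated to $S=\{a,b\}$. Say $v_1,v_2\in\partial_a D$ (the case $v_1,v_2\in\partial_b D$ is identical), and assume $D$ is simple; this is the only case that is needed, and it is the case in which the degree count below is valid. Since $v_1<_{\mathfrak{o}}v_2$ are consecutive in $\partial_a D$ with no point of $\partial D$ between them, a small disc meeting $a$ along the subarc from $v_1$ to $v_2$ meets $D$ in exactly two strand-ends, so locally $(D,s)$ is the left-hand picture of the last boundary relation in \eqref{eq: skein 2} (equivalently of \eqref{eq: height exch corr}), with state $+$ at the $\mathfrak{o}$-smaller end $v_1$ and $-$ at $v_2$. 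Applying that relation in this disc and rearranging (using $q=\omega^{-4}$) gives an identity
$$[D,s]=q\,[D,s']+\omega\,[D_{\cup}],$$
where $D_{\cup}$ is obtained from $D$ by replacing the two strand-ends at $v_1$ and $v_2$ by a cup joining them; in particular $n_a(D_{\cup})=n_a(D)-2$ and $n_b(D_{\cup})=n_b(D)$.

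Next I would bound the degrees. Neither the Kauffman relations \eqref{eq: skein 1} nor the boundary relations \eqref{eq: skein 2} ever increase the number of endpoints of a diagram on a fixed boundary arc, so every basis element occurring in the expansion of $[D_{\cup}]$ has at most $n_a(D)-2$ endpoints on $a$ and at most $n_b(D)$ on $b$; hence $d([D_{\cup}])\le n_a(D)+n_b(D)-2$. For the simple diagram $D$, on the other hand, I claim $d([D,s])=d([D,s'])=n_a(D)+n_b(D)$: sorting the state on each boundary arc into $\mathfrak{o}$-increasing order using the scalar moves \eqref{eq: height exch 1} and the move \eqref{eq: height exch corr} (whose extra terms all carry a cup, hence have strictly fewer endpoints) rewrites $[D,s]$ as a nonzero scalar times a basis vector of $\mathcal{B}^{\mathfrak{o}}$ supported on $D$, plus basis vectors of strictly smaller degree, and such a basis vector cannot cancel; likewise for $[D,s']$, and both sortings produce the same top-degree basis vector since $s$ and $s'$ carry the same number of $+$-states on each arc. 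Therefore $d([D_{\cup}])<d([D,s])=d([D,s'])$.

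Finally, extracting the part of degree $n_a(D)+n_b(D)$ from $[D,s]=q\,[D,s']+\omega\,[D_{\cup}]$ annihilates the $\omega\,[D_{\cup}]$ term and leaves $\lt([D,s])=q\,\lt([D,s'])$, which is the assertion. The point I expect to be the main obstacle is the degree identity $d([D,s])=d([D,s'])=n_a(D)+n_b(D)$ — equivalently, that $[D,s]$ and $[D,s']$ each contribute a basis vector of top degree; this is where the simplicity of $D$ (concretely, that $v_1$ and $v_2$ are not joined by a trivial cup) is used, and it is contained in the analysis underlying the basis theorem \cite[Theorem~$2.11$]{LeStatedSkein}. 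Everything else is bookkeeping.
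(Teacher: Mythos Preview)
Your argument is correct and is precisely how one unpacks the paper's one-line proof (which merely cites relations \eqref{eq: skein 2}, \eqref{eq: height exch 1}, \eqref{eq: height exch corr}): the identity $[D,s]=q[D,s']+\omega[D_\cup]$ is the last boundary relation in \eqref{eq: skein 2} rearranged, and the filtration estimate $d([D_\cup])\le n_a(D)+n_b(D)-2$ together with the survival of the top basis vector $[D,s^{\uparrow}]$ is the only other ingredient. Your explicit restriction to simple $D$ is well placed --- the literal statement fails when $v_1,v_2$ are joined by a trivial arc --- and is the only case needed in Lemma~\ref{lemmA}.
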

 
 \begin{proof} This is a straightforward consequence of the  boundary relations \eqref{eq: skein 2} and the height exchange formulas \eqref{eq: height exch 1} and \eqref{eq: height exch corr}.   \end{proof}
 
 \par Let $(D,s)$ be  an $\mathfrak{o}$-oriented simple stated diagram of $\mathbf{\Sigma}$ and write $s=(s_a, s_0, s_b)$ as in the definition of the gluing map before Theorem \ref{theorem_gluingmap}. It results from Lemma \ref{lemma_leading_term} that we have the equality:
 $$ \lt ([D,(s_a,s_0,s_b)]) = q^{k(s_a) +k(s_b)} \lt([D, (s_a^{\uparrow}, s_0, s_b^{\uparrow})]).$$
 
 \par Fix an orientation $\mathfrak{o}_{\mathbb{B}}$ of the left and right boundary arcs of the bigon. Consider the filtration of  $\mathcal{S}_{\omega}(\mathbb{B})\otimes \mathcal{S}_{\omega}(\mathbf{\Sigma})\cong \mathcal{S}_{\omega}(\mathbb{B} \bigsqcup \mathbf{\Sigma})$ associated to the set of boundary arcs $S':=\{b_L, b_R, a,b\}$ and the orientations $\mathfrak{o}$ and $\mathfrak{o}_{\mathbb{B}}$, as in Definition \ref{def_filtration}. Given $X' \in \mathcal{S}_{\omega}(\mathbb{B})\otimes \mathcal{S}_{\omega}(\mathbf{\Sigma})$, we denote by $\lt'(X')$ the associated leading term. By definition of the left comodule map, we have the formula:
 $$ \Delta_a^L ([D, (s_a,s_0,s_b)]) = \sum_{s\in \St(n_a(D))} [\alpha^{(n_a(D))}, (s_a,s)] \otimes [D, (s, s_0,s_b)] $$
 
 \begin{lemma}\label{lemmA}
  Let $[D, (s_a,s_0,s_b)]$ be an element of the basis $\mathcal{B}^{\mathfrak{o}}$. One has 
 \begin{equation*} 
	\lt'\left(\Delta_a^L ([D, (s_a, s_0,s_b)])\right)=\sum_{s\in \St^{\uparrow}(n_a(D))} H_s(q) [\alpha^{(|\partial_a(D)|)}, (s_a,s)] \otimes [D, (s,s_0,s_b)]
	\end{equation*}
	and 
	\begin{equation*} 
	\lt'\left(\sigma \circ \Delta_b^R([D, (s_a,s_0,s_b)]) \right)= \sum_{s\in \St^{\uparrow}(n_b(D)} H_{s}(q) [\alpha^{(|\partial_b(D)|)}, (s,s_b)] \otimes [D, (s_a, s_0,s)], 
\end{equation*}
where the summands are written in the basis associated to $(\mathfrak{o}, \mathfrak{o}_{\mathbb{B}})$ of $\mathcal{S}_{\omega}(\mathbb{B})\otimes \mathcal{S}_{\omega}(\mathbf{\Sigma})$. 
 \end{lemma}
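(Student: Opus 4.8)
The plan is to prove the two formulas in Lemma \ref{lemmA} in parallel, the second being obtained from the first by the obvious symmetry exchanging the roles of $a$ and $b$ (and swapping the two tensor factors via $\sigma$). So I will concentrate on the first formula, for $\Delta_a^L$. The starting point is the explicit expansion of $\Delta_a^L([D,(s_a,s_0,s_b)])$ recalled just before the statement: it is a sum over all states $s\in \St(n_a(D))$ of the terms $[\alpha^{(n_a(D))},(s_a,s)]\otimes[D,(s,s_0,s_b)]$. This is \emph{not} yet an expansion in the basis $\mathcal{B}^{\mathfrak{o}}\otimes\mathcal{B}^{\mathfrak{o}}$, because the diagram $\alpha^{(n)}$ with an arbitrary bigon state $(s_a,s)$ is simple but the state is generally not $\mathfrak{o}_{\mathbb{B}}$-increasing, and likewise $[D,(s,s_0,s_b)]$ need not be $\mathfrak{o}$-increasing in its $a$-slot. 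So the first step is to rewrite each summand in the canonical basis and then identify which summands contribute to the leading term.

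First I would record the filtration degree of each summand. The set $S'=\{b_L,b_R,a,b\}$ was chosen so that $d$ counts endpoints on all four of these arcs; since $s_a$ and $s_b$ are fixed and $D$ is fixed, every summand has the \emph{same} value of $d$, namely $n_a(D)$ (from the $b_L$-slot of $\alpha^{(n_a(D))}$) plus $n_a(D)$ (from its $b_R$-slot, which equals the $a$-slot of $D$) plus $n_a(D)$ again in the $a$-slot of $D$... more precisely $d$ of each summand is $|\partial_{b_L}\alpha^{(n)}|+|\partial_{b_R}\alpha^{(n)}|+|\partial_a D|+|\partial_b D| = n + n + n + n_b(D)$, where $n=n_a(D)$; this is constant over the sum. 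Hence no cancellation of filtration degree occurs between summands, and $\lt'$ of the whole expression is obtained by applying $\lt'$ termwise. Now for a fixed $s\in\St(n)$, I apply Lemma \ref{lemma_leading_term} repeatedly (equivalently, the displayed consequence $\lt([D,(s_a,s_0,s_b)])=q^{k(s_a)+k(s_b)}\lt([D,(s_a^\uparrow,s_0,s_b^\uparrow)])$, applied both in $\mathcal{S}_{\omega}(\mathbb{B})$ to the bigon factor and in $\mathcal{S}_{\omega}(\mathbf{\Sigma})$ to the surface factor): the key point is that sorting the $b_R$-endpoints of $\alpha^{(n)}$ and the $a$-endpoints of $D$ is governed by the \emph{same} permutation (they are matched by the gluing), so the two sorting factors are $q^{k(s)}$ each, giving a total factor $q^{2k(s)}$, while sorting the $b_L$-slot contributes $q^{k(s_a)}$ — but $s_a$ is already $\mathfrak{o}$-increasing since $[D,(s_a,s_0,s_b)]\in\mathcal{B}^{\mathfrak{o}}$, so $k(s_a)=0$ — and similarly the $b$-slot contributes nothing. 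Thus $\lt'\big([\alpha^{(n)},(s_a,s)]\otimes[D,(s,s_0,s_b)]\big)=q^{2k(s)}\,[\alpha^{(n)},(s_a,s^\uparrow)]\otimes[D,(s^\uparrow,s_0,s_b)]$, a basis element of $\mathcal{B}^{\mathfrak{o}}\otimes\mathcal{B}^{\mathfrak{o}}$.

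Finally I would collect terms: summing over all $s\in\St(n)$ and grouping according to the value $t:=s^\uparrow\in\St^\uparrow(n)$ gives exactly $\sum_{t\in\St^\uparrow(n)}\big(\sum_{s:\,s^\uparrow=t}q^{2k(s)}\big)[\alpha^{(n)},(s_a,t)]\otimes[D,(t,s_0,s_b)]=\sum_{t\in\St^\uparrow(n)}H_t(q)\,[\alpha^{(n)},(s_a,t)]\otimes[D,(t,s_0,s_b)]$, which is the claimed formula, with $n=|\partial_a(D)|$. The symmetric argument for $\Delta_b^R$ uses the analogous expansion $\Delta_b^R([D,(s_a,s_0,s_b)])=\sum_{s\in\St(n_b(D))}[D,(s_a,s_0,s)]\otimes[\alpha^{(n_b(D))},(s,s_b)]$ and then applies $\sigma$; here one must be slightly careful that $\sigma$ only permutes the two tensor factors and does not reorder endpoints within a factor, so the leading-term computation is unaffected. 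The only real subtlety — the step I expect to need the most care — is the bookkeeping that the permutation needed to sort the $b_R$-endpoints of the glued bigon is \emph{identical} to the one sorting the $a$-endpoints of $D$ (because the gluing map $i_{|a\#b_L}$ matches these endpoint-sets bijectively in a height-preserving way), which is what produces the exponent $2k(s)$ rather than $k(s)$ and hence the factor $q^{2k(s')}$ appearing in the definition of $H_s(q)$; everything else is a direct application of Lemma \ref{lemma_leading_term} and the fact that the filtration degree is constant across the sum.
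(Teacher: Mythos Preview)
Your proof is correct and follows exactly the approach the paper has in mind: the paper's own proof is the single sentence ``This is a straightforward consequence of Lemma \ref{lemma_leading_term},'' and what you have written is precisely the unpacking of that sentence---observe that all summands in the explicit expansion of $\Delta_a^L$ have the same filtration degree, apply Lemma \ref{lemma_leading_term} termwise to produce the factor $q^{2k(s)}$ (one $q^{k(s)}$ from the $b_R$-slot of the bigon and one from the $a$-slot of $D$, since the gluing matches these point-sets), and regroup by $s^\uparrow$ to obtain $H_s(q)$. The only point worth being slightly more explicit about is that the orientation $\mathfrak{o}_{\mathbb{B}}$ on the bigon is chosen compatibly with $\mathfrak{o}$ on $a$ (and on $b$), so that $s_a$ remains increasing on $b_L$ and so that the two sortings of $s$ use the same index order; since $\mathfrak{o}_{\mathbb{B}}$ was declared arbitrary this is legitimate, but it is exactly the ``bookkeeping'' you flag at the end.
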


 \begin{proof} This is a straightforward consequence of Lemma \ref{lemma_leading_term}. \end{proof}

 \begin{proof}[Proof of Theorems \ref{theorem1}, \ref{theorem1prime}]
 We want to show that the following sequence is exact:
	$$ 0 \rightarrow \mathcal{S}_{\omega}(\mathbf{\Sigma}_{|a\#b}) \xrightarrow{i_{|a\#b}} \mathcal{S}_{\omega}(\mathbf{\Sigma}) \xrightarrow{\Delta_a^L - \sigma \circ \Delta_b^R} \mathcal{S}_{\omega}(\mathbb{B}) \otimes \mathcal{S}_{\omega}(\mathbf{\Sigma}),$$
	where $\sigma(x\otimes y)= y\otimes x$.
 The injectivity of $i_{|a\# b}$ was proved in \cite{LeStatedSkein}.
 The inclusion $\mathrm{Im}(i_{|a\#b}) \subset \ker (\Delta_a^L - \sigma \circ \Delta_b^R)$ follows from the coassociativity of the comodule maps. To prove the reverse inclusion, consider an element $X:= \sum_{i\in I} x_i [D_i,s_i] \in \ker (\Delta_a^L - \sigma \circ \Delta_b^R)$ developed in the basis $\mathcal{B}^{\mathfrak{o}}$. If $\lt(X)=0$, then $X$ is a linear combination of diagrams which do not intersect $a$ and $b$, hence $X$ belongs to the image of $i_{|a\#b}$. Suppose that $\lt(X)>0$. We will find an element $Y\in \mathcal{S}_{\omega}(\mathbf{\Sigma}_{|a\#b})$ such that $\lt(i_{|a\#b}(Y)) = \lt(X)$. Now $X$ belongs to the image of $i_{|a\#b}$ if and only if $Z:= X- i_{|a\#b}(Y)$ belongs to this image. Since $d(Z)<d(X)$, the proof will follow by induction on $d(X)$.
 \vspace{2mm}
 \par Consider the set $\widetilde{\mathcal{D}}$ of pairs $(D,s_0)$ for which  there exists some states $s_a$ and $s_b$ such that the basis element $[D,(s_a,s_0,s_b)]$ appears in the expression of $X$. Given $\widetilde{D}=(D,s_0)\in \widetilde{\mathcal{D}}$, denote by $\St_X(\widetilde{D})$ the set of couples $(s_a,s_b)$ such that $[D, (s_a,s_0,s_b)]$ appears in the expression of $X$. We re-write the development of $X$ in the basis as:
 $$ X= \sum_{\widetilde{D}=(D,s_0)\in \widetilde{\mathcal{D}}} \sum_{(s_a,s_b)\in \St_X(\widetilde{D})} x_{[D,(s_a,s_0,s_b)]} [D, (s_a,s_0,s_b)]. $$
  Consider the subset $\widetilde{\mathcal{D}}_{\mathrm{max}} \subset \widetilde{\mathcal{D}}$ of pairs $(D,s_0)$ such that $d(X)=n_a(D)+n_b(D)$. By Lemma \ref{lemmA}, one has:
  \begin{multline*}
   \lt'(\Delta_a^L(X)) = \sum_{(D,s_0)\in \widetilde{\mathcal{D}}_{\mathrm{max}}} \sum_{(s_a,s_b)\in \St_X((D,s_0))} x_{[D,(s_a,s_0,s_b)]} 
   \\ \sum_{s\in \St^{\uparrow}(n_a(D))} H_s(q) [\alpha^{(n_a(D))}, (s_a,s)] \otimes [D, (s,s_0,s_b)].
   \end{multline*}
  Similarly, one has:
  \begin{multline*}
   \lt'(\sigma\circ\Delta_b^R(X)) = \sum_{(D,s_0)\in \widetilde{\mathcal{D}}_{\mathrm{max}}} \sum_{(s_a,s_b)\in \St_X((D,s_0))} x_{[D,(s_a,s_0,s_b)]} 
   \\ \sum_{s'\in \St^{\uparrow}(n_b(D))} H_{s'}(q) [\alpha^{(n_b(D))}, (s',s_b)] \otimes [D, (s_a,s_0,s')].
   \end{multline*}
 \par From the equality $\lt'(\Delta_a^L(X)) = \lt'(\sigma\circ \Delta_b^R(X))$, we find that for any pair $(D,s_0)\in \widetilde{\mathfrak{D}}_{\mathrm{max}}$,  for any pair $(s_a,s_b) \in \St_X((D,s_0))$ and for any state $s\in \St^{\uparrow}(n_a(D))$, there exists a unique pair $(s_a',s_b')\in \St_X((D,s_0))$ and a unique state $s'\in \St^{\uparrow}(n_b(D))$ such that:
 \begin{multline*}
  x_{[D,(s_a,s_0,s_b)]} H_s(q) [\alpha^{(n_a(D))}, (s_a,s)] \otimes [D, (s,s_0,s_b)] \\
  = x_{[D, (s_a',s_0,s'_b)]} H_{s'}(q) [\alpha^{(n_b(D))}, (s',s'_b)] \otimes [D, (s'_a,s_0,s')]. 
  \end{multline*}
 \par We deduce the following:
 \begin{itemize}
 \item For any $(D,s_0)\in \widetilde{\mathcal{D}}_{\mathrm{max}}$, we have $n_a(D)=n_b(D)= \frac{1}{2}d(X)$. We will denote by $n$ this integer.
 \item We have the equalities $s'=s_a=s_b$ and $s=s'_a=s'_b$. Hence for any $(D,s_0)\in \widetilde{\mathcal{D}}_{\mathrm{max}}$, we have $\St_X((D,s_0))= \{ (s,s), s\in \St^{\uparrow}(n) \}$.
 \item  For any $(D,s_0)\in \widetilde{\mathcal{D}}_{\mathrm{max}}$ and $s \in \St^{\uparrow}(n)$, the coefficient 
 $x_{[D,(s,s_0,s)]}H_s(q)$ is independent of $s$. We will denote this coefficient by $x_{(D,s_0)}$.
 \end{itemize}
 \par With the above notations, we re-write the leading term of $X$ as:
 $$ \lt(X) = \sum_{(D,s_0)\in \widetilde{\mathcal{D}}_{\mathrm{max}}} x_{(D,s_0)} \sum_{s\in \St^{\uparrow}(n)}[D,(s,s_0,s)].$$
 \par Given $(D,s_0)\in \widetilde{\mathcal{D}}_{\mathrm{max}}$, since $n_a(D)=n_b(D)=n$, there exists a diagram $D_0$ of $\mathbf{\Sigma}_{|a\#b}$ such that $D$ is obtained from $D_0$ by cutting along the common image in $\Sigma_{|a\#b}$ of $a$ and $b$ by the projection. Define the following element:
 $$ Y:= \sum _{(D,s_0)\in \widetilde{\mathcal{D}}_{\mathrm{max}}} x_{(D,s_0)} [D_0, s_0] \in \mathcal{S}_{\omega}(\mathbf{\Sigma}).$$
 \par By the above expression, we have the equality $\lt(X) = \lt(i_{|a\#b}(Y))$. This concludes the proof.
  \end{proof}
 
 \par Consider a topological triangulation $\Delta$ of $\mathbf{\Sigma}$. The punctured surface $\mathbf{\Sigma}$ is obtained from the disjoint union $\mathbf{\Sigma}_{\Delta}:=\bigsqcup_{\mathbb{T} \in F(\Delta)} \mathbb{T}$ by gluing the triangles along their common edges. Denote by $\mathring{\mathcal{E}}(\Delta) \subset \mathcal{E}(\Delta)$ the subset of edges which are not boundary arcs. Each edge $e\in \mathring{\mathcal{E}}(\Delta)$ lifts in $\mathbf{\Sigma}_{\Delta}$ to two boundary arcs $e_L$ and $e_R$. 
 By composing all the left comodule maps $\Delta_{e_L}^L$ together (the order does not matter thanks to the coassociativity property in Theorem \ref{theorem_gluingmap}) one gets a Hopf comodule map
 \begin{equation*}
 \Delta^L : \otimes_{\mathbb{T}\in F(\Delta)} \mathcal{S}_{\omega}(\mathbb{T}) \rightarrow \left( \otimes_{e\in \mathring{\mathcal{E}}(\Delta)} \mathcal{S}_{\omega}(\mathbb{B})\right) \otimes \left(  \otimes_{\mathbb{T}\in F(\Delta)} \mathcal{S}_{\omega}(\mathbb{T}) \right).
 \end{equation*}
 Similarly, composing all the right comodule maps $\Delta_{e_R}^R$ together gives 
 \begin{equation*}
 \Delta^R :  \otimes_{\mathbb{T}\in F(\Delta)} \mathcal{S}_{\omega}(\mathbb{T}) \rightarrow \left(  \otimes_{\mathbb{T}\in F(\Delta)} \mathcal{S}_{\omega}(\mathbb{T}) \right)\otimes \left( \otimes_{e\in \mathring{\mathcal{E}}(\Delta)} \mathcal{S}_{\omega}(\mathbb{B})\right). 
 \end{equation*} 
 Recall the definition of $i^{\Delta}$ in Section \ref{sec_definitions}.
 \begin{corollary}
 	The following sequence is exact. 
 \begin{equation*}
 0 \rightarrow \mathcal{S}_{\omega}(\mathbf{\Sigma}) \xrightarrow{i^{\Delta}} 
 \otimes_{\mathbb{T}\in F(\Delta)} \mathcal{S}_{\omega}(\mathbb{T}) \xrightarrow{\Delta^L -\sigma\circ\Delta^R}  \left( \otimes_{e\in \mathring{\mathcal{E}}(\Delta)} \mathcal{S}_{\omega}(\mathbb{B})\right) \otimes \left(  \otimes_{\mathbb{T}\in F(\Delta)} \mathcal{S}_{\omega}(\mathbb{T}) \right).
 \end{equation*} 
  \end{corollary}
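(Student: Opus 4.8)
The plan is to deduce the exactness of the long sequence from Theorem \ref{theorem1} by iterating over the inner edges of the triangulation, one at a time. Concretely, order the inner edges $e^{(1)}, \dots, e^{(r)}$ of $\mathring{\mathcal{E}}(\Delta)$ arbitrarily. The punctured surface $\mathbf{\Sigma}$ is obtained from $\mathbf{\Sigma}_\Delta = \bigsqcup_{\mathbb{T}\in F(\Delta)}\mathbb{T}$ by successively gluing along $e^{(1)}, \dots, e^{(r)}$; write $\mathbf{\Sigma}^{(k)}$ for the surface obtained after performing the first $k$ gluings, so $\mathbf{\Sigma}^{(0)} = \mathbf{\Sigma}_\Delta$ and $\mathbf{\Sigma}^{(r)} = \mathbf{\Sigma}$. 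At each step $k$, the arc $e^{(k+1)}$ appears in $\mathbf{\Sigma}^{(k)}$ as a pair of boundary arcs which I glue; Theorem \ref{theorem1}, applied to this single gluing, gives an exact sequence
\begin{equation*}
0 \rightarrow \mathcal{S}_{\omega}(\mathbf{\Sigma}^{(k+1)}) \xrightarrow{i_{|e^{(k+1)}_L \# e^{(k+1)}_R}} \mathcal{S}_{\omega}(\mathbf{\Sigma}^{(k)}) \xrightarrow{\Delta^L_{e^{(k+1)}_L} - \sigma\circ\Delta^R_{e^{(k+1)}_R}} \mathcal{S}_{\omega}(\mathbb{B})\otimes \mathcal{S}_{\omega}(\mathbf{\Sigma}^{(k)}).
\end{equation*}
Thus at every stage $\mathcal{S}_{\omega}(\mathbf{\Sigma}^{(k+1)})$ is identified, via $i_{|e^{(k+1)}}$, with the joint kernel of the two comodule maps attached to $e^{(k+1)}$ acting on $\mathcal{S}_{\omega}(\mathbf{\Sigma}^{(k)})$.

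The core step is then a purely formal lemma: if $A$ is an algebra and $\delta_1, \dots, \delta_r : A \to M_1, \dots, M_r$ are linear maps such that each kernel inclusion $\ker\delta_j \hookrightarrow A$ is ``compatible'' with the remaining maps (meaning $\delta_i$ restricted to $\ker\delta_j$ still makes sense and the relevant diagrams commute), then $\bigcap_j \ker\delta_j = \ker\big(\bigoplus_j \delta_j : A \to \bigoplus_j M_j\big)$, which is a tautology, together with the observation that the composite $i^\Delta = i_{|e^{(1)}}\circ\cdots\circ i_{|e^{(r)}}$ (in any order, by coassociativity of the gluing maps, \cite[Theorem $3.1$]{LeStatedSkein}) has image exactly $\bigcap_{j} \ker\big(\Delta^L_{e^{(j)}_L} - \sigma\circ\Delta^R_{e^{(j)}_R}\big)$ when these maps are transported to act on the \emph{big} algebra $\otimes_{\mathbb{T}\in F(\Delta)}\mathcal{S}_{\omega}(\mathbb{T})$. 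I would prove this last identification by downward induction on $k$: assuming $\mathrm{Im}(i_{|e^{(k+1)}}\circ\cdots\circ i_{|e^{(r)}})$ in $\mathcal{S}_{\omega}(\mathbf{\Sigma}^{(k)})$ equals the intersection of the kernels indexed by $e^{(k+1)},\dots,e^{(r)}$, intersect with $\ker(\Delta^L_{e^{(k)}_L}-\sigma\circ\Delta^R_{e^{(k)}_R})$ using the step-$(k-1)$ instance of Theorem \ref{theorem1} and the fact that $i_{|e^{(k)}}$ is an algebra (hence comodule-compatible) morphism, so the kernels of the later maps pull back correctly along $i_{|e^{(k)}}$. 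Finally I observe that $\Delta^L - \sigma\circ\Delta^R$ in the statement, landing in $\big(\otimes_{e}\mathcal{S}_{\omega}(\mathbb{B})\big)\otimes\big(\otimes_\mathbb{T}\mathcal{S}_{\omega}(\mathbb{T})\big)$, has kernel equal to this same intersection: its $e$-component is precisely $\Delta^L_{e_L} - \sigma\circ\Delta^R_{e_R}$ (the other inner-edge comodule maps act trivially on that tensor factor, since distinct gluing arcs are disjoint and the comodule maps commute), so a vector lies in $\ker(\Delta^L - \sigma\circ\Delta^R)$ iff it lies in each $\ker(\Delta^L_{e_L}-\sigma\circ\Delta^R_{e_R})$.

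The main obstacle I anticipate is bookkeeping rather than conceptual: one must check carefully that when a comodule map $\Delta^L_{e_L}$ for one inner edge $e$ is applied after gluing along a \emph{different} inner edge $e'$, the result is unchanged — i.e. that $\Delta^L_{e_L}\circ i_{|e'} = (\mathrm{id}_{\mathcal{S}_\omega(\mathbb{B})}\otimes i_{|e'})\circ\Delta^L_{e_L}$ — so that ``the joint kernel of all the $\Delta^L_{e_L}-\sigma\circ\Delta^R_{e_R}$'' is well-defined independently of the order and is genuinely computed by iterating Theorem \ref{theorem1}. This commutation is exactly the coassociativity/naturality of the gluing maps recorded in \cite[Theorem $3.1$]{LeStatedSkein} (distinct boundary arcs can be glued in any order), applied to the bigon-gluing description of the comodule maps from Section $2.2$; I would state it as a short auxiliary lemma and then let the induction run. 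Once that is in place, the exactness of the displayed sequence is immediate: injectivity of $i^\Delta$ is the composition of injectivity statements from Theorem \ref{theorem1}, and exactness at the middle term is the identification of $\mathrm{Im}(i^\Delta)$ with $\ker(\Delta^L - \sigma\circ\Delta^R)$ just described.
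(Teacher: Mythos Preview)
Your proposal is correct and follows essentially the same approach as the paper: apply Theorem~\ref{theorem1} once for each inner edge to identify $\mathcal{S}_{\omega}(\mathbf{\Sigma})$ with the intersection $\bigcap_{e\in\mathring{\mathcal{E}}(\Delta)}\ker(\Delta^L_{e_L}-\sigma\circ\Delta^R_{e_R})$, and then observe that this intersection equals $\ker(\Delta^L-\sigma\circ\Delta^R)$. The paper's proof records exactly these two steps in two sentences, whereas you spell out the inductive bookkeeping and the commutation of comodule maps along distinct edges that the paper leaves implicit.
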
                         
\begin{proof}
 Theorem \ref{theorem1} applied to each inner edge provides  an isomorphism between $\mathcal{S}_{\omega}(\mathbf{\Sigma})$ and the intersection, over the inner edges $e$, of $\text{Ker} ( \Delta_{e_L}^L-\sigma\circ \Delta_{e_R}^R)$. We conclude by observing that the latter intersection is $\text{Ker} ( \Delta^L-\sigma\circ \Delta^R)$.
\end{proof}

 \par We can reformulate the above exact sequence in terms of coHochschild cohomology. 
  \begin{definition}\label{def_coHochschild} Given a coalgebra $C$ with a bi-comodule $M$, with comodules maps $\Delta^L :  M \rightarrow C\otimes M$ and $\Delta^R: M \rightarrow M \otimes C$, the $0$-th coHochschild cohomology group is $\mathrm{coHH}^0(C, M):= \ker \left(\Delta^L - \sigma \circ \Delta^R\right)$.
  \end{definition}
We refer to \cite{HPS_CohochschildHom} for a self-contained introduction to coHochschild (co)homology. The above triangular decomposition of skein algebra can be re-written as:
\begin{equation*}
 \mathcal{S}_{\omega}(\mathbf{\Sigma})\cong \mathrm{coHH}^0\left( \otimes_{e\in \mathring{\mathcal{E}}(\Delta)} \mathcal{O}_q[\SL_2], \otimes_{\mathbb{T}\in F(\Delta)}  \mathcal{S}_{\omega}(\mathbb{T})\right).
\end{equation*}

 \subsection{The center of stated skein algebras at odd roots of unity}
 
Here we prove Theorem \ref{theorem2}. We prove it for the bigon, then for the triangle, and we conclude with the general case. Let us start by the following classical result.  

 \begin{lemma}\label{lemma_qbinomial}
 Let $\mathcal{R}$ be a ring and $q\in \mathcal{R}^{\times}$ a root of unity of order $N>1$. Suppose that $\mathcal{A}$ is an $\mathcal{R}$-algebra and $x,y\in \mathcal{A}$ are such that $yx=qxy$. One has $(x+y)^N= x^N +y^N$.
 \end{lemma}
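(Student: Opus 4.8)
The plan is to prove the $q$-binomial theorem in the stated generality by expanding $(x+y)^N$ and showing that every mixed term vanishes because $q^N = 1$. First I would set up the $q$-binomial coefficients: for $0 \le k \le N$ define $\binom{N}{k}_q := \frac{[N]_q!}{[k]_q![N-k]_q!}$ where $[n]_q := 1 + q + \cdots + q^{n-1}$ and $[n]_q! := [1]_q[2]_q\cdots[n]_q$, with the usual convention $[0]_q! = 1$. The key algebraic input is the $q$-binomial theorem for $q$-commuting variables: when $yx = qxy$ in an associative $\mathcal{R}$-algebra, one has
\begin{equation*}
(x+y)^N = \sum_{k=0}^{N} \binom{N}{k}_q\, x^k y^{N-k}.
\end{equation*}
I would prove this by induction on $N$, using the Pascal-type recursion $\binom{N}{k}_q = \binom{N-1}{k-1}_q + q^k \binom{N-1}{k}_q$ (one of several equivalent forms), the point being that pushing an $x$ past a block $y^{N-k}$ to regroup $x(x+y)^{N-1}$ into standard monomials $x^k y^{N-k}$ produces exactly the power of $q$ appearing in the recursion. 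This step is a routine but slightly fiddly computation; I would present the recursion and the one-line induction and leave the bookkeeping implicit.

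Next I would invoke the classical fact that when $q$ is a primitive (or merely any) root of unity with $q^N = 1$, the $q$-integer $[N]_q = 1 + q + \cdots + q^{N-1}$ equals $0$, while $[k]_q$ need not be zero for $1 \le k \le N-1$; more carefully, since $q^N = 1$ one has $[N]_q = \frac{q^N - 1}{q - 1} = 0$ when $q \ne 1$ (and the statement is trivial when $q = 1$ since then $x, y$ commute and it is the ordinary binomial theorem — wait, not quite: if $q=1$ but $N$ is arbitrary we still need $q^N=1$ which holds, and commutativity gives $(x+y)^N = \sum \binom{N}{k} x^k y^{N-k}$, which is generally not $x^N + y^N$). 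So I would be careful here: the hypothesis is $q^N = 1$, and the conclusion $(x+y)^N = x^N + y^N$ genuinely requires the middle binomial coefficients to vanish, which fails for $q = 1$. Hence the honest statement needs $q$ to be a primitive $N$-th root of unity, or at least $N$ prime and $q \ne 1$; since in the paper $\omega$ has odd order $N > 1$ and $q = \omega^{-4}$, and $N$ odd means $\gcd(4,N)=1$ so $q$ also has order exactly $N$. I would therefore state and use: if $q$ has multiplicative order exactly $N$, then $\binom{N}{k}_q = 0$ for all $1 \le k \le N-1$. This is standard — one shows $[N]_q = 0$ appears once in the numerator $[N]_q!$ and the denominators $[k]_q![N-k]_q!$ involve only $[j]_q$ with $j < N$, none of which is a zero-divisor killing that factor, so the quotient is $0$ in $\mathcal{R}$.

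Combining, only the $k = 0$ and $k = N$ terms survive, giving $(x+y)^N = \binom{N}{N}_q x^N + \binom{N}{0}_q y^N = x^N + y^N$. The main obstacle, and the only real subtlety, is the $q=1$ caveat just discussed: the lemma as literally stated (\emph{$q^N = 1$}) is false for $q = 1$, $N > 1$ unless $x$ or $y$ is nilpotent or zero, so in writing the proof I would either (i) silently assume $q$ is a \emph{primitive} $N$-th root of unity, consistent with the paper's standing setup where $\omega$ has order exactly $N$ and hence so does $q = \omega^{-4}$ as $N$ is odd, or (ii) note that even for general $q$ with $q^N=1$ of order $d \mid N$, the surviving terms are those $x^k y^{N-k}$ with $d \mid k$, which collapses to $x^N + y^N$ precisely when $d = N$. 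I expect the author intends the primitive case. The rest — the $q$-Pascal induction and the divisibility-of-$q$-factorials argument — is classical (going back to Gauss, and appearing in e.g. Kassel's book) and I would cite it rather than belabor it.
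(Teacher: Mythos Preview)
Your approach is essentially identical to the paper's: cite the $q$-binomial expansion $(x+y)^N = \sum_k \binom{N}{k}_q x^k y^{N-k}$ (the paper refers to Kassel, Proposition IV.2.2, using the product form $\binom{N}{k}_q = \prod_{i=0}^{k-1}\frac{1-q^{N-i}}{1-q^{i+1}}$) and then observe that the intermediate coefficients vanish when $q^N=1$. Your caveat about primitivity is well taken and applies equally to the paper's one-line argument---the product formula only makes sense when the denominators $1-q^{i+1}$ are nonzero for $1\le i+1\le N-1$, which is exactly the requirement that $q$ have order $N$; as you correctly note, this is automatic in every application in the paper since $\omega$ has odd order $N$ and the relevant $q$'s are powers $\omega^{\pm 4}$, $\omega^{\pm 8}$, $q^{\pm 2}$, all coprime to $N$.
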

 
 \begin{proof} By \cite[Proposition $IV.2.2$]{Kassel}, one has:
 $$ (x+y)^N = \sum_{k=0}^N \begin{pmatrix} N \\ k \end{pmatrix}_q x^k y^{N-k}, $$
 where $\begin{pmatrix} N \\ k \end{pmatrix}_q := \prod_{i=0}^{k-1} \left( \frac{1-q^{N-i}}{1-q^{i+1}} \right)$. Since $q^N=1$, the coefficients $\begin{pmatrix} N \\ k \end{pmatrix}_q$ vanish for $1\leq k \leq N-1$, and we get the desired formula.
 \end{proof}
 
 \subsubsection{The case of the bigon}
 
 Recall from Section $2.2$ that the Hopf algebra $\mathcal{S}_{\omega}(\mathbb{B})$ is canonically isomorphic to $\mathcal{O}_q[\SL_2]$. In this case, Theorem \ref{theorem2} is a well known theorem of Lusztig. More precisely, it is proved in \cite{Lusztig_QGroupsRoots1} (see also \cite[Theorem $3.5.1$]{Lusztig_Book}) that there exists a morphism of braided Hopf algebras $Fr_*: \dot{U}_q \mathfrak{sl}_2 \to \dot{U}_{+1}\mathfrak{sl}_2$ which induces a braided functor $Fr: \Rep(\SL_2)\to \Rep_q(\SL_2)$ between the category of finite rank representations of $\SL_2$ and the category $\Rep_q(\SL_2)$ of finite rank $\dot{U}_q\mathfrak{sl}_2$ modules. Since $\mathcal{O}_q[\SL_2]$ (resp. $\mathcal{O}[\SL_2]$) is isomorphic to the coend of the forgetful functor $F: \Rep_q(\SL_2)\to \Mod_{\mathcal{R}}$ (resp. of the forgetful functor $\Rep(\SL_2) \to \Mod_{\mathcal{R}}$) the Frobenius functor $Fr$ induces a morphism $j: \mathcal{O}[\SL_2]\to \mathcal{O}_q[\SL_2]$.  Moreover, as noticed in \cite{Negron_Frobenius}, the image of $Fr$ lies in the M\"ugen center of $\Rep_q(\SL_2)$ so the image of $j$ is central.
 We refer to \cite[Section $5.1$]{Negron_Frobenius} for details on this approach. A down-to-earth construction of $j$, based on elementary computations using the definition of $\mathcal{O}_q[\SL_2]$ by generators and relations, was described by Brown-Goodearl and goes as follows:

 \begin{lemma}[\cite{BrownGoodearl} Proposition III.3.1] \label{lemma_center_bigone}
 Suppose that $q:=\omega^{-4}$ is a root of unity of odd order $N\geq 1$. There exists a injective morphism of Hopf algebras  $j_{\mathbb{B}}:\mathcal{S}_{+1}(\mathbb{B}) \rightarrow \mathcal{S}_{\omega}(\mathbb{B})$  characterized by $j_{\mathbb{B}}(\alpha_{\varepsilon \varepsilon'}):= (\alpha_{\varepsilon \varepsilon'})^N$  whose image lies in the center of $\mathcal{S}_{\omega}(\mathbb{B})$.
 \end{lemma}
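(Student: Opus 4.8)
The plan is to verify directly on generators that the assignment $j_{\mathbb{B}}(\alpha_{\varepsilon\varepsilon'}) := (\alpha_{\varepsilon\varepsilon'})^N$ extends to a well-defined algebra map, is central, and is a Hopf map. First I would check well-definedness. The source algebra $\mathcal{S}_{+1}(\mathbb{B})$ is $\mathcal{R}_{+1}[\GL_2]/(\det_{+1}-1)$, i.e. it is presented by the four generators $a_{\varepsilon\varepsilon'}$ subject to the commutativity relations and $a_{++}a_{--} = 1 + a_{+-}a_{-+}$. So I must check that the $N$-th powers $A_{\varepsilon\varepsilon'} := (\alpha_{\varepsilon\varepsilon'})^N \in \mathcal{S}_{\omega}(\mathbb{B})$ satisfy these same relations. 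For the three $q$-commutation relations of $\mathcal{S}_\omega(\mathbb{B})$ (for instance $\alpha_{++}\alpha_{+-} = q^{-1}\alpha_{+-}\alpha_{++}$), raising to the $N$-th power and using $q^N=1$ gives $A_{++}A_{+-} = q^{-N}A_{+-}A_{++} = A_{+-}A_{++}$, and likewise for the others, so the $A$'s pairwise commute — this already shows the image is commutative. For the quadratic relation, from $\alpha_{++}\alpha_{--} = 1 + q^{-1}\alpha_{+-}\alpha_{-+}$ together with $\alpha_{-+}\alpha_{+-} = \alpha_{+-}\alpha_{-+}$ and the fact that $\alpha_{++}\alpha_{--}$ commutes with $q^{-1}\alpha_{+-}\alpha_{-+}$ (standard in $\mathcal{O}_q(\SL_2)$), I would apply Lemma \ref{lemma_qbinomial} with $x = 1$ and $y = q^{-1}\alpha_{+-}\alpha_{-+}$ — but more carefully, one needs $(\alpha_{++}\alpha_{--})^N$; since $\alpha_{++},\alpha_{--}$ $q$-commute with $q^{-1}\alpha_{+-}\alpha_{-+}$ appropriately one gets $(\alpha_{++}\alpha_{--})^N = A_{++}A_{--}$ (the cross terms carry powers of $q$ that sum to a multiple of $N$), and $(1 + q^{-1}\alpha_{+-}\alpha_{-+})^N = 1 + q^{-N}A_{+-}A_{-+} = 1 + A_{+-}A_{-+}$ by Lemma \ref{lemma_qbinomial} (the two summands $1$ and $q^{-1}\alpha_{+-}\alpha_{-+}$ commute). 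Hence $A_{++}A_{--} = 1 + A_{+-}A_{-+}$, which is exactly the defining relation of $\mathcal{S}_{+1}(\mathbb{B})$. This gives the algebra morphism $j_{\mathbb{B}}$.

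Next I would prove centrality: since the four $A_{\varepsilon\varepsilon'}$ generate the image, it suffices to check each $A_{\varepsilon\varepsilon'}$ is central in $\mathcal{S}_\omega(\mathbb{B})$, and since they generate the algebra it suffices to check $A_{\varepsilon\varepsilon'}$ commutes with each generator $\alpha_{\delta\delta'}$. For pairs related by a $q$-commutation relation, $\alpha_{\delta\delta'}(\alpha_{\varepsilon\varepsilon'})^N = q^{\pm N}(\alpha_{\varepsilon\varepsilon'})^N\alpha_{\delta\delta'} = (\alpha_{\varepsilon\varepsilon'})^N\alpha_{\delta\delta'}$; for the remaining pairs one uses the quadratic relations to move $\alpha_{\delta\delta'}$ past $\alpha_{\varepsilon\varepsilon'}$ at the cost of lower-order terms that themselves commute after taking $N$-th powers — concretely, commuting $\alpha_{++}$ past $\alpha_{--}$ introduces a correction involving $\alpha_{+-}\alpha_{-+}$, which $q$-commutes with everything in a controlled way, and the bookkeeping again only produces powers of $q$ that are multiples of $N$. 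This is the step I expect to require the most care: the relations $\alpha_{++}\alpha_{--} = 1 + q^{-1}\alpha_{+-}\alpha_{-+}$ and $\alpha_{--}\alpha_{++} = 1 + q\alpha_{+-}\alpha_{-+}$ are not simple $q$-commutations, so verifying $A_{++}$ commutes with $\alpha_{--}$ (and $A_{--}$ with $\alpha_{++}$) needs the identity that in $\mathcal{O}_q(\SL_2)$ the element $\alpha_{++}^N$ (resp. $\alpha_{--}^N$) is central — this is exactly a standard computation, and I would either cite the literature on the center of $\mathcal{O}_q(\SL_2)$ at a root of unity or spell out the short induction using that $\alpha_{+-}\alpha_{-+}$ is a normal element.

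Finally I would check $j_{\mathbb{B}}$ is a morphism of Hopf algebras. For the counit, $\epsilon(A_{\varepsilon\varepsilon'}) = \epsilon(\alpha_{\varepsilon\varepsilon'})^N = \delta_{\varepsilon\varepsilon'}^N = \delta_{\varepsilon\varepsilon'} = \epsilon(\alpha_{\varepsilon\varepsilon'})$ in $\mathcal{S}_{+1}(\mathbb{B})$. For the coproduct, I would compute $\Delta\big((\alpha_{\varepsilon\varepsilon'})^N\big)$ in $\mathcal{S}_\omega(\mathbb{B})\otimes\mathcal{S}_\omega(\mathbb{B})$: since $\Delta(\alpha_{\varepsilon\varepsilon'}) = \alpha_{\varepsilon+}\otimes\alpha_{+\varepsilon'} + \alpha_{\varepsilon-}\otimes\alpha_{-\varepsilon'}$, the two summands $q$-commute (the first tensor factors $q$-commute and the second tensor factors $q$-commute, with compatible exponents so that the product is a genuine $q$-commutation in the tensor algebra), so Lemma \ref{lemma_qbinomial} applied in $\mathcal{S}_\omega(\mathbb{B})\otimes\mathcal{S}_\omega(\mathbb{B})$ gives $\Delta(\alpha_{\varepsilon\varepsilon'})^N = (\alpha_{\varepsilon+})^N\otimes(\alpha_{+\varepsilon'})^N + (\alpha_{\varepsilon-})^N\otimes(\alpha_{-\varepsilon'})^N = (j_{\mathbb{B}}\otimes j_{\mathbb{B}})\big(\Delta(\alpha_{\varepsilon\varepsilon'})\big)$, matching the coproduct in $\mathcal{S}_{+1}(\mathbb{B})$; compatibility with the antipode then follows automatically (a bialgebra map between Hopf algebras commutes with the antipode), or can be checked directly on generators using $S(\alpha_{+-}) = -q\alpha_{+-}$, $(-q)^N = -1$ since $N$ is odd. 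This completes the proof, and the main obstacle is cleanly organizing the root-of-unity cancellations of $q$-powers in the centrality argument; everything else is a direct application of Lemma \ref{lemma_qbinomial}.
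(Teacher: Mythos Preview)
Your argument for well-definedness breaks down at the determinant relation. You want to show $\alpha_{++}^N\alpha_{--}^N - \alpha_{+-}^N\alpha_{-+}^N = 1$ in $\mathcal{S}_\omega(\mathbb{B})$, and you attempt this via $(1 + q^{-1}\alpha_{+-}\alpha_{-+})^N = 1 + A_{+-}A_{-+}$ ``by Lemma~\ref{lemma_qbinomial}''. But the two summands $1$ and $q^{-1}\alpha_{+-}\alpha_{-+}$ \emph{commute}, so the ordinary binomial theorem applies and none of the middle terms vanish; Lemma~\ref{lemma_qbinomial} requires a genuine $q$-commutation $yx=qxy$ with $q$ a nontrivial root of unity. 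Likewise, your claim that $(\alpha_{++}\alpha_{--})^N = \alpha_{++}^N\alpha_{--}^N$ because ``the cross terms carry powers of $q$ that sum to a multiple of $N$'' is unfounded: $\alpha_{++}$ and $\alpha_{--}$ do not $q$-commute, their commutator is $(q^{-1}-q)\alpha_{+-}\alpha_{-+}$, so reordering does not just produce powers of $q$.

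The paper sidesteps this obstacle by first building the map at the level of $\mathcal{R}_q[\GL_2]$ (where there is no determinant relation to check), proving there that $\widetilde{j}$ is a bialgebra morphism with central image, and then observing that $\widetilde{j}(\det_{+1})$ is group-like. A separate structural lemma (Lemma~\ref{lemma_GL2}) classifies group-like elements of $\mathcal{R}_q[\GL_2]$ as powers of $\det_q$; hence $\widetilde{j}(\det_{+1}) = \det_q^n$ for some $n$, which becomes $1$ in the quotient $\mathcal{S}_\omega(\mathbb{B})$. This is the missing idea in your approach. Your treatment of centrality and of the coproduct compatibility is essentially what the paper does; and note that the paper also proves injectivity (via the basis $\mathcal{B}^{\mathfrak{o}}$), which you do not address.
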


 \subsubsection{The case of the triangle} Denote by $\alpha, \beta, \gamma$ the three arcs of Figure \ref{figtriangle} and $\tau$ the automorphism of $\mathcal{S}_{\omega}(\mathbb{T})$ induced by the rotation sending $\alpha, \beta, \gamma$ to $\beta, \gamma, \alpha$ respectively . In \cite[Theorem $4.6$]{LeStatedSkein}, it was proved that the stated skein algebra $\mathcal{S}_{\omega}(\mathbb{T})$ is presented by the generators $\alpha_{\varepsilon \varepsilon'}, \beta_{\varepsilon \varepsilon'}, \gamma_{\varepsilon \varepsilon'}$ and the following relations together with their images through $\tau$ and $\tau^2$:
\begin{eqnarray}
\alpha_{-\varepsilon}\alpha_{+\varepsilon'}&=& A^2 \alpha_{+\varepsilon}\alpha_{-\varepsilon'}-\omega^{-5}C_{\varepsilon'}^{\varepsilon} \label{eq1} \\
\alpha_{\varepsilon -}\alpha_{\varepsilon' +}&=& A^2 \alpha_{\varepsilon +}\alpha_{\varepsilon'-}-\omega^{-5}C_{\varepsilon'}^{\varepsilon} \label{eq2} \\
\beta_{\mu \varepsilon} \alpha_{\mu' \varepsilon'} &=& A\alpha_{\varepsilon \varepsilon'}\beta_{\mu\mu'} - A^2 C_{\mu'}^{\varepsilon} \gamma_{\varepsilon' \mu} \label{eq3}
\\ \alpha_{- \varepsilon}\beta_{\varepsilon' +} &=& A^2 \alpha_{+ \varepsilon} \beta_{\varepsilon' -} - \omega^{-5}\gamma_{\varepsilon \varepsilon'} \label{eq4}
\\ \alpha_{\varepsilon -} \gamma_{+ \varepsilon'} &=& A^2 \alpha_{\varepsilon +} \gamma_{- \varepsilon'} + \omega \beta_{\varepsilon' \varepsilon} \label{eq5}
\end{eqnarray}
\par Here we use the notation $A:= \omega^{-2}$, $C_-^- = C_+^+:=0$, $C_+^-:= -\omega^5$ and $C_-^+:=\omega$.
\begin{figure}[!h] 
\centerline{\includegraphics[width=7cm]{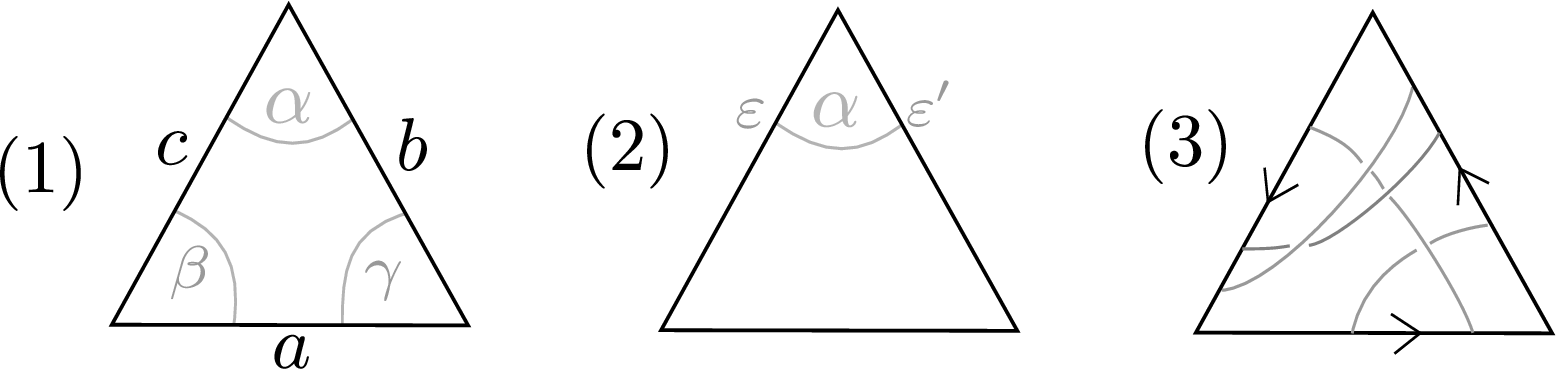} }
\caption{ $(1)$ The three diagrams $\alpha, \beta, \gamma$,  $(2)$  the stated diagram representing $\alpha_{\varepsilon \varepsilon'}$ and  $(3)$ the diagram $\theta^{(2,1,1)}$.}
\label{figtriangle} 
\end{figure} 

 \par When $\omega=+1$, the algebra $\mathcal{S}_{+1}(\mathbb{T})$ has the following simpler presentation. Consider the commutative unital polynomial algebra $\mathcal{A}:= \mathcal{R}[\alpha_{\varepsilon \varepsilon'}, \beta_{\varepsilon \varepsilon'}, \gamma_{\varepsilon \varepsilon'} | \varepsilon, \varepsilon'=\pm ]$. Given $\delta \in \{\alpha, \beta, \gamma \}$, denote by $M_{\delta}$ the $2\times 2$ matrix with coefficients in $\mathcal{A}$ defined by $M_{\delta}:= \begin{pmatrix} \delta_{++} & \delta_{+-} \\ \delta_{-+}& \delta_{--} \end{pmatrix}$ and write $C:= \begin{pmatrix} 0 & 1 \\ -1 & 0 \end{pmatrix}$ and $\mathds{1}:=\begin{pmatrix} 1&0 \\ 0&1 \end{pmatrix}$.
 
 \begin{lemma}\label{lemma_triangle+1}
 The algebra $\mathcal{S}_{+1}(\mathbb{T})$ is isomorphic to  
  \begin{equation*}
 \quotient{\mathcal{R}[\alpha_{\varepsilon \varepsilon'}, \beta_{\varepsilon \varepsilon'}, \gamma_{\varepsilon \varepsilon'} | \varepsilon, \varepsilon'=\pm ]}{\left( \begin{array}{l} \det(M_{\alpha})=\det(M_{\beta})=\det(M_{\gamma})=1, \\ M_{\gamma}CM_{\beta} C M_{\alpha} C = \mathds{1} \end{array} \right)}.
 \end{equation*} 
 \end{lemma}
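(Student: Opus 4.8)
The strategy is to start from L\^e's presentation of $\mathcal{S}_{+1}(\mathbb{T})$ by the generators $\alpha_{\varepsilon\varepsilon'},\beta_{\varepsilon\varepsilon'},\gamma_{\varepsilon\varepsilon'}$ and the relations \eqref{eq1}--\eqref{eq5} (plus their $\tau$- and $\tau^2$-images), specialize all the structure constants at $\omega=+1$ (so $A=1$, $\omega^{-5}=1$, $C^-_+=1$, $C^+_-=-1$, $C^+_+=C^-_-=0$), and show that the resulting ideal is exactly the one generated by the three determinant relations and the single matrix relation $M_\gamma C M_\beta C M_\alpha C = \mathds{1}$. Since at $\omega=+1$ the height-exchange and Kauffman relations force commutativity (as recalled in the Remark after \eqref{eq: height exch corr}), $\mathcal{S}_{+1}(\mathbb{T})$ is already a quotient of the free commutative algebra $\mathcal{A}=\mathcal{R}[\alpha_{\varepsilon\varepsilon'},\beta_{\varepsilon\varepsilon'},\gamma_{\varepsilon\varepsilon'}]$, so it suffices to compare the two ideals inside $\mathcal{A}$.

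First I would rewrite equations \eqref{eq1} and \eqref{eq2} at $\omega=+1$: they become $\alpha_{-\varepsilon}\alpha_{+\varepsilon'}-\alpha_{+\varepsilon}\alpha_{-\varepsilon'}=C^\varepsilon_{\varepsilon'}$, i.e.\ (taking $(\varepsilon,\varepsilon')=(+,-)$ and $(-,+)$ and using commutativity) precisely $\det(M_\alpha)=\alpha_{++}\alpha_{--}-\alpha_{+-}\alpha_{-+}=1$; applying $\tau,\tau^2$ gives $\det(M_\beta)=\det(M_\gamma)=1$. Next, equation \eqref{eq3} at $\omega=+1$ reads $\beta_{\mu\varepsilon}\alpha_{\mu'\varepsilon'}=\alpha_{\varepsilon\varepsilon'}\beta_{\mu\mu'}-C^\varepsilon_{\mu'}\gamma_{\varepsilon'\mu}$, which I would repackage as a single matrix identity among $M_\alpha,M_\beta,M_\gamma$ and $C$: reading off the four entries should give exactly $M_\gamma = $ (some product of $M_\alpha,M_\beta$ conjugated by $C$), equivalently the relation $M_\gamma C M_\beta C M_\alpha C=\mathds 1$ after using the determinant-one conditions to invert. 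Equations \eqref{eq4} and \eqref{eq5}, together with their $\tau$-images, should then be shown to be consequences of \eqref{eq3} and the determinant relations (they are the "cyclic" restatements), so they add nothing new to the ideal. This gives a surjection from the presented algebra onto the algebra in the statement; conversely one checks the displayed relations hold in $\mathcal{S}_{+1}(\mathbb{T})$, giving a map the other way, and the two are mutually inverse on generators.

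The main obstacle is bookkeeping: matching the index conventions in \eqref{eq1}--\eqref{eq5} (which $\varepsilon$ is a row index, which is a column index, and how $C^\varepsilon_{\varepsilon'}$ sits inside $C=\begin{pmatrix}0&1\\-1&0\end{pmatrix}$) against the matrix form $M_\gamma C M_\beta C M_\alpha C=\mathds 1$, and in particular verifying that \eqref{eq4} and \eqref{eq5} are redundant rather than imposing extra relations — this requires carefully expanding the $2\times 2$ matrix product and comparing all four entries, using $\det(M_\delta)=1$ to rewrite inverses as $C M_\delta^t C$ or the adjugate. Once the index conventions are pinned down, the identification is a finite linear-algebra check; I expect no conceptual difficulty beyond that, since commutativity at $\omega=+1$ removes all ordering subtleties.
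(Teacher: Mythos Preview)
Your proposal is correct and follows essentially the same approach as the paper: specialize L\^e's presentation at $\omega=+1$, observe that commutativity is forced, that \eqref{eq1}--\eqref{eq2} collapse to the determinant relations, and that the remaining non-trivial content of \eqref{eq3}--\eqref{eq5} (and their $\tau$-images) is exactly the single matrix identity $M_\gamma C M_\beta C M_\alpha C=\mathds{1}$. The only cosmetic difference is that the paper phrases the matrix relation as coming from \eqref{eq4} (equivalently $(M_\gamma C)^{-1}=M_\beta C M_\alpha C$) and notes that \eqref{eq4} is a special case of \eqref{eq3} while \eqref{eq5} is the $\tau$-image of \eqref{eq4}, whereas you extract it directly from \eqref{eq3}; the underlying bookkeeping is the same.
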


 \begin{proof} 
		That $\mathcal{S}_{+1}(\mathbb{T})$ commutative is a particular case of \cite[Corollary $2.5$]{LeStatedSkein}. After setting $\omega=+1$ we see that Equations \eqref{eq1} and \eqref{eq2} coincide;  \eqref{eq5} is the image of \eqref{eq4} by rotation, and the latter is a particular case of \eqref{eq3}. 
		Moreover, a direct inspection shows that the other part of \eqref{eq1} and of \eqref{eq3} correspond to $\det(M_{\alpha})=1$ and $(M_{\gamma}C)^{-1}=M_{\beta} C M_{\alpha} C$, respectively. 
 \end{proof}
 
 \begin{lemma}\label{lemma_triangle_preliminar} Suppose that $\omega$ is a root of unity of odd order $N\geq 1$.
 For every $\varepsilon, \varepsilon', \mu, \mu' \in \{ -, +\}$ with $\varepsilon\neq \mu'$, one has
 \begin{equation*}
 \alpha_{\mu' \varepsilon'}^N \beta_{\mu \varepsilon}^N - \alpha_{\varepsilon \varepsilon'}^N \beta_{\mu \mu'}^N =  \gamma_{\varepsilon', \mu}^N.
\end{equation*} 
\end{lemma}

\begin{proof} We suppose that $(\varepsilon, \mu')=(-,+)$. The proof in the case where $(\varepsilon, \mu')=(+,-)$ is similar and left to the reader. For $n\geq 0$, let $D_n$ be the simple diagram made of $n$ parallel copies of $\alpha$ and $n$ parallel copies of $\beta$ and consider the orientation $\mathfrak{o}$ depicted in Figure \ref{fig_equationbidon}. For $\mathbf{\eta}=(\eta_1, \ldots, \eta_n) \in \{-, +\}^n $ let $\mathbf{\eta}^{\vee}:=\{-\eta_n, \ldots, -\eta_1\}$. For $\mathbf{\eta}, \mathbf{\eta}' \in \{ - , +\}^n$ let $s_{\mathbf{\eta}, \mathbf{\eta}'}$ be the state of $D_n$ sending all points of $\partial_b D_n$ to $\varepsilon'$, all points of $\partial_a D_n$ to $\mu$ and the points $(p_1, \ldots, p_n, p_1', \ldots, p_n')$ of $\partial_c D_n$ ordered by $\mathfrak{o}$, to the states $(\eta_1, \ldots, \eta_n, \eta_1', \ldots, \eta_n')$. Write $X_{\mathbf{\eta}, \mathbf{\eta}'}:= [D_n, s_{\mathbf{\eta}, \mathbf{\eta}'}]$. 

\begin{figure}[!h] 
\centerline{\includegraphics[width=10cm]{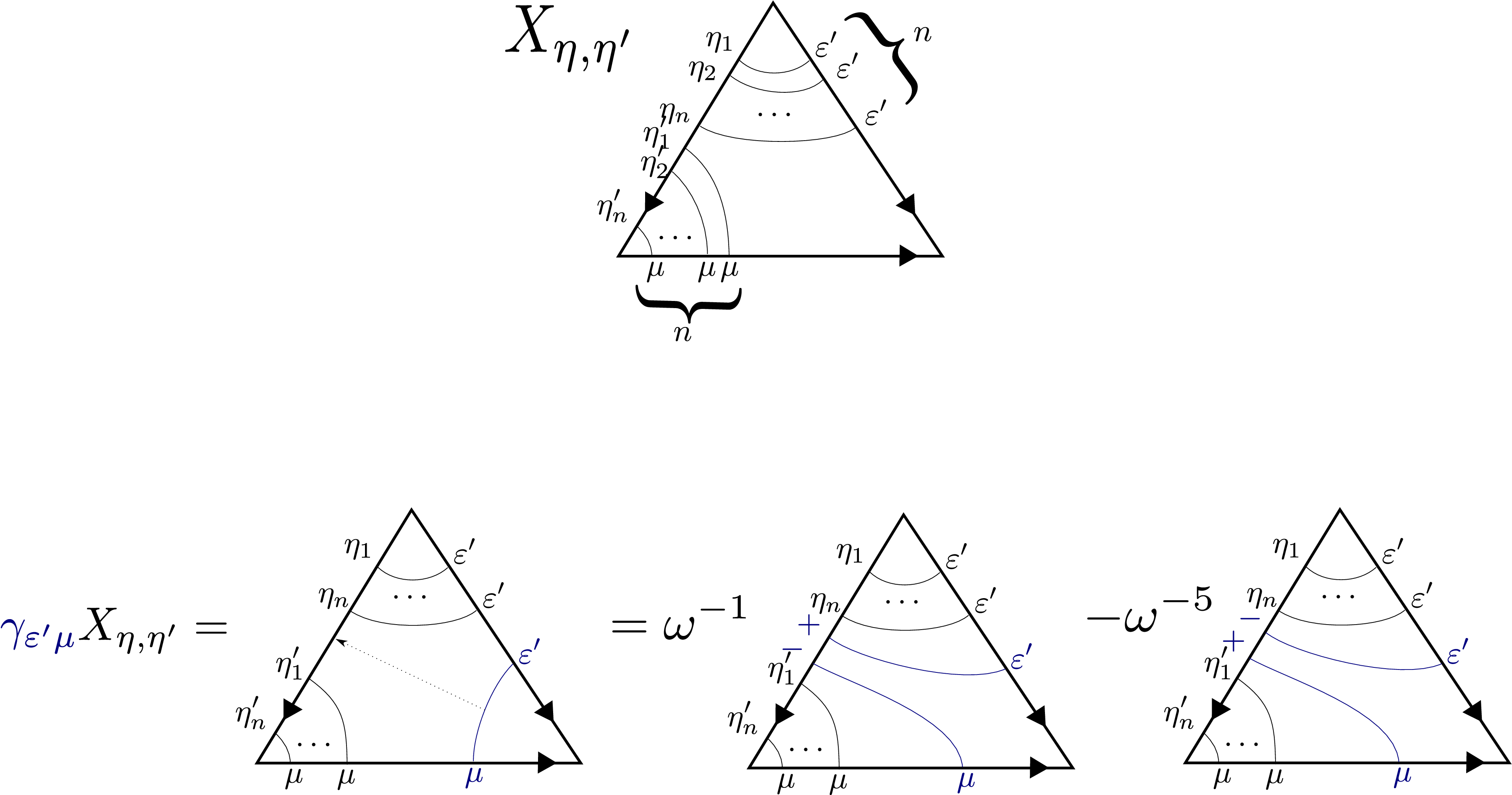} }
\caption{On the top: the element $X_{\eta, \eta'}$. On the bottom: an illustration of Equation \eqref{eq_bidon0}. } 
\label{fig_equationbidon} 
\end{figure} 

 \par Using the skein relations \eqref{eq: skein 2}, as illustrated in Figure \ref{fig_equationbidon}, we find that 
 \begin{equation}\label{eq_bidon0}
  X_{\mathbf{\eta}, \mathbf{\eta}'} \gamma_{\varepsilon', \mu} = \omega^{-1} X_{(\mathbf{\eta}, +), (-, \mathbf{\eta}')} - \omega^{-5} X_{(\mathbf{\eta}, -), (+, \mathbf{\eta}')}, 
  \end{equation}
 where $(\mathbf{\eta}, +) := (\eta_1, \ldots, \eta_n, +)$ and $(-, \mathbf{\eta}'):= (-, \eta'_1, \ldots, \eta_n')$. Let $n_+(\eta)$ be the number of indices $i\in \{1, \ldots, n\}$ such that $\eta_i=+$. 
 Using Equation \eqref{eq_bidon0}, we prove by induction of $n$ that 
 \begin{equation}\label{eq_bidon1}
 (\gamma_{\varepsilon' \mu})^n = \sum_{\eta \in \{-, +\}^n} (\omega^{-1})^{n_+(\eta)} (-\omega^{-5})^{n-n_+(\eta)} X_{\eta, \eta^{\vee}}.
 \end{equation}
 Let $m(\eta):= \# \{ 1\leq i <j \leq n | (\eta_i, \eta_j)=(+, -) \}$ and denote by $\eta_+$ the unique element of $\{-,+\}^n$ such that $n_+(\eta)=n_+(\eta_+)$ and  $m(\eta_+)=0$. Note that $m(\eta)=m(\eta^{\vee})$. Using the skein relation \eqref{eq: skein 2}, we find that for any $\eta, \eta' \in \{-,+\}^n$, one has 
 \begin{equation}\label{eq_bidon2}
 X_{\eta, \eta'} = q^{m(\eta)+m(\eta')} X_{\eta_+, \eta'_+}.
 \end{equation}
 
 For $1\leq k \leq N$, let $\eta_+^{(k)}\in \{-, +\}^N$ be the unique element such that $m(\eta_+^{(k)})=0$ and $n_+(\eta_+^{(k)})=k$, \textit{i.e.} ${\eta_+^{(k)}}_i = \left\{ \begin{array}{ll} - & \mbox{, for }1\leq i \leq N-k; \\ + & \mbox{, for }i>N-k.\end{array} \right.$
 Putting Equations \eqref{eq_bidon1} and \eqref{eq_bidon2} together, one finds that 
 $$ (\gamma_{\varepsilon' \mu})^N = \sum_{k=0}^N (\omega^{-1})^{k} (-\omega^{-5})^{N-k} \left(\sum_{\eta \in \{-,+\}^N, n_+(\eta)=k} q^{2m(\eta)} \right) X_{\eta_+^{(k)}, \eta_+^{(k)\vee}}.$$
Now, a simple computation shows that 
\begin{multline*}
 \left(\sum_{\eta \in \{-,+\}^N, n_+(\eta)=k} q^{2m(\eta)} \right) = q^{2nN-n(n-1)} \sum_{1\leq i_1 <i_2 < \ldots <i_n\leq N} q^{2(i_1+\ldots +i_n)} \\ = \left\{ \begin{array}{ll} 1 & \mbox{, if }k=0 \mbox{ or }k=N; \\ 0 & \mbox{, else.}\end{array}\right.\end{multline*}
Therefore, we find that 
$$ (\gamma_{\varepsilon' \mu})^N = X_{\eta_+^{(N)}, \eta_-^{(N)}} - X_{\eta_-^{(N)}, \eta_+^{(N)}} = \alpha_{+ \varepsilon'}^N \beta_{\mu -}^N  - \alpha_{- \varepsilon'}^N \beta_{\mu +}^N.$$
Note that we used that $(-1)^N=-1$, so that $N$ is odd. This concludes the proof.

\end{proof} 
 
 \begin{lemma}\label{lemma_center_triangle}
 Suppose that $\omega$ is a root of unity of odd order $N\geq 1$. There exists an injective morphism of algebras $j_{\mathbb{T}}:\mathcal{S}_{+1}(\mathbb{T}) \rightarrow \mathcal{S}_{\omega}(\mathbb{T})$, whose image lies in the center of $\mathcal{S}_{\omega}(\mathbb{T})$,  characterized by $j_{\mathbb{T}}(\delta_{\varepsilon \varepsilon'}):= (\delta_{\varepsilon \varepsilon'})^N$ for $\delta\in \{\alpha, \beta, \gamma\} $ and $\varepsilon, \varepsilon' = \pm$.  Moreover, if $a$ is a boundary arc of $\mathbb{T}$, the following diagrams commute:
 $$ \begin{array}{ll} 
 \begin{tikzcd}
 \mathcal{S}_{+1}(\mathbb{T}) \arrow[r, "\Delta_a^L"] \arrow[d, hook, "j_{\mathbb{T}}"] & \mathcal{S}_{+1}(\mathbb{B})\otimes \mathcal{S}_{+1}(\mathbb{T}) \arrow[d, hook, "j_{\mathbb{B}}\otimes j_{\mathbb{T}}"] \\
 \mathcal{S}_{\omega}(\mathbb{T}) \arrow[r, "\Delta_a^L"] & \mathcal{S}_{\omega}(\mathbb{B})\otimes \mathcal{S}_{\omega}(\mathbb{T})
 \end{tikzcd}
 &  
 \begin{tikzcd}
 \mathcal{S}_{+1}(\mathbb{T}) \arrow[r, "\Delta_a^R"] \arrow[d, hook, "j_{\mathbb{T}}"] &  \mathcal{S}_{+1}(\mathbb{T})\otimes \mathcal{S}_{+1}(\mathbb{B}) \arrow[d, hook, "j_{\mathbb{T}}\otimes j_{\mathbb{B}}"] \\
 \mathcal{S}_{\omega}(\mathbb{T}) \arrow[r, "\Delta_a^R"] &  \mathcal{S}_{\omega}(\mathbb{\mathbb{T}})\otimes \mathcal{S}_{\omega}(\mathbb{B})
 \end{tikzcd}
 \end{array}
 $$
 
 \end{lemma}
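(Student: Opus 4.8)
The plan is to play the two available presentations against each other: the presentation \eqref{eq1}--\eqref{eq5} of $\mathcal{S}_{\omega}(\mathbb{T})$ (together with its images under $\tau$ and $\tau^{2}$), and the presentation of $\mathcal{S}_{+1}(\mathbb{T})$ furnished by Lemma \ref{lemma_triangle+1}. Since $\mathcal{S}_{+1}(\mathbb{T})$ is the commutative algebra on the generators $\delta_{\varepsilon\varepsilon'}$ ($\delta\in\{\alpha,\beta,\gamma\}$) modulo $\det(M_{\alpha})=\det(M_{\beta})=\det(M_{\gamma})=1$ and $M_{\gamma}CM_{\beta}CM_{\alpha}C=\mathds{1}$, to produce the algebra morphism $j_{\mathbb{T}}$ it suffices to check that the elements $\delta_{\varepsilon\varepsilon'}^{N}\in\mathcal{S}_{\omega}(\mathbb{T})$ are central and that they satisfy these same relations, where in the determinants and in the matrix product each entry $\delta_{\varepsilon\varepsilon'}$ of $M_{\delta}$ is replaced by $\delta_{\varepsilon\varepsilon'}^{N}$; injectivity and the two commuting squares will then be treated separately. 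Throughout I will use the arithmetic input that, $\omega$ having odd order $N$, so do $A=\omega^{-2}$ and $q=\omega^{-4}$, whence $\omega^{kN}=A^{kN}=q^{kN}=1$ for all $k$, so that every structure constant appearing in \eqref{eq1}--\eqref{eq5} becomes $1$ after raising to the $N$-th power.

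For centrality, first note that the assignment $\delta_{\varepsilon\varepsilon'}\mapsto\delta_{\varepsilon\varepsilon'}^{N}$ is equivariant for the rotation $\tau$, an algebra automorphism of $\mathcal{S}_{\omega}(\mathbb{T})$; hence it is enough to prove that each $\alpha_{\varepsilon\varepsilon'}^{N}$ commutes with every generator. The subalgebra of $\mathcal{S}_{\omega}(\mathbb{T})$ generated by the $\alpha_{\varepsilon\varepsilon'}$ is a faithful copy of $\mathcal{S}_{\omega}(\mathbb{B})=\mathbb{C}_q[\SL_2]$ (it is a quotient of $\mathbb{C}_q[\SL_2]$ by \eqref{eq1}--\eqref{eq2}, including the determinant relation, and faithfulness follows from the basis $\mathcal{B}^{\mathfrak{o}}$), so that $\alpha_{\varepsilon\varepsilon'}^{N}$ commutes with the other $\alpha$'s by exactly the computation carried out in the proof of Lemma \ref{lemma_center_bigone} (quasi-commutation scalars become $q^{\pm N}=1$, and the mixed relation is handled by the telescoping identity $\alpha_{++}^{N}\alpha_{--}-\alpha_{--}\alpha_{++}^{N}=(q^{-N}-q^{N})\alpha_{+-}\alpha_{++}^{N-1}\alpha_{-+}=0$). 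Commutation of $\alpha_{\varepsilon\varepsilon'}^{N}$ with the $\beta$'s and $\gamma$'s then follows from \eqref{eq3}--\eqref{eq5}: each of these relations has a quasi-commutation part (scalar a power of $A$ or $q$, killed by the $N$-th power) plus an additive correction term, and one telescopes as above, using $\det(M_{\alpha})=\det(M_{\beta})=\det(M_{\gamma})=1$ to rewrite the corrections. This is a direct but somewhat long case check in the spirit of the bigon computation; the bookkeeping can be shortened by first eliminating the $\gamma$'s via \eqref{eq4}, so that $\mathcal{S}_{\omega}(\mathbb{T})$ is presented on the $\alpha$'s and $\beta$'s alone.

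Next come the relations. That $\alpha_{++}^{N}\alpha_{--}^{N}-\alpha_{+-}^{N}\alpha_{-+}^{N}=1$ holds in $\mathcal{S}_{\omega}(\mathbb{T})$ follows, via the embedded copy of $\mathbb{C}_q[\SL_2]$, from the corresponding identity in $\mathcal{S}_{\omega}(\mathbb{B})$ established inside the proof of Lemma \ref{lemma_center_bigone} (the image of $\det_{+1}$ is group-like, hence a power of $\det_{q}=1$); applying $\tau$ and $\tau^{2}$ gives the determinant relations for $\beta$ and $\gamma$. The remaining relation $M_{\gamma}CM_{\beta}CM_{\alpha}C=\mathds{1}$, with each entry replaced by its $N$-th power, is the heart of the matter and the step I expect to be the main obstacle: one must show that the $q$-deformed matrix identity encoded by \eqref{eq3}--\eqref{eq5} in $\mathcal{S}_{\omega}(\mathbb{T})$ degenerates, after raising all entries to the $N$-th power, to the classical one. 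I would carry this out by expanding the four entries of $M_{\gamma}CM_{\beta}CM_{\alpha}C$ with $\delta_{\varepsilon\varepsilon'}$ replaced by $\delta_{\varepsilon\varepsilon'}^{N}$ and putting each resulting cubic monomial into normal form by repeated use of \eqref{eq1}--\eqref{eq5} and their $\tau$-images; since all the scalars produced along the way are $N$-th powers of $\omega$ and hence trivial, the computation is forced to collapse onto the commutative one and yields $\mathds{1}$. A cleaner variant would be to extract from \eqref{eq3}--\eqref{eq5} the precise $q$-matrix identity already valid in $\mathcal{S}_{\omega}(\mathbb{T})$ (of the shape $M_{\gamma}\,M_{\beta}\,M_{\alpha}$ equal to an explicit correction matrix) and to raise that identity directly to the $N$-th power.

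Finally, injectivity and the squares. Fix an orientation $\mathfrak{o}$ of the boundary arcs of $\mathbb{T}$ so that the basis $\mathcal{B}^{\mathfrak{o}}$ of Definition \ref{def_basis} --- simultaneously a basis of $\mathcal{S}_{\omega}(\mathbb{T})$ and of $\mathcal{S}_{+1}(\mathbb{T})$ --- consists of ordered monomials in the $\delta_{\varepsilon\varepsilon'}$. Since the $\delta_{\varepsilon\varepsilon'}^{N}$ are central, $j_{\mathbb{T}}$ sends a basis monomial $\prod\delta_{\varepsilon\varepsilon'}^{a_{\varepsilon\varepsilon'}}$ to $\prod\delta_{\varepsilon\varepsilon'}^{Na_{\varepsilon\varepsilon'}}$, whose leading term in $\mathcal{B}^{\mathfrak{o}}$ (for the total-degree filtration) is the basis monomial with the same exponents; distinct exponent vectors give distinct leading terms, so these images are linearly independent and $j_{\mathbb{T}}$ is injective. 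For the square involving $\Delta_{a}^{L}$, evaluate both composites on a generator $\delta_{\varepsilon\varepsilon'}$: if the arc $\delta$ misses $a$, then $\Delta_{a}^{L}(\delta_{\varepsilon\varepsilon'})=1\otimes\delta_{\varepsilon\varepsilon'}$ and both composites give $1\otimes\delta_{\varepsilon\varepsilon'}^{N}$; if $\delta$ meets $a$, then $\Delta_{a}^{L}(\delta_{\varepsilon\varepsilon'})$ is a sum of two tensors of bigon- and triangle-arc generators of the same shape as the bigon coproduct, whose two summands commute up to a power of $q$, so by Lemma \ref{lemma_qbinomial} (using $q^{2N}=1$) its $N$-th power is the corresponding sum of $N$-th powers, which equals $(j_{\mathbb{B}}\otimes j_{\mathbb{T}})(\Delta_{a}^{L}(\delta_{\varepsilon\varepsilon'}))$; this is the same verification as the one showing that $j_{\mathbb{B}}$ is a morphism of bialgebras in the proof of Lemma \ref{lemma_center_bigone}. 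Since all maps involved are algebra morphisms, the square commutes; the square involving $\Delta_{a}^{R}$ is identical.
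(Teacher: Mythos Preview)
Your overall plan coincides with the paper's: use Lemma \ref{lemma_triangle+1}, check that the $\delta_{\varepsilon\varepsilon'}^N$ are central and satisfy the commutative presentation, then handle injectivity via a basis argument and the comodule squares via Lemma \ref{lemma_qbinomial}. The centrality step, the determinant relations (reduced to the bigon case), injectivity, and the commuting squares are all treated essentially as in the paper, and your sketches there are sound.

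The genuine gap is the matrix relation $M_{\gamma}CM_{\beta}CM_{\alpha}C=\mathds{1}$ for the $N$-th powers. Your justification --- expand the cubic monomials in the central elements $\delta_{\varepsilon\varepsilon'}^N$ and ``put each into normal form by repeated use of \eqref{eq1}--\eqref{eq5}; since all the scalars produced along the way are $N$-th powers of $\omega$'' --- does not go through. Relations \eqref{eq1}--\eqref{eq5} relate the generators $\delta_{\varepsilon\varepsilon'}$, not their $N$-th powers, and the scalars they produce are $A^2$, $\omega^{-5}$, etc., not $N$-th powers of anything; conversely, since the $\delta_{\varepsilon\varepsilon'}^N$ are already central, reordering them costs nothing, but that alone gives you no relation among $\alpha^N$, $\beta^N$, $\gamma^N$ together. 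What is needed is to \emph{compute} $\gamma_{\varepsilon'\mu}^N$ in terms of the $\alpha^N$'s and $\beta^N$'s. The paper does this by observing from \eqref{eq3} that $\omega^{-3}\gamma_{\varepsilon'\mu}=\beta_{\mu+}\alpha_{-\varepsilon'}-A\,\alpha_{+\varepsilon'}\beta_{\mu-}$, setting $x:=\beta_{\mu-}\alpha_{+\varepsilon'}$ and $y:=\alpha_{-\varepsilon'}\beta_{\mu+}$, checking from the height-exchange/boundary relations that $xy=q^{2}yx$, and then applying Lemma \ref{lemma_qbinomial} (together with $A^N=\omega^N=1$, $N$ odd) to get $\gamma_{\varepsilon'\mu}^{N}=x^{N}-y^{N}=\beta_{\mu-}^{N}\alpha_{+\varepsilon'}^{N}-\alpha_{-\varepsilon'}^{N}\beta_{\mu+}^{N}$. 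This is precisely the $N$-th power version of the relation encoded in $M_{\gamma}CM_{\beta}CM_{\alpha}C=\mathds{1}$, and it is the step your argument is missing. Your ``cleaner variant'' (raise the $q$-matrix identity to the $N$-th power) is on the right track, but it only works once you have identified the pair $(x,y)$ with the $q$-commutation needed to invoke Lemma \ref{lemma_qbinomial}.
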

 
 \begin{proof}

	We proceed in a similar way than Lemma \ref{lemma_center_bigone}, by showing first that the extension of the assignment  $j_{\mathbb{T}}(\delta_{\varepsilon \varepsilon'}):= \delta_{\varepsilon \varepsilon'}^N$ to a morphism of algebras is well-defined. In virtue of Lemma \ref{lemma_triangle+1} and by the rotation automorphism, it is enough to show that  
	 $\alpha_{\varepsilon \varepsilon'}^N$ lies in the center of   $\mathcal{S}_{\omega}(\mathbb{T})$ and that $j_{\mathbb{T}}$ sends $\det( M_{\alpha})-1$ and $M_{\gamma}CM_{\beta} C M_{\alpha} C - \mathds{1}$ to zero.

	First remark that the relations \eqref{eq1} and \eqref{eq2} put together coincide with the defining relations of $\mathcal{S}_{\omega}(\mathbb{B})$, hence one has an inclusion of algebras $\phi: \mathcal{S}_{\omega}(\mathbb{B}) \hookrightarrow \mathcal{S}_\omega(\mathbb{T})$ defined by $\phi(\alpha_{\varepsilon\varepsilon'})=\alpha_{\varepsilon\varepsilon'}$. 
	By applying Lemma \ref{lemma_center_bigone}, one obtains an inclusion $\phi\circ j_{\mathbb{B}}: \mathcal{S}_{+1}(\mathbb{B}) \hookrightarrow \mathcal{S}_\omega(\mathbb{T})$ which coincides with $j_{\mathbb{T}}$ on the $\alpha_{\varepsilon \varepsilon'}$'s. It remains to show that the $\alpha_{\varepsilon \varepsilon'}^N$'s commute with the $\beta_{\mu \mu'}$'s and the $\gamma_{\mu \mu'}$'s, and that $j_{\mathbb{T}}$ vanishes on $M_{\gamma}CM_{\beta} C M_{\alpha} C - \mathds{1}$.  

We have  $\alpha_{\varepsilon \varepsilon'}^N \beta_{\mu \varepsilon} = A^{-N}\beta_{\mu \varepsilon}\alpha_{\varepsilon\varepsilon'}^N = \beta_{\mu \varepsilon}\alpha_{\varepsilon\varepsilon'}^N$. 
From 
\begin{align*}
 \alpha_{+ \varepsilon}^N \beta_{\varepsilon' -} &= \alpha_{+ \varepsilon}^{N-1}(A^{-2}\alpha_{- \varepsilon}\beta_{\varepsilon' +} +\omega^{-1} \gamma_{\varepsilon \varepsilon'})
\\ &= (A^{-3N+1}\alpha_{- \varepsilon} \beta_{\varepsilon' +} + \omega^{-1}A^{N-1} \gamma_{\varepsilon \varepsilon'})\alpha_{+ \varepsilon}^{N-1}
\end{align*}
and 
\begin{equation*}
\beta_{\varepsilon' -} \alpha_{+ \varepsilon}^N = (A\alpha_{- \varepsilon}\beta_{\varepsilon' +} + \omega \gamma_{\varepsilon \varepsilon'})\alpha_{+ \varepsilon}^{N-1},
\end{equation*}
one obtains 
$$ \alpha_{+ \varepsilon}^N\beta_{\varepsilon' -} - \beta_{\varepsilon' -}\alpha_{+ \varepsilon}^N = (A(A^{-3N}-1)\alpha_{- \varepsilon}\beta_{\varepsilon' +} + \omega(A^N-1)\gamma_{\varepsilon \varepsilon'})\alpha_{+ \varepsilon}^{N-1} = 0. $$
\par Similarly, we compute: 
\begin{eqnarray*}
 \alpha_{- \varepsilon}^N \beta_{\varepsilon' +} &=& \alpha_{- \varepsilon}^{N-1}(A^{2}\alpha_{+ \varepsilon}\beta_{\varepsilon' -} -\omega^{-5} \gamma_{\varepsilon \varepsilon'})
\\ &=& (A^{N+1}\alpha_{+ \varepsilon} \beta_{\varepsilon' -} - \omega^{-3}A^{N} \gamma_{\varepsilon \varepsilon'})\alpha_{- \varepsilon}^{N-1}; 
\\ \beta_{\varepsilon' +} \alpha_{- \varepsilon}^N &=& (A\alpha_{+ \varepsilon}\beta_{\varepsilon' -} - \omega^{-3} \gamma_{\varepsilon \varepsilon'})\alpha_{- \varepsilon}^{N-1}.
\end{eqnarray*}
Thus we find: 
$$ \alpha_{- \varepsilon}^N\beta_{\varepsilon' +} - \beta_{\varepsilon' +}\alpha_{- \varepsilon}^N = (A(A^{N}-1)\alpha_{+ \varepsilon}\beta_{\varepsilon' -} - \omega^{-3}(A^N-1)\gamma_{\varepsilon \varepsilon'})\alpha_{- \varepsilon}^{N-1} = 0. $$
 So we have proven that $\alpha_{\varepsilon \varepsilon'}^N$ commutes with every elements $\beta_{\mu \mu'}$. 
The commutativity of  $\alpha_{\varepsilon \varepsilon'}^N$ with each element $\gamma_{\mu \mu'}$ is shown in a very similar way. 

	Next, showing that $j_{\mathbb{T}}$ vanishes on $M_{\gamma}CM_{\beta} C M_{\alpha} C - \mathds{1}$ amounts to showing that 
	\begin{equation*}
	\beta_{\mu \varepsilon}^N \alpha_{\mu' \varepsilon'}^N - \alpha_{\varepsilon \varepsilon'}^N \beta_{\mu \mu'}^N =  \gamma_{\varepsilon', \mu}^N  \text{ for } \varepsilon\neq \mu'.
	\end{equation*} 
	This was proved in Lemma \ref{lemma_triangle_preliminar}.

	Now let us prove that $j_{\mathbb{T}}$ is injective.  
	To this end, let us consider the following basis of $\mathcal{S}_{\omega}(\mathbb{T})$. 
	
	Consider the counter-clockwise orientation $\mathfrak{o}$ of the boundary arcs of $\mathbb{T}$ as in Figure \ref{figtriangle}. 
	Given $\mathbf{k}=(k_{\alpha}, k_{\beta}, k_{\gamma}) \in (\mathbb{Z}^{\geq 0})^3$, denote by $\theta^{\mathbf{k}}$ the (not simple) diagram $\alpha^{k_{\alpha}}\beta^{k_{\beta}}\gamma^{k_{\gamma}}$; see Figure \ref{figtriangle} for an example. 
	By Proposition \ref{prop_alternative_basis} the set of classes $[\theta^{\mathbf{k}}, s]$, where $s$ is $\mathfrak{o}$-increasing, forms a basis of $\mathcal{S}_{\omega}(\mathbb{T})$.  
	
	By construction, $j_{\mathbb{T}}$  sends the elements  $[\theta^{\mathbf{k}}, s]$ of $\mathcal{S}_{+1}(\mathbb{T})$, where $s$ is $\mathfrak{o}$-increasing, to some basis elements $[\theta^{N \mathbf{k}}, s']$, where $s'$ is also $\mathfrak{o}$ increasing, therefore  $j_{\mathbb{T}}$ is injective. 
	
	It remains to prove that  $j_{\mathbb{T}}$ is a morphism of Hopf comodules. To avoid confusion, let us denote by $x_{\varepsilon \varepsilon'}$ the generators of $\mathcal{S}_{\omega}(\mathbb{B})$  and reserve the notation $\alpha_{\varepsilon \varepsilon'}$ for the element of $\mathcal{S}_{\omega}(\mathbb{T})$. By definition, we have $\Delta_c^L(\alpha_{\varepsilon \varepsilon'}) = x_{\varepsilon +} \otimes \alpha_{+ \varepsilon'} + x_{\varepsilon - } \otimes \alpha_{- \varepsilon'}$. Write $u:= x_{\varepsilon +} \otimes \alpha_{+ \varepsilon'}$ and $v:= x_{\varepsilon - } \otimes \alpha_{- \varepsilon'}$. Since $uv= q^{-2}vu$, by Lemma \ref{lemma_qbinomial} we have $(u+v)^N=u^N+v^N$, so 
	\begin{multline*} \Delta_c^L (j_{\mathbb{B}}(\alpha_{\varepsilon \varepsilon'})) = \left( \Delta_c^L(\alpha_{\varepsilon \varepsilon'}) \right)^N = (u+v)^N = u^N +v^N \\= x_{\varepsilon +}^N\otimes \alpha_{+ \varepsilon'}^N + x_{\varepsilon -}^N \otimes \alpha_{- \varepsilon'}^N = j_{\mathbb{B}}\otimes j_{\mathbb{T}} (\Delta_c^L (\alpha_{\varepsilon \varepsilon'})).\end{multline*}
	The proof that $\Delta_b^L (j_{\mathbb{B}}(\alpha_{\varepsilon \varepsilon'})) =  j_{\mathbb{B}}\otimes j_{\mathbb{T}} (\Delta_b^L (\alpha_{\varepsilon \varepsilon'}))$ is done using a similar computation and the equality $\Delta_a^L (j_{\mathbb{B}}(\alpha_{\varepsilon \varepsilon'})) =  j_{\mathbb{B}}\otimes j_{\mathbb{T}} (\Delta_a^L (\alpha_{\varepsilon \varepsilon'}))$ holds since both sides are equal to $1 \otimes \alpha_{\varepsilon \varepsilon'}^N$. By symmetry in the generators $\alpha, \beta, \gamma$, we have prove that $j_{\mathbb{B}}$ commutes with the left comodule maps. That it commutes with the right comodule maps is proved similarly. 
	 This concludes the proof.

\end{proof} 

\subsubsection{The general case: proof of Theorem \ref{theorem2}}

	In this section we prove Theorem \ref{theorem2} that we remind here for the convenience of the reader:
	
	 \begin{theorem}
	Suppose that  $\omega$ is a root of unity of odd order $N\geq 1$ and $\mathbf{\Sigma}$ a punctured surface. There exists an embedding 
	\begin{equation*}
	j_{\mathbf{\Sigma}} : \mathcal{S}_{+1}(\mathbf{\Sigma}) \hookrightarrow \mathcal{Z}\left( \mathcal{S}_{\omega}(\mathbf{\Sigma}) \right)
	\end{equation*}
	of the (commutative) stated skein algebra with parameter $+1$ into the center of the stated skein algebra with parameter $\omega$. Moreover, the morphism $j_{\mathbf{\Sigma}}$  is characterized by the property that it sends a closed curve $\gamma$ to $T_N(\gamma)$ and a stated arc $\alpha_{\varepsilon \varepsilon'}$ to $\alpha_{\varepsilon \varepsilon'}^{(N)}$, where $\alpha_{\varepsilon \varepsilon'}^{(N)}$ is the tangle made by stacking $N$ parallel copies of $\alpha_{\varepsilon \varepsilon'}$ on top of the others.
\end{theorem}
Recall from Section \ref{sec_basecheloud} that closed curves and arcs do not have self-intersection points by definition.
	We divide the proof in five steps. 
	\\
	\par In Step $1$, we show that the decomposition Theorem \ref{theorem1} together with the two previous sections provide an injective morphism of algebras 
	\begin{equation}\label{eq: morph j surf triang}
	j_{(\mathbf{\Sigma},\Delta)} : \mathcal{S}_{+1}(\mathbf{\Sigma}) \hookrightarrow \mathcal{S}_{\omega}(\mathbf{\Sigma}),  
	\end{equation}
	which is central. 
	We study further properties of $j_{(\mathbf{\Sigma},\Delta)}$ and we show that it is does \emph{not} depend  on a topological triangulation $\Delta$. 
	The other steps are devoted to making explicit the morphism $j_{(\mathbf{\Sigma},\Delta)}$ on arcs and loops. In Step $2$ to $4$, we suppose that the punctured surface has a non-degenerated triangulation (see below); in Step $5$ we treat the other punctured surfaces. 
	\\
	\par In Step $2$, we prove that  $j_{(\mathbf{\Sigma},\Delta)}$ sends the stated arcs that have their endpoints on \emph{two different} boundary arcs of $\Sigma$, to their $N$-th power. 
	\\
	\par In Step $3$, we prove that $j_{(\mathbf{\Sigma},\Delta)}$ sends some particular closed curves of $\Sigma_{\mathcal{P}}$ to their $N$-th Chebyshev polynomial of first kind.  
	\\
	\par Step $4$ is more involved. We first prove a structural result. Adding a puncture on a surface $\mathbf{\Sigma}$ gives rise to a surjective map $\varphi$ from the skein algebra of the new punctured surface to that of the initial one defined in Section $2.3$. We show that    $j_{(\mathbf{\Sigma},\Delta)}$ commutes with these surjections (see Lemma \ref{lemma_add_puncture}).  
	From this,  we deduce the image by $j_{(\mathbf{\Sigma},\Delta)}$ of  stated arcs that have both their endpoints on the \emph{same} boundary arc of $\Sigma$ and of \emph{any} closed curve of $\Sigma_{\mathcal{P}}$.  
	\\
	\par In Step $5$, we treat the remaining cases of connected punctured surfaces that do not admit a non-degenerate topological triangulation (including those with no puncture). The proof consists, again, in adding a puncture and using the previous study.
	\\
	\par These five steps prove Theorem \ref{theorem2}.
		\\
	
	All along this section, $\mathbf{\Sigma}$ is a punctured surface, $\Delta$  a topological triangulation $\mathbf{\Sigma}$ and $\omega$  a root of unity of odd order $N\geq 1$. 
	Except for Step $1$ and $5$, the triangulation $\Delta$ is required to be \emph{non-degenerate}, that is, such that each of its inner edges separates two distinct faces.

\paragraph{Step 1: formal definition.} 

Assume that $\mathbf{\Sigma}$ admits a (possibly degenerate) triangulation $\Delta$. 
Consider the following diagram, where both lines are exact by Theorem \ref{theorem1} and the vertical maps are given by Lemmas \ref{lemma_center_bigone} and \ref{lemma_center_triangle}. 
\begin{equation}\label{diag j}
\begin{tikzcd}
0 \arrow[r,""] & 
\mathcal{S}_{+1}(\mathbf{\Sigma}) \arrow[r, "i^{\Delta}"] \arrow[d, hook, dotted, "\exists!", "j_{(\mathbf{\Sigma}, \Delta)}"'] & 
\otimes_{\mathbb{T}\in F(\Delta)} \mathcal{S}_{+1}(\mathbb{T}) \arrow[r,"\Delta^L - \sigma \circ \Delta^R"] \arrow[d, hook,"\otimes_{\mathbb{T}}  j_{\mathbb{T}}"] &
\left( \otimes_{e\in \mathring{\mathcal{E}}(\Delta)} \mathcal{S}_{+1}(\mathbb{B}) \right) \otimes \left( \otimes_{\mathbb{T}\in F(\Delta)} \mathcal{S}_{+1}(\mathbb{T}) \right)
\arrow[d, hook,"(\otimes_e j_{\mathbb{B}})\otimes (\otimes_{\mathbb{T}} j_{\mathbb{T}})"] \\
0 \arrow[r,""] & 
\mathcal{S}_{\omega}(\mathbf{\Sigma}) \arrow[r, "i^{\Delta}"]  & 
\otimes_{\mathbb{T}\in F(\Delta)} \mathcal{S}_{\omega}(\mathbb{T}) \arrow[r,"\Delta^L - \sigma \circ \Delta^R"] &
\left( \otimes_{e\in \mathring{\mathcal{E}}(\Delta)} \mathcal{S}_{\omega}(\mathbb{B}) \right) \otimes \left( \otimes_{\mathbb{T}\in F(\Delta)} \mathcal{S}_{\omega}(\mathbb{T}) \right)
\end{tikzcd}
\end{equation}
The existence of an injective morphism $j_{(\mathbf{\Sigma},\Delta)} : \mathcal{S}_{+1}(\mathbf{\Sigma}) \hookrightarrow \mathcal{S}_{\omega}(\mathbf{\Sigma})$ follows from the exactness of the lines and the injectivity of $\otimes_{\mathbb{T}\in F(\Delta)}j_{\mathbb{T}}$ (and the fact that all maps involved in the diagram are algebra morphisms). Moreover, since $j_{\mathbb{T}}$ is central, so is $j_{(\mathbf{\Sigma},\Delta)}$. 

Let us show that $j_{(\mathbf{\Sigma},\Delta)}$ is compatible with the gluing maps. 
\begin{lemma}\label{lem: j commute with i}
If $a$ and $b$ are two boundary arcs of $\mathbf{\Sigma}$, the following diagram commutes:
$$ \begin{tikzcd}
\mathcal{S}_{+1}(\mathbf{\Sigma}_{|a\#b}) 
\arrow[r, hook, "j_{\mathbf{\Sigma}_{a\#b}}"] 
\arrow[d, hook, "i_{|a\#b}"] 
& \mathcal{S}_{\omega} (\mathbf{\Sigma}_{|a\#b}) \arrow[d, hook, "i_{|a\#b}"] \\
\mathcal{S}_{+1}(\mathbf{\Sigma})
\arrow[r, hook, "j_{\mathbf{\Sigma}}"] 
& \mathcal{S}_{\omega}(\mathbf{\Sigma})
\end{tikzcd} $$
\end{lemma}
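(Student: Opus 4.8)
The plan is to deduce Lemma~\ref{lem: j commute with i} from the defining property of $j_{(\mathbf{\Sigma},\Delta)}$ as the unique map making diagram~\eqref{diag j} commute, exploiting the coassociativity and compatibility of all the structure maps involved. First I would fix a topological triangulation $\Delta$ of $\mathbf{\Sigma}_{|a\#b}$ in which the common image $c$ of $a$ and $b$ is an inner edge; cutting along $c$ (equivalently, \emph{not} gluing $a$ to $b$) this same collection of arcs is a triangulation $\Delta'$ of $\mathbf{\Sigma}$ with the same set of faces. Thus $\mathcal{S}_{\omega}(\mathbf{\Sigma}_{|a\#b})$ and $\mathcal{S}_{\omega}(\mathbf{\Sigma})$ embed into the \emph{same} tensor product $\otimes_{\mathbb{T}\in F(\Delta)}\mathcal{S}_{\omega}(\mathbb{T})$ via $i^{\Delta}$ and $i^{\Delta'}$ respectively, and by coassociativity of the gluing maps one has $i^{\Delta'} = i^{\Delta}\circ i_{a\#b}$ — the edge $c$ is simply one more inner edge to glue along. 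The analogous identity holds for $\omega=+1$.

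Next I would use that $j_{(\mathbf{\Sigma}_{|a\#b},\Delta)}$ and $j_{(\mathbf{\Sigma},\Delta')}$ are, by Step~1, the restrictions of the \emph{same} map $\otimes_{\mathbb{T}}j_{\mathbb{T}}$ on $\otimes_{\mathbb{T}\in F(\Delta)}\mathcal{S}_{+1}(\mathbb{T})$ to the respective subalgebras $\mathcal{S}_{+1}(\mathbf{\Sigma}_{|a\#b})$ and $\mathcal{S}_{+1}(\mathbf{\Sigma})$. Concretely: for $x\in\mathcal{S}_{+1}(\mathbf{\Sigma}_{|a\#b})$ we have, inside $\otimes_{\mathbb{T}}\mathcal{S}_{\omega}(\mathbb{T})$,
\begin{align*}
i^{\Delta}\bigl(i_{a\#b}(j_{\mathbf{\Sigma}}(x))\bigr)
&= i^{\Delta'}\bigl(j_{(\mathbf{\Sigma},\Delta')}(x)\bigr)
= (\otimes_{\mathbb{T}}j_{\mathbb{T}})\bigl(i^{\Delta'}(x)\bigr)\\
&= (\otimes_{\mathbb{T}}j_{\mathbb{T}})\bigl(i^{\Delta}(i_{a\#b}(x))\bigr)
= i^{\Delta}\bigl(j_{\mathbf{\Sigma}_{|a\#b}}(i_{a\#b}(x))\bigr),
\end{align*}
where the first and third equalities use $i^{\Delta'}=i^{\Delta}\circ i_{a\#b}$ (in the $\omega$ and $+1$ versions), the second uses the commutativity of the left square of~\eqref{diag j} applied to $\Delta'$, and the last uses it again for $\mathbf{\Sigma}_{|a\#b}$ with $\Delta$. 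Since $i^{\Delta}$ is injective, this yields $i_{a\#b}\circ j_{\mathbf{\Sigma}}=j_{\mathbf{\Sigma}_{|a\#b}}\circ i_{a\#b}$, which is precisely the assertion. One small point to address is that $j_{(\mathbf{\Sigma},\Delta)}$ has been shown in Step~1 to be independent of the chosen triangulation, so the notation $j_{\mathbf{\Sigma}}$ (without reference to $\Delta'$) is legitimate here; alternatively, one can run the argument with the specific triangulations above and invoke triangulation-independence only at the end.

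The main obstacle is bookkeeping rather than conceptual: one must check carefully that a triangulation of $\mathbf{\Sigma}_{|a\#b}$ can indeed be chosen so that $c$ is an inner edge and so that cutting yields an honest triangulation of $\mathbf{\Sigma}$ with the same faces — this requires $\mathbf{\Sigma}$ (and $\mathbf{\Sigma}_{|a\#b}$) to admit triangulations, and some care when $a$ and $b$ lie on a small or exceptional component, in which case one falls back on the direct description of $j$ on generators from Steps~2--5. There is also a degenerate case to handle, namely when $a$ and $b$ bound a component that becomes small after gluing, or when gluing creates a component without punctures; in those situations one checks the identity directly on the generating arcs and curves using the explicit formulas $\gamma\mapsto T_N(\gamma)$ and $\alpha_{\varepsilon\varepsilon'}\mapsto(\alpha_{\varepsilon\varepsilon'})^N$ together with the fact that $i_{a\#b}$ is defined by summing over internal states, which commutes with taking $N$-th powers once one knows (from Lemma~\ref{lemma_qbinomial}) that the relevant crossing states $q$-commute.
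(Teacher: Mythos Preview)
Your core argument is the paper's: choose compatible triangulations of $\mathbf{\Sigma}$ and $\mathbf{\Sigma}_{|a\#b}$ sharing the same set of faces, factor one triangulation map through $i_{a\#b}$ via coassociativity, observe that both $j$'s are restrictions of the \emph{same} $\otimes_{\mathbb{T}}j_{\mathbb{T}}$, and conclude by injectivity of the triangulation embedding.

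Two corrections are needed. First, your displayed chain does not type-check. With your conventions ($\Delta$ triangulates the glued surface $\mathbf{\Sigma}_{|a\#b}$ and $\Delta'$ the cut surface $\mathbf{\Sigma}$), coassociativity gives $i^{\Delta}=i^{\Delta'}\circ i_{a\#b}$, not $i^{\Delta'}=i^{\Delta}\circ i_{a\#b}$; and expressions such as $j_{\mathbf{\Sigma}}(x)$ for $x\in\mathcal{S}_{+1}(\mathbf{\Sigma}_{|a\#b})$, or the composite $i^{\Delta}\circ i_{a\#b}$, have mismatched domains. Once the roles of $(\Delta,\Delta')$ and of $(j_{\mathbf{\Sigma}},j_{\mathbf{\Sigma}_{|a\#b}})$ are swapped, the computation becomes exactly the paper's diagram chase. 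Second, your remarks about invoking triangulation-independence of $j$, and about handling degenerate components via the explicit formulas of Steps~2--5, are circular: in the paper's logical order this lemma is \emph{input} to the proof of triangulation-independence (Lemma~\ref{lemma_independance}) and to those later steps. The clean fix is simply to state and prove the commutativity for the specific compatible pair of triangulations you built, with no appeal to later results; no exceptional cases arise here, since $\mathbf{\Sigma}$ is assumed triangulable throughout Step~1.
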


\begin{proof}

 Let $\Delta_{a\#b}$ the topological triangulation of $\mathbf{\Sigma}_{|a\#b}$ that is induced by $\Delta$.  
Let us consider the following diagram. 
	$$ \begin{tikzcd}
	\mathcal{S}_{+1}(\mathbf{\Sigma}_{|a\#b}) \arrow[r, hook, "i_{|a\#b}"] \arrow[d, hook, "j_{(\mathbf{\Sigma}_{|a\#b}, \Delta_{a\#b})}"] 
	\arrow[rr, bend left=20, "i^{\Delta_{a\#b}}"]
	&
	\mathcal{S}_{+1}(\mathbf{\Sigma}) \arrow[r, hook, "i^{\Delta}"] \arrow[d, hook, "j_{(\mathbf{\Sigma}, \Delta)}"] &
	\otimes_{\mathbb{T}}\mathcal{S}_{+1}(\mathbb{T}) \arrow[d, hook, "\otimes_{\mathbb{T}}j_{\mathbb{T}}"] \\
	\mathcal{S}_{\omega}(\mathbf{\Sigma}_{|a\#b})  \arrow[r, hook, "i_{|a\#b}"]
	\arrow[rr, bend right=20, "i^{\Delta_{a\#b}}"]
	&
	\mathcal{S}_{\omega}(\mathbf{\Sigma}) \arrow[r, hook, "i^{\Delta}"] &
	\otimes_{\mathbb{T}}\mathcal{S}_{\omega}(\mathbb{T}) 
	\end{tikzcd}$$
	The outer triangles commute by coassociativity of the gluing maps. Two of the three squares commute by Diagram \eqref{diag j}. Since $i^{\Delta}$ is injective, the remaining (left-hand side) square commutes.     
\end{proof}

We now prove that the morphism $j_{(\mathbf{\Sigma}, \Delta)}$ does not depend on $\Delta$.  We first need a preliminary result.

\begin{lemma}\label{lemma_square}
	Let $Q$ be a square (\textit{i.e.} a  disc with four punctures on its boundary) 
	and  $\Delta_Q$ a topological triangulation of $Q$. 
	If $\alpha_{\varepsilon \varepsilon'} \in \mathcal{S}_{\omega}(Q)$ is the class of a stated arc, then $j_{(Q, \Delta_Q)}(\alpha_{\varepsilon \varepsilon'})= \alpha_{\varepsilon \varepsilon'}^N$. 
	In particular, $j_{(Q,\Delta_Q)}$ does not depend on $\Delta_Q$. 
\end{lemma}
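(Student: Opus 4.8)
The plan is to reduce the statement to the triangle case (Lemma \ref{lemma_center_triangle}) through L\^e's injective gluing embedding. Since $Q$ is a square, a topological triangulation $\Delta_Q$ consists of the four boundary arcs together with a single inner edge $e$, a diagonal of $Q$; cutting $Q$ along $e$ produces two triangles $\mathbb{T}_1$ and $\mathbb{T}_2$, so that $i^{\Delta_Q}\colon \mathcal{S}_\omega(Q)\hookrightarrow \mathcal{S}_\omega(\mathbb{T}_1)\otimes \mathcal{S}_\omega(\mathbb{T}_2)$, and by the commutativity of the middle square of the diagram \eqref{diag j} one has $i^{\Delta_Q}\circ j_{(Q,\Delta_Q)}=(j_{\mathbb{T}_1}\otimes j_{\mathbb{T}_2})\circ i^{\Delta_Q}$. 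As $i^{\Delta_Q}$ is injective, it suffices to prove, for a stated arc $\alpha_{\varepsilon\varepsilon'}$, the identity $i^{\Delta_Q}\big((\alpha_{\varepsilon\varepsilon'})^N\big)=(j_{\mathbb{T}_1}\otimes j_{\mathbb{T}_2})\big(i^{\Delta_Q}(\alpha_{\varepsilon\varepsilon'})\big)$; and since $i^{\Delta_Q}$ is an algebra morphism, its left-hand side equals $\big(i^{\Delta_Q}(\alpha_{\varepsilon\varepsilon'})\big)^N$, so the whole statement becomes an $N$-th power identity inside $\mathcal{S}_\omega(\mathbb{T}_1)\otimes\mathcal{S}_\omega(\mathbb{T}_2)$.

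I would first compute $i^{\Delta_Q}(\alpha_{\varepsilon\varepsilon'})$. Isotope $\alpha$ into minimal position with respect to $e$; a standard argument in the disc shows that $\alpha$ then meets $e$ in $0$ or $1$ point. If $\alpha\cap e=\emptyset$, then $\alpha$ lies in one triangle, say $\mathbb{T}_1$, whence $i^{\Delta_Q}(\alpha_{\varepsilon\varepsilon'})=\beta_{\varepsilon\varepsilon'}\otimes 1$ for $\beta$ the corresponding arc of $\mathbb{T}_1$ (again one of its three standard arcs); here the required identity is immediate from Lemma \ref{lemma_center_triangle} ($j_{\mathbb{T}_1}(\beta_{\varepsilon\varepsilon'})=\beta_{\varepsilon\varepsilon'}^N$) together with $(\beta_{\varepsilon\varepsilon'}\otimes 1)^N=\beta_{\varepsilon\varepsilon'}^N\otimes 1$. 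If $\alpha$ meets $e$ in one point, then cutting along $e$ expresses $\alpha$ as an arc $\beta\subset\mathbb{T}_1$ joining a boundary arc of $\mathbb{T}_1$ to $e_L$ and an arc $\gamma\subset\mathbb{T}_2$ joining $e_R$ to a boundary arc of $\mathbb{T}_2$ — both among the three standard arcs of their triangles, so that Lemma \ref{lemma_center_triangle} applies to $\beta_{\varepsilon\mu}$ and $\gamma_{\mu\varepsilon'}$ — and the cut-and-paste formula gives $i^{\Delta_Q}(\alpha_{\varepsilon\varepsilon'})=\sum_{\mu=\pm}\beta_{\varepsilon\mu}\otimes\gamma_{\mu\varepsilon'}$.

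The core of the proof is then an application of Lemma \ref{lemma_qbinomial}. Put $x:=\beta_{\varepsilon+}\otimes\gamma_{+\varepsilon'}$ and $y:=\beta_{\varepsilon-}\otimes\gamma_{-\varepsilon'}$. Using the defining relations \eqref{eq1}--\eqref{eq2} of $\mathcal{S}_\omega(\mathbb{T}_1)$ and $\mathcal{S}_\omega(\mathbb{T}_2)$ (and their rotations) together with $C^\varepsilon_\varepsilon=0$, one checks that $\beta_{\varepsilon-}\beta_{\varepsilon+}$ and $\gamma_{-\varepsilon'}\gamma_{+\varepsilon'}$ are each a power of $A=\omega^{-2}$ times $\beta_{\varepsilon+}\beta_{\varepsilon-}$, respectively $\gamma_{+\varepsilon'}\gamma_{-\varepsilon'}$; hence $yx=q_0\,xy$ for some power $q_0$ of $A$, and $q_0^N=1$ because $\omega$ has odd order $N$. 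Lemma \ref{lemma_qbinomial} then gives $(x+y)^N=x^N+y^N=\beta_{\varepsilon+}^N\otimes\gamma_{+\varepsilon'}^N+\beta_{\varepsilon-}^N\otimes\gamma_{-\varepsilon'}^N$. The left-hand side is $\big(i^{\Delta_Q}(\alpha_{\varepsilon\varepsilon'})\big)^N=i^{\Delta_Q}\big((\alpha_{\varepsilon\varepsilon'})^N\big)$, and by Lemma \ref{lemma_center_triangle} the right-hand side is $\sum_{\mu}j_{\mathbb{T}_1}(\beta_{\varepsilon\mu})\otimes j_{\mathbb{T}_2}(\gamma_{\mu\varepsilon'})=(j_{\mathbb{T}_1}\otimes j_{\mathbb{T}_2})\big(i^{\Delta_Q}(\alpha_{\varepsilon\varepsilon'})\big)$; injectivity of $i^{\Delta_Q}$ now yields $j_{(Q,\Delta_Q)}(\alpha_{\varepsilon\varepsilon'})=(\alpha_{\varepsilon\varepsilon'})^N$. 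Since in this argument $\alpha$ was an arbitrary non-trivial simple arc of $Q$, this proves the first assertion.

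For the last assertion, observe that $Q$, being a disc, carries no non-trivial closed curve; therefore every element of the basis $\mathcal{B}^{\mathfrak{o}}$ of $\mathcal{S}_{+1}(Q)$ is, up to height reordering, a product of single stated arcs, so stated arcs generate $\mathcal{S}_{+1}(Q)$ as an algebra. As $j_{(Q,\Delta_Q)}$ is an algebra morphism whose values on this generating set, $\alpha_{\varepsilon\varepsilon'}\mapsto(\alpha_{\varepsilon\varepsilon'})^N$, do not involve $\Delta_Q$, the morphism $j_{(Q,\Delta_Q)}$ does not depend on $\Delta_Q$. I expect the one delicate point to be the bookkeeping in the one-crossing case: correctly reading off $i^{\Delta_Q}(\alpha_{\varepsilon\varepsilon'})=\sum_\mu\beta_{\varepsilon\mu}\otimes\gamma_{\mu\varepsilon'}$ from the gluing map and pinning down the commutation between $x$ and $y$ precisely enough to invoke Lemma \ref{lemma_qbinomial}; everything else is formal once the triangle case and the injectivity of $i^{\Delta_Q}$ are in hand.
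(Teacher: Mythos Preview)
Your proof is correct and follows essentially the same path as the paper's: cut along the diagonal $e$, handle the cases $|\alpha\cap e|=0,1$ separately, and in the one-point case apply Lemma~\ref{lemma_qbinomial} after checking $q$-commutation of the two summands (the paper records this precisely as $xy=q^{-2}yx$). The only addition on your side is spelling out why the ``in particular'' follows, via stated arcs generating $\mathcal{S}_{+1}(Q)$, which the paper leaves implicit.
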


\begin{proof}
	Let $e$ be the inner edge of $\Delta_Q$ which is a common boundary arc of two triangles $\mathbb{T}_1$ and $\mathbb{T}_2$. 
	Make the intersection  $\alpha\cap e$ transversal and minimal via an isotopy on $\alpha$.
	If the intersection is empty, then $\alpha$ is included 
	in one of the triangles and the lemma follows from Lemma \ref{lemma_center_triangle}. 
	If  $\alpha \cap e$ is not empty, then it has only one element.  
	Therefore, by letting $\alpha^{\mathbb{T}_i}:= \alpha \cap \mathbb{T}_i$ for $i=1,2$, one has  $i^{\Delta_Q}(\alpha_{\varepsilon \varepsilon'}) = \alpha^{\mathbb{T}_1}_{\varepsilon +} \otimes \alpha^{\mathbb{T}_2}_{+ \varepsilon'} + \alpha^{\mathbb{T}_1}_{\varepsilon -} \otimes \alpha^{\mathbb{T}_2}_{- \varepsilon'}$. 
	Write $x:= \alpha^{\mathbb{T}_1}_{\varepsilon +} \otimes \alpha^{\mathbb{T}_2}_{+ \varepsilon'}$ and $y:= \alpha^{\mathbb{T}_1}_{\varepsilon -} \otimes \alpha^{\mathbb{T}_2}_{- \varepsilon'}$ and remark that $xy=q^{-2}yx$. 
	By Lemma  \ref{lemma_qbinomial} one has 
	\begin{equation*}
	i^{\Delta_Q}(\alpha_{\varepsilon \varepsilon'}^N)= i^{\Delta_Q}(\alpha_{\varepsilon \varepsilon'})^N = (x+y)^N = x^N +y^N = (j_{\mathbb{T}_1} \otimes j_{\mathbb{T}_2}) \circ i^{\Delta_Q} (\alpha_{\varepsilon \varepsilon'}).
	\end{equation*}
	Hence one has the equality $j_{(Q, \Delta_Q)}(\alpha_{\varepsilon \varepsilon'})= \alpha_{\varepsilon \varepsilon'}^N$.
\end{proof}

\begin{lemma}\label{lemma_independance}
	The morphism $j_{(\mathbf{\Sigma}, \Delta)}$ does not depend on $\Delta$. 

\end{lemma}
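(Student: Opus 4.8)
The plan is to reduce the statement to a single \emph{flip} and then invoke Lemma~\ref{lemma_square}. It is classical that any two topological triangulations of $\mathbf{\Sigma}$ are related by a finite sequence of flips (diagonal exchanges inside a quadrilateral formed by two adjacent faces), so it suffices to prove $j_{(\mathbf{\Sigma},\Delta)}=j_{(\mathbf{\Sigma},\Delta')}$ when $\Delta'$ is obtained from $\Delta$ by flipping a single inner edge $e$ lying between two \emph{distinct} faces $\mathbb{T}_1,\mathbb{T}_2$. Let $Q$ be the square obtained from $\mathbf{\Sigma}$ by cutting along all edges of $\Delta$ \emph{except} $e$ (equivalently $Q=\mathbb{T}_1\cup_e\mathbb{T}_2$), write $\Delta_Q$ for the triangulation of $Q$ with single inner edge $e$ and $\Delta_Q'$ for the flipped one, and set $F':=F(\Delta)\setminus\{\mathbb{T}_1,\mathbb{T}_2\}$. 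Since $\Delta$ and $\Delta'$ agree away from $Q$, both present $\mathbf{\Sigma}$ as glued from the \emph{same} disjoint union $P:=Q\sqcup\bigsqcup_{\mathbb{T}\in F'}\mathbb{T}$ along the \emph{same} arcs, hence the associated gluing morphism $i^P:\mathcal{S}_{\omega}(\mathbf{\Sigma})\hookrightarrow \mathcal{S}_{\omega}(Q)\otimes\bigotimes_{\mathbb{T}\in F'}\mathcal{S}_{\omega}(\mathbb{T})$ is the same for $\Delta$ and $\Delta'$ (and likewise at parameter $+1$); it is injective, being a factor of the injective map $i^{\Delta}=(i^{\Delta_Q}\otimes\mathrm{id})\circ i^P$ (coassociativity of gluing).

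The heart of the argument is the identity
\begin{equation*}
i^P\circ j_{(\mathbf{\Sigma},\Delta)}=\bigl(j_{(Q,\Delta_Q)}\otimes\textstyle\bigotimes_{\mathbb{T}\in F'}j_{\mathbb{T}}\bigr)\circ i^P .
\end{equation*}
To prove it, post-compose both sides with the injective map $i^{\Delta_Q}\otimes\mathrm{id}$. Using $i^{\Delta}=(i^{\Delta_Q}\otimes\mathrm{id})\circ i^P$ and Diagram~\eqref{diag j} for $\mathbf{\Sigma}$, the left-hand side becomes $(\bigotimes_{\mathbb{T}\in F(\Delta)}j_{\mathbb{T}})\circ i^{\Delta}$; using the defining square \eqref{diag j} of $j_{(Q,\Delta_Q)}$, i.e. $i^{\Delta_Q}\circ j_{(Q,\Delta_Q)}=(j_{\mathbb{T}_1}\otimes j_{\mathbb{T}_2})\circ i^{\Delta_Q}$, the right-hand side becomes $(\bigotimes_{\mathbb{T}\in F(\Delta)}j_{\mathbb{T}})\circ(i^{\Delta_Q}\otimes\mathrm{id})\circ i^P=(\bigotimes_{\mathbb{T}\in F(\Delta)}j_{\mathbb{T}})\circ i^{\Delta}$ as well. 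Injectivity of $i^{\Delta_Q}\otimes\mathrm{id}$ then gives the displayed identity, and running the same computation with $\Delta'$ in place of $\Delta$ (with the \emph{same} $i^P$) yields $i^P\circ j_{(\mathbf{\Sigma},\Delta')}=(j_{(Q,\Delta_Q')}\otimes\bigotimes_{\mathbb{T}\in F'}j_{\mathbb{T}})\circ i^P$.

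Now Lemma~\ref{lemma_square} gives $j_{(Q,\Delta_Q)}=j_{(Q,\Delta_Q')}$, so the two identities have equal right-hand sides; since $i^P$ is injective we conclude $j_{(\mathbf{\Sigma},\Delta)}=j_{(\mathbf{\Sigma},\Delta')}$, which finishes the proof. The only step that is not a formal diagram chase is the input that the flip graph of topological triangulations is connected and that each flip can be realised inside a square piece $Q$ as above; this is standard, the only delicate configurations being those involving self-folded triangles around interior punctures, which are handled by the classical theory of triangulated surfaces (alternatively, one may first reduce to non-degenerate triangulations, for which no such configuration occurs).
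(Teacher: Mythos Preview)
Your proof is correct and follows essentially the same route as the paper: reduce to a single flip, use Lemma~\ref{lemma_square} to see that $j_Q$ is triangulation-independent, and conclude by injectivity of the gluing map. The only cosmetic difference is that the paper packages your displayed identity as Lemma~\ref{lem: j commute with i} (with the complement $\mathbf{\Sigma}\setminus Q$ kept as a single piece rather than broken into triangles), whereas you reprove that compatibility inline by the same diagram chase through~\eqref{diag j}.
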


\begin{proof}

	Every two triangulations can be related by a finite sequence of flips on the edges. Therefore, it is enough to prove that if $\Delta'$ differs from $\Delta$ by a flip of one edge, then $j_{(\mathbf{\Sigma}, \Delta)}=j_{(\mathbf{\Sigma}, \Delta')}$.  

Let $e$ be  an inner edge of $\Delta$ that bounds two distinct faces $\mathbb{T}_1$ and $\mathbb{T}_2$. 
Consider the topological triangulation $\Delta'$ obtained from $\Delta$ by flipping the edge $e$ inside the square $Q=\mathbb{T}_1\cup \mathbb{T}_2$. Let $i: \mathcal{S}_{\omega}(\mathbf{\Sigma}) \hookrightarrow \mathcal{S}_{\omega}(\mathbf{\Sigma}\setminus Q) \otimes \mathcal{S}_{\omega}(Q)$ be the gluing morphism. 
By Lemma \ref{lemma_square}, the morphism $j_Q : \mathcal{S}_{+1}(Q) \hookrightarrow \mathcal{S}_{\omega}(Q)$ does not depend on the triangulation of $Q$. Therefore, by Lemma \ref{lem: j commute with i}, both the morphisms $j_{(\mathbf{\Sigma}, \Delta)}$ and $j_{(\mathbf{\Sigma}, \Delta')}$ make the following diagram commutative:
	$$ \begin{tikzcd}
	\mathcal{S}_{+1}(\mathbf{\Sigma}) \arrow[r, hook, "i"] \arrow[d, hook, shift right=1.5ex, "j_{(\mathbf{\Sigma}, \Delta')}"'] \arrow[d, hook, "j_{(\mathbf{\Sigma}, \Delta)}"] &
	\mathcal{S}_{+1}(\mathbf{\Sigma}\setminus Q)\otimes \mathcal{S}_{+1}(Q) \arrow[d, hook, "j_{(\mathbf{\Sigma}\setminus Q, \Delta_{\mathbf{\Sigma}\setminus Q})}\otimes j_Q"] \\
	\mathcal{S}_{\omega}(\mathbf{\Sigma}) \arrow[r, hook, "i"] &
	\mathcal{S}_{\omega}(\mathbf{\Sigma}\setminus Q)\otimes \mathcal{S}_{\omega}(Q).
	\end{tikzcd}$$
	This proves that $j_{(\mathbf{\Sigma}, \Delta)}=j_{(\mathbf{\Sigma}, \Delta')}$ and concludes the proof.

\end{proof}

\paragraph{Step 2: arcs with endpoints in distinct boundary arcs.} 

We now assume that the triangulation $\Delta$ is non-degenerate.
\begin{lemma}\label{lemma_baleze} 
	If $\alpha_{\varepsilon \varepsilon'}\in \mathcal{S}_{\omega}(\mathbf{\Sigma})$ is  the class of a stated arc such that its  endpoints lie on two different boundary arcs, then $j_{\mathbf{\Sigma}} (\alpha_{\varepsilon \varepsilon'}) = \alpha_{\varepsilon \varepsilon'}^N$. 
\end{lemma}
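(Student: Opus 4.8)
The plan is to reduce the statement to the cases of the triangle, the square and the bigon, which are settled by Lemmas~\ref{lemma_center_triangle}, \ref{lemma_square} and~\ref{lemma_center_bigone}, and then to propagate it to $\mathbf{\Sigma}$ along gluings by means of the compatibility Lemma~\ref{lem: j commute with i} together with the triangulation-independence of $j_{\mathbf{\Sigma}}$ (Lemma~\ref{lemma_independance}).

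The reduction I would aim for is: find a subsurface $P\subseteq\mathbf{\Sigma}$ which is a triangle, a square or a bigon, such that, after an isotopy, $\alpha\subseteq P$ while $a$ and $b$ are full boundary arcs of $P$. Typically one takes $P$ to be (a thickening of) the two faces of an appropriate triangulation incident to a single inner edge parallel to $\alpha$: when $a$ and $b$ share no puncture one gets a square with $a,b$ among its four sides, and when $a$ and $b$ share a puncture one can even arrange $\alpha$ to lie in a single triangle of which $a$ and $b$ are two edges. Granting such a $P$, write $\mathbf{\Sigma}$ as the gluing of $P$ with $\overline{\mathbf{\Sigma}\setminus P}$ along the remaining boundary arcs of $P$; the associated injective algebra morphism $i\colon\mathcal{S}_{\omega}(\mathbf{\Sigma})\hookrightarrow\mathcal{S}_{\omega}(P)\otimes\mathcal{S}_{\omega}(\overline{\mathbf{\Sigma}\setminus P})$, a composition of the gluing maps, sends $\alpha_{\varepsilon\varepsilon'}$ to $\alpha_{\varepsilon\varepsilon'}\otimes 1$ since $\alpha$ is disjoint from the glued arcs.

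The argument then closes formally: applying Lemma~\ref{lem: j commute with i} once per gluing gives $i\circ j_{\mathbf{\Sigma}}=(j_{P}\otimes j_{\overline{\mathbf{\Sigma}\setminus P}})\circ i$, so $i\bigl(j_{\mathbf{\Sigma}}(\alpha_{\varepsilon\varepsilon'})\bigr)=j_{P}(\alpha_{\varepsilon\varepsilon'})\otimes 1$; by the relevant one of Lemmas~\ref{lemma_center_triangle}, \ref{lemma_square}, \ref{lemma_center_bigone} this is $\alpha_{\varepsilon\varepsilon'}^{N}\otimes 1=i(\alpha_{\varepsilon\varepsilon'}^{N})$, and injectivity of $i$ forces $j_{\mathbf{\Sigma}}(\alpha_{\varepsilon\varepsilon'})=\alpha_{\varepsilon\varepsilon'}^{N}$.

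The hard part is the geometric reduction, i.e.\ the existence of $P$. A naive regular neighbourhood of $a\cup\alpha\cup b$ is a disc (hence a square) only when $a$ and $b$ share no puncture; when they share a puncture this set contains an essential loop, so one must instead thicken $\alpha$ together with well-chosen sub-arcs of $a$ and $b$, or build the ambient triangulation by hand, and handle the most degenerate configurations (e.g.\ when $a\cup b$ is a twice-punctured boundary circle) separately. An alternative that I would fall back on if this becomes unwieldy is an induction on the number of intersection points of $\alpha$ with the inner edges of a fixed triangulation, cutting along one such edge $e$ at a time, expanding $i_{e}(\alpha_{\varepsilon\varepsilon'})$ over the states at the cut points into summands that $q^{\pm 2}$-commute, and applying Lemma~\ref{lemma_qbinomial} (legitimate since $N$ is odd, so $q^{\pm 2}$ still has order $N$) together with the inductive hypothesis on the pieces; the snag there is that a cut sub-arc may have both endpoints on the \emph{same} new boundary arc, a configuration whose treatment is only obtained later in the paper, which is why the subsurface reduction is the cleaner route. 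Either way, once the reduction is in place the remaining verifications --- that the relevant $q$-commutation constants come out as claimed --- are routine consequences of the skein relations \eqref{eq: skein 1}, \eqref{eq: skein 2} and the height-exchange formulas \eqref{eq: height exch 1}, \eqref{eq: height exch corr}.
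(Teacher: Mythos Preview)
Your first route --- isolating a triangle, square or bigon $P\subset\mathbf{\Sigma}$ that contains $\alpha$ and has $a,b$ among its boundary arcs --- is not merely incomplete: in general no such $P$ exists. If the closures of $a$ and $b$ share both of their puncture endpoints (for instance when $a$ and $b$ are the two arcs on a boundary circle of $\Sigma$ carrying exactly two punctures), then $a$ and $b$ cannot both be sides of a triangle or a square (whose sides meet in at most one puncture), and the only bigon with sides $a,b$ would have to be bounded by that entire boundary circle --- which bounds a disc in $\mathbf{\Sigma}$ only when $\mathbf{\Sigma}$ is itself a bigon. So for, say, a positive-genus surface with a single twice-punctured boundary circle, your reduction is simply unavailable, and you give no argument for it. Your fallback (cut along one inner edge at a time and induct) runs into exactly the snag you name: a sub-arc produced by a single cut can land with both endpoints on the same new boundary arc, and that case is the content of Lemma~\ref{lemma_T1}, which in the paper's logic comes \emph{after} the present lemma.

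The paper sidesteps both issues by working with the full decomposition $i^{\Delta}$ at once rather than peeling off a subsurface or a single edge. Put $\alpha$ in minimal transverse position with the edges of a non-degenerate $\Delta$ and take $T^{(N)}$, the tangle of $N$ stacked parallel copies of $\alpha$; the hypothesis that the two endpoints lie on \emph{different} boundary arcs is precisely what makes $[T^{(N)},s_0^{(N)}]=\alpha_{\varepsilon\varepsilon'}^{N}$ (no height reordering at a single boundary arc is needed). Then $i^{\Delta}(\alpha_{\varepsilon\varepsilon'}^{N})$ is the sum, over \emph{all} $N$-tuples of states on the inner edges, of the cut stated tangle, while $(\otimes_{\mathbb T}j_{\mathbb T})\circ i^{\Delta}(\alpha_{\varepsilon\varepsilon'})$ is the sub-sum over diagonal $N$-tuples. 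The difference is killed one inner edge $e$ at a time: freezing the states at the other edges and summing over those at $e$ produces, in the two faces $\mathbb T_1,\mathbb T_2$ adjacent to $e$, exactly the expression $i_Q([T_Q,\cdot])-(j_{\mathbb T_1}\otimes j_{\mathbb T_2})\circ i_Q([T_Q,\cdot])$ for the square $Q=\mathbb T_1\cup\mathbb T_2$, which vanishes by Lemma~\ref{lemma_square}. This yields $i^{\Delta}(\alpha_{\varepsilon\varepsilon'}^{N})=(\otimes_{\mathbb T}j_{\mathbb T})\circ i^{\Delta}(\alpha_{\varepsilon\varepsilon'})$, hence $j_{\mathbf{\Sigma}}(\alpha_{\varepsilon\varepsilon'})=\alpha_{\varepsilon\varepsilon'}^{N}$ by the defining diagram~\eqref{diag j}.
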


\begin{proof}	 
By the defining property of  $j_{\mathbf{\Sigma}}$, as depicted in Diagram \eqref{diag j}, it is enough to prove that   
\begin{equation}\label{eq: coprod j is j coprod}
i^{\Delta}(\alpha_{\varepsilon\varepsilon'}^N)=(\otimes_{\mathbb{T}\in F(\Delta)}j_{\mathbb{T}}) i^{\Delta}(\alpha_{\varepsilon\varepsilon'}).
\end{equation}

Without lost of generality, we suppose that the arc $\alpha$ is in minimal and transverse position with the edges of $\Delta$. 
Let $T$ be a (vertical framed) tangle of $\Sigma_{\mathcal{P}}\times (0,1)$ that projects on $\alpha$ and such that its height projection is an injective map (this is possible since $\alpha$ is an arc). Note that for each  $\mathbb{T} \in F(\Delta)$, the tangle $T_{\mathbb{T}}:=T \cap (\mathbb{T}\times (0,1) )$ may have various connected components; since the height projection is injective, these components are ordered by height. 
Let $T^{(N)}$ be a tangle of $N$ parallel copies of $T$ obtained by stacking $N$ copies of $T$, but close enough to have the following property. 
For each $\mathbb{T} \in F(\Delta)$, if $T_1$ and $T_2$ are two connected components of $T_{\mathbb{T}}$ such that $T_1$ is below $T_2$, then, in $T^{(N)}_{\mathbb{T}}:=T^{(N)} \cap (\mathbb{T}\times (0,1))$, each copy of $T_1$ is below all the copies of $T_2$. See Figure \ref{figtriangle3d} for an illustration. 
Note that since $\alpha$ is an arc with boundary points at two distinct boundary arcs, the tangle $T^{(N)}$ is a representative of the $N$-th product of $\alpha_{\varepsilon\varepsilon'}$ in $\mathcal{S}_{\omega}(\mathbf{\Sigma})$; otherwise it may not be true. 

	\begin{figure}[!h] 
	\centerline{\includegraphics[width=8cm]{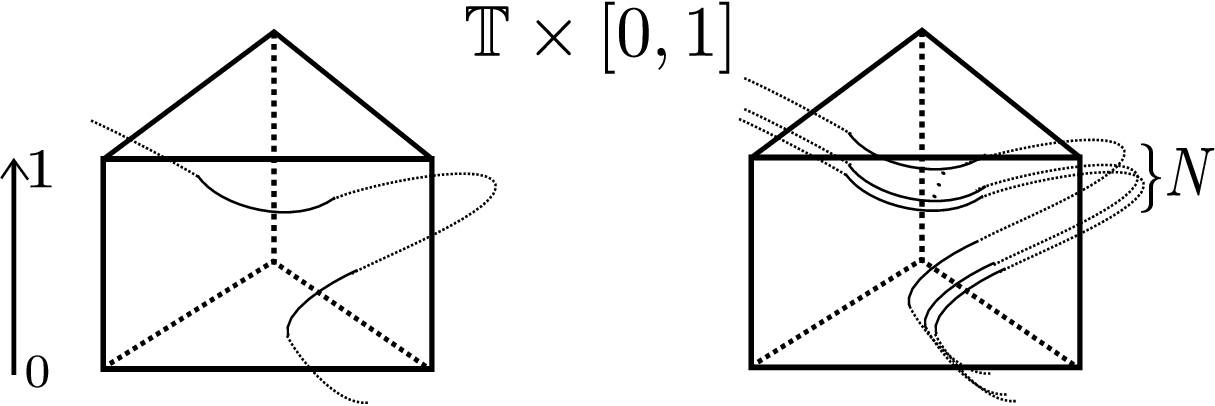} }
	\caption{Instance of tangles $T_{\mathbb{T}}$ and $T^{(N)}_{\mathbb{T}}$.  } 
	\label{figtriangle3d} 
\end{figure}

The left-hand term of \eqref{eq: coprod j is j coprod} can be described as the cutting of   $T^{(N)}$ along each edge of the triangulation, and summing the result over all possible states at each edge. More formally, it is described as follows.

Let $K$ be a subset of edges of $\Delta$ that intersect $\alpha$. 
We let $\St_K(\alpha)$ be the set of maps $s: T \cap (K\times (0,1)) \rightarrow \{-,+\}$. 
We identify $\St_K(\alpha)$ with $ \sqcup_{e\in K} \St_{\{e\}}(\alpha)$, which allows us to write $s\in \St_K(\alpha)$ as $\sqcup s_e$. 
We will only consider the two sets $K$: the set $E$ of all the \emph{internal} edges of $\Delta$ that intersect $\alpha$, and the set $K=\{e\}$ for an edge $e$. 
 
For $s\in \St_E(\alpha)$, write $s^{(N)}:=(s, \ldots, s)\in \St_E(\alpha)^{\times N}$. 
We denote by $s_0$ the state of $\alpha_{\varepsilon\varepsilon'}$ (so, one has $\alpha_{\varepsilon\varepsilon'}=[T,s_0]$). 

For $\mathbf{s}= (s_{1}, \ldots, s_{N})\in \St_E(\alpha)^{\times N}$, we let 
$$ \alpha(\mathbf{s}) := \otimes_{\mathbb{T}\in F(\Delta)} [T^{(N)}_{\mathbb{T}}, ( \mathbf{s}\sqcup s_0^{(N)})_{|\partial\mathbb{T}}] \in \otimes_{\mathbb{T}\in F(\Delta)}\mathcal{S}_{\omega}(\mathbb{T}), $$
where we associate, to the $k$-th copy of $T^{(N)}_{\mathbb{T}}$, the restriction of the state $s_{k}$. 
With this notation, the left-hand term of \eqref{eq: coprod j is j coprod} can be written as 
\begin{equation}\label{eq:j1}
  i^{\Delta}(\alpha_{\varepsilon \varepsilon'}^N)  = \sum_{\mathbf{s}\in \St_E(\alpha)^{\times N}} \alpha(\mathbf{s}). 
\end{equation}

Now, let us describe the right-hand term of  \eqref{eq: coprod j is j coprod}. 
Note that the construction of $T^{(N)}$ ensures that, for each triangle  $\mathbb{T}$ and each state $s$ of $T_{\mathbb{T}}$, one has $j_{\mathbb{T}}([T_{\mathbb{T}},s])=[T^{(N)}_{\mathbb{T}},s^{(N)}]$. 
Therefore, using that $j_{\mathbb{T}}$ is an algebra morphism, one has 
\begin{equation}\label{eq:j2} 
\left( \otimes_{\mathbb{T}\in F(\Delta)} j_{\mathbb{T}} \right) i^{\Delta} (\alpha_{\varepsilon \varepsilon'}) = \sum_{s\in \St_E(\alpha)} \alpha(s^{(N)}).
\end{equation}

Let $Y$ be the set of non-diagonal states $\St_E(\alpha)^{\times N}\setminus \{(s,...,s)| s\in \St_E(\alpha)\}$. 
The sum in \eqref{eq:j1} and in \eqref{eq:j2} differ by the sum of $\alpha(\mathbf{s})$ for $\mathbf{s}\in Y$. 

Let us fix an edge $e$ of $E$ and let us split  $Y$  into $J\sqcup Y_e$ where $Y_e$ is the set of $N$-tuples of states at $e$, that is,  $Y_e=\{\mathbf{s}\in Y~|~ \mathbf{s}: T^{(N)} \cap (e\times (0,1))\to \{-,+\}\}$. 
Therefore, showing  \eqref{eq: coprod j is j coprod} amounts to showing that 
\begin{equation*}
\sum_{\mathbf{s'}\in J} \sum_{\mathbf{s}\in Y_e} \alpha(\mathbf{s}'\sqcup \mathbf{s})=0. 
\end{equation*}
In fact, let us show that, for each $\mathbf{s}'\in J$, one has $\sum_{\mathbf{s}\in Y_e} \alpha(\mathbf{s}'\sqcup \mathbf{s})=0$.

Let $\mathbb{T}_1$ and $\mathbb{T}_2$ be the two triangles adjoining $e$ (they are distinct since $\Delta$ is assumed non-degenerate) and let $Q\subset  \Sigma_{\mathcal{P}}$ be the resulting square. 
Denote by $i_Q: \mathcal{S}_{\omega}(Q)\hookrightarrow \otimes_{\mathbb{T}\in F(\Delta)} \mathcal{S}_{\omega}(\mathbb{T})$ the corresponding embedding and write $T_Q:= T\cap (Q\times (0,1))$ . 
For each $\mathbf{s}'\in J$, one has 
\begin{multline*}
\sum_{\mathbf{s}\in Y_{e}} \alpha(\mathbf{s}'\sqcup \mathbf{s}) \\ = \left( \otimes_{\mathbb{T}\neq \mathbb{T}_1, \mathbb{T}_2} [T^{(N)}_{\mathbb{T}}, \mathbf{s}'_{|\partial \mathbb{T}} ]\right) 
\otimes \left( i_Q([T_Q^{(N)}, \mathbf{s}'_{|\partial Q} ]) - (j_{\mathbb{T}_1} \otimes j_{\mathbb{T}_2}) \circ i_Q ([T_Q^{(N)}, \mathbf{s}'_{|Q}])\right).
\end{multline*}
 The last term is zero by Lemma \ref{lemma_square} and the commutativity of the diagrams in Lemma \ref{lemma_center_triangle}. This concludes  the proof. 
\end{proof}

\paragraph{Step 3: closed curves that intersect $\Delta$ nicely.}

\begin{definition}  The $N$-th Chebyshev polynomial of first kind is the polynomial  $T_N(X) \in \mathbb{Z}[X]$ defined by the recursive formulas $T_0(X)=2$, $T_1(X)=X$ and $T_{n+2}(X)=XT_{n+1}(X) -T_n(X)$ for $n\geq 0$.
\end{definition}

\par  The following proposition is at the heart of (our proof of) the so-called "miraculous cancelations" from \cite{BonahonWong1}. We postpone its proof to the Appendix A.

\begin{proposition}\label{proptchebychev}
	If $\omega$ is a root of unity of odd order $N\geq 1$, then in  $\mathcal{S}_{\omega}(\mathbb{B})$, the following equality holds:
	$$ T_N(\alpha_{++}+\alpha_{--}) = \alpha_{++}^N + \alpha_{--}^N.$$
\end{proposition}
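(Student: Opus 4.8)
The strategy is to reduce the identity to a statement about the Chebyshev recursion and the commutation relations of the two elements $u:=\alpha_{++}$ and $v:=\alpha_{--}$ in $\mathcal{S}_\omega(\mathbb{B})\cong\mathbb{C}_q[\SL_2]$. From the presentation of $\mathcal{S}_\omega(\mathbb{B})$ recalled after \eqref{relbigone}, one has $vu=1+q\,\alpha_{+-}\alpha_{-+}$ and $uv=1+q^{-1}\alpha_{+-}\alpha_{-+}$, so that setting $w:=\alpha_{+-}\alpha_{-+}$ (which is central in the subalgebra generated by $u,v$ since $\alpha_{+-}$ and $\alpha_{-+}$ commute and each $q$-commutes with $u$ and $v$ by coherent powers) we get $vu=1+qw$, $uv=1+q^{-1}w$, hence $uv-vu=(q^{-1}-q)w$ and $qvu-q^{-1}uv=q-q^{-1}$. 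The first step is to record these relations and to observe that the subalgebra $\mathcal{C}\subset\mathcal{S}_\omega(\mathbb{B})$ generated by $u$ and $v$ is a quotient of the quantum plane-type algebra with the one extra central relation; in particular $u^N$ and $v^N$ are central in $\mathcal{C}$ (indeed $u^N v = q^{-N}v u^N=vu^N$, and similarly for $v^N$).

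The second step is the algebraic core. I would prove by induction on $n$ a closed formula for $T_n(u+v)$ inside $\mathcal{C}$. Concretely, set $x_n:=T_n(u+v)$; then $x_0=2$, $x_1=u+v$, and $x_{n+2}=(u+v)x_{n+1}-x_n$. The claim is that $x_n=u^n+v^n+(\text{lower order, central correction terms})$, and more precisely one expects $x_n$ to be a $\mathbb{Z}$-linear combination of monomials $u^a v^b$ with $a+b\le n$ and $a+b\equiv n\pmod 2$, with the top pieces $u^n+v^n$. The cleanest way is to diagonalize: in the localization (or formally) one can write $u+v$ in terms of an element whose $N$-th power is controlled. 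Alternatively — and this is the route I expect to be cleanest — use the known trace identity for $\SL_2$-type algebras: if $g$ is a $2\times2$ matrix of "quantum" entries with $uv-vu$ proportional to a scalar times the off-diagonal product and quantum determinant $1$, then $u+v$ behaves like the trace of $g$ and there is a Cayley–Hamilton relation $g^2-(u+v)g+\mathds{1}=0$ in the appropriate matrix algebra, which gives $g^n=S_{n-1}(u+v)g-S_{n-2}(u+v)\mathds{1}$ for the second-kind Chebyshev-like polynomials $S_n$, and taking "quantum traces" yields $u^n+v^n$ (the two eigenvalue powers) $=T_n(u+v)$ up to the $q$-corrections, which vanish precisely because $q^N=1$.

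The third step is to extract the root-of-unity cancellation. Having an explicit expansion $T_N(u+v)=u^N+v^N+\sum_{0<a+b<N} c_{a,b}(q)\,u^a v^b$ with $c_{a,b}(q)\in\mathbb{Z}[q,q^{-1}]$, one checks that each $c_{a,b}(q)$ is divisible by a $q$-integer of the form $[m]_q$ or a $q$-binomial $\binom{N}{k}_q$ with $1\le k\le N-1$, which vanishes when $q$ is a primitive $N$-th root of unity with $N$ odd (here one uses $q=\omega^{-4}$ and that $\omega$ has odd order $N$, so $q$ also has order $N$; this is exactly the mechanism of Lemma~\ref{lemma_qbinomial}, which I would invoke or mimic). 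In fact the slickest packaging is to apply Lemma~\ref{lemma_qbinomial} directly: writing $u+v$ is not a sum of $q$-commuting elements, but after passing to the matrix/eigenvalue picture one does get a sum of two $q^{2}$-commuting terms whose $N$-th powers add, and then descends back. I would then conclude that all $c_{a,b}(q)=0$ at our root of unity, giving $T_N(\alpha_{++}+\alpha_{--})=\alpha_{++}^N+\alpha_{--}^N$.

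\textbf{Main obstacle.} The delicate point is the bookkeeping in Step 2: getting a genuinely closed, manageable formula for $T_n(u+v)$ in a noncommutative algebra where $u$ and $v$ do not $q$-commute but satisfy the inhomogeneous relations $vu=1+qw$, $uv=1+q^{-1}w$. The cleanest path is almost certainly the Cayley–Hamilton / $2\times2$-matrix trick (realizing $u,v$ as eigenvalue-like quantities of a quantum-determinant-one matrix), so that the Chebyshev recursion becomes the standard recursion for matrix powers and the correction terms are organized by the second-kind polynomials; then the vanishing at $q^N=1$ is forced by the usual $q$-binomial vanishing exactly as in Lemma~\ref{lemma_qbinomial}. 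I expect the proof in Appendix~A to follow essentially this outline, perhaps phrased via an explicit induction rather than the matrix language.
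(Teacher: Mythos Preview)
Your proposal is a sketch rather than a proof, and it contains at least one concrete error. You write ``$u^N v = q^{-N}v u^N=vu^N$'', but $u=\alpha_{++}$ and $v=\alpha_{--}$ do \emph{not} $q$-commute: from $uv=1+q^{-1}w$ and $vu=1+qw$ one gets $uv-vu=(q^{-1}-q)w$, an inhomogeneous relation. The correct statement $u^Nv=vu^N$ at $q^N=1$ is true, but it is proved in Lemma~\ref{lemma_center_bigone} by a different computation, not by iterating a $q$-commutation. More seriously, your Step~2 --- the ``Cayley--Hamilton / eigenvalue'' trick --- is never carried out. The entries of the quantum matrix do not commute, so the classical identity $\tr(g^n)=T_n(\tr g)$ is not available, and the hoped-for reorganization of $T_N(u+v)-u^N-v^N$ into a sum of vanishing $q$-binomials is asserted but not established. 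You correctly identify this as the main obstacle; the proposal does not overcome it.

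The paper's proof in Appendix~A is entirely different and is topological rather than algebraic. It works in the skein algebra of the square $Q$ (obtained by cutting an annulus $\mathbb{A}$), using the filtration by the number of endpoints on two of the four boundary arcs. The key steps are: (i) in $\mathcal{S}_\omega(\mathbb{A})$, a direct skein induction shows $T_N(\gamma)$ commutes with the transverse arc $\beta_{\varepsilon\varepsilon'}$ (Lemma~\ref{lemmaannulus}); (ii) the leading term of $(\alpha_{++}+\alpha_{--})^N$, and hence of $T_N(\alpha_{++}+\alpha_{--})$, equals $\alpha_{++}^N+\alpha_{--}^N$ (Lemma~\ref{lemma1}); (iii) any polynomial in the $\alpha_{\varepsilon\varepsilon'}$ of degree $<2N$ that commutes with every $\beta_{\mu\mu'}$ must be a constant (Lemmas~\ref{lemma2} and \ref{lemma3}); (iv) a parity argument shows that constant is zero (Lemma~\ref{lemma4}). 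Combining these, $x:=T_N(\alpha_{++}+\alpha_{--})-\alpha_{++}^N-\alpha_{--}^N$ is forced to vanish. This approach sidesteps entirely the noncommutative algebra you were wrestling with: rather than computing $T_N(u+v)$ directly, it pins down $x$ by its degree, its commutation with an auxiliary family of elements, and its constant term.
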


Recall that we suppose that the triangulation is non-degenerate.

\begin{lemma}\label{lemma_curve}
	Let  $\gamma \in \mathcal{S}_{\omega}(\mathbf{\Sigma})$ be the class of a closed curve. 
	If the closed curve can be chosen such that it intersects an edge of $\Delta$ once and only once, then 	$j_{\mathbf{\Sigma}} (\gamma) = T_N(\gamma)$.
\end{lemma}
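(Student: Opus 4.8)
The proof of Lemma~\ref{lemma_curve} runs exactly parallel to Step~2 (Lemma~\ref{lemma_baleze}), with ``arc'' replaced by ``closed curve'' and ``$N$-th power'' replaced by ``$T_N$''. By Diagram~\eqref{diag j} and the injectivity of $i^{\Delta}$, it suffices to prove the identity
\begin{equation*}
i^{\Delta}(T_N(\gamma)) = \bigl(\otimes_{\mathbb{T}\in F(\Delta)} j_{\mathbb{T}}\bigr)\, i^{\Delta}(\gamma)
\end{equation*}
in $\otimes_{\mathbb{T}\in F(\Delta)}\mathcal{S}_{\omega}(\mathbb{T})$. First I would put $\gamma$ in minimal transverse position with the edges of $\Delta$ and lift it to a vertical-framed tangle $T$ whose height projection is injective; then, as in Step~2, build the ``$N$-parallel'' tangle $T^{(N)}$ so that inside each face $\mathbb{T}$ the $N$ copies of each component of $T_{\mathbb{T}}$ are stacked componentwise by height. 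Crucially, one must check that $T^{(N)}$ really represents $T_N(\gamma)$ in $\mathcal{S}_{\omega}(\mathbf{\Sigma})$: since $\gamma$ is a single closed curve, $T^{(N)}$ is $N$ parallel copies of $\gamma$, and the Chebyshev polynomial $T_N$ is precisely the universal expression relating the $N$-th power of a loop to the loop obtained by ``cabling'' — this is the standard fact (going back to Bonahon--Wong, and used implicitly in \cite{BonahonWong1}) that $T_N(\gamma)$ is represented by $N$ parallel copies of the curve, so I would invoke it here.

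Then I expand both sides over edge states exactly as in \eqref{eq:j1} and \eqref{eq:j2}: writing $E$ for the set of internal edges meeting $\gamma$ and $\St_E(\gamma)$ for the states of $T$ along these edges, the left-hand side becomes $\sum_{\mathbf{s}\in\St_E(\gamma)^{\times N}}\gamma(\mathbf{s})$ while the right-hand side, using $j_{\mathbb{T}}([T_{\mathbb{T}},s])=[T^{(N)}_{\mathbb{T}},s^{(N)}]$, becomes $\sum_{s\in\St_E(\gamma)}\gamma(s^{(N)})$. Hence I must kill the off-diagonal sum $\sum_{\mathbf{s}\in Y}\gamma(\mathbf{s})$ over $Y=\St_E(\gamma)^{\times N}\setminus\{(s,\dots,s)\}$. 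As in Step~2, I fix one edge $e\in E$, split $Y$ as $J\sqcup Y_e$, and it is enough to show $\sum_{\mathbf{s}\in Y_e}\gamma(\mathbf{s}'\sqcup\mathbf{s})=0$ for each fixed $\mathbf{s}'\in J$. Letting $Q=\mathbb{T}_1\cup\mathbb{T}_2$ be the square across $e$ and $T_Q=T\cap(Q\times(0,1))$, this difference equals (a tensor of fixed factors outside $Q$) times $\bigl(i_Q([T_Q,\mathbf{s}'_{|\partial Q}]) - (j_{\mathbb{T}_1}\otimes j_{\mathbb{T}_2})\circ i_Q([T_Q,\mathbf{s}'_{|Q}])\bigr)$, which vanishes by Lemma~\ref{lemma_square} applied to $Q$ — provided the portion of $\gamma$ inside $Q$ is a disjoint union of arcs with endpoints on the boundary of $Q$, so that Lemma~\ref{lemma_square} (which concerns stated arcs in a square) applies to each component.

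The one subtlety, and the place where the hypothesis ``$\gamma$ meets an edge of $\Delta$ exactly once'' is used, is this: for the reduction to Lemma~\ref{lemma_square} to go through I need $T_Q$ to be a tangle whose components are \emph{arcs} running between boundary arcs of the square $Q$ (no closed components, and each component crossing $e$ at most once). Since $\gamma$ hits the distinguished edge $e$ exactly once, the component of $\gamma$ passing through $e$ is cut by $e$ into a single arc crossing $e$ once, and any other edges of $\Delta$ it meets only refine this further; choosing $e$ to be the edge $\gamma$ meets once guarantees that in the square across $e$ we see exactly one arc crossing the diagonal, which is precisely the situation handled by Lemma~\ref{lemma_square}. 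I expect this bookkeeping — verifying that the pieces of $\gamma$ inside each relevant square are honest arcs so Lemma~\ref{lemma_square} applies, and pinning down the fact that $T^{(N)}$ represents $T_N(\gamma)$ rather than $\gamma^N$ (the closed-curve analogue of the ``otherwise it may not be true'' caveat in Step~2) — to be the main obstacle; the state-sum cancellation itself is then formally identical to Step~2.
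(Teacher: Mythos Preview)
Your approach has a genuine gap at the point you yourself flag as ``crucial'': the claim that the tangle $T^{(N)}$ of $N$ parallel copies of $\gamma$ represents $T_N(\gamma)$ in $\mathcal{S}_{\omega}(\mathbf{\Sigma})$ is simply false. For a simple closed curve $\gamma$, the $N$ disjoint parallel copies are (by definition of the product) the class $\gamma^N$, not $T_N(\gamma)$; there is no ``standard fact'' identifying these, and they differ by a polynomial of degree $<N$ in $\gamma$. The caveat in Step~2 (``otherwise it may not be true'') is about height ordering at the boundary when both endpoints of an arc lie on the same boundary arc; it has nothing to do with closed curves, and certainly does not say that stacking copies of a loop yields a Chebyshev polynomial. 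Consequently, if your state-sum cancellation went through verbatim, it would prove $j_{\mathbf{\Sigma}}(\gamma)=\gamma^N$, which contradicts the statement of the lemma. In particular the off-diagonal sum $\sum_{\mathbf{s}\in Y}\gamma(\mathbf{s})$ is \emph{not} zero here: it equals $i^{\Delta}(\gamma^N-T_N(\gamma))$, so the ``formally identical'' cancellation must fail somewhere for loops.

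The paper's proof takes a different route that makes the appearance of $T_N$ transparent. One cuts $\mathbf{\Sigma}$ along the distinguished edge $e$ (replacing $e$ by two parallel arcs $e',e''$) to obtain $\mathbf{\Sigma}(e)$; under the gluing map $i_{|e'\#e''}$ the closed curve $\gamma$ becomes the open arc $\beta$ with $i_{|e'\#e''}(\gamma)=\beta_{++}+\beta_{--}$. Since $\beta$ now has its endpoints on two distinct boundary arcs, Lemma~\ref{lemma_baleze} gives $j_{\mathbf{\Sigma}(e)}(\beta_{\varepsilon\varepsilon'})=\beta_{\varepsilon\varepsilon'}^N$, hence $j_{\mathbf{\Sigma}(e)}(i_{|e'\#e''}(\gamma))=\beta_{++}^N+\beta_{--}^N$. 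The Chebyshev polynomial enters through Proposition~\ref{proptchebychev}, the algebraic identity $T_N(\alpha_{++}+\alpha_{--})=\alpha_{++}^N+\alpha_{--}^N$ in $\mathcal{S}_{\omega}(\mathbb{B})$; pushing this through the obvious map $\mathcal{S}_{\omega}(\mathbb{B})\to\mathcal{S}_{\omega}(\mathbf{\Sigma}(e))$ turns $\beta_{++}^N+\beta_{--}^N$ into $T_N(\beta_{++}+\beta_{--})=i_{|e'\#e''}(T_N(\gamma))$. Commutativity of $j$ with gluing (Lemma~\ref{lem: j commute with i}) and injectivity of $i_{|e'\#e''}$ finish the argument. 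Your proposal never invokes Proposition~\ref{proptchebychev}, and without it there is no mechanism for $T_N$ to appear.
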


\begin{proof}
	Consider the punctured surface $\mathbf{\Sigma}(e)$ obtained from $\mathbf{\Sigma}$ by replacing $e$ by two arcs $e'$ and $e''$ parallel to $e$ with the same endpoints and removing the bigone between $e'$ and $e''$. Consider the injective morphism $i_{|e'\#e''} : \mathcal{S}_{\omega}(\mathbf{\Sigma}) \hookrightarrow \mathcal{S}_{\omega}(\mathbf{\Sigma}(e))$. By Lemma \ref{lem: j commute with i}, the following diagram commutes: 
	$$\begin{tikzcd}
	\mathcal{S}_{+1}(\mathbf{\Sigma}) \arrow[r, hook, "j_{\mathbf{\Sigma}}"] \arrow[d, hook, "i_{|e'\#e''}"] & \mathcal{S}_{\omega}(\mathbf{\Sigma}) \arrow[d, hook, "i_{|e'\#e''}"] \\
	\mathcal{S}_{+1}(\mathbf{\Sigma}(e)) \arrow[r, hook, "j_{\mathbf{\Sigma}(e)}"] & \mathcal{S}_{\omega}(\mathbf{\Sigma}(e)) 
	\end{tikzcd} $$
	By cutting $\gamma$ along $e$, we get an arc $\beta\subset \Sigma(e)$ such that, by the hypothesis,  $i_{e'\#e''}(\gamma)=\beta_{++}+\beta_{--}$. 
	Consider the algebra morphism $\varphi : \mathcal{S}_{\omega}(\mathbb{B}) \rightarrow   \mathcal{S}_{\omega}(\mathbf{\Sigma}(e))$ sending $\alpha_{\varepsilon \varepsilon'}$ to $\beta_{\varepsilon \varepsilon'}$. One has: 
	\begin{align*}
	j_{\mathbf{\Sigma}(e)} \circ i_{|e'\#e''} (\gamma) &= j_{\mathbf{\Sigma}(e)} (\beta_{++} + \beta_{--})& \\
	&= \varphi(\alpha_{++}^N + \alpha_{--}^N) &\mbox{ by Lemma \ref{lemma_baleze}} \\
	&= \varphi(T_N(\alpha_{++} + \alpha_{--})) &\mbox{ by Proposition \ref{proptchebychev}} \\
	&= i_{|e'\#e''} (T_N(\gamma)). &
	\end{align*}
	Hence by the above diagram one has $j_{\mathbf{\Sigma}}(\gamma) = T_N(\gamma)$.
\end{proof}

\paragraph{Step 4: adding a puncture.}

Let $\mathbf{\Sigma}'=(\Sigma, \mathcal{P}\cup \{p_0\})$ be a punctured surface obtained from $\mathbf{\Sigma}=(\Sigma, \mathcal{P})$ by adding one puncture $p_0\in \Sigma_{\mathcal{P}}$ and consider the algebra morphism $\varphi : \mathcal{S}_{\omega}(\mathbf{\Sigma}') \rightarrow \mathcal{S}_{\omega}(\mathbf{\Sigma})$ of Section \ref{sec_removingpuncture}. We assume that $\mathbf{\Sigma}$ is equipped with a non-degenerated triangulation.

\begin{lemma}\label{lemma_add_puncture} The following diagram is commutative.  
	$$ \begin{tikzcd} 
	\mathcal{S}_{+1}(\mathbf{\Sigma}') \arrow[r, hook, "j_{\mathbf{\Sigma}'}"] \arrow[d, two heads, "\varphi"] &
	\mathcal{S}_{\omega}(\mathbf{\Sigma}') \arrow[d, two heads, "\varphi"] \\
	\mathcal{S}_{+1}(\mathbf{\Sigma}) \arrow[r, hook, "j_{\mathbf{\Sigma}}"] &
	\mathcal{S}_{\omega}(\mathbf{\Sigma}) 
	\end{tikzcd}$$
\end{lemma}

\begin{proof}
First consider the following diagram 

\begin{equation}\label{diagram_truc}
\begin{tikzcd}
0 \arrow[r] & \mathcal{I}^{+1}_{p_0} \arrow[r] \arrow[d, hook, "j_{\mathbf{\Sigma}'}"] & \mathcal{S}_{+1}(\mathbf{\Sigma}') \arrow[r, "\varphi"] \arrow[d, hook, "j_{\mathbf{\Sigma}'}"] & 
\mathcal{S}_{+1}(\mathbf{\Sigma}) \arrow[r] \arrow[d, hook, " j_{\mathbf{\Sigma}}"] & 0 \\
0 \arrow[r] & \mathcal{I}_{p_0} \arrow[r] & \mathcal{S}_{\omega}(\mathbf{\Sigma}') \arrow[r, "\varphi"] & \mathcal{S}_{\omega}(\mathbf{\Sigma})  \arrow[r] & 0
\end{tikzcd}
\end{equation}
 where $\mathcal{I}^{+1}_{p_0} \subset \mathcal{S}_{+1}(\mathbf{\Sigma}')$ and $\mathcal{I}_{p_0} \subset \mathcal{S}_{\omega}(\mathbf{\Sigma}')$ denote the off-puncture ideals in $ \mathcal{S}_{+1}(\mathbf{\Sigma}')$ and $ \mathcal{S}_{\omega}(\mathbf{\Sigma}')$ respectively (see Definition \ref{def_off_puncture}). 
By Proposition \ref{prop_off_puncture}, both lines are exact so we need to  prove the inclusion  $j_{\mathbf{\Sigma}'} (\mathcal{I}^{+1}_{p_0}) \subset \mathcal{I}_{p_0}$ to conclude. We divide the proof in two steps.

\vspace{2mm}
\par \textbf{Step 1:} We first suppose that $\mathbf{\Sigma}=\mathbb{T}_0$ is a triangle. In this case, $\mathbb{T}'_0$ is a punctured triangle and we have two possibilities depending whether $p_0$ is in the boundary or the interior of $\mathbb{T}_0$. Some non-degenerate triangulations $\Delta_0'$ of $\mathbb{T}_0'$ are drawn in Figure \ref{fig_puncturedTriangle}. 

\begin{figure}[!h] 
	\centerline{\includegraphics[width=6cm]{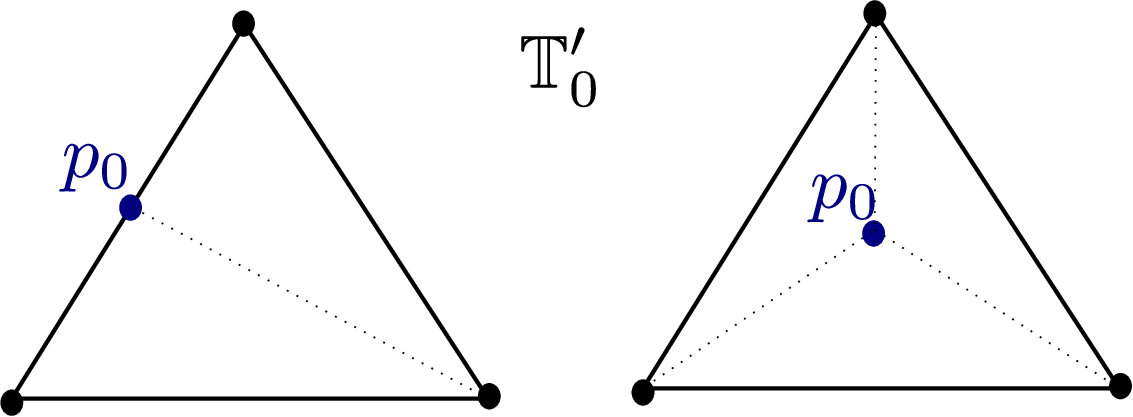} }
	\caption{Punctured triangles $\mathbb{T}'_0$ and their non-degenerated triangulations.  } 
	\label{fig_puncturedTriangle} 
\end{figure}

\par \textbf{Claim:} the off-kernel ideal $\mathcal{I}_{p_0}$ is generated by elements $\alpha_{\varepsilon \varepsilon'} - \alpha'_{\varepsilon \varepsilon'}$ and $\gamma- \gamma'$, where $\alpha$ and $\alpha'$ are arcs isotopic in $\mathbb{T}_0$ whose endpoints lye in distinct boundary arcs and $\gamma$, $\gamma'$ are curves isotopic in $\mathbb{T}_0$ which intersect each edge of $\Delta_0'$ once.

 \par If the claim is proved, then for $\alpha_{\varepsilon \varepsilon'} - \alpha'_{\varepsilon \varepsilon'}$ and $\gamma- \gamma'$ some generators of $\mathcal{I}_{p_0}$, then Lemma \ref{lemma_baleze} implies that $j_{\mathbb{T}_0'} \left( \alpha_{\varepsilon \varepsilon'} - \alpha'_{\varepsilon \varepsilon'}\right) \subset \mathcal{I}_{p_0}$ and Lemma \ref{lemma_curve} implies that $j_{\mathbb{T}_0'}(\gamma-\gamma') = T_N(\gamma) - T_N(\gamma') \in \mathcal{I}_{p_0}$. The the claim implies the inclusion $j_{\mathbf{\mathbb{T}_0}'} (\mathcal{I}^{+1}_{p_0}) \subset \mathcal{I}_{p_0}$ which concludes concludes the proof in the case of the triangle. To prove the claim, recall from Proposition \ref{prop_off_puncture} that $\mathcal{I}_{p_0}$ is generated by elements $\alpha_{\varepsilon \varepsilon'} - \alpha'_{\varepsilon \varepsilon'}$ and $\gamma- \gamma'$ with $\alpha$ and $\alpha'$ isotopic in $\mathbb{T}_0$ and $\gamma, \gamma'$ isotopic in $\mathbb{T}_0$.
 First note that when $p_0$ lies in the boundary of $\mathbb{T}_0$, then $\mathbb{T}_0'$ does not contain any non contractible simple closed curve and the non-trivial arcs of $\mathbb{T}_0'$ have endpoints in distinct boundary arcs, so the claim is immediate in this case. When $p_0$ lies in the interior of $\mathbb{T}_0$, there is only one non-trivial simple closed curve (which encircles $p_0$ once) and this curves intersects each edges of $\Delta_0'$ once. However $\mathbb{T}_0'$ contains $3$ non-trivial arcs with endpoints in the same boundary arcs which are related by a $\frac{2}{3}\pi$-radiant rotation. Let $\delta$ be one of these arcs and $\delta_{\varepsilon \varepsilon'}=  \adjustbox{valign=c}{\includegraphics[width=1.5cm]{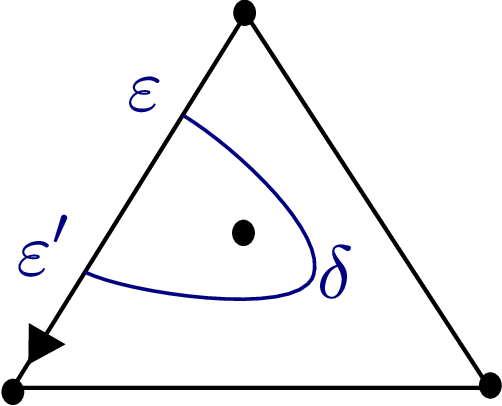}} $ . Since $x:=\delta_{\varepsilon \varepsilon'} - C^{\varepsilon'}_{\varepsilon} \in \mathcal{I}_{p_0}$, we need to show that $x$ belongs to the ideal $\mathcal{J}_{p_0}$ generated by elements  $\alpha_{\varepsilon \varepsilon'} - \alpha'_{\varepsilon \varepsilon'}$ with $\alpha, \alpha'$ isotopic in $\mathbb{T}_0$ with distinct endpoints. This is done by a simple application of the skein relation \eqref{eq: skein 2} as follows:
 $$x= \adjustbox{valign=c}{\includegraphics[width=1.5cm]{PTriangle1.eps}} - \adjustbox{valign=c}{\includegraphics[width=1.5cm]{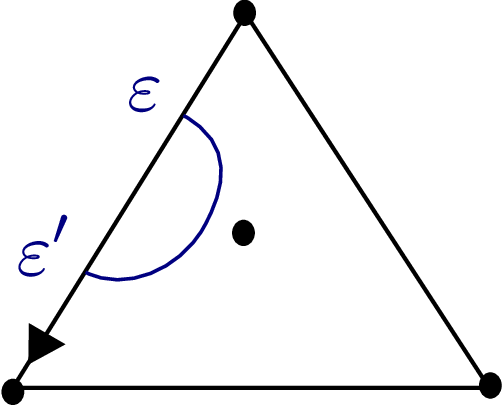}} = \sum_{\mu = +, -} C_{\mu}^{-\mu} \left( \adjustbox{valign=c}{\includegraphics[width=1.5cm]{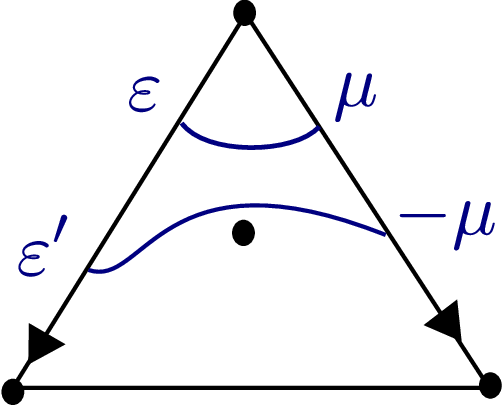}}- \adjustbox{valign=c}{\includegraphics[width=1.5cm]{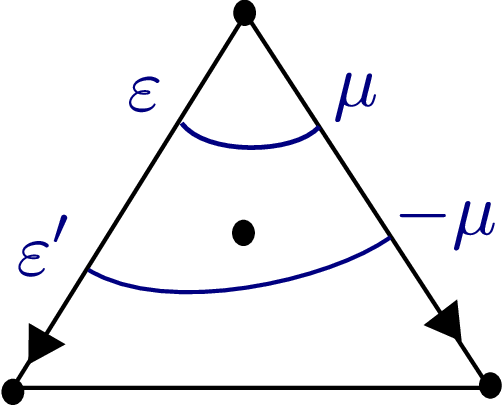}} \right).$$
 Therefore $x$ belongs to the ideal generated by elements $\adjustbox{valign=c}{\includegraphics[width=1.5cm]{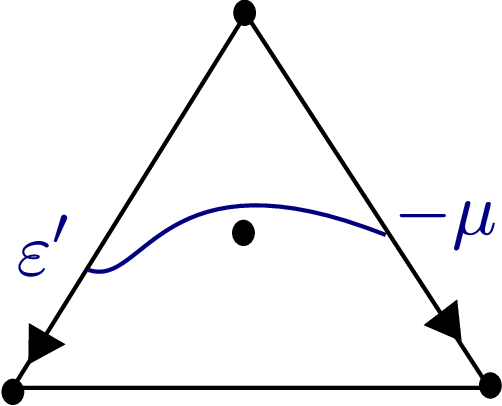}} - \adjustbox{valign=c}{\includegraphics[width=1.5cm]{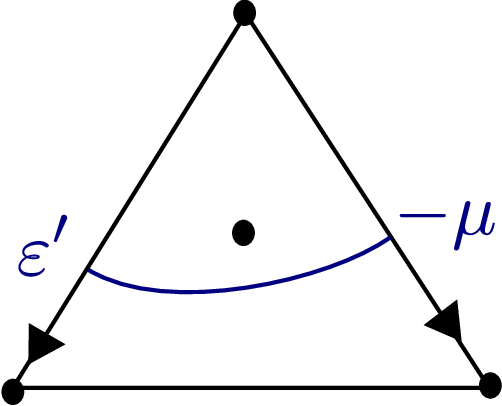}}$.
 This proves the claim and concludes the proof of the lemma in the case where $\mathbf{\Sigma}=\mathbb{T}_0$.
 \vspace{2mm}
\par \textbf{Step 2:} We consider the general case. Recall that $\mathbf{\Sigma}$ is equipped with a non-degenerate triangulation $\Delta$ and let $\mathbb{T}_0$ be the face containing the point $p_0$. Let $\mathbf{\Sigma}_0$ be the (possibly empty) punctured surface made of the faces of $\Delta$ distinct from $\mathbb{T}_0$ so that $\mathbf{\Sigma}$ is obtained from $\mathbb{T}_0 \bigsqcup \mathbf{\Sigma}_0$ by gluing some pairs of boundary arcs together and let $i : \mathcal{S}_{\omega}(\mathbf{\Sigma}) \hookrightarrow \mathcal{S}_{\omega}(\mathbb{T}_0) \otimes \mathcal{S}_{\omega}(\mathbf{\Sigma}_0)$ denote the gluing map. Similarly, let $i': \mathcal{S}_{\omega}(\mathbf{\Sigma}') \hookrightarrow \mathcal{S}_{\omega}(\mathbb{T}_0') \otimes \mathcal{S}_{\omega}(\mathbf{\Sigma}_0)$ be the gluing map of $\mathbf{\Sigma}'$. Consider the following diagram

$$
 \begin{tikzcd}
 \mathcal{S}_{+1}(\mathbb{T}_0') \otimes \mathcal{S}_{+1}(\mathbf{\Sigma}_0)
  \arrow[rrr, hook, "j_{\mathbb{T}_0'}\otimes j_{\mathbf{\Sigma}_0}" ]
   \arrow[ddd, twoheadrightarrow, "\varphi_0 \otimes \id" ] &{}&{}& 
 \mathcal{S}_{\omega}(\mathbb{T}_0') \otimes \mathcal{S}_{\omega}(\mathbf{\Sigma}_0) 
 \arrow[ddd, twoheadrightarrow, "\varphi_0 \otimes \id"] \\
 {} & 
 \mathcal{S}_{+1}(\mathbf{\Sigma}')
  \arrow[lu, hook, "i'"] \arrow[r, hook, "j_{\mathbf{\Sigma}'}"] \arrow[d, twoheadrightarrow, "\varphi"] & 
  \mathcal{S}_{\omega}(\mathbf{\Sigma}')
  \arrow[ru, hook, "i'"] \arrow[d, twoheadrightarrow, "\varphi"] &{} \\
 {} & 
  \mathcal{S}_{\omega}(\mathbf{\Sigma}')
  \arrow[ld, hook, "i"] \arrow[r, hook, "j_{\mathbf{\Sigma}}"] & 
  \mathcal{S}_{\omega}(\mathbf{\Sigma})
  \arrow[rd, hook, "i"] & {} \\
 \mathcal{S}_{+1}(\mathbb{T}_0) \otimes \mathcal{S}_{+1}(\mathbf{\Sigma}_0)
  \arrow[rrr, hook,  "j_{\mathbb{T}_0}\otimes j_{\mathbf{\Sigma}_0}"] &{}&{}&
 \mathcal{S}_{\omega}(\mathbb{T}_0) \otimes \mathcal{S}_{\omega}(\mathbf{\Sigma}_0)
\end{tikzcd}
$$

In this diagram: 
\begin{itemize}
\item the outer square commutes by Step $1$; 
\item the  squares on the top and bottom  commutes by Lemma \ref{lem: j commute with i}; 
\item the squares on the left and right sides commutes by definition of $\varphi$.
\end{itemize}
Therefore the innermost square commutes; this concludes the proof.
	
\end{proof}

\begin{notations}
For $\alpha_{\varepsilon \varepsilon'} \in \mathcal{S}_{\omega}(\mathbf{\Sigma})$ the class of a stated arc, we denote by  $\alpha_{\varepsilon \varepsilon'}^{(N)}$ be the class of the stated tangle made by stacking $N$ parallel copies of $\alpha_{\varepsilon \varepsilon'}$ on top of the others in the framing direction. More precisely,  if both endpoints of $\alpha$ lye in different boundary arcs, then $\alpha_{\varepsilon \varepsilon'}^{(N)}=(\alpha_{\varepsilon \varepsilon'})^{N}$. If $\alpha$ has its two endpoints, say $v$ and $w$, in the same boundary arc with $h(v)<h(w)$ such that $v$ has state $\varepsilon$ and $w$ has state $\varepsilon'$, then $ \alpha_{\varepsilon \varepsilon'}^{(N)}$ is the class of the stated tangle $(\alpha^{(N)}, s^{(N)})$ defined as follows. The tangle $\alpha^{(N)}$ is made of $N$ parallel copies $\alpha^{(N)}= \alpha_1 \cup \ldots \cup \alpha_N$ of $\alpha$ such that the height order is given by $h(v_1)<h(v_2)< \ldots < h(v_N)<h(w_1) < \ldots < h(w_N)$. The state $s^{(N)}$ sends the points $v_i$ to $\varepsilon$ and the points $w_j$ to $\varepsilon'$.
\end{notations}

\begin{lemma}\label{lemma_T1}
If $\alpha_{\varepsilon \varepsilon'}\in \mathcal{S}_{\omega}(\mathbf{\Sigma})$ is  the class of a stated arc such that its  endpoints lie on the same boundary arcs, then $j_{\mathbf{\Sigma}} (\alpha_{\varepsilon \varepsilon'}) = \alpha_{\varepsilon \varepsilon'}^{(N)}$. 
\end{lemma}
\begin{proof}
	Since the two endpoints of $\alpha$ lie on the same boundary arc $a$, we can pick a puncture $p_0\in a $  that 
	lies between these two endpoints. 
	Denote by $\mathbf{\Sigma}'=(\Sigma, \mathcal{P}\cup \{p_0\})$ the punctured surface obtained by adding this puncture and $\varphi : \mathcal{S}_{\omega}(\mathbf{\Sigma}') \rightarrow \mathcal{S}_{\omega}(\mathbf{\Sigma})$ the morphism of Section $2.3$. With the notations of Section $2.3$, the two components of $a\setminus \{p_0\}$ are two boundary arcs $b$ and $c$ of $\mathbf{\Sigma}'$ and we choose the convention such that $\alpha \in \mathcal{T}^{(0)}(\mathbf{\Sigma})$. Note that $\alpha^{(N)}$ is in $\mathcal{T}^{(0)}(\mathbf{\Sigma})$ as well. 
	To avoid confusion, we denote by $\alpha'$ the arc $\alpha$ seen as an arc in $\Sigma_{\mathcal{P}\cup \{p_0\}}$, so that $\iota (\alpha')=\alpha$.
	By Lemma \ref{lemma_baleze}, one has $j_{\mathbf{\Sigma}'} (\alpha'_{\varepsilon \varepsilon'})= (\alpha'_{\varepsilon \varepsilon'})^{N}= {\alpha'}_{\varepsilon \varepsilon'}^{(N)}$. By commutativity of the Diagram in Lemma \ref{lemma_add_puncture} and by definition of $\varphi$, the image $j_{\mathbf{\Sigma}}(\alpha_{\varepsilon \varepsilon'})$ is the class in $\mathcal{S}_{\omega}(\mathbf{\Sigma})$ of the unique stated tangle in $\mathcal{T}^{(0)}(\mathbf{\Sigma})$ which is isotopic to ${\alpha'}_{\varepsilon \varepsilon'}^{(N)}$: this is 
	$\alpha_{\varepsilon \varepsilon'}^{(N)}$.
	\end{proof}

\begin{lemma}\label{lemma_T2}
If  $\gamma \in \mathcal{S}_{\omega}(\mathbf{\Sigma})$ is the class of a closed curve, then $j_{\mathbf{\Sigma}} (\gamma) = T_N(\gamma)$. 
\end{lemma}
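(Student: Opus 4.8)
The plan is to reduce the statement about an arbitrary closed curve $\gamma$ to the case already treated in Lemma~\ref{lemma_curve}, namely a closed curve that meets some edge of a triangulation exactly once. First I would observe that, given $\gamma$, I can always add one puncture to $\mathbf{\Sigma}$ in order to gain enough room: choose a puncture $p\in \Sigma_{\mathcal{P}}$ lying on the curve $\gamma$ (or, if $\gamma$ is contractible, use that $\gamma = -q-q^{-1}$ is a scalar and $T_N(-q-q^{-1}) = -q^N - q^{-N} = -2$, matching $j_{\mathbf{\Sigma}}$ applied to the scalar $-2$ in $\mathcal{S}_{+1}(\mathbf{\Sigma})$, so the contractible case is immediate). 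Let $\mathbf{\Sigma}'=(\Sigma, \mathcal{P}\cup\{p\})$ and let $\gamma'\subset \Sigma_{\mathcal{P}\cup\{p\}}$ be the curve induced by $\gamma$, so that $\pi(\gamma') = \gamma$.

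Next I would arrange a triangulation: since $p$ lies on $\gamma'$, one can pick a topological triangulation $\Delta'$ of $\mathbf{\Sigma}'$ having an edge $e$ incident to $p$ which meets $\gamma'$ transversally in exactly one point (this is possible precisely because $\gamma'$ passes through the new puncture, which lets us run a short arc of $\Delta'$ across $\gamma'$ just once; if $\mathbf{\Sigma}'$ happens to be small, it is covered by Step~5 and no triangulation argument is needed). Then Lemma~\ref{lemma_curve} applies to $\gamma'$ in $\mathcal{S}_{\omega}(\mathbf{\Sigma}')$ and gives $j_{\mathbf{\Sigma}'}(\gamma') = T_N(\gamma')$. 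Now apply the surjection $\pi$ and use Proposition~\ref{prop_add_puncture}:
\begin{equation*}
j_{\mathbf{\Sigma}}(\gamma) = j_{\mathbf{\Sigma}}(\pi(\gamma')) = \pi(j_{\mathbf{\Sigma}'}(\gamma')) = \pi(T_N(\gamma')) = T_N(\pi(\gamma')) = T_N(\gamma),
\end{equation*}
where the next-to-last equality holds because $\pi$ is an algebra morphism and $T_N$ is a polynomial.

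The main obstacle I anticipate is the geometric claim that one can always choose the extra puncture $p$ and the triangulation $\Delta'$ so that some edge of $\Delta'$ meets $\gamma'$ exactly once. For an essential simple closed curve through $p$ this is clear, but $\gamma$ may a priori be given by a non-simple diagram or be a power of a simple closed curve; one should first reduce to the case where $\gamma$ is (a scalar times) a simple closed curve using the Kauffman relations and the fact that $T_N$ is multiplicative-compatible with Chebyshev recursion, or simply note that as an element of the commutative algebra $\mathcal{S}_{+1}(\mathbf{\Sigma})$ it suffices to treat the generators, which we may take to be simple closed curves and stated arcs. One also needs to handle the case where $\mathbf{\Sigma}$ (or $\mathbf{\Sigma}'$) does not admit a non-degenerate triangulation, but those finitely many surfaces are exactly the content of Step~5, so they can be invoked directly. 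Modulo these reductions, the proof is a short diagram chase combining Lemma~\ref{lemma_curve}, Lemma~\ref{lemma_baleze} (for the arc-cutting used inside Lemma~\ref{lemma_curve}), and Proposition~\ref{prop_add_puncture}.
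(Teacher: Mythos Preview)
Your approach is essentially the same as the paper's: add a puncture to obtain $\mathbf{\Sigma}'$, find a triangulation of $\mathbf{\Sigma}'$ with an edge meeting $\gamma'$ exactly once, apply Lemma~\ref{lemma_curve} there, and push the result down via Proposition~\ref{prop_add_puncture}. The paper's proof is just these two sentences, with the puncture taken to be an inner puncture refining $\Delta$.

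One imprecision in your write-up: you place the puncture $p$ \emph{on} $\gamma$, but then $\gamma$ no longer lives in $\Sigma_{\mathcal{P}\cup\{p\}}$. What you want is to place $p$ just off $\gamma$ (or equivalently, push $\gamma$ slightly off $p$); then a short edge of a triangulation of $\mathbf{\Sigma}'$ emanating from $p$ crosses $\gamma'$ exactly once. Also, your worries about non-simple diagrams and powers of curves are unnecessary: in this paper ``the class of a closed curve'' means the class of a simple closed curve (a basis element), so no reduction via Kauffman relations or Chebyshev multiplicativity is needed. The contractible case is likewise not a real issue, since such a curve is already a scalar and both sides agree trivially, as you noted.
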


\begin{proof}
	If the closed curve can be chosen such that it intersects an edge of $\Delta$ once and only once, then this is Lemma \ref{lemma_curve}. 
	Otherwise, we can refine the triangulation by adding an inner puncture in order to have this property. Denote by $\mathbf{\Sigma}'$ the resulting punctured surface and let  $\gamma'\in \mathcal{S}_{+1}(\mathbf{\Sigma}')$ be such that  $\iota(\gamma')=\gamma$. 
	Lemma \ref{lemma_curve} implies that $j_{\mathbf{\Sigma}'}(\gamma')=T_N(\gamma')$ and Lemma \ref{lemma_add_puncture} implies that $j_{\mathbf{\Sigma}}(\gamma)=T_N(\gamma)$. 
\end{proof}

\paragraph{Step 5: punctured surfaces which do not admit non-degenerate triangulations.}

It remains to prove Theorem $1.2$ for connected punctured surfaces which do not admit non-degenerate topological triangulations, that is for the small punctured surfaces, for the disc with one inner puncture  and one puncture on its boundary and for the unpunctured surfaces $\mathbf{\Sigma}=(\Sigma, \emptyset)$ with empty set of puncture.

The disc with only one puncture (on its boundary) and the sphere with zero or one puncture have both trivial skein algebra, while the sphere with two punctures has a commutative skein algebra. Therefore,  Theorem \ref{theorem2} holds trivially for them.  It remains to prove the 

\begin{lemma}\label{lemma_degenerate}
Theorem \ref{theorem2} holds when $\mathbf{\Sigma}$ is either a disc with one inner puncture  and one puncture on its boundary or an unpunctured surface $\mathbf{\Sigma}=(\Sigma, \emptyset)$ of genus at least one.
\end{lemma}

\begin{proof}
Choose an inner puncture $p_0 \in \mathring{\Sigma}_{\mathcal{P}}$ and consider the punctured surface $\mathbf{\Sigma}':=(\Sigma, \mathcal{P}\cup \{p_0\})$. Since $\mathbf{\Sigma}'$ admits a non-degenerate triangulation, our previous study shows the existence of the Chebyshev morphism $j_{\mathbf{\Sigma}'} : \mathcal{S}_{+1}(\mathbf{\Sigma}') \hookrightarrow \mathcal{Z} \left(  \mathcal{S}_{\omega}(\mathbf{\Sigma}') \right)$.
Consider the off-puncture ideals $\mathcal{I}^{+1}_{p_0} \subset \mathcal{S}_{+1}(\mathbf{\Sigma}')$ and $\mathcal{I}_{p_0} \subset \mathcal{S}_{\omega}(\mathbf{\Sigma}')$. Exactly the same argument used in the proof of Lemma \ref{lemma_add_puncture}, shows the inclusion $j_{\mathbf{\Sigma}'} (\mathcal{I}^{+1}_{p_0}) \subset \mathcal{I}_{p_0}$. 
By Proposition \ref{prop_off_puncture}, both lines in the following diagram are exact, 
$$
\begin{tikzcd}
0 \arrow[r] & \mathcal{I}^{+1}_{p_0} \arrow[r] \arrow[d, hook, "j_{\mathbf{\Sigma}'}"] & \mathcal{S}_{+1}(\mathbf{\Sigma}') \arrow[r, "\varphi"] \arrow[d, hook, "j_{\mathbf{\Sigma}'}"] & 
\mathcal{S}_{+1}(\mathbf{\Sigma}) \arrow[r] \arrow[d, dotted, "\exists! j_{\mathbf{\Sigma}}"] & 0 \\
0 \arrow[r] & \mathcal{I}_{p_0} \arrow[r] & \mathcal{S}_{\omega}(\mathbf{\Sigma}') \arrow[r, "\varphi"] & \mathcal{S}_{\omega}(\mathbf{\Sigma})  \arrow[r] & 0
\end{tikzcd}
$$

, therefore there exists a unique algebra morphism $j_{\mathbf{\Sigma}} : \mathcal{S}_{+1}(\mathbf{\Sigma}) \rightarrow \mathcal{S}_{\omega}(\mathbf{\Sigma})$ which makes the diagram commuting. Since $j_{\mathbf{\Sigma}}$ is obtained from $j_{\mathbf{\Sigma}'}$ by passing to the quotient, its image is also central and one has the equalities $j_{\mathbf{\Sigma}}([\gamma])=T_N([\gamma])$ and $j_{\mathbf{\Sigma}}(\alpha_{\varepsilon \varepsilon'}) = \alpha^{(N)}_{\varepsilon \varepsilon'}$ for any closed curve $\gamma$ and any stated arc $\alpha_{\varepsilon \varepsilon'}$.
This concludes the proof.

\end{proof}

 \subsection{A Poisson bracket on $\mathcal{S}_{+1}(\mathbf{\Sigma})$}\label{sec_Poisson_skein}
 
 \par In this section, we define and make explicit a Poisson structure on $\mathcal{S}_{+1}(\mathbf{\Sigma})$.
 \vspace{2mm}
 
 \subsubsection{Preliminaries}
 
  We briefly recall some general facts concerning deformation quantization. 
  
  Let $\mathcal{A}$ be a complex commutative unital algebra, $\mathbb{C}[[\hbar]]$ be the ring of formal series in a parameter $\hbar$ and $\mathcal{A}[[\hbar]]:=\mathcal{A}\otimes_{\mathbb{C}}\mathbb{C}[[\hbar]]$. A \textit{star product} $\star$ on $\mathcal{A}$ is an associative product on $\mathcal{A}[[\hbar]]$ such that if $f=\sum_i f_i \hbar^i $ and $g=\sum_i g_i \hbar^i$ are elements of $\mathcal{A}[[h]]$, then:
  $$ f\star g = f_0g_0 \mod{\hbar},$$
 where $f_0g_0$ denotes the product of $f_0$ and $g_0$ in $\mathcal{A}$. 
  A star product induces a Poisson structure on $\mathcal{A}$ by the formula:
  \begin{equation}\label{eq_poisson} f\star g -g\star f = \hbar \{ f, g \} \mod{\hbar^2}, \end{equation}
  for all $f,g\in \mathcal{A}$.
  The algebra $(\mathcal{A}[[\hbar]], \star)$ is called a \textit{deformation quantization} of the commutative Poisson algebra $(\mathcal{A}, \{\cdot, \cdot \})$. We refer to  (\cite{KontsevichQuantizationPoisson}, \cite{GRS_QuantizationDeformation}  $II.2$)  for detailed discussions. A \textit{morphism of star products} between $(\mathcal{A}, \star_{\mathcal{A}})$ and $(\mathcal{B}, \star_{\mathcal{B}})$ is an algebra morphism $\psi : \mathcal{A}[[\hbar]] \rightarrow \mathcal{B}[[\hbar]]$ whose restriction to $\mathcal{A}\subset \mathcal{A}[[\hbar]]$ induces a morphism $\phi : \mathcal{A} \rightarrow \mathcal{B}$. 
Note that such a $\phi$ is, in fact, a morphism of Poisson algebras for the induced Poisson algebra structures.
   An isomorphism $\psi :  (\mathcal{A}[[\hbar]], \star_1) \xrightarrow{\cong} (\mathcal{A}[[\hbar]], \star_2)$ of star products is called a \textit{gauge equivalence} if $\psi(f)= f \pmod{\hbar}$. If two star products are gauge equivalent, they induce the same Poisson bracket on $\mathcal{A}$.
  
   To end this preamble, let us mention that deformation quantization is well-behaved relatively to the tensor product. 
  	Indeed, if $\mathcal{A}[[h]]$ and $\mathcal{B}[[h]]$ are deformation quantizations of $\mathcal{A}$ and $\mathcal{B}$ respectively, then $\mathcal{A}[[h]]\otimes \mathcal{B}[[h]]\cong (\mathcal{A}\otimes \mathcal{B})[[h]]$ is a deformation quantization of $\mathcal{A}\otimes \mathcal{B}$. Note also that the  Poisson structure on $\mathcal{A}\otimes \mathcal{B}$ given by \eqref{eq_poisson} is 
  	\begin{equation}\label{eq: tens poiss quant}
  	\ \{ f\otimes g, f'\otimes g'\} =   ff' \otimes \{g,g'\} + \{f,f'\}\otimes gg', 
  	\end{equation}
  for $f,f'\in \mathcal{A}$ and $g,g' \in \mathcal{B}$.

    \subsubsection{Formal definition}

  Let $\mathbf{\Sigma}$ be a punctured surface and $\mathfrak{o}$ an orientation of its boundary arc. Denote by $\mathcal{S}_{+1}(\mathbf{\Sigma})$ the stated skein algebra associated to the ring $\mathbb{C}$ with $\omega=+1$ and denote by $\mathcal{S}_{\omega_{\hbar}}(\mathbf{\Sigma})$ the stated skein algebra associated to the ring $\mathbb{C}[[\hbar]]$ with $\omega_{\hbar}:= \exp\left( -\hbar/4 \right)$. The convention is chosen so that $q=\exp(\hbar)$. 
Recall the basis $\mathcal{B}^{\mathfrak{o}}$ from Definition \ref{def_basis}. Since $\mathcal{B}^{\mathfrak{o}}$ is independent of $\omega$, one has an isomorphism of  $\mathbb{C}[[\hbar]]$-modules 
	\begin{equation}\label{eq: iso psi Bo}
	\psi^{\mathfrak{o}} : \mathcal{S}_{+1}(\mathbf{\Sigma})[[\hbar]] \xrightarrow{\cong}  \mathcal{S}_{\omega_{\hbar}}(\mathbf{\Sigma}).
	\end{equation}
Note that $\mathfrak{o}$ tells us how to lift the basis elements $[D,s]$ of $\mathcal{S}_{+1}(\mathbf{\Sigma})$ (which are independent on the height order) in $\mathcal{S}_{\omega_{\hbar}}(\mathbf{\Sigma})$. We emphasize that $\psi^{\mathfrak{o}}$ is not an algebra morphism.

  \begin{definition}\label{de:poiss brack}
   Pulling-back the product of $\mathcal{S}_{\omega_{\hbar}}(\mathbf{\Sigma})$ along $\psi^{\mathfrak{o}}$ gives a star product $\star_{\hbar}$ on  $\mathcal{S}_{+1}(\mathbf{\Sigma})$. We denote by   $\{\cdot, \cdot\}^s$ the resulting Poisson bracket on $\mathcal{S}_{+1}(\mathbf{\Sigma})$  given by Equation \eqref{eq_poisson}.
 \end{definition}
 
 Here the superscript $s$ stands for "skein".
 \begin{remark}\label{remark_independance} Note that for any two orientations $\mathfrak{o}_1$ and $\mathfrak{o}_2$ of the boundary arcs of $\mathbf{\Sigma}$, the automorphism $(\psi^{\mathfrak{o}_2})^{-1}\circ \psi^{\mathfrak{o}_1} : \mathcal{S}_{+1}(\mathbf{\Sigma})[[\hbar]] \xrightarrow{\cong} \mathcal{S}_{+1}(\mathbf{\Sigma})[[\hbar]]$ is a gauge equivalence, hence the Poisson bracket $\{\cdot, \cdot \}^s$ does not depend on $\mathfrak{o}$. 
 \end{remark}
 By definition, $(\mathcal{S}_{+1}(\mathbf{\Sigma})[[\hbar]], \star_{\hbar})$ is a quantization deformation of the Poisson algebra $(\mathcal{S}_{+1}(\mathbf{\Sigma}), \{\cdot, \cdot\}^s)$. Moreover, this structure of Poisson algebra  is compatible with decompositions of surfaces. More precisely, one has the following. 
 \begin{lemma}\label{lemma: poisson morph at +1 skein}
  The gluing maps $i_{|a\#b} : \mathcal{S}_{+1}(\mathbf{\Sigma}_{|a\#b})\hookrightarrow \mathcal{S}_{+1}(\mathbf{\Sigma})$, the maps $i^{\Delta}: \mathcal{S}_{+1}(\mathbf{\Sigma}) \hookrightarrow \otimes_{\mathbb{T}\in F(\Delta)}\mathcal{S}_{+1}(\mathbb{T})$ and the coproduct maps $\Delta^L, \Delta^R$  are Poisson morphisms.
  \end{lemma}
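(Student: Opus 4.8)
The plan is to deduce the Poisson statement from the fact that each of the relevant linear maps is the specialization at $\omega = \omega_\hbar$ of a map that is \emph{already an algebra morphism at the formal level}, and that these maps are compatible with the canonical bases $\mathcal{B}^{\mathfrak{o}}$. The key observation is that the gluing morphism $i_{|a\#b}$, the triangulation embedding $i^\Delta$, and the comodule maps $\Delta^L_a, \Delta^R_b$ are all defined over $\mathcal{R} = \mathbb{C}[[\hbar]]$ simultaneously with their $\omega = +1$ and $\omega = \omega_\hbar$ versions; they are ``the same'' formulas on stated tangles, independent of $\omega$. So if we denote by $\psi^{\mathfrak{o}}$ and $\psi^{\mathfrak{o}'}$ the basis isomorphisms \eqref{eq: iso psi Bo} on source and target (choosing orientations compatibly, e.g. so that the orientation on $\mathbf{\Sigma}_{|a\#b}$ restricts from that on $\mathbf{\Sigma}$, etc.), then the diagram relating $i_{|a\#b}$ at $+1$ and at $\omega_\hbar$ through $\psi^{\mathfrak{o}}, \psi^{\mathfrak{o}'}$ commutes \emph{on the nose}, because $i_{|a\#b}$ sends a basis stated diagram to a sum of basis stated diagrams with coefficients that are monomials in $\omega$ (coming from reordering heights via the height-exchange relations \eqref{eq: height exch 1}, \eqref{eq: height exch corr}), and those same monomials appear in both specializations.

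Concretely, first I would fix the orientations: choose $\mathfrak{o}$ on $\mathbf{\Sigma}$, which induces an orientation $\mathfrak{o}_{|a\#b}$ on $\mathbf{\Sigma}_{|a\#b}$ and, via $i^\Delta$, orientations on the triangles. Then I would record the elementary fact that, under the basis isomorphism $\psi^{\mathfrak{o}}$, the map $i_{|a\#b}$ (resp. $i^\Delta$, $\Delta^L$, $\Delta^R$) commutes with $\psi$: that is, $\psi^{\mathfrak{o}} \circ i_{|a\#b}^{+1} = i_{|a\#b}^{\omega_\hbar} \circ \psi^{\mathfrak{o}_{|a\#b}}$ as $\mathbb{C}[[\hbar]]$-linear maps (extending $i_{|a\#b}^{+1}$ by $\mathbb{C}[[\hbar]]$-linearity). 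This is because the definition of $i_{|a\#b}$ on a basis element — cut along the gluing arc, sum over states on the cut — produces, after re-expressing in the basis $\mathcal{B}^{\mathfrak{o}}$ on $\mathbf{\Sigma}$, coefficients that are Laurent monomials in $\omega$ applied uniformly; the same holds for $i^\Delta$ (a composition of such gluings) and for $\Delta^L_a = (\text{iso}) \circ i_{|a\#b_L}$, $\Delta^R_a = (\text{iso}) \circ i_{|b_R\#a}$. Granting this, I would then conclude: $i_{|a\#b}^{+1}$, being a $\mathbb{C}[[\hbar]]$-linear map that intertwines the pulled-back products $\star_\hbar$ on both sides (by the nose-commuting square and the fact that $i_{|a\#b}^{\omega_\hbar}$ is an algebra morphism by \cite[Theorem 3.1]{LeStatedSkein}), is a morphism of star products; hence by the general fact recalled in the Preliminaries (that a morphism of star products restricts to a Poisson morphism), $i_{|a\#b}^{+1}$ is Poisson for $\{\cdot,\cdot\}^s$. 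The same argument applies verbatim to $i^\Delta$, and to $\Delta^L, \Delta^R$, using additionally \eqref{eq: tens poiss quant} to identify the Poisson structure on the tensor products $\mathcal{S}_{+1}(\mathbb{B}) \otimes \mathcal{S}_{+1}(\mathbf{\Sigma})$ etc. with the one coming from the factorwise star products.

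The main obstacle — really the only nontrivial point — is the claim that $i_{|a\#b}$ commutes strictly with the basis isomorphisms $\psi^{\mathfrak{o}}$, i.e. that the ``$\omega$-dependence'' of the gluing map is purely through uniform monomial coefficients and does not mix basis elements in an $\omega$-dependent way. This requires a careful bookkeeping: when one glues and then rewrites a (possibly non-simple, non-$\mathfrak{o}$-increasing) stated diagram back into the basis $\mathcal{B}^{\mathfrak{o}}$, one uses the Kauffman relations \eqref{eq: skein 1}, the boundary relations \eqref{eq: skein 2}, and the height-exchange moves \eqref{eq: height exch 1}, \eqref{eq: height exch corr}; one must check that the resulting expansion has the property that ``the coefficient of each basis element is a single Laurent monomial $\omega^{m}$ with $m$ independent of the specialization of $\omega$.'' This is essentially Lemma~\ref{lemma_leading_term} / the leading-term computation already used in the proof of Theorem~\ref{theorem1}, but one needs the full expansion, not just the leading term; alternatively, one can argue more cleanly that the basis $\mathcal{B}^{\mathfrak{o}}$ gives structure constants for $i_{|a\#b}$ that lie in $\mathbb{Z}[\omega^{\pm 1}]$ (a statement over the universal ground ring $\mathbb{Z}[\omega^{\pm1}]$), and then specialize $\omega \mapsto +1$ and $\omega \mapsto \omega_\hbar$; the square commutes because both specializations are ring maps out of $\mathbb{Z}[\omega^{\pm1}]$ and the matrix of $i_{|a\#b}$ in the basis is defined over that ring. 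Once this ``integrality/naturality'' of the gluing map with respect to the canonical basis is in hand, everything else is the formal deformation-quantization nonsense recalled above, and the lemma follows.
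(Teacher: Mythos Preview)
Your overall strategy matches the paper's one-line proof (``each of these morphisms arises from a morphism of star products''), but there is a genuine error in the execution. You claim that the square
\[
\psi^{\mathfrak{o}}\circ (i_{|a\#b}^{+1}\otimes \mathrm{id}_{\mathbb{C}[[\hbar]]}) \;=\; i_{|a\#b}^{\omega_\hbar}\circ \psi^{\mathfrak{o}_{|a\#b}}
\]
commutes \emph{on the nose}, and you use this to conclude that $i_{|a\#b}^{+1}\otimes\mathrm{id}$ is an algebra morphism for $\star_\hbar$. This is false in general. When you apply $i_{|a\#b}$ to a basis element and re-expand the result in $\mathcal{B}^{\mathfrak{o}}$, the height-exchange relations \eqref{eq: height exch 1}--\eqref{eq: height exch corr} produce coefficients that are genuine Laurent polynomials $p(\omega)$, and $p(1)\neq p(\omega_\hbar)$ in $\mathbb{C}[[\hbar]]$. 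Your ``universal ground ring'' remark does not rescue this: having two specialization maps out of $\mathbb{Z}[\omega^{\pm1}]$ does not make their values agree.

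The fix is that you never needed on-the-nose commutativity. Define $\phi:=(\psi^{\mathfrak{o}})^{-1}\circ i_{|a\#b}^{\omega_\hbar}\circ \psi^{\mathfrak{o}_{|a\#b}}$. This is tautologically an algebra morphism for $(\star_\hbar,\star_\hbar)$, since $i_{|a\#b}^{\omega_\hbar}$ is an algebra morphism by L\^e's theorem. Because the matrix entries of $i_{|a\#b}$ in the bases $\mathcal{B}^{\mathfrak{o}}$ lie in $\mathbb{Z}[\omega^{\pm1}]$ and $\omega_\hbar=\exp(-\hbar/4)\equiv 1\pmod\hbar$, one has $\phi\equiv i_{|a\#b}^{+1}\pmod\hbar$. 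That is exactly the definition of a morphism of star products in the paper's sense (an algebra morphism $\phi$ on $\mathcal{A}[[\hbar]]$ whose reduction mod $\hbar$ is a given map $\mathcal{A}\to\mathcal{B}$), and the general fact you quoted then gives that $i_{|a\#b}^{+1}$ is Poisson. The same correction applies verbatim to $i^{\Delta}$, $\Delta^L$ and $\Delta^R$.
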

  
  \begin{proof}
   This results from the fact that each of these morphisms arises from a morphism of  star products.  
  \end{proof}

  \subsubsection{Explicit formula}\label{sec: explicit formula of bracket}

  This section is devoted to making explicit the Poisson bracket $\{\cdot, \cdot\}^s$ on stated diagrams. 
 It will be expressed in terms of \emph{resolutions} of stated diagrams, which are defined at crossings and at points on the boundary arcs.  
  
  All along this section, $\mathbf{\Sigma}$ is a punctured surface. 
  
  \paragraph{Resolution at a crossing.} 
  Let $(D,s)$ be a stated diagram and $c$ a crossing of $D$.  
 Denote by $D_+$ and $D_-$ the diagrams obtained from $D$ by replacing the crossing $c$ by its positive and negative resolution respectively:  
  	\begin{equation*}
 \text{ the crossing $c$  } 
 \begin{tikzpicture}[baseline=-0.4ex,scale=0.5,>=stealth]	
  \draw [fill=gray!45,gray!45] (-.6,-.6)  rectangle (.6,.6)   ;
  \draw[line width=1.2,-] (-0.4,-0.52) -- (.4,.53);
  \draw[line width=1.2,-] (0.4,-0.52) -- (0.1,-0.12);
  \draw[line width=1.2,-] (-0.1,0.12) -- (-.4,.53);
  \end{tikzpicture}
\text{ and its positive }
  \begin{tikzpicture}[baseline=-0.4ex,scale=0.5,>=stealth] 
  \draw [fill=gray!45,gray!45] (-.6,-.6)  rectangle (.6,.6)   ;
  \draw[line width=1.2] (-0.4,-0.52) ..controls +(.3,.5).. (-.4,.53);
  \draw[line width=1.2] (0.4,-0.52) ..controls +(-.3,.5).. (.4,.53);
  \end{tikzpicture}
\text{ and negative }
  \begin{tikzpicture}[baseline=-0.4ex,scale=0.5,rotate=90]	
  \draw [fill=gray!45,gray!45] (-.6,-.6)  rectangle (.6,.6)   ;
  \draw[line width=1.2] (-0.4,-0.52) ..controls +(.3,.5).. (-.4,.53);
  \draw[line width=1.2] (0.4,-0.52) ..controls +(-.3,.5).. (.4,.53);
  \end{tikzpicture}
  \text{ resolution.}
  \end{equation*}
    The resolution of $(D,s)$ at the crossing $c$ is defined by 
  \begin{equation*}
  \Res_c(D,s) := \left[D_+ , s\right] - \left[D_-, s\right] \in \mathcal{S}_{+1}(\mathbf{\Sigma}).
  \end{equation*}

  \paragraph{Resolution at boundary points.}

  Let $b_1,...,b_k$ be the boundary arcs of $\Sigma_{\mathcal{P}}$. 
  
  \begin{definition}
  	A \emph{height order} on a stated diagram $(D,s)$ of $\Sigma_{\mathcal{P}}$ is a $k$--tuple $\mathfrak{o}=(\mathfrak{o}_1,...,\mathfrak{o}_k)$ of bijections of sets $\mathfrak{o}_i: \partial_{b_i} D \to \{1, ..., |\partial_{b_i}D| \}$.   
  \end{definition} 
Note that the product of symmetric groups $\mathbb{S}_{n_1} \times \ldots \times \mathbb{S}_{n_k}$ acts freely and transitively on the set of height orders by left composition.

	To a height order $\mathfrak{o}$ on $(D,s)$ corresponds a stated tangle with same height order and which projects to $(D,s)$. Therefore, one can consider the class of  $(D,s,\mathfrak{o})$ in  $\mathcal{S}_{\omega}(\mathbf{\Sigma})$.   
  	If $\omega=+1$, the class $[D,s, \mathfrak{o}]\in \mathcal{S}_{+1}(\mathbf{\Sigma})$ is independent of $\mathfrak{o}$, and we denote it simply by $[D,s]$.
\\

 Let us choose a boundary arc $b_i$ and suppose there are two points $p_H$ and $p_L$ of $\partial_{b_i}D$ such that $\mathfrak{o}_i(p_H)=\mathfrak{o}_i(p_L)+1$ (\textit{i.e.} $p_H$ is the $\mathfrak{o}_i$--successor of $p_L$). 
Let $\widetilde{\mathfrak{o}}$ be the order on $b_i$ that is induced by the orientation of $\Sigma$. To alleviate notation, we write $p<_{\widetilde{\mathfrak{o}}}q$ for $\widetilde{\mathfrak{o}}(p)<\widetilde{\mathfrak{o}}(q)$. For instance, in the stated diagram $\heightexch{<-}{+}{-}$, if $p_L$ is the endpoint with $s(p_L)=+$ and $p_H$ the endpoint with $s(p_H)=-$ and if $\mathfrak{o}$ is the orientation given by the arrow, then $p_L>_{\widetilde{\mathfrak{o}}} p_H$ whereas $p_L<_{\mathfrak{o}} p_H$ (because the $\mathfrak{o}$ and $\widetilde{\mathfrak{o}}$ orientation of the boundary arc where live $p_L$ and $p_H$ are opposite).
\par 

Let $\tau \in \mathbb{S}_{n_i}$ be the transposition that exchanges the $\mathfrak{o}_i$ order of $p_H$ and $p_L$.  
The resolution of $(D,s)$ along $\tau$, denoted by $\Res_{\tau} (D,s, \mathfrak{o})\in \mathcal{S}_{+1}(\mathbf{\Sigma})$, is given by 
\begin{equation*}
\Res_{\tau} (D,s, \mathfrak{o}) = 
\begin{cases}
\frac{1}{2} [ D, s], 	& \text{if either } s(p_H)=s(p_L)\mbox{ and } p_L<_{\widetilde{\mathfrak{o}}} p_H, \\
&  \mbox{or }(s(p_H), s(p_L))=(-,+)\mbox{ and }p_H<_{\widetilde{\mathfrak{o}}} p_L; \\ 
-\frac{1}{2} [D,s],	& \mbox{if either }  s(p_H)=s(p_L)\mbox{ and }p_H<_{\widetilde{\mathfrak{o}}} p_L,  \\ 
& \mbox{or } (s(p_H), s(p_L))=(+,-)\mbox{ and } p_L<_{\widetilde{\mathfrak{o}}}p_H; \\ 
\frac{1}{2}[D,s] -2[{D}, {\tau s}], 	& \mbox{if } (s(p_H), s(p_L))=(+,-)\mbox{ and }p_H<_{\widetilde{\mathfrak{o}}}p_L;
\\ 
-\frac{1}{2}[D,s] +2[{D}, {\tau s}], & , \mbox{if } (s(p_H), s(p_L))=(-,+)\mbox{ and } p_L<_{\widetilde{\mathfrak{o}}} p_H
\end{cases}
\end{equation*}
where $\tau s$ is the state that differs from $s$ only by exchanging the states of $p_H$ and $p_L$. 
\\

Let us extend the resolution to several points, namely any permutation of the
boundary heights on a given boundary component. 
For two transpositions $\sigma_1$ and $\sigma_2$ of $\mathfrak{o}$--consecutive points, let 
\begin{equation}\label{eq: extension res}
\Res_{\sigma_1\circ\sigma_2}(D,s,\mathfrak{o}) = \Res_{\sigma_1}(D,s, \sigma_2\circ \mathfrak{o}) + \Res_{\sigma_2}(D,s,\mathfrak{o}).
\end{equation}

\begin{definition}
	For a permutation $\sigma \in \mathbb{S}_{n_{1}} \times \ldots \times \mathbb{S}_{n_{k}}$, the resolution $\Res_{\sigma}(D,s,\mathfrak{o})$ is defined via \eqref{eq: extension res}, by considering the decomposition of $\sigma$ into transpositions of $\mathfrak{o}$--consecutive points. This is clearly
independent of the choice of decomposition into transpositions.
\end{definition}
\begin{remark}
The resolution $\Res_{\sigma}(D,s,\mathfrak{o})$  is invariant under isotopy of $(D,s)$. Also, one has $\Res_{id}(D,s,\mathfrak{o}) =0$. 
\end{remark}

\begin{lemma}\label{lemmabracket}
In the skein algebra $\mathcal{S}_{\omega_{\hbar}}(\mathbf{\Sigma})$, the following two statements holds. 
\begin{enumerate}
\item Let $D\crosspos$ and $D\crossneg$  be two diagrams that differ from each other only by a change of a crossing $c$.  One has 
$$ [D\crosspos, s, \mathfrak{o}]-[D\crossneg, s, \mathfrak{o}] = \hbar \Res_c(D\crosspos, s) \mod{\hbar^2}. $$
\item Let $(D,s,\mathfrak{o})$ be an $\mathfrak{o}$-ordered stated diagram. For  $\pi\in \mathbb{S}_{n_1}\times \ldots \times \mathbb{S}_{n_k}$ one has 
$$ [D, s , \mathfrak{o}] - [D,s,\pi\circ\mathfrak{o}] = \hbar\Res_{\pi}(D,s,\mathfrak{o}) \mod{\hbar^2}.$$
\end{enumerate}
In the two statements, the resolutions $\Res$  are seen in $\mathcal{S}_{\omega_{\hbar}}(\mathbf{\Sigma})$ via the isomorphism $\psi^{\widetilde{\mathfrak{o}}}$ of \eqref{eq: iso psi Bo}. 
\end{lemma}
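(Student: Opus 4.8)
The plan is to verify both statements by direct computation, reducing everything to local moves performed inside a small disc and expressed in the basis $\mathcal{B}^{\mathfrak{o}}$ via $\psi^{\widetilde{\mathfrak{o}}}$. Throughout I use $\omega_{\hbar} = \exp(-\hbar/4)$, so that $q = \exp(\hbar) = 1 + \hbar + O(\hbar^2)$, $\omega_{\hbar} = 1 - \hbar/4 + O(\hbar^2)$, $A = \omega_{\hbar}^{-2} = 1 + \hbar/2 + O(\hbar^2)$, and $\omega_{\hbar}^{\pm 1} = 1 \mp \hbar/4 + O(\hbar^2)$. The key point is that the $O(\hbar)$-coefficient of each Kauffman or boundary relation, read modulo $\hbar^2$, is exactly the prescribed resolution.

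\textbf{Statement (1).} First I would apply the Kauffman bracket relation \eqref{eq: skein 1} to the crossing $c$ in $D\crosspos$ and in $D\crossneg$. Writing the two smoothings as $[D_+,s]$ and $[D_-,s]$ (with any fixed compatible height order, which is irrelevant modulo $\hbar^2$ for the present purpose since the correction is already $O(\hbar)$), one gets
$$[D\crosspos,s,\mathfrak{o}] = \omega_{\hbar}^{-2}[D_+,s] + \omega_{\hbar}^{2}[D_-,s], \qquad [D\crossneg,s,\mathfrak{o}] = \omega_{\hbar}^{2}[D_+,s] + \omega_{\hbar}^{-2}[D_-,s].$$
Subtracting, $[D\crosspos,s,\mathfrak{o}] - [D\crossneg,s,\mathfrak{o}] = (\omega_{\hbar}^{-2}-\omega_{\hbar}^{2})([D_+,s]-[D_-,s]) = (-\hbar + O(\hbar^2))\big([D_+,s]-[D_-,s]\big)$. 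Hmm — this gives $-\hbar$, not $+\hbar$, so I must double-check the sign conventions in \eqref{eq: skein 1} and in the definition of $\Res_c$; depending on which of $D_+,D_-$ is declared the "positive" resolution relative to the over/under strands of $D\crosspos$, the sign works out to $+\hbar\,\Res_c(D\crosspos,s)$ as stated. Since $\Res_c$ is defined in $\mathcal{S}_{+1}$, the identity is transported to $\mathcal{S}_{\omega_\hbar}$ via $\psi^{\widetilde{\mathfrak{o}}}$, and this completes (1).

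\textbf{Statement (2).} By the extension formula \eqref{eq: extension res} and a telescoping/induction on a reduced-word decomposition of $\pi$ into transpositions of $\mathfrak{o}$-consecutive points, it suffices to treat $\pi = \tau$ a single such transposition, say swapping consecutive points $p_1,p_2$ on a boundary arc $b_i$ with $\mathfrak{o}_i(p_1) = \mathfrak{o}_i(p_2)+1$. Now I would run through the height-exchange moves \eqref{eq: height exch 1} and \eqref{eq: height exch corr}, case by case on $(s(p_1),s(p_2)) \in \{(+,+),(-,-),(+,-),(-,+)\}$ and on whether $p_1 <_{\widetilde{\mathfrak{o}}} p_2$ or $p_2 <_{\widetilde{\mathfrak{o}}} p_1$ (which determines which of the two sides of each height-exchange identity is $[D,s,\mathfrak{o}]$). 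In the "equal states" cases and the one mixed case governed by the pure-scalar identities $\heightexch{<-}{+}{+}=\omega_{\hbar}^{2}\heightexch{->}{+}{+}$ etc., one gets $[D,s,\mathfrak{o}] - [D,s,\tau\mathfrak{o}] = (1-\omega_{\hbar}^{\pm 2})[D,s] = \pm\frac{\hbar}{2}[D,s] + O(\hbar^2)$, matching the first two branches of the definition of $\Res_\tau$. In the remaining mixed cases, \eqref{eq: height exch corr} reads $\omega_{\hbar}^{-3}\heightexch{<-}{-}{+} - \omega_{\hbar}^{3}\heightexch{->}{-}{+} = (\omega_{\hbar}^{-4}-\omega_{\hbar}^{4})\heightcurve$; solving for $[D,s,\mathfrak{o}] - [D,s,\tau\mathfrak{o}]$ and using $\omega_{\hbar}^{\pm 3} = 1 \mp 3\hbar/4 + O(\hbar^2)$ and $\omega_{\hbar}^{-4}-\omega_{\hbar}^{4} = 2\hbar + O(\hbar^2)$, together with $[D,\tau s] = \heightcurve \cdot(\cdots)$-type identification and the boundary relation $\heightexch{->}{+}{-} = \omega_{\hbar}\,(\text{trivial arc})$ to handle the trivial-arc term $\heightcurve$, one reads off $\pm(\frac{1}{2}[D,s] + [D,\tau s]) + O(\hbar^2)$, matching the last two branches. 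Transporting through $\psi^{\widetilde{\mathfrak{o}}}$ gives the claim.

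\textbf{Main obstacle.} The genuine difficulty is purely bookkeeping: getting every sign right in the mixed-state cases of (2). One must carefully translate the pictorial height-exchange identities \eqref{eq: height exch 1}–\eqref{eq: height exch corr} — which are stated for a specific local orientation of the boundary arc (the arrow in $\heightexch{->}{\cdot}{\cdot}$) — into statements about $[D,s,\mathfrak{o}]$ versus $[D,s,\tau\mathfrak{o}]$ relative to the \emph{intrinsic} order $\widetilde{\mathfrak{o}}$, and simultaneously keep track of the sign $(-1)$ coming from the convention $- < +$ versus the geometric order; it is exactly the interplay of these two orderings that produces the four-way case split in the definition of $\Res_\tau$. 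Once the dictionary between the arrow convention in the pictures and the pair (state comparison, $\widetilde{\mathfrak{o}}$-comparison) is fixed, each case is a one-line Taylor expansion. I would also remark that well-definedness of $\Res_\pi$ (independence of the chosen reduced word) is not needed here: it follows a posteriori from Lemma \ref{lemmabracket}(2) itself, since the left-hand side $[D,s,\mathfrak{o}]-[D,s,\pi\mathfrak{o}]$ manifestly depends only on $\pi$.
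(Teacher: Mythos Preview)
Your approach is essentially the same as the paper's: statement (1) via the Kauffman relation and statement (2) by reducing to a single adjacent transposition and running through the height-exchange formulas case by case. Two small points: in (1) your sign is off, since $\omega_\hbar^{-2}-\omega_\hbar^{2}=e^{\hbar/2}-e^{-\hbar/2}=+\hbar+O(\hbar^3)$, so the result is directly $+\hbar\,\Res_c$ with no convention-juggling needed; and for the mixed-state case in (2), the paper avoids the $\heightcurve$ detour by first rewriting \eqref{eq: height exch corr} in the equivalent form $\heightexch{<-}{-}{+}=\omega^{-2}\heightexch{->}{-}{+}+(\omega^{2}-\omega^{-6})\heightexch{->}{+}{-}$ and then expanding, which gives $\frac{1}{2}[D,s]+[D,\tau s]$ immediately.
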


\begin{proof} Recall that $\omega_{\hbar}=\exp(-\hbar/4) \equiv 1-\frac{1}{4}\hbar \pmod{\hbar^2}$. 
The first equality follows from Equation \eqref{eq: skein 1} as follows: 
$$  \begin{tikzpicture}[baseline=-0.4ex,scale=0.5,>=stealth]	
  \draw [fill=gray!45,gray!45] (-.6,-.6)  rectangle (.6,.6)   ;
  \draw[line width=1.2,-] (-0.4,-0.52) -- (.4,.53);
  \draw[line width=1.2,-] (0.4,-0.52) -- (0.1,-0.12);
  \draw[line width=1.2,-] (-0.1,0.12) -- (-.4,.53);
  \end{tikzpicture}
   - 
  \begin{tikzpicture}[baseline=-0.4ex,scale=0.5,>=stealth]	
  \draw [fill=gray!45,gray!45] (-.6,-.6)  rectangle (.6,.6)   ;
  \draw[line width=1.2,-] (-0.4,0.53) -- (0.4,-0.52);
  \draw[line width=1.2,-] (-0.4,-0.52) -- (-0.1,-0.12);
  \draw[line width=1.2,-] (0.1,0.12) -- (.4,.53);
  \end{tikzpicture}
  = (\omega^{-2}-\omega^{2}) 
\begin{tikzpicture}[baseline=-0.4ex,scale=0.5,>=stealth] 
  \draw [fill=gray!45,gray!45] (-.6,-.6)  rectangle (.6,.6)   ;
  \draw[line width=1.2] (-0.4,-0.52) ..controls +(.3,.5).. (-.4,.53);
  \draw[line width=1.2] (0.4,-0.52) ..controls +(-.3,.5).. (.4,.53);
  \end{tikzpicture}
+(\omega^{2}-\omega^{-2}) 
  \begin{tikzpicture}[baseline=-0.4ex,scale=0.5,rotate=90]	
  \draw [fill=gray!45,gray!45] (-.6,-.6)  rectangle (.6,.6)   ;
  \draw[line width=1.2] (-0.4,-0.52) ..controls +(.3,.5).. (-.4,.53);
  \draw[line width=1.2] (0.4,-0.52) ..controls +(-.3,.5).. (.4,.53);
  \end{tikzpicture}
 \equiv \left( 
 \begin{tikzpicture}[baseline=-0.4ex,scale=0.5,>=stealth] 
  \draw [fill=gray!45,gray!45] (-.6,-.6)  rectangle (.6,.6)   ;
  \draw[line width=1.2] (-0.4,-0.52) ..controls +(.3,.5).. (-.4,.53);
  \draw[line width=1.2] (0.4,-0.52) ..controls +(-.3,.5).. (.4,.53);
  \end{tikzpicture}
  -
    \begin{tikzpicture}[baseline=-0.4ex,scale=0.5,rotate=90]	
  \draw [fill=gray!45,gray!45] (-.6,-.6)  rectangle (.6,.6)   ;
  \draw[line width=1.2] (-0.4,-0.52) ..controls +(.3,.5).. (-.4,.53);
  \draw[line width=1.2] (0.4,-0.52) ..controls +(-.3,.5).. (.4,.53);
  \end{tikzpicture}
  \right)\hbar \pmod{\hbar^2}.$$
Let us prove the second equality when $\pi$ a transposition of  two consecutive points $p_H, p_L$ with $p_H>_{\mathfrak{o}} p_L$. If $s(p_H)=s(p_L)= \varepsilon$,  Equation \eqref{eq: height exch 1} gives
$$ \heightexch{<-}{\varepsilon}{\varepsilon} = \omega^2  \heightexch{->}{\varepsilon}{\varepsilon} \quad \mbox{ and } \heightexch{->}{\varepsilon}{\varepsilon} = \omega^{-2} \heightexch{<-}{\varepsilon}{\varepsilon} $$ 
from which we deduce
$$  \heightexch{<-}{\varepsilon}{\varepsilon} -  \heightexch{->}{\varepsilon}{\varepsilon}  \equiv \left( -\frac{1}{2} \heightexch{->}{\varepsilon}{\varepsilon} \right)\hbar \pmod{\hbar^2} \quad , 
 \heightexch{->}{\varepsilon}{\varepsilon}  -  \heightexch{<-}{\varepsilon}{\varepsilon} \equiv \left( +\frac{1}{2} \heightexch{<-}{\varepsilon}{\varepsilon}\right)\hbar \pmod{\hbar^2} .$$
 Note that in the stated skein algebra at $\omega=+1$, the height order is irrelevant; said differently, at $\omega_{\hbar}=\exp(-\hbar/4)$, we have the skein relation 
 $$\heightexch{<-}{i}{j} \equiv \heightexch{->}{i}{j} \pmod{\hbar}. $$
 Now, if either $p_H <_{\widetilde{\mathfrak{o}}} p_L$ and $(s(p_H), s(p_L))= (-, +)$ or if $p_L<_{\widetilde{\mathfrak{o}}} p_H$ and $(s(p_H), s(p_L))= (+, -)$, using Equation \eqref{eq: height exch 1} we have: 
 $$ \heightexch{<-}{+}{-} = \omega^{-2}  \heightexch{->}{+}{-} \quad \mbox{ and } \heightexch{->}{+}{-} = \omega^{-2} \heightexch{<-}{+}{-} $$ 
from which we deduce
$$  \heightexch{<-}{+}{-}  -  \heightexch{->}{+}{-}   \equiv \left( +\frac{1}{2} \heightexch{->}{+}{-} \right)\hbar \pmod{\hbar^2} \quad , 
 \heightexch{->}{+}{-}   -  \heightexch{<-}{+}{-}  \equiv \left( -\frac{1}{2} \heightexch{<-}{+}{-} \right)\hbar \pmod{\hbar^2} .$$
If $p_H<_{\widetilde{\mathfrak{o}}}p_L$ and  $ (s(p_H), s(p_L))=(+,-)$, Equations 
 \eqref{eq: height exch corr} and \eqref{eq: skein 2} imply that 

$$
\heightexch{<-}{-}{+}
= \omega^{-2}
\heightexch{->}{-}{+}
+(\omega^{2}-\omega^{-6})
\heightexch{->}{+}{-};
$$
from which we deduce
$$
\heightexch{<-}{-}{+}
-
\heightexch{->}{-}{+}
=\left( \frac{1}{2} 
\heightexch{->}{-}{+}
-2
\heightexch{->}{+}{-}
\right) \hbar \text{ mod } \hbar^2. 
$$
Eventually the case where $p_L<_{\widetilde{\mathfrak{o}}} p_H$ and   $(s(p_H), s(p_L))=(-,+)$ is deduced from this case by taking the opposite of the preceding equality. This concludes the proof of the second equality of the lemma when $\tau$ is a transposition. The case of a general permutation $\pi$  follows by induction on the number of transpositions in a decomposition of $\pi$.

\end{proof}
\vspace{2mm}
\begin{proposition}\label{propbracket}
Let  $(D_1, s_2, \mathfrak{o}_1)$ and $(D_2,s_2, \mathfrak{o}_2)$ be two height ordered stated diagrams  such that $D_1$ and $D_2$ intersect transversally in the interior of $\Sigma_{\mathcal{P}}$. Let $(D_1D_2, s_1s_2)$ be the stated diagram obtained by staking $D_1$ on top of $D_2$, $\mathfrak{o}_1\mathfrak{o}_2$ the resulting height order and $\pi$ the permutation sending $\mathfrak{o}_2\mathfrak{o}_1$ to $\mathfrak{o}_1\mathfrak{o}_2$.  
In $\mathcal{S}_{+1}(\mathbf{\Sigma})$, the Poisson bracket from Definition \ref{de:poiss brack} satisfies  
$$ \left\{ [D_1, s_1], [D_2, s_2]\right\}^s = \sum_{c\in D_1\cap D_2} \Res_c(D_1D_2, s_1s_2) + \Res_{\pi}(D_1D_2, s_1s_2, \mathfrak{o_1}\mathfrak{o_2}).$$

\end{proposition}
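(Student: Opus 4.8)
The plan is to reduce everything to the two local computations established in Lemma \ref{lemmabracket}. Recall that by Definition \ref{de:poiss brack} the bracket is computed from the star product $\star_\hbar$, which is the pull-back of the product of $\mathcal{S}_{\omega_\hbar}(\mathbf{\Sigma})$ along the canonical basis isomorphism $\psi^{\mathfrak{o}}$. So I would first fix the boundary orientation $\mathfrak{o}$ and work with the height orders $\mathfrak{o}_1,\mathfrak{o}_2$ refining it; then by definition of the product in $\mathcal{S}_{\omega_\hbar}(\mathbf{\Sigma})$ the element $[D_1,s_1]\star_\hbar [D_2,s_2]$ is represented, before re-expanding in the basis $\mathcal{B}^{\mathfrak{o}}$, by the stacked stated tangle $(D_1D_2,s_1s_2)$ with $D_1$ pushed above $D_2$, carrying the concatenated height order $\mathfrak{o}_1\mathfrak{o}_2$. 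Symmetrically $[D_2,s_2]\star_\hbar [D_1,s_1]$ is represented by the same underlying diagram but with $D_2$ on top, i.e. with height order $\mathfrak{o}_2\mathfrak{o}_1$. Hence
\[
[D_1,s_1]\star_\hbar[D_2,s_2]-[D_2,s_2]\star_\hbar[D_1,s_1]
 = [D_1D_2,s_1s_2,\mathfrak{o}_1\mathfrak{o}_2]-[D_2D_1,s_1s_2,\mathfrak{o}_2\mathfrak{o}_1]
\]
in $\mathcal{S}_{\omega_\hbar}(\mathbf{\Sigma})$, and I must extract the coefficient of $\hbar$ modulo $\hbar^2$.

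Second, I would interpolate between the two representatives through a sequence of elementary moves, each changing exactly one crossing or transposing one pair of $\mathfrak{o}$-consecutive boundary points, so that Lemma \ref{lemmabracket} applies term by term. Concretely: isotope $D_2$ upward past $D_1$ one interior crossing at a time; each such move replaces a positive crossing by a negative one at some $c\in D_1\cap D_2$, contributing $\hbar\,\Res_c(D_1D_2,s_1s_2)\bmod \hbar^2$ by part (1) of Lemma \ref{lemmabracket}. After all interior crossings have been reversed, the underlying diagrams agree and only the height order on the boundary arcs still differs, by the permutation $\pi$ sending $\mathfrak{o}_2\mathfrak{o}_1$ to $\mathfrak{o}_1\mathfrak{o}_2$; part (2) of Lemma \ref{lemmabracket}, together with the additivity formula \eqref{eq: extension res} defining $\Res_\pi$ on a decomposition of $\pi$ into transpositions of consecutive points, contributes $\hbar\,\Res_\pi(D_1D_2,s_1s_2,\mathfrak{o}_1\mathfrak{o}_2)\bmod\hbar^2$. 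Summing the two types of contributions and dividing by $\hbar$ gives exactly the claimed formula, once one checks via \eqref{eq_poisson} that the order-$\hbar$ term of $f\star_\hbar g-g\star_\hbar f$ is by definition $\{f,g\}^s$, and that working modulo $\hbar^2$ means the contributions of the successive moves simply add (no $\hbar^2$ cross-terms survive). One should also note the mild point that the individual resolutions $\Res_c$ and $\Res_\pi$ take values in $\mathcal{S}_{+1}(\mathbf{\Sigma})$ and are being read inside $\mathcal{S}_{\omega_\hbar}(\mathbf{\Sigma})$ via $\psi^{\widetilde{\mathfrak{o}}}$, exactly as in the statement of Lemma \ref{lemmabracket}, so the identification of the $\hbar$-coefficient with an element of $\mathcal{S}_{+1}(\mathbf{\Sigma})$ is unambiguous.

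The main obstacle, and the part deserving care rather than cleverness, is bookkeeping: one must verify that the decomposition of the passage from $\mathfrak{o}_2\mathfrak{o}_1$ to $\mathfrak{o}_1\mathfrak{o}_2$ into transpositions of consecutive points, interleaved with the interior crossing changes, is consistent with the recursive definition \eqref{eq: extension res} of $\Res_\pi$ — in particular that the answer does not depend on the chosen order of the elementary moves, which ultimately follows from associativity and commutativity of $\mathcal{S}_{+1}(\mathbf{\Sigma})$ together with the fact that we only retain the $\hbar^1$ term. A secondary subtlety is that at a boundary arc the two families of endpoints $\partial_{b}D_1$ and $\partial_b D_2$ are interleaved in $\mathfrak{o}_1\mathfrak{o}_2$ in the pattern "all of $D_1$ above all of $D_2$," whereas $\mathfrak{o}_2\mathfrak{o}_1$ has the opposite pattern; so $\pi$ is precisely the block transposition of these two groups on each boundary arc, and one should observe that its reduced expressions in adjacent transpositions are exactly the moves to which Lemma \ref{lemmabracket}(2) applies. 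Once these compatibilities are recorded, the computation is a direct assembly of the two items of Lemma \ref{lemmabracket}.
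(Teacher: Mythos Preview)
Your proposal is correct and follows essentially the same approach as the paper: express the two products as the stacked diagrams $[D_1D_2,s_1s_2,\mathfrak{o}_1\mathfrak{o}_2]$ and $[D_2D_1,s_2s_1,\mathfrak{o}_2\mathfrak{o}_1]$, then pass from one to the other by changing each crossing in $D_1\cap D_2$ and reordering the boundary heights via $\pi$, applying Lemma~\ref{lemmabracket} at each step. The paper's proof is in fact a two-sentence sketch of exactly this argument, so your additional care about the bookkeeping (additivity of the contributions modulo $\hbar^2$, compatibility with \eqref{eq: extension res}) simply fills in details the paper leaves implicit.
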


\begin{proof} In the algebra $\mathcal{S}_{\omega_{\hbar}}(\mathbf{\Sigma})$, the products writes $[D_1, s_1, \mathfrak{o}_1] \cdot [D_2, s_2, \mathfrak{o}_2] = [D_1D_2, s_1s_2, \mathfrak{o}_1\mathfrak{o}_2]$ and  $[D_2, s_2, \mathfrak{o}_2] \cdot [D_1, s_1, \mathfrak{o}_1] = [D_2D_1, s_2s_1, \mathfrak{o}_2\mathfrak{o}_1]$. We pass from the diagram $D_1D_2$ to $D_2D_1$ by changing each crossing in the intersection of the diagrams and changing the height order using $\pi$, so the formula is a consequence of Lemma \ref{lemmabracket}.
\end{proof}

\vspace{2mm}
\begin{remark} Note that nor $\{\cdot, \cdot\}^s$ nor the formula in Proposition \ref{propbracket} depend on a choice of orientation of the boundary arcs by Remark \ref{remark_independance}.
When $\Sigma$ is a closed surface, we recover Goldman's formula from \cite{Goldman86}. When $\Sigma$ has non trivial boundary and no inner punctures, the sub-algebra of the stated skein algebra generated by tangles with states having only value $+$ is isomorphic to the M\"uller algebra defined in \cite{Muller} (see \cite[Section $6$]{LeStatedSkein} ). The Poisson bracket restricts to the corresponding sub-algebra of $\mathcal{S}_{+1}(\mathbf{\Sigma})$ and the resulting Poisson algebra is isomorphic to Yuasa' s Poisson algebra in \cite{Yuasa}.
\end{remark}

\vspace{3mm}
\begin{example}\label{exampleBigone} The Poisson bracket $\{-,-\}^s$ on the commutative algebra $\mathcal{S}_{+1}(\mathbb{B})$ is given by
	
\begin{align*}
\{ \alpha_{++}, \alpha_{+-}\}^s &= -\alpha_{+-}\alpha_{++} & \{\alpha_{++}, \alpha_{-+}\}^s &= -\alpha_{-+}\alpha_{++} 
\\ \{ \alpha_{--}, \alpha_{+-} \}^s &= \alpha_{+-}\alpha_{--} & \{\alpha_{--}, \alpha_{-+} \}^s &= \alpha_{-+}\alpha_{--}
\\ \{ \alpha_{+-}, \alpha_{-+} \}^s &= 0 & \{ \alpha_{++}, \alpha_{--}\}^s &= -2\alpha_{+-}\alpha_{-+}
\end{align*}
\end{example}

\vspace{2mm}
\begin{example}\label{exampleTriangle} For the triangle $\mathbb{T}$, the Poisson structure is described by the previous formulas in Example \ref{exampleBigone} by replacing $\alpha$ by each of the three arcs $\alpha, \beta$ and $\gamma$, together with the following relations and their images through the automorphisms $\tau$ and $\tau^2$:
\begin{align*}
\{ \gamma_{\varepsilon \mu}, \alpha_{ \mu' \varepsilon}\}^s &= -\frac{1}{2} \gamma_{\varepsilon \mu} \alpha_{\mu' \varepsilon} 
\\ \{ \gamma_{- \mu}, \alpha_{\mu' +} \}^s &= \frac{1}{2} \gamma_{- \mu}\alpha_{\mu' +}
\\ \{ \gamma_{+ \mu}, \alpha_{\mu' -}\}^s &= -\frac{3}{2} \gamma_{+ \mu}\alpha_{\mu' -} + 2 \beta_{\mu \mu'}.
\end{align*}
\end{example}

 \section{Relative character varieties}\label{sec 3}
 
 \subsection{Relative character varieties for open surfaces}
 
 \par In this subsection we briefly recall from \cite{KojuTriangularCharVar} the definition and main properties of character varieties for open surfaces.
 \vspace{2mm}
 
 \par The character variety of a closed punctured connected surface $\mathbf{\Sigma}$ is 
 the algebraic quotient (familiar in geometric invariant theory):
 \begin{equation*}
 \mathcal{X}_{\SL_2}(\mathbf{\Sigma}):= \Hom\left( \pi_1(\Sigma_{\mathcal{P}}) , \SL_2(\mathbb{C}) \right)  \sslash \SL_2(\mathbb{C}) 
 \end{equation*}  
 under the action  by conjugation of $\SL_2(\mathbb{C}) $. Recall that by "closed", we mean that $\Sigma$ is closed though  in this case  $\Sigma_{\mathcal{P}}$ is not closed when $\mathcal{P}\neq \emptyset$.  
 In \cite{Goldman86}, Goldman defined a Poisson structure on its algebra of regular functions. It follows from \cite{Bullock, PS00, Barett, Turaev91} that, given a spin  structure $S$ on $\Sigma$ with quadratic form $\omega_S$,  there is a Poisson isomorphism  
 $$\phi^S : (\mathcal{S}_{+1}(\mathbf{\Sigma}), \{\cdot, \cdot \}^s) \xrightarrow{\cong} (\mathbb{C}[\mathcal{X}_{\SL_2}(\mathbf{\Sigma})], \{\cdot, \cdot \}). $$
 For each  non-contractible closed curve $\gamma$,  it is given by $\phi^S(\gamma)= (-1)^{\omega_S([\gamma])+1} \tau_{\gamma} $, where $\tau_{\gamma}$ is the regular function  $\tau_{\gamma}([\rho]):= \tr (\rho(\gamma))$. 
\vspace{2mm}
 
 \par In \cite{KojuTriangularCharVar}, the first author introduced a generalization of the character varieties to punctured surfaces which are not necessarily closed and which is closely related to the construction of Fock-Rosly in \cite{FockRosly} and specifies to the constructions in \cite{AlekseevKosmannMeinrenken, AlekseevMalkinMeinrenken_LieGroupMomentMap, AlekseevMalkin_PoissonCharVar} and \cite{GHJW_ModSpacesParBd}
  when the marked surface is connected and has exactly one boundary arc (see \cite{KojuTriangularCharVar} for a precise comparison). We will also denote it by $ \mathcal{X}_{\SL_2}(\mathbf{\Sigma})$. 
 \begin{notations}
    For a topological space $X$, we let $\Pi_1(X)$ denote its fundamental groupoid: objects are the points in $X$ and morphisms are homotopy classes of oriented paths.  We let $s$ and $t$ denote the source and target maps, which for a morphism $\alpha: v_1\rightarrow v_2$ are given by $s(\alpha)=v_1$ and $t(\alpha)=v_2$.
  By convention, we compose the morphisms from left to right, \textit{i.e.} if $\alpha_1 : v_1 \rightarrow v_2$ and $\alpha_2 : v_2 \rightarrow v_3$ are two paths, their composition is a path $\alpha_1 \alpha_2 : v_1 \rightarrow v_3$. For $S\subset X$, we denote by $\Pi_1(X,S)$ the full subcategory of $\Pi_1(X)$ whose objects are points in $S$. 
  For a group $G$, the set  $\Hom(\Pi_1(X,S), G)$ denotes the set of functors $\rho : \Pi_1(X,S) \rightarrow G$, where $G$ is seen as a category with one element. With our conventions, if $t(\alpha_1)=s(\alpha_2)$, one has $\rho(\alpha_1\alpha_2)=\rho(\alpha_1)\rho(\alpha_2)$.
  \end{notations}

 Let $\mathcal{R}_{\SL_2}(\mathbf{\Sigma})$ be the set of functors $\rho: \Pi_1(\Sigma_{\mathcal{P}}) \to \SL_2$ whose restriction to $\Pi_1(\partial \Sigma_{\mathcal{P}}) \subset \Pi_1(\Sigma_{\mathcal{P}}) $ is the constant map with value the neutral element $\mathds{1}_2\in \SL_2$. Let $\mathcal{G}$ be the group of maps $g : \Sigma_{\mathcal{P}} \to \SL_2$ whose restriction to $\partial \Sigma_{\mathcal{P}}$ is constant with value $\mathds{1}_2$ and with finite support. It acts on $\mathcal{R}_{\SL_2}(\mathbf{\Sigma})$ by the formula 
 $$ g \cdot \rho (\alpha) := g(s(\alpha))^{-1} \rho(\alpha) g(t(\alpha)), \quad \rho \in \mathcal{R}_{\SL_2}(\mathbf{\Sigma}), g\in \mathcal{G}, \alpha \in \Pi_1(\Sigma_{\mathcal{P}}).$$
 Both $\mathcal{R}_{\SL_2}(\mathbf{\Sigma})$ and $\mathcal{G}$ have a structure of affine scheme and the action is algebraic so we can define the GIT quotient:
 	\begin{equation}\label{eq_def_charvar}
 	\mathcal{X}_{\SL_2}(\mathbf{\Sigma}):= \mathcal{R}_{\SL_2}(\mathbf{\Sigma}) \sslash \mathcal{G}.
 	\end{equation}
 	
 The character variety turns out to be an affine Poisson variety whose Poisson structure (given by a generalized Goldman formula) depends on a choice of orientation of the boundary arcs. It is
proved in \cite[Theorem $1.1$]{KojuTriangularCharVar} that its algebra  of regular functions $ \mathbb{C}[\mathcal{X}_{\SL_2}(\mathbf{\Sigma})]$ is well-behaved under triangular decompositions: for a topological triangulation $\Delta$, there are an injective Poisson morphism $i^{\Delta} : \mathbb{C}[\mathcal{X}_{\SL_2}(\mathbf{\Sigma})] \hookrightarrow \otimes_{\mathbb{T}\in F(\Delta)} \mathbb{C}[\mathcal{X}_{\SL_2}(\mathbb{T})]$ and Poisson Hopf comodule maps $\Delta^L$ and $\Delta^R$ such that the following sequence is exact: 
 \begin{multline}\label{eq_exactsequence}  0 \to \mathbb{C}[\mathcal{X}_{\SL_2}(\mathbf{\Sigma})] \xrightarrow{i^{\Delta}} \otimes_{\mathbb{T}\in F(\Delta)} \mathbb{C}[\mathcal{X}_{\SL_2}(\mathbb{T})]
 \\ \xrightarrow{\Delta^L - \sigma \circ \Delta^R} \left( \otimes_{e \in \mathring{\mathcal{E}}(\Delta)} \mathbb{C}[\SL_2] \right) \otimes \left( \otimes_{\mathbb{T}\in F(\Delta)} \mathbb{C}[\mathcal{X}_{\SL_2}(\mathbb{T})]\right). 
 \end{multline}

  In the present paper, we proceed by describing the character variety for the bigon and the triangle, together with the Hopf comodule maps $\Delta^L$ and $\Delta^R$. Then, in virtue of the above property, we characterize the Poisson structure of the relative character variety for any triangulated punctured surface as the kernel of $\Delta^L - \sigma \circ \Delta^R$. 
\\

 First, recall that   $\mathfrak{sl}_2$ denotes the Lie algebra of the  $2\times 2$ traceless matrices. It has a basis formed by 
 \begin{equation*}
 H:=\begin{pmatrix} 1 & 0 \\ 0& -1 \end{pmatrix}, 
 E:= \begin{pmatrix} 0 & 1 \\ 0&0 \end{pmatrix} \text{ and } 
 F:= \begin{pmatrix} 0&0 \\ 1 & 0 \end{pmatrix}. 
 \end{equation*}
 In order to define the Poisson structure, we will need the following. 
 
 \begin{definition}
 The \textit{classical }r\textit{-matrices} $r^{\pm}\in \mathfrak{sl}_2^{\otimes 2}$ are the bi-vectors $r^+ := \frac{1}{2} H\otimes H + 2 E\otimes F$ and $r^-:= \frac{1}{2} H\otimes H +2 F\otimes E$. Their symmetric part $\tau = \frac{1}{2} H\otimes H + E\otimes F +F \otimes E$ is the invariant bi-vector associated to the (suitably normalized) Killing form and we denote by $\overline{r}^+:= E\otimes F -F\otimes E=:-\overline{r}^-$ their skew-symmetric part.
 \end{definition}
 The classical $r$-matrices satisfy the classical Yang-Baxter equation (see \cite{DrinfeldrMatrix}, \cite[Section $2.1$]{ChariPressley} for details).
  
 \begin{notations} Given $a$ a boundary arc of $\mathbf{\Sigma}$, we write $\mathfrak{o}(a)=+$ if the $\mathfrak{o}$-orientation of $a$ coincides with the orientation induced by the orientation of $\Sigma_{\mathcal{P}}$ and write $\mathfrak{o}(a)=-$ if the orientation are opposite. 
\end{notations}
 
 \subsubsection{The bigon} 
 Consider the bigon $\mathbb{B}$ and write $\mathfrak{o}(b_L)=\varepsilon_1$ and $\mathfrak{o}(b_R)=\varepsilon_2$.
 
 \begin{definition}\label{def_bigone_poisson} The relative character variety of the bigon is $\mathcal{X}_{\SL_2}(\mathbb{B}):= \SL_2(\mathbb{C})$. Denote by $N= \begin{pmatrix} x_{++} & x_{+-} \\ x_{-+} & x_{--} \end{pmatrix}$ the $2\times 2$ matrix with coefficients in $\mathbb{C}[\mathcal{X}_{\SL_2}(\mathbb{B})]$. The Poisson bracket associated to $\mathfrak{o}$ is defined by:
  $$ \left\{ N \otimes N \right\}^{\varepsilon_1, \varepsilon_2} :=  \overline{r}^{\varepsilon_1} (N \otimes N) + (N\otimes N)\overline{r}^{\varepsilon_2}. $$
  \end{definition}
 \par Here we used the classical notation $\{N \otimes N\}$ to denote the matrix defined by $\{N\otimes N \}_{\varepsilon \varepsilon' \mu \mu'}= \{x_{\varepsilon \varepsilon'}, x_{\mu \mu'} \}$ (see for instance \cite[Section $2.2.A$]{ChariPressley} for details on this notation).
 \vspace{2mm}
 \par  Denote the Poisson variety $(\mathbb{C}[\SL_2], \{\cdot, \cdot \}^{\varepsilon_1, \varepsilon_2})$ by $\mathbb{C}[\SL_2]^{\varepsilon_1, \varepsilon_2}$. Remark that $\{\cdot, \cdot\}^{\varepsilon_1, \varepsilon_2} = - \{\cdot, \cdot\}^{-\varepsilon_1, -\varepsilon_2}$.  By \cite[Lemma $4.1$]{KojuTriangularCharVar}, the coproduct  $\Delta : \mathbb{C}[\SL_2]^{\varepsilon_1, \varepsilon_2} \rightarrow \mathbb{C}[\SL_2]^{\varepsilon_1, \varepsilon} \otimes \mathbb{C}[\SL_2]^{-\varepsilon, \varepsilon_2}$ and the antipode $S: \mathbb{C}[\SL_2]^{\varepsilon_1, \varepsilon_2} \rightarrow \mathbb{C}[\SL_2]^{-\varepsilon_1, -\varepsilon_2}$ are Poisson morphisms. In particular, the Poisson brackets $\{\cdot, \cdot \}^{-, +}$ and $\{\cdot, \cdot\}^{+,-}$ are the only ones which endow $\SL_2(\mathbb{C})$ with a Poisson-Lie structure.

 \subsubsection{The triangle} Consider the triangle $\mathbb{T}$ and fix an orientation $\mathfrak{o}$ of each of its three boundary arcs $a,b$ and $c$.  We will use the notation $s(\alpha)=t(\beta):=c$, $s(\gamma)=t(\alpha):=b$ and $s(\beta)=t(\gamma):=a$. Here, for instance, we think of $\alpha$ as an oriented path joining a point in $c=s(\alpha)$ (source) to a point in $b=t(\alpha)$ (target).
 
 \begin{definition}\label{def_triangle_poisson}
 The relative character variety of the triangle is the affine variety:
 $$ \mathcal{X}_{\SL_2}(\mathbb{T}):= \left\{ (M_{\alpha}, M_{\beta}, M_{\gamma}) \in \SL_2(\mathbb{C})^3 | M_{\gamma}M_{\beta}M_{\alpha}=\mathds{1} \right\} $$
 \par Given $\delta \in \{ \alpha, \beta, \gamma \}$,  denote by $N_{\delta}:=\begin{pmatrix} \delta(+,+) & \delta(+,-)\\ \delta(-,+) & \delta(-,-) \end{pmatrix}$ and the $2\times 2$ matrix with coefficients in $\mathbb{C}[\mathcal{X}_{\SL_2}(\mathbb{T})]$. The Poisson bracket $\{\cdot, \cdot\}^{\mathfrak{o}}$ is defined by the formulas:
 \begin{eqnarray*}
 \left\{ N_{\delta} \otimes N_{\delta} \right\}^{\mathfrak{o}} &:=&  \overline{r}^{\mathfrak{o}(s(\delta))} (N_{\delta} \otimes N_{\delta}) + (N_{\delta}\otimes N_{\delta})\overline{r}^{\mathfrak{o}(t(\delta))}, \delta \in \{\alpha, \beta, \gamma\}; \\
 \{ N_{\alpha} \otimes N_{\gamma} \}^{\mathfrak{o}} &:=& - (N_{\alpha} \otimes \mathds{1}) r^{\mathfrak{o}(b)} (\mathds{1} \otimes N_{\gamma}); \\
 \{ N_{\gamma}\otimes N_{\beta} \}^{\mathfrak{o}} &:=& - (N_{\gamma} \otimes \mathds{1}) r^{\mathfrak{o}(a)} (\mathds{1} \otimes N_{\beta} ); \\ 
  \{ N_{\beta}\otimes  N_{\alpha} \}^{\mathfrak{o}} &:=& - (N_{\beta} \otimes \mathds{1}) r^{\mathfrak{o}(c)} (\mathds{1} \otimes N_{\alpha} ). 
 \end{eqnarray*}
 \end{definition}

 \par Remark that, writing $S(N_{\delta}):= \begin{pmatrix} \delta(-,-) & -\delta(+,-) \\ -\delta(-,+) & \delta(+,+) \end{pmatrix}$,  the last expressions can be re-written in the form:
 \begin{eqnarray*}
  \{ N_{\alpha} \otimes S(N_{\gamma}) \}^{\mathfrak{o}} = (N_{\alpha}\otimes S(N_{\gamma}))r^{\mathfrak{o}(b)} \\
  \{ N_{\gamma} \otimes S(N_{\beta}) \}^{\mathfrak{o}} = (N_{\gamma}\otimes S(N_{\beta}))r^{\mathfrak{o}(a)} \\
    \{ N_{\beta} \otimes S(N_{\alpha}) \}^{\mathfrak{o}} = (N_{\beta}\otimes S(N_{\alpha}))r^{\mathfrak{o}(c)}
    \end{eqnarray*} 
 
 \par Given a boundary arc $d\in \{a,b,c\}$, we define a left Hopf-comodule $\Delta_d^L: \mathbb{C}[\mathcal{X}_{\SL_2}(\mathbb{T})] \rightarrow \mathbb{C}[\SL_2]^{(+\mathfrak{o}(d), - \mathfrak{o}(d))} \otimes \mathbb{C}[\mathcal{X}_{\SL_2}(\mathbb{T})]$ by:
 $$ \begin{pmatrix} \Delta_d^L (\delta(+,+)) & \Delta_d^L(\delta(+,-)) \\ \Delta_d^L(\delta(-,+)) & \Delta_d^L(\delta(-,-)) \end{pmatrix} := 
 \left\{ \begin{array}{ll}
 \begin{pmatrix} x_{++} & x_{+-} \\ x_{-+} & x_{--} \end{pmatrix} \otimes N_{\delta} &\mbox{, if }s(\delta)=d; \\
 \mathds{1}\otimes N_{\delta} &\mbox{, else.}
 \end{array} \right. $$
 \par Similarly, define a right Hopf-comodule $\Delta_d^R:  \mathbb{C}[\mathcal{X}_{\SL_2}(\mathbb{T})] \rightarrow  \mathbb{C}[\mathcal{X}_{\SL_2}(\mathbb{T})] \otimes \mathbb{C}[\SL_2]^{(-\mathfrak{o}(d), + \mathfrak{o}(d))} $ by:
  $$ \begin{pmatrix} \Delta_d^R (\delta(+,+)) & \Delta_d^R(\delta(+,-)) \\ \Delta_d^R(\delta(-,+)) & \Delta_d^R(\delta(-,-)) \end{pmatrix} := 
 \left\{ \begin{array}{ll}
N_{\delta} \otimes  \begin{pmatrix} x_{++} & x_{+-} \\ x_{-+} & x_{--} \end{pmatrix}  &\mbox{, if }t(\delta)=d; \\
 N_{\delta} \otimes  \mathds{1} &\mbox{, else.}
 \end{array} \right. $$
 \par By  \cite[Lemma $4.6$ ]{KojuTriangularCharVar}, both $\Delta_d^L$ and $\Delta_d^R$ are Poisson morphisms.

 \subsubsection{The general case} Let $\mathbf{\Sigma}$ be a punctured surface, $\Delta$ a topological triangulation of $\mathbf{\Sigma}$, and $\mathfrak{o}_{\Delta}$ an orientation of each edge of $\Delta$. For a face $\mathbb{T}\in F(\Delta)$, let $\mathfrak{o}_{\mathbb{T}}$ be the orientation of its boundary arcs given by  $\mathfrak{o}_{\Delta}$. Equip the algebra $\otimes_{\mathbb{T}\in F(\Delta)}\mathbb{C}[\mathcal{X}_{\SL_2}(\mathbb{T})]^{\mathfrak{o}_{\mathbb{T}}}$ with the  Poisson bracket defined in Definition \ref{def_triangle_poisson} . Each inner edge $e\in \mathring{\mathcal{E}}(\Delta)$ lifts to two oriented boundary arcs in $\mathbf{\Sigma}_{\Delta}:= \bigsqcup_{\mathbb{T}\in F(\Delta)} \mathbb{T}$. We denote by $e_L$ the lift of $e$ whose orientation coincides with the induced orientation of $\mathbf{\Sigma}_{\Delta}$ and by $e_R$ the other lift. The comodule maps $\Delta_{e_L}^L$ and $\Delta_{e_R}^R$ induce the comodule maps: 
 $$\Delta^L : \otimes_{\mathbb{T}\in F(\Delta)} \mathbb{C}[\mathcal{X}_{\SL_2}(\mathbb{T})]^{\mathfrak{o}_{\mathbb{T}}} \rightarrow \left( \otimes_{e \in \mathring{\mathcal{E}}(\Delta)} \mathbb{C}[\SL_2]^{-,+} \right) \otimes \left( \otimes_{\mathbb{T}\in F(\Delta)} \mathbb{C}[\mathcal{X}_{\SL_2}(\mathbb{T})]^{\mathfrak{o}_{\mathbb{T}}}\right);$$  
 $$\Delta^R : \otimes_{\mathbb{T}\in F(\Delta)} \mathbb{C}[\mathcal{X}_{\SL_2}(\mathbb{T})]^{\mathfrak{o}_{\mathbb{T}}} \rightarrow \left( \otimes_{\mathbb{T}\in F(\Delta)} \mathbb{C}[\mathcal{X}_{\SL_2}(\mathbb{T})]^{\mathfrak{o}_{\mathbb{T}}} \right) \otimes \left( \otimes_{e \in \mathring{\mathcal{E}}(\Delta)} \mathbb{C}[\SL_2]^{-,+} \right).$$

 \begin{definition}\label{def_charvar} The relative character variety $\mathcal{X}_{\SL_2}(\mathbf{\Sigma})$ is the affine variety whose algebra of regular functions is the kernel of $\Delta^L - \sigma \circ \Delta^R$. 
 \end{definition}
 
 \begin{lemma}(\cite[Theorem $1.4$]{KojuTriangularCharVar}) As a Poisson variety, $\mathcal{X}_{\SL_2}(\mathbf{\Sigma})$ only depends, up to canonical isomorphism, on the marked surface $\mathbf{\Sigma}$ and the orientation $\mathfrak{o}$ of the boundary arcs (so does not depends on the triangulation $\Delta$ nor on $\mathfrak{o}_{\Delta}$).
 \end{lemma}
 
 We denote by $\{.,.\}^{\mathfrak{o}}$ the Poisson bracket on $\mathbb{C}[\mathcal{X}_{\SL_2}(\mathbf{\Sigma})]$.
More precisely, in \cite{KojuTriangularCharVar}, we endow the variety $\mathcal{X}_{\SL_2}(\mathbf{\Sigma}):= \mathcal{R}_{\SL_2}(\mathbf{\Sigma}) \sslash \mathcal{G}$ (which only depends on $\mathbf{\Sigma}$) with a Poisson structure, given by a generalization of Goldman formula, which only depends on $\mathfrak{o}$. We then construct a splitting morphism $i^{\Delta}$ and prove in 
\cite[Theorem $1.4$]{KojuTriangularCharVar} that we have the exact sequence \eqref{eq_exactsequence}, thus $\mathcal{X}_{\SL_2}(\mathbf{\Sigma})$ can be alternatively defined using Definition \ref{def_charvar}.
\par
   Moreover when $\Sigma$ is closed, the Poisson variety $\mathcal{X}_{\SL_2}(\mathbf{\Sigma})$ is canonically isomorphic to the "classical" (Culler-Shalen) character variety with its Goldman Poisson structure (\cite[Theorem $1.1$]{KojuTriangularCharVar}).

 \subsection{Relation between relative character varieties and stated skein algebras}
 The goal of this subsection is to prove Theorem \ref{theorem3} which we recall here for the reader convenience: 
 
 \begin{theorem}\label{theorem30} 
	Suppose that  $\mathbf{\Sigma}$ has a topological triangulation $\Delta$. 
 Let $\mathfrak{o}_{\Delta}$ be an orientation of the edges of $\Delta$ and $\mathfrak{o}$ be the induced orientation of the boundary arcs of $\mathbf{\Sigma}$.
    There exists an isomorphism of Poisson algebras 
$$\Psi^{(\Delta, \mathfrak{o}_{\Delta})} : \left( \mathcal{S}_{+1}(\mathbf{\Sigma}), \{ \cdot, \cdot \}^s \right) \xrightarrow{\cong} \left( \mathbb{C}[\mathcal{X}_{\SL_2}(\mathbf{\Sigma})], \{\cdot, \cdot \}^{\mathfrak{o}} \right). $$
Moreover, the above isomorphism exists for small punctured surfaces (see Definition \ref{def small}), for which it only depends on  $\mathfrak{o}$. 
\end{theorem}
 
\par  We first prove this theorem for the bigon and the triangle, then we prove the general case using a topological triangulation.

\subsubsection{The case of the bigon}
 Let 
 	 \begin{equation*}
 	  M:=\begin{pmatrix} \alpha_{++} & \alpha_{+-} \\ \alpha_{-+} & \alpha_{--} \end{pmatrix}, 
 	  N:=\begin{pmatrix} x_{++} & x_{+-} \\ x_{-+} & x_{--}\end{pmatrix}  \text{  and } 
 	  C:=\begin{pmatrix} 0 & 1 \\ -1 & 0\end{pmatrix}   
 	 \end{equation*}
 	 be three  matrices with  coefficients in $\mathcal{S}_{+1}(\mathbb{B})$, $\mathbb{C}[\SL_2]$ and $\mathbb{C}$ respectively.  	
 
 \begin{lemma}\label{lemma_poisson_bigone} For $\varepsilon_1, \varepsilon_2 \in \{ -,+\}$, there is a Poisson isomorphism $\Psi^{\varepsilon_1,\varepsilon_2} : (\mathcal{S}_{+1}(\mathbb{B}), \{\cdot, \cdot \}^s) \xrightarrow{\cong} \mathbb{C}[\SL_2]^{\varepsilon_1, \varepsilon_2}$ defined by the formula: 
 $$ \Psi^{\varepsilon_1, \varepsilon_2} (M) := \left\{ 
 \begin{array}{ll}
 N &\mbox{, if }(\varepsilon_1,\varepsilon_2) = (-,+); \\
 CNC &\mbox{, if }(\varepsilon_1,\varepsilon_2) = (+,-);\\ 
  -CN &\mbox{, if }(\varepsilon_1,\varepsilon_2) = (+,+); \\
   -NC &\mbox{, if }(\varepsilon_1,\varepsilon_2) = (-,-). \\
 \end{array}\right.$$
   \end{lemma}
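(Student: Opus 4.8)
The plan is to verify directly that each of the four candidate maps $\Psi^{\varepsilon_1,\varepsilon_2}$ is a well-defined algebra isomorphism and that it intertwines the Poisson brackets. Since $\mathcal{S}_{+1}(\mathbb{B})$ and $\mathbb{C}[\SL_2]$ are both commutative, the algebra-isomorphism part reduces to a presentation check: recall from Section 2.2 that $\mathcal{S}_{+1}(\mathbb{B})$ is generated by the four entries of $M$ with the single relation $\det(M)=\alpha_{++}\alpha_{--}-\alpha_{+-}\alpha_{-+}=1$ (the $q=+1$ specialization of the bigon relations), and $\mathbb{C}[\SL_2]$ is the polynomial algebra on the entries of $N$ modulo $\det(N)=1$. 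Since $C\in\SL_2(\mathbb{C})$ with $\det(C)=1$, each of the assignments $M\mapsto N,\ CNC,\ -CN,\ -NC$ is a linear change of the four generators that preserves the determinant (note $\det(-CN)=(-1)^2\det(C)\det(N)=\det(N)$, and similarly for the others), hence descends to a well-defined algebra homomorphism; it is invertible because $C$ is, so each $\Psi^{\varepsilon_1,\varepsilon_2}$ is an algebra isomorphism.

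Next I would check the Poisson compatibility. On the source side the bracket $\{\cdot,\cdot\}^s$ is given explicitly in Example~\ref{exampleBigone}; in matrix form this is exactly $\{M\otimes M\}^s = \overline{r}^{-}(M\otimes M)+(M\otimes M)\overline{r}^{+}$ for the orientation $(\varepsilon_1,\varepsilon_2)=(-,+)$, which one verifies entry-by-entry against the six formulas of Example~\ref{exampleBigone} (using $\overline{r}^+=E\otimes F-F\otimes E=-\overline{r}^-$). This settles the case $(\varepsilon_1,\varepsilon_2)=(-,+)$ since $\Psi^{-,+}(M)=N$ and the target bracket is $\{N\otimes N\}^{-,+}=\overline{r}^-(N\otimes N)+(N\otimes N)\overline{r}^+$ by Definition~\ref{def_bigone_poisson}. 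For the other three orientations I would not recompute anything: instead use the conjugation behaviour of $\overline{r}^{\pm}$ under $C$. The key algebraic identities are $(C\otimes C)\,\overline{r}^{\pm}\,(C\otimes C)^{-1} = \overline{r}^{\mp}$ (equivalently $C E C^{-1}=-F$, $CFC^{-1}=-E$, $CHC^{-1}=-H$, so $C$ implements the Cartan involution up to sign and swaps $r^+\leftrightarrow r^-$, $\overline{r}^+\leftrightarrow\overline{r}^-$). Combined with the observation from Section~3.3.1 that $\{\cdot,\cdot\}^{\varepsilon_1,\varepsilon_2}=-\{\cdot,\cdot\}^{-\varepsilon_1,-\varepsilon_2}$, one propagates the $(-,+)$ case to the remaining three by left/right multiplication by $C$ and by overall sign.

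Concretely, for $(\varepsilon_1,\varepsilon_2)=(+,-)$ set $M'=CMC$; then $\{M'\otimes M'\}^s = (C\otimes C)\{M\otimes M\}^s(C\otimes C)^{-1} = (C\otimes C)\big(\overline{r}^-(M\otimes M)+(M\otimes M)\overline{r}^+\big)(C\otimes C)^{-1} = \overline{r}^+(M'\otimes M') + (M'\otimes M')\overline{r}^-$, which matches $\{N\otimes N\}^{+,-}$ under $\Psi^{+,-}(M)=CNC$. For $(\varepsilon_1,\varepsilon_2)=(+,+)$ one multiplies only on the left by $C$, converting the left factor $\overline{r}^-$ to $\overline{r}^+$ while leaving the right factor $\overline{r}^+$ untouched, and absorbing the resulting overall sign into the $-$ in $\Psi^{+,+}(M)=-CN$ (here one uses that the bracket is bilinear so a global sign on the matrix of generators is harmless for the conjugation identity but must be tracked through $\{N\otimes N\}^{\varepsilon_1,\varepsilon_2}$, which is quadratic — in fact $\{(-CN)\otimes(-CN)\}=\{CN\otimes CN\}$, so the sign is cosmetic and it is really the asymmetry $\{\cdot,\cdot\}^{+,+}=-\{\cdot,\cdot\}^{-,-}$ together with the half-conjugation that does the work); the case $(-,-)$ is symmetric with right multiplication by $C$. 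The only genuinely delicate point — and the step I expect to need the most care — is bookkeeping the signs and the left-versus-right placement of $C$ so that each of the four target brackets from Definition~\ref{def_bigone_poisson} is reproduced exactly; everything else is a routine $4\times 4$ matrix computation. Finally, $\Psi^{\varepsilon_1,\varepsilon_2}$ is a Poisson isomorphism because it is an algebra isomorphism that intertwines the brackets on a generating set, and brackets are determined by their values on generators via the Leibniz rule.
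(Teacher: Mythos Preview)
Your proposal is correct and follows essentially the same approach as the paper: the algebra isomorphism is deduced from $\det(C)=1$, the case $(\varepsilon_1,\varepsilon_2)=(-,+)$ is checked by comparing Example~\ref{exampleBigone} with the expansion of $\overline{r}^-(N\otimes N)+(N\otimes N)\overline{r}^+$, and the remaining three cases are deduced from the conjugation identity $(C\otimes C)\overline{r}^{\varepsilon}=\overline{r}^{-\varepsilon}(C\otimes C)$. The only cosmetic difference is that the paper packages the reduction as showing that the composite $\varphi:=\Psi^{\varepsilon_1,\varepsilon_2}\circ(\Psi^{-,+})^{-1}:\mathbb{C}[\SL_2]^{-,+}\to\mathbb{C}[\SL_2]^{\varepsilon_1,\varepsilon_2}$ is Poisson, which avoids the slightly awkward passage through $M'=CMC$ on the source side; since $\Psi^{-,+}$ is the tautological identification $M\leftrightarrow N$, your computation is literally the same once transported to the target, but the paper's phrasing makes the logic (and the irrelevance of the global sign in $-CN$, $-NC$) more transparent.
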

   
   \begin{proof}
   That $\Psi^{\varepsilon_1, \varepsilon_2}$ is an isomorphism of algebras follows from the fact that $\det(C)=1$.  Let us see the compatibility of $\Psi^{\varepsilon_1, \varepsilon_2}$ with the Poisson structures.  For $(\varepsilon_1, \varepsilon_2)=(-,+)$, this follows from a direct comparison of Definition \ref{def_bigone_poisson} and Example \ref{exampleBigone}. Indeed, one has: 
   \begin{eqnarray*}
   \left\{ N\otimes N\right\}^{-,+} &=& \overline{r}^-(N\otimes N) + (N\otimes N)\overline{r}^+ 
   \\  &=& (F\otimes E - E\otimes F)(N\otimes N) +(N\otimes N)(E\otimes F - F\otimes E) \\
   &=& \begin{pmatrix} 0 & x_{++} \\ 0 & x_{-+} \end{pmatrix} \otimes \begin{pmatrix} x_{+-} & 0 \\ x_{--} & 0\end{pmatrix} - 
   \begin{pmatrix} x_{+-} & 0 \\ x_{--} & 0\end{pmatrix} \otimes \begin{pmatrix} 0 & x_{++} \\ 0 & x_{-+} \end{pmatrix} + \\
   && 
   \begin{pmatrix} 0 & 0 \\ x_{++} & x_{+-} \end{pmatrix} \otimes \begin{pmatrix} x_{-+} & x_{--} \\ 0 & 0 \end{pmatrix} - 
   \begin{pmatrix} x_{-+} & x_{--} \\ 0 & 0 \end{pmatrix} \otimes \begin{pmatrix} 0 & 0 \\ x_{++} & x_{+-} \end{pmatrix}.
   \end{eqnarray*}
    \par We recover  the formulas computed in Example \ref{exampleBigone}. 
    For $(\varepsilon_1, \varepsilon_2)=(+,+)$,  we prove that the isomorphism $\varphi : \mathbb{C}[\SL_2]^{-,+} \xrightarrow{\cong} \mathbb{C}[\SL_2]^{+,+}$ given by $\varphi := \Psi^{+,+} \circ(\Psi^{-,+})^{-1}$, is a Poisson morphism. 
   Note that $\varphi(N)= -CN$ and that $ (C\otimes C) \overline{r}^{\varepsilon} = \overline{r}^{- \varepsilon} (C\otimes C)$. 
   It follows that  
    \begin{eqnarray*}
    \{ \varphi(N) \otimes \varphi(N) \}^{+, +} &=& \overline{r}^+ (CN \otimes CN) + (CN\otimes CN) \overline{r}^+ \\
    &=&  (C\otimes C) \left( \overline{r}^- (N\otimes N) + (N\otimes N) \overline{r}^+ \right) = \varphi^{\otimes 2} \left( \{ N\otimes N\}^{-, +} \right)
    \end{eqnarray*}
  which proves the claim. The two remaining cases for $(\varepsilon_1, \varepsilon_2)$ are proved similarly.   
   \end{proof}
   
   \subsubsection{The case of the triangle}  For $\delta \in \{ \alpha, \beta, \gamma \}$, let 
	\begin{equation*}
	 M_{\delta} := \begin{pmatrix} \delta_{++} & \delta_{+-} \\ \delta_{-+} & \delta_{--} \end{pmatrix} \text{  and } 
	 N_{\delta}:= \begin{pmatrix} \delta(+,+) & \delta(+,-) \\ \delta(-,+) & \delta(-,-) \end{pmatrix} 
	\end{equation*}
be  two matrices with coefficients in $\mathcal{S}_{+1}(\mathbb{T})$ and $\mathbb{C}[\mathcal{X}_{\SL_2}(\mathbb{T})]$ respectively.
      
   \begin{lemma}\label{lemma_poisson_triangle} There is a Poisson isomorphism $\Psi^{\mathfrak{o}} : (\mathcal{S}_{+1}(\mathbb{T}), \{\cdot, \cdot \}^s ) \xrightarrow{\cong} (\mathbb{C}[\mathcal{X}_{\SL_2}(\mathbb{T})], \{\cdot, \cdot \}^{\mathfrak{o}} )$ defined by the formula: 
    $$ \Psi^{\mathfrak{o}} (M_{\delta}) := \left\{ 
 \begin{array}{ll}
 N_{\delta} &\mbox{, if }(\mathfrak{o}(s(\alpha)),\mathfrak{o}(t(\alpha))) = (-,+); \\
 CN_{\delta}C &\mbox{, if }(\mathfrak{o}(s(\alpha)),\mathfrak{o}(t(\alpha))) = (+,-);\\ 
  -CN_{\delta} &\mbox{, if }(\mathfrak{o}(s(\alpha)),\mathfrak{o}(t(\alpha))) = (+,+); \\
   -N_{\delta}C &\mbox{, if }(\mathfrak{o}(s(\alpha)),\mathfrak{o}(t(\alpha))) = (-,-). \\
 \end{array}\right.$$
   for each $\delta \in \{ \alpha, \beta, \gamma \}$. Moreover, if $d\in \{a,b,c\}$ is a boundary arc of $\mathbb{T}$, the following diagrams commute: 
   
   \begin{eqnarray*}
   \begin{tikzcd}
   \mathcal{S}_{+1}(\mathbb{T}) \arrow[r, "\Delta_d^L"] \arrow[d, "\Psi^{\mathfrak{o}}", "\cong"'] &
   \mathcal{S}_{+1}(\mathbb{B}) \otimes \mathcal{S}_{+1}(\mathbb{T}) \arrow[d, "\Psi^{\mathfrak{o}(d), - \mathfrak{o}(d)} \otimes \Psi^{\mathfrak{o}}", "\cong"'] \\
   \mathbb{C}[\mathcal{X}_{\SL_2}(\mathbb{T})] \arrow[r, "\Delta_d^L"] & \mathbb{C}[\SL_2]\otimes \mathbb{C}[\mathcal{X}_{\SL_2}(\mathbb{T})] 
   \end{tikzcd} &
   \begin{tikzcd}
   \mathcal{S}_{+1}(\mathbb{T}) \arrow[r, "\Delta_d^R"] \arrow[d, "\Psi^{\mathfrak{o}}", "\cong"'] &
   \mathcal{S}_{+1}(\mathbb{T}) \otimes \mathcal{S}_{+1}(\mathbb{B})   \arrow[d, "\Psi^{\mathfrak{o}} \otimes \Psi^{-\mathfrak{o}(d),  \mathfrak{o}(d)} ", "\cong"'] \\
   \mathbb{C}[\mathcal{X}_{\SL_2}(\mathbb{T})] \arrow[r, "\Delta_d^R"] & \mathbb{C}[\mathcal{X}_{\SL_2}(\mathbb{T})] \otimes \mathbb{C}[\SL_2] 
   \end{tikzcd} 
   \end{eqnarray*}
   \end{lemma}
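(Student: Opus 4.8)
The plan is to establish the claim by explicit comparison of the two presentations. Lemma~\ref{lemma_triangle+1} gives a presentation of $\mathcal{S}_{+1}(\mathbb{T})$ by the three $2\times 2$ matrices $M_\delta$ subject to $\det(M_\delta)=1$ and $M_\gamma C M_\beta C M_\alpha C=\mathds{1}$, while by Definition~\ref{def_triangle_poisson} the algebra $\mathbb{C}[\mathcal{X}_{\SL_2}(\mathbb{T})]$ is generated by the $N_\delta$ with $\det(N_\delta)=1$ and $N_\gamma N_\beta N_\alpha=\mathds{1}$. First I would check that $\Psi^{\mathfrak{o}}$ is a well-defined algebra isomorphism: in each of the four cases for $(\mathfrak{o}(s(\delta)),\mathfrak{o}(t(\delta)))$ the prescribed matrix is obtained from $N_\delta$ by left/right multiplication by $C$ or $-C$, all of determinant $1$, so $\det(M_\delta)=1$ is preserved; and one verifies that the word $M_\gamma C M_\beta C M_\alpha C$ becomes, after substituting the case-by-case formulas and using $C^2=-\mathds{1}$, exactly $N_\gamma N_\beta N_\alpha$ (the intermediate $C$-factors and the signs all cancel, the orientations of consecutive edges being opposite where a gluing occurs: $t(\alpha)=s(\gamma)=b$, $t(\gamma)=s(\beta)=a$, $t(\beta)=s(\alpha)=c$). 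This last bookkeeping is the combinatorial heart of the well-definedness; it is routine but must be done carefully because the sign conventions in the four cases are asymmetric. Injectivity/surjectivity is then immediate since the substitution is invertible on generators.

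Next I would verify that $\Psi^{\mathfrak{o}}$ is Poisson. The strategy is the one already used for the bigon in Lemma~\ref{lemma_poisson_bigone}: it suffices to treat the ``reference'' orientation case and then conjugate by $C$ using the identity $(C\otimes C)\overline r^{\varepsilon}=\overline r^{-\varepsilon}(C\otimes C)$ (and the analogous identity for $r^{\pm}$). Concretely, for the diagonal brackets $\{N_\delta\otimes N_\delta\}^{\mathfrak{o}}$ the computation is literally the bigon computation applied to each of $\alpha,\beta,\gamma$, matching Example~\ref{exampleBigone} with the three arcs. For the mixed brackets $\{N_\alpha\otimes N_\gamma\}^{\mathfrak{o}}$, $\{N_\gamma\otimes N_\beta\}^{\mathfrak{o}}$, $\{N_\beta\otimes N_\alpha\}^{\mathfrak{o}}$ — given in Definition~\ref{def_triangle_poisson} in terms of the full $r$-matrices $r^{\pm}$ — I would compare against the skein-side brackets computed in Example~\ref{exampleTriangle} (the relations $\{\gamma_{\varepsilon\mu},\alpha_{\mu'\varepsilon}\}^s=-\tfrac12\gamma_{\varepsilon\mu}\alpha_{\mu'\varepsilon}$, etc., and their $\tau$-images). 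After applying $\Psi^{\mathfrak{o}}$ to the skein formulas one obtains, in each orientation case, precisely the $r$-matrix expression; the skew-symmetric part of $r^{\pm}$ accounts for the ``$\pm\tfrac12$'' coefficients and the symmetric part $\tau$ accounts for the inhomogeneous term $2\beta_{\mu\mu'}$. It is enough to check one entry of each matrix identity since both sides are bilinear and the rest follows by the $C$-conjugation symmetry and the $\tau$-equivariance.

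Finally, the commutativity of the two square diagrams with $\Delta_d^L$ and $\Delta_d^R$ is a direct matrix computation. On the skein side, $\Delta_d^L(M_\delta)$ is $M_{b_L/b_R}\otimes M_\delta$ (the bigon matrix glued onto the source) if $s(\delta)=d$ and $\mathds{1}\otimes M_\delta$ otherwise, exactly mirroring the coproduct formula for the bigon $\Delta(\alpha_{\varepsilon\varepsilon'})$; on the character-variety side $\Delta_d^L(N_\delta)$ has the same shape by the definitions preceding the lemma. Substituting $\Psi^{\mathfrak{o}(d),-\mathfrak{o}(d)}$ on the bigon factor and $\Psi^{\mathfrak{o}}$ on the triangle factor, one must check that the $C$'s (or $-C$'s) inserted by $\Psi$ on the left-hand route match those inserted on the right-hand route; this works because the orientation of the glued arc is $\mathfrak{o}(d)$ on the triangle side and $-\mathfrak{o}(d)$ on the attached bigon's boundary, so the case-splitting of $\Psi$ is consistent across the gluing. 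The $\Delta_d^R$ square is handled identically with left/right reversed. I expect the main obstacle to be none of the Poisson computations individually — each is a short $2\times2$ verification — but rather the careful handling of the sign and $C$-conjugation conventions across all four orientation cases simultaneously, so the cleanest writeup is to record the single identity $\Psi^{\mathfrak o}(M_\delta) = P_{\mathfrak o(s(\delta))}\, N_\delta\, Q_{\mathfrak o(t(\delta))}$ for suitable $C$-valued matrices $P_\pm,Q_\pm$, prove the reference case, and invoke the conjugation symmetries once and for all.
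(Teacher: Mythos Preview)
Your proposal is correct and follows essentially the same route as the paper: algebra isomorphism via Lemma~\ref{lemma_triangle+1}, diagonal Poisson brackets reduced to the bigon computation of Lemma~\ref{lemma_poisson_bigone}, mixed brackets checked in a reference orientation against Example~\ref{exampleTriangle} (the paper takes $\mathfrak{o}(a)=\mathfrak{o}(b)=\mathfrak{o}(c)=+$), remaining orientations handled by the conjugation identity $(C\otimes C)r^{\varepsilon}=r^{-\varepsilon}(C\otimes C)$, and the comodule squares by direct inspection. Your suggestion to package the case-split as $\Psi^{\mathfrak{o}}(M_\delta)=P_{\mathfrak{o}(s(\delta))}N_\delta Q_{\mathfrak{o}(t(\delta))}$ is a tidier bookkeeping device than the paper's case-by-case treatment, but the underlying argument is the same.
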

   
   \begin{proof} That $\Psi^{\mathfrak{o}}$ is an algebra morphism follows from Lemma \ref{lemma_triangle+1}.  For $\delta\in \{\alpha, \beta, \gamma\}$, the equality $(\Psi^{\mathfrak{o}})^{\otimes 2} (\{ \delta_{\varepsilon \varepsilon'} , \delta_{\mu \mu'} \}^{\mathfrak{o}}) = \{ \Psi^{\mathfrak{o}}(\delta_{\varepsilon \varepsilon'}) , \Psi^{\mathfrak{o}}(\delta_{\mu \mu'}) \}^s$ follows from the same computation that the proof of Lemma \ref{lemma_poisson_bigone}. For $\mathfrak{o}(a)=\mathfrak{o}(b)=\mathfrak{o}(c)=+$, one has: 
   \begin{eqnarray*}
   \{ N_{\alpha} \otimes N_{\gamma} \}^{\mathfrak{o}} &=& - (N_{\alpha} \otimes \mathds{1})(\frac{1}{2} H\otimes H +2E \otimes F)(\mathds{1}\otimes N_{\gamma}) \\
   &=& - \frac{1}{2} \begin{pmatrix} \alpha(+,+) & - \alpha(+,-) \\ \alpha(-,+) & -\alpha(-,-) \end{pmatrix} \otimes \begin{pmatrix} \gamma(+,+) & \gamma(+,-) \\ - \gamma(-,+) & -\gamma(-,-) \end{pmatrix} \\
   && -2 \begin{pmatrix} 0 & \alpha(+,+) \\ 0 & \alpha(-,+) \end{pmatrix} \otimes \begin{pmatrix} 0 & 0 \\ \gamma(+,+) & \gamma(+,-) \end{pmatrix}. 
   \end{eqnarray*}
   \par We recover the formulas of Example \ref{exampleTriangle}, hence one has  $(\Psi^{\mathfrak{o}})^{\otimes 2} (\{ \alpha_{\varepsilon \varepsilon'} , \gamma_{\mu \mu'} \}^{\mathfrak{o}}) = \{ \Psi^{\mathfrak{o}}(\alpha_{\varepsilon \varepsilon'}) , \Psi^{\mathfrak{o}}(\gamma_{\mu \mu'}) \}^s$. We get similar formulas by permuting cyclically the arcs $\gamma, \beta$ and $\alpha$. This proves that $\Psi^{\mathfrak{o}}$ is a Poisson morphism when $\mathfrak{o}(a)=\mathfrak{o}(b)=\mathfrak{o}(c)=+$. For another choice $\mathfrak{o}'$ of orientation of the boundary arcs, we prove that $\Psi^{\mathfrak{o}'}$ is Poisson by showing that the isomorphism $\Psi^{\mathfrak{o}'} \circ (\Psi^{\mathfrak{o}})^{-1}$ is Poisson. This follows from a similar computation than the one in the proof of Lemma \ref{lemma_poisson_bigone} by using the fact that $(C\otimes C)r^{\varepsilon} = r^{- \varepsilon}(C\otimes C)$. The fact that the two diagrams in the lemma commute follows from a straightforward computation.
   
   \end{proof}
   
   \subsubsection{The general case: proof of Theorem \ref{theorem3}}
   \par Consider a topological triangulation $\Delta$ of a punctured surface $\mathbf{\Sigma}$, together with a choice $\mathfrak{o}_{\Delta}$ of orientation of its edges. 
   Consider the following commutative diagram: 
   
   $$
\begin{tikzcd}
0 \arrow[r,""] & 
\mathcal{S}_{+1}(\mathbf{\Sigma}) \arrow[r, "i^{\Delta}"] \arrow[d, dotted, "\cong", "\exists! \Psi^{(\Delta, \mathfrak{o}_{\Delta})}"'] & 
\otimes_{\mathbb{T}} \mathcal{S}_{+1}(\mathbb{T}) \arrow[r,"\Delta^L - \sigma \circ \Delta^R"] \arrow[d, "\cong","\otimes_{\mathbb{T}}  \Psi^{\mathfrak{o}_{\mathbb{T}}}"'] &
\left( \otimes_{e} \mathcal{S}_{+1}(\mathbb{B}) \right) \otimes \left( \otimes_{\mathbb{T}} \mathcal{S}_{+1}(\mathbb{T}) \right)
\arrow[d, "\cong", "(\otimes_e \Psi^{-,+})\otimes (\otimes_{\mathbb{T}} \Psi^{\mathfrak{o}(\mathbb{T})})"'] \\
0 \arrow[r,""] & 
\mathbb{C}[\mathcal{X}_{\SL_2}(\mathbf{\Sigma})]  \arrow[r, "i^{\Delta}"]  & 
\otimes_{\mathbb{T}} \mathbb{C}[\mathcal{X}_{\SL_2}(\mathbb{T})] \arrow[r,"\Delta^L - \sigma \circ \Delta^R"] &
\left(\otimes_{e}\mathbb{C}[\SL_2]^{-,+}\right) \otimes  \left( \otimes_{\mathbb{T}} \mathbb{C}[\mathcal{X}_{\SL_2}(\mathbb{T})] \right)
\end{tikzcd}
$$
   In this diagram, both lines are exact and all morphisms are Poisson by Lemma \ref{lemma: poisson morph at +1 skein} and \cite{KojuTriangularCharVar}, hence there exists a unique Poisson isomorphism   $\Psi^{(\Delta, \mathfrak{o}_{\Delta})} : \left( \mathcal{S}_{+1}(\mathbf{\Sigma}), \{ \cdot, \cdot \}^s \right) \xrightarrow{\cong} \left( \mathbb{C}[\mathcal{X}_{\SL_2}(\mathbf{\Sigma})], \{\cdot, \cdot \}^{\mathfrak{o}} \right)$ induced by restriction of $\otimes_{\mathbb{T}} \Psi^{\mathfrak{o}(\mathbb{T})}$. This concludes the proof.

\subsection{Relative spin structures and explicit formulas}\label{sec_relativespin}
 The goal of this subsection is to give an explicit formula for the morphism $\Psi^{(\Delta, \mathfrak{o}_{\Delta})}$,  when evaluated on the generators of   $\mathcal{S}_{+1}(\mathbf{\Sigma})$. A key point is to have a global method to compute some signs that depend on the combinatorial data $(\Delta, \mathfrak{o}_{\Delta})$. We provide such a method by introducing the notion of relative spin structure, which gives a geometric interpretation these signs. We end the section by  relating the  $\Psi^{(\Delta, \mathfrak{o}_{\Delta})}$ with the morphism of \cite[Theorem 8.12]{CostantinoLe19}.
  
   \subsubsection{Relative spin structures}
  Since the classical identifications between skein algebras of closed punctured surfaces and character varieties are indexed by spin structures, it is natural to expect that the combinatorial data $(\Delta, \mathfrak{o}_{\Delta})$ indexing the isomorphism of Theorem \ref{theorem3} encode a generalization of the notion of spin structures  which would have a good behavior for the operation of gluing boundary arcs together. Before defining this notion, we introduce some notations.
  
  \begin{notations} \begin{enumerate}
  \item In this subsection, $\mathbf{\Sigma}=(\Sigma, \mathcal{P})$ will denote a triangulable punctured surface, $\mathfrak{o}$ an orientation of its boundary arcs and $(\Delta, \mathfrak{o}_{\Delta})$ a combinatorial data and we equip $\Sigma_{\mathcal{P}}$ with a Riemannian structure compatible with the orientation. For each boundary arc $a$, we fix a point $v_a\in a$. If $\partial \Sigma \neq \emptyset$, we write $\mathbb{V}:= \{v_a\}_a$ where $a$ runs through the set of boundary arcs. If $\Sigma$ is closed, we fix an arbitrarily point $v_a$ in each connected component $a$ of $\Sigma_{\mathcal{P}}$ and  write  $\mathbb{V}:= \{v_a\}_a$. 
  \item Let $\pi : U\Sigma_{\mathcal{P}} \rightarrow \Sigma_{\mathcal{P}}$ denote the unitary tangent bundle. For $\vec{v}=(v,u)\in U\Sigma_{\mathcal{P}}$, we denote by $-\vec{v}=(v,-u)$ the vector with opposite orientation. Denote by $\theta^{1/2}_{\vec{v}} : \vec{v} \rightarrow -\vec{v}$ the class in $\Pi_1(U\Sigma_{\mathcal{P}})$ of a path making a half-twist in the fiber over $\pi(\vec{v})$ in the direction given by the orientation and write $\theta_{\vec{v}}:= \theta^{1/2}_{\vec{v}}\theta^{1/2}_{-\vec{v}}$. For simplicity, for a path $\alpha : \vec{v}_1 \rightarrow \vec{v}_2$, we will write $\theta^{1/2}\alpha$ and $\alpha\theta^{1/2}$ instead of $\theta^{1/2}_{-\vec{v}_1}\alpha$ and $\alpha\theta^{1/2}_{\vec{v}_2}$ with no confusion possible.
  When $\partial \Sigma \neq \emptyset$, for each boundary arc $a$, we denote by $\vec{v}_a \in U\Sigma_{\mathcal{P}}$ the lift of $v_a$ pointing in the direction of $\mathfrak{o}$. When $\Sigma$ is closed, we fix an arbitrarily lift $\vec{v}_a$ of each $v_a$. We write $\widehat{\mathbb{V}}_+ := \{\vec{v}_a\}_a$ and $\widehat{\mathbb{V}} := \{ \vec{v}_a, - \vec{v}_a \}_a$.    
  \end{enumerate}
  
  \end{notations} 
   
  \begin{definition}\label{def_spin} A \textit{relative spin structure} on $\mathbf{\Sigma}$ is a functor 
  \\ $W \in \Hom (\Pi_1(U\Sigma_{\mathcal{P}}, \widehat{\mathbb{V}}_+), \mathbb{Z}/2\mathbb{Z})$ such that $W (\theta_{\vec{v}})=1$ for all $\vec{v} \in \widehat{\mathbb{V}}_+$. We denote by $\mathrm{Spin}(\mathbf{\Sigma})$ the set of relative spin structures on $\mathbf{\Sigma}$.
  \end{definition} 
  
  \begin{remark}\label{remark_spin1}
  When $\Sigma$ is closed and connected, an element $W\in \mathrm{Spin}(\mathbf{\Sigma})$ is a group morphism $W: \pi_1(U\Sigma_{\mathcal{P}}, \vec{v}_0^+)\rightarrow \mathbb{Z}/2\mathbb{Z}$ such that $W(\theta_{\vec{v}_0^+})=1$. Since $\mathbb{Z}/2\mathbb{Z}$ is abelian, such a morphism is equivalent to a group morphism $\underline{W}: \mathrm{H}_1(U\Sigma_{\mathcal{P}}, \mathbb{Z}/2\mathbb{Z}) \rightarrow \mathbb{Z}/2\mathbb{Z}$ satisfying $W([\theta])=1$. Such a morphism $\underline{W}$ defines a regular double covering $\widetilde{U}$ of $U\Sigma_{\mathcal{P}}$ such that the covering on each fiber is non trivial. Since $\mathrm{Spin}(2)$ is the only non-trivial double covering of $\mathrm{SO}(2)$, the space $\widetilde{U}$ is the total space of a $\mathrm{Spin}(2)$ fiber bundle over $\Sigma_{\mathcal{P}}$ lifting the bundle of orthogonal frames induced by the metric, hence it defines a spin structure. There is actually a $1$ to $1$ correspondence between isomorphism classes of spin structures and such morphism $\underline{W}$ (see \cite{MilnorSpinStructure} for details). Therefore a relative spin structure is the same than a  "standard" spin structure in the closed case. When the surface has non empty boundary, an element $W\in \mathrm{Spin}(\mathbf{\Sigma})$ still induces a group morphism $\underline{W}$ thus a spin structure. However the functor $W$ contains more information than $\underline{W}$ which permits to "glue" relative spin structures together.
  \end{remark}
  
  Let $a$ and $b$ be two distinct boundary arcs of $\mathbf{\Sigma}$ and denote by $p: \Sigma_{\mathcal{P}} \rightarrow {\Sigma_{\mathcal{P}}}_{|a\#b}$ the projection. Write $c:=p(a)=p(b)$. We assume that $(1)$ the restriction $p: \Sigma_{\mathcal{P}}\setminus (a\cup b) \to {\Sigma_{\mathcal{P}}}_{|a\#b} \setminus c$ is an isometry, $(2)$ the restriction $p: a\to c$ and $p:b\to c$ are isometries and $(3)$  that the orientations  $\mathfrak{o}$ of $a$ and $b$ coincide when gluing the arcs and that $p(v_a)=p(v_b)=:v_c$. The projection induces a lift $\vec{v}_c \in U {\Sigma_{\mathcal{P}}}_{|a\#b}$ of $v_c$ and a functor
  $$ p_* : \Pi_1(U\Sigma_{\mathcal{P}}, \widehat{\mathbb{V}}_+) \rightarrow \Pi_1(U{\Sigma_{\mathcal{P}}}_{|a\#b}, \widehat{\mathbb{V}}^{a\#b}_+ \cup \{\vec{v}_c\} ).$$   
   \begin{lemma} For $W\in \mathrm{Spin}(\Sigma)$, there exists a unique $W_{|a\#b} \in \mathrm{Spin}(\Sigma_{|a\#b})$ such that $W_{|a\#b}(p_*(\alpha))=W(\alpha)$ for all $\alpha \in \Pi_1(U\Sigma_{\mathcal{P}}, \widehat{\mathbb{V}}_+)$.
   \end{lemma}
   
   \begin{proof} Note that the image of $p_*$ generates the groupoid $\Pi_1(U{\Sigma_{\mathcal{P}}}_{|a\#b}, \widehat{\mathbb{V}}_+^{a\#b} \cup \{\vec{v}_c\})$ in the sense that any path $\alpha \in \Pi_1(U{\Sigma_{\mathcal{P}}}_{|a\#b}, \widehat{\mathbb{V}}_+^{a\#b} \cup \{\vec{v}_c\})$ can be written as a composition $\alpha = p_*(\alpha_1)\ldots p_*(\alpha_n)$ for some $\alpha_i \in \Pi_1(U\Sigma_{\mathcal{P}}, \widehat{\mathbb{V}}_+)$. Hence for $W\in \mathrm{Spin}(\mathbf{\Sigma})$, there exists a unique functor $\widetilde{W} : \Pi_1(U{\Sigma_{\mathcal{P}}}_{|a\#b}, \widehat{\mathbb{V}}_+^{a\#b} \cup \{\vec{v}_c\}) \rightarrow \mathbb{Z}/2\mathbb{Z}$ such that $\widetilde{W}(\pi_*(\alpha))=W(\alpha)$ for all $\alpha \in \Pi_1(U\Sigma_{\mathcal{P}}, \widehat{\mathbb{V}}_+)$, and $W_{|a\#b}$ has to be the restriction of $\widetilde{W}$ to the full subcategory $\Pi_1(U{\Sigma_{\mathcal{P}}}_{|a\#b}, \widehat{\mathbb{V}}_+^{a\#b})$.
   \end{proof}
   
    Note that the map $r_{a\#b}: \mathrm{Spin}(\mathbf{\Sigma}) \rightarrow \mathrm{Spin}(\mathbf{\Sigma}_{|a\#b})$ sending $W$ to $W_{|a\#b}$ is surjective but not injective. Indeed when lifting a functor in $\Hom(\Pi_1(U\Sigma_{\mathcal{P}}, \widehat{\mathbb{V}}_+), \mathbb{Z}/2\mathbb{Z})$ to a functor in $\Hom(\Pi_1(U\Sigma_{\mathcal{P}}, \widehat{\mathbb{V}}_+\cup \{ \overrightarrow{v}_c\}),  \mathbb{Z}/2\mathbb{Z})$ there is a $\mathbb{Z}/2\mathbb{Z}$ ambiguity.
     Note also that if $a,b,c,d$ are four distinct boundary arcs, one obviously has $r_{a\#b}\circ r_{c\#d} = r_{c\#d}\circ r_{a\#b}$. In particular, once some combinatorial data $(\Delta, \mathfrak{o}_{\Delta})$ of $\mathbf{\Sigma}$ are fixed, any relative spin structure on $\mathbf{\Sigma}$ can be obtained by gluing some relative spin structure on each face of the triangulation.
   
   \subsubsection{Lifts of embedded curves and the function $w$}
   
   Let us call  \textit{embedded arc} a smooth embedding $\alpha : [0,1] \rightarrow \Sigma_{\mathcal{P}}$ such that $\alpha(0), \alpha(1) \in \partial \Sigma_{\mathcal{P}}$. To any embedded arc and any simple closed curve, we associate two lifts in $U\Sigma_{\mathcal{P}}$ as follows.
   \par  For $\alpha$ an embedded arc oriented from the boundary arc $a$ to the boundary arc $b$, we isotope $\alpha$ (in the class of embedded arc)  such that $\alpha(0)=v_a, \alpha(1)=v_b$,  the vectors $\alpha'(0)$ and $\alpha'(1)$ are  tangent to $a$ and $b$ and such that $\alpha'(0)$ points in the direction of $a$ opposite to the orientation  induced by the orientation of $\Sigma_{\mathcal{P}}$ and such that $\alpha'(1)$ points in the direction of $b$ induced by the orientation of $\Sigma_{\mathcal{P}}$. The \textit{positive lift} of $\alpha$ is the homotopy class $\widehat{\alpha}^+\in \Pi_1(U\Sigma_{\mathcal{P}}, \widehat{\mathbb{V}})$ of the continuous map $t\mapsto \left(\alpha(t), \frac{\alpha'(t)}{\lVert \alpha'(t) \rVert}\right)$. 
   \par For $v$ a point in a boundary arc $a$, we write $\mathfrak{o}(v)=0$ if the orientation of $a$ agrees with the induced orientation of $\Sigma_{\mathcal{P}}$ and $\mathfrak{o}(v)=1$ else. The $\mathfrak{o}$-\textit{lift} $\widehat{\alpha}^{\mathfrak{o}}\in \Pi_1(U\Sigma_{\mathcal{P}}, \widehat{\mathbb{V}}_+)$ is defined by the formula
   \begin{equation}\label{eq_lifts}
   \widehat{\alpha}^+ = (\theta^{1/2})^{1-\mathfrak{o}(s(\alpha))} \widehat{\alpha}^{\mathfrak{o}} (\theta^{1/2})^{\mathfrak{o}(t(\alpha))}.
   \end{equation}
    \par Let $\gamma$ be a smooth embedded curve and $v\in \mathbb{V}$.  We define $\widehat{\gamma}_v^+ $ as the as the homotopy class of a map  $ t\mapsto \left(\beta(t), \frac{\beta'(t)}{\lVert \beta'(t) \rVert}\right)$  where $\beta$ is a smooth immersion  $\beta : [0,1] \rightarrow \Sigma_{\mathcal{P}}$  which is isotopic to $\gamma$ such that $\beta(0)=v=\beta(1)$ and $\beta'(0)$ points in the direction induced by the orientation of the surface for $\widehat{\gamma}_v^+ $. Similarly, we define $\widehat{\gamma}_v^{\mathfrak{o}}$ as the homotopy class of a map  $ t\mapsto \left(\beta(t), \frac{\beta'(t)}{\lVert \beta'(t) \rVert}\right)$ where this time $\beta'(0)$ points in the direction of $\mathfrak{o}$ for $\widehat{\gamma}_v^{\mathfrak{o}}$.
     If $\Sigma$ is closed and $\gamma$ is in a connected component $b$, we impose that $\widehat{\gamma}_v^+ =\widehat{\gamma}_v^{\mathfrak{o}}$ is defined from an immersion $\beta$ such that $(\beta(0), \beta'(0))=v_b$.
   
   \begin{notations}\label{def_w} For $W\in \mathrm{Spin}(\mathbf{\Sigma})$ and $\alpha$ an embedded arc, we write $w(\alpha):= W(\widehat{\alpha}^{\mathfrak{o}})\in \mathbb{Z}/2\mathbb{Z}$. For $\gamma$ a closed curve we write $w(\gamma):= W(\widehat{\gamma}_v^{\mathfrak{o}})$.
   \end{notations}
   
   \begin{remark}\label{remark_Johnson}
   The value $w(\gamma)$ associated to a closed curve is obviously independent on the choice of the point $v$. Moreover, as noted in Remark \ref{remark_spin1}, the value $W(\widehat{\gamma})$ only depends on the homology class $[\widehat{\gamma}^{\mathfrak{o}}] \in \mathrm{H}_1(U\Sigma_{\mathcal{P}}; \mathbb{Z}/2\mathbb{Z})$ and is closely related to the Johnson quadratic form as follows. Let $\{\gamma_i\}_{i=1, \ldots, n}$ be a collection of simple closed curves. Johnson proved in \cite[Theorem 1.A]{Johnson_SpinStructures} that the class $y:= \sum_{i=1}^n [\widehat{\gamma}_i^{\mathfrak{o}}] +n[\theta] \in \mathrm{H}_1(U\Sigma_{\mathcal{P}}; \mathbb{Z}/2\mathbb{Z})$ only depends on the homology class of $x:=\sum_{i=1}^n[\gamma_i] \in \mathrm{H}_1(\Sigma_{\mathcal{P}}; \mathbb{Z}/2\mathbb{Z})$ hence that the assignation $x\mapsto y$ defines a map (not a morphism) $\mathrm{H}_1(\Sigma_{\mathcal{P}}; \mathbb{Z}/2\mathbb{Z})\rightarrow \mathrm{H}_1(U\Sigma_{\mathcal{P}}; \mathbb{Z}/2\mathbb{Z})$. Moreover, for a (relative) spin structure $W$, Johnson proved in \cite[Theorem 1.B]{Johnson_SpinStructures} that the map $\omega :  \mathrm{H}_1(\Sigma_{\mathcal{P}}; \mathbb{Z}/2\mathbb{Z}) \rightarrow \mathbb{Z}/2\mathbb{Z}$ defined by $\omega(\sum_{i=1}^n [\gamma_i]):= n + \sum_{i=1}^n w([\gamma_i]) \pmod{2}$ satisfies the relation
   $$ \omega([\alpha +\beta]) = \omega([\alpha]) + \omega([\beta]) + \left< [\alpha], [\beta]\right>, $$
   hence that $\omega$ is a quadratic form for $(\mathrm{H}_1(\Sigma_{\mathcal{P}}; \mathbb{Z}/2\mathbb{Z}), \left<\cdot, \cdot \right>)$, where $\left<\cdot, \cdot \right>$ represents the intersection form. Hence the values $w(\gamma)$ in Definition \ref{def_w} are related to the Johnson quadratic form of the underlying spin structure by $\omega([\gamma])= w(\gamma)+1 \pmod{2}$.
   \end{remark}

\subsubsection{Relative spin structures associated to combinatorial data}

In order to  assign a relative spin structure to some combinatorial data $(\Delta, \mathfrak{o}_{\Delta})$ in a canonical way, we need to assign to each triangle $\mathbb{T}$, equipped with an orientation $\mathfrak{o}_{\mathbb{T}}$ of its boundary arcs, a canonical relative spin structure and then gluing the triangles along their faces. Let $\alpha, \beta, \gamma$ be the three paths in Figure \ref{figtriangle} which generate the groupoid $\Pi_1(\mathbb{T}, \mathbb{V})$ with relation $\gamma\beta\alpha=1$. Note that for any choice of $\mathfrak{o}_{\mathbb{T}}$, one has the relation $\widehat{\gamma}^{\mathfrak{o}_{\mathbb{T}}}\widehat{\beta}^{\mathfrak{o}_{\mathbb{T}}}\widehat{\alpha}^{\mathfrak{o}_{\mathbb{T}}}=\theta^{-2}$. Hence a relative spin structure $W$ on $\mathbb{T}$ is described by three elements $W(\widehat{\alpha}^{\mathfrak{o}_{\mathbb{T}}}), W(\widehat{\beta}^{\mathfrak{o}_{\mathbb{T}}}), W(\widehat{\gamma}^{\mathfrak{o}_{\mathbb{T}}}) \in \mathbb{Z}/2\mathbb{Z}$ such that $W(\widehat{\alpha}^{\mathfrak{o}_{\mathbb{T}}}) +W(\widehat{\beta}^{\mathfrak{o}_{\mathbb{T}}})+W(\widehat{\gamma}^{\mathfrak{o}_{\mathbb{T}}}) =0$. Therefore there exist four different relative spin structures on $\mathbb{T}$.

\begin{definition}\label{def_combinatoric}
The \textit{distinguished} relative spin structure on $\mathbb{T}$ is the relative spin structure $W$ such that $W(\widehat{\alpha}^{\mathfrak{o}_{\mathbb{T}}}) =W(\widehat{\beta}^{\mathfrak{o}_{\mathbb{T}}})=W(\widehat{\gamma}^{\mathfrak{o}_{\mathbb{T}}}) =0$. For $\mathbf{\Sigma}$ a punctured surface with combinatorial data $(\Delta, \mathfrak{o}_{\Delta})$, we associate a relative spin structure $W^{(\Delta, \mathfrak{o}_{\Delta})} \in \mathrm{Spin}(\mathbf{\Sigma})$ by gluing together the distinguished spin structures on the faces of the triangulation.
\end{definition}

Note that the distinguished relative spin structure $W$ on $\mathbb{T}$ satisfies $w(\alpha)=w(\beta)=w(\gamma)=0$ and $w(\alpha^{-1})=w(\beta^{-1})=w(\gamma^{-1})=1$.

\begin{remark} Since we associate to each face a specific (named distinguished) relative spin structure, there is no reason to believe that every spin structure on $\Sigma_{\mathcal{P}}$ can be associated to some combinatorial data. Moreover we will not investigate under which condition two combinatorial data induce the same relative spin structure. In \cite{NovakRunkel}, Novak and Runkel showed that any spin structure on a surface can be encoded by the combinatorial data consisting in a triangulation (with no degenerate face), an orientation of the edges and a choice of distinguished vertex for each face. Moreover they proved that two such combinatorial data induce the same spin structure if and only if they can be related by a sequence of elementary moves. It would be interesting to compare their approach to Definition \ref{def_combinatoric}.

\end{remark}

We now state an explicit formula for the values $w(\alpha)$ associated to a relative spin structure $W^{(\Delta, \mathfrak{o}_{\Delta})}$. For each edge $e\in \mathcal{E}(\Delta)$, fix a point $v_e\in e$ and let $\mathbb{V}^{\Delta}=\{v_e\}_{e\in \mathcal{E}(\Delta)}$. When $\partial \Sigma \neq \emptyset$, we assume that $\mathbb{V}^{\Delta}\cap \partial \Sigma_{\mathcal{P}}= \mathbb{V}$. When $\Sigma$ is closed, we assume that $\mathbb{V} \subset \mathbb{V}^{\Delta}$. Let $\vec{v}_e \in U\Sigma_{\mathcal{P}}$ be the lift of $v_e$ oriented in the direction of $\mathfrak{o}_{\Delta}$ and set $\widehat{\mathbb{V}}^{\Delta}_+ := \{\vec{v}_e; e\in \mathcal{E}(\Delta)\}$ and $\widehat{\mathbb{V}}^{\Delta} := \{\vec{v}_e, -\vec{v}_e; e\in \mathcal{E}(\Delta)\}$. Note that the set $\widehat{\mathbb{G}}^{\Delta}:= \{ (\widehat{\alpha}^{\mathfrak{o}}_{\mathbb{T}})^{\pm 1}, (\widehat{\beta}^{\mathfrak{o}}_{\mathbb{T}})^{\pm 1}, (\widehat{\gamma}^{\mathfrak{o}}_{\mathbb{T}})^{\pm 1}; \mathbb{T}\in F(\Delta)\}$ generates the groupoid $\Pi_1(U\Sigma_{\mathcal{P}}, \widehat{\mathbb{V}}_+^{\Delta})$. By definition of the gluing operation, the functor $W^{(\Delta, \mathfrak{o}_{\Delta})}$ is the restriction of the functor $\widetilde{W} \in \Hom(\Pi_1(U\Sigma_{\mathcal{P}}, \widehat{\mathbb{V}}_+^{\Delta}), \mathbb{Z}/2\mathbb{Z})$ characterized by $\widetilde{W}(\widehat{\alpha}_{\mathbb{T}}^{\mathfrak{o}_{\mathbb{T}}})= \widetilde{W}(\widehat{\beta}_{\mathbb{T}}^{\mathfrak{o}_{\mathbb{T}}})=\widetilde{W}(\widehat{\gamma}_{\mathbb{T}}^{\mathfrak{o}_{\mathbb{T}}})=0$ for every face $\mathbb{T}$ and $\widetilde{W}(\theta_{\vec{v}})=1$ for any $\vec{v}\in \widehat{\mathbb{V}}_+^{\Delta}$. Set $\mathbb{G}^{\Delta}:= \pi (\widehat{\mathbb{G}}^{\Delta}_+)=\{ \alpha_{\mathbb{T}}^{\pm 1}, \beta_{\mathbb{T}}^{\pm 1}, \gamma_{\mathbb{T}}^{\pm 1}; \mathbb{T}\in F(\Delta)\}$ and for $\delta \in \mathbb{G}^{\Delta}$ a path in $\mathbb{T}$, write $w(\delta):=\widetilde{W}(\widehat{\delta}^{\mathfrak{o}_{\mathbb{T}}})$. Hence $w(\delta)=0$ if $\delta=\alpha_{\mathbb{T}}, \beta_{\mathbb{T}}$ or $\gamma_{\mathbb{T}}$ and $w(\delta)=1$ if $\delta=\alpha_{\mathbb{T}}^{-1}, \beta_{\mathbb{T}}^{-1}$ or $\gamma_{\mathbb{T}}^{-1}$.
\vspace{2mm}
\par Let $\alpha$ be either an embedded arc or a closed curve and choose a decomposition
\begin{equation}\label{eq_decomp_arc}
\alpha = \alpha_1 \ldots \alpha_n \quad , \alpha_i \in \mathbb{G}^{\Delta},
\end{equation}
such that either $\alpha_i$ and $\alpha_{i+1}$ lies in different faces $\mathbb{T}_i \neq \mathbb{T}_{i+1}$  of $\Delta$, or $\mathbb{T}_i=\mathbb{T}_{i+1}$ is a degenerate triangle, with two boundary arcs glued together to give an arc $c$ in $\Sigma_{\mathcal{P}}$, and $\alpha_i\alpha_{i+1}$ crosses $c=t(\alpha_i)=s(\alpha_{i+1})$ transversally. In the above statement, the indices $i$ are taken in $\mathbb{Z}/n\mathbb{Z}$ when $\alpha$ is a closed curve. Note that such a decomposition is obtained by isotoping $\alpha$ transversally with minimal intersection to the edges of the triangulation, and then cutting $\alpha$ along the edges. For $(\mathbb{T}, \mathfrak{o}_{\mathbb{T}})$ a triangle with oriented edges,  $a$ an edge  and $v_a \in a$, recall that we write $\mathfrak{o}_{\mathbb{T}}(v_a)=0$ if the orientation of $a$ corresponds to the orientation induced by the orientation of $\mathbb{T}$ and write $\mathfrak{o}_{\mathbb{T}}(v_a)=+1$ else.

\begin{lemma}\label{lemma_spinw}
The function $w$ associated to the relative spin structure $W^{(\Delta, \mathfrak{o}_{\Delta})}$ is characterized by the formula
$$ w(\alpha) = \left\{ \begin{array}{ll} \sum_{i=1}^n w(\alpha_i) + \sum_{i=1}^{n-1} \mathfrak{o}_{\mathbb{T}_i}(t(\alpha_i)) \pmod{2} & \mbox{, if } \alpha \mbox{ is an embedded arc;} \\
\sum_{i=1}^n w(\alpha_i) + \sum_{i=1}^{n} \mathfrak{o}_{\mathbb{T}_i}(t(\alpha_i)) \pmod{2} & \mbox{, if } \alpha \mbox{ is a closed curve.} \end{array} \right.$$
\end{lemma}

\begin{proof}
First note that for the positive lifts, one has the equality
$$ \widehat{\alpha}^+ = \widehat{\alpha}_1^+ \ldots \widehat{\alpha}_n^+.$$
This equality follows from the fact that the embedded curve chosen to represent $ \widehat{\alpha}^+ $ can be isotoped such that it crosses tangentially the edges of $\Delta$ in such a way that, when cutting along the edges, one obtains the composition $\widehat{\alpha}_1^+ \ldots \widehat{\alpha}_n^+.$ Note also that this equality is essentially \cite[Proposition $8.11$]{CostantinoLe19}. Recall from Equation \eqref{eq_lifts} that $\widehat{\alpha_i}^+ = (\theta^{1/2})^{1-\mathfrak{o}(s(\alpha_i))} \widehat{\alpha}_i^{\mathfrak{o}} (\theta^{1/2})^{\mathfrak{o}(t(\alpha_i))}$ and note that, since we assume that the faces $\mathbb{T}_i$ and $\mathbb{T}_{i+1}$ are distinct, one has 
$$(1-\mathfrak{o}_{\mathbb{T}_i}(t(\alpha_i))) + \mathfrak{o}_{\mathbb{T}_{i+1}}(s(\alpha_{i+1}))= 2 \mathfrak{o}_{\mathbb{T}_{i+1}}(s(\alpha_i)),$$
 (where indices are understood modulo $n$ when $\alpha$ is a closed curve). When $\alpha$ is an arc,  we thus obtain the equality
 $$ \widehat{\alpha}_1^{\mathfrak{o}_{\mathbb{T}_1}} \ldots  \widehat{\alpha}_n^{\mathfrak{o}_{\mathbb{T}_n}} = \theta^{\sum_{i=1}^{n-1}\mathfrak{o}_{\mathbb{T}_i}(t(\alpha_i))} (\theta^{1/2})^{1-\mathfrak{o}(s(\alpha))} \widehat{\alpha}^+ (\theta^{1/2})^{\mathfrak{o}(t(\alpha))},$$
from which we deduce that
\begin{align*}
w(\alpha) := W(\widehat{\alpha}^{\mathfrak{o}}) &=& W \left( (\theta^{-1/2})^{1-\mathfrak{o}(s(\alpha))} \widehat{\alpha}^+ (\theta^{-1/2})^{\mathfrak{o}(t(\alpha))} \right) \\
&=& W\left( \theta^{-\sum_{i=1}^{n-1}\mathfrak{o}_{\mathbb{T}_i}(t(\alpha_i))} \widehat{\alpha}_1^{\mathfrak{o}_{\mathbb{T}_1}}\ldots \widehat{\alpha}_n^{\mathfrak{o}_{\mathbb{T}_n}} \right) \\
&=& \sum_{i=1}^{n-1} \mathfrak{o}_{\mathbb{T}_i}(t(\alpha_i)) + \sum_{i=1}^n w(\alpha_i) \pmod{2}.
\end{align*}
The computation when $\alpha$ is a closed curve is done in the same manner.

\end{proof}

\subsubsection{Explicit formulas for the isomorphism}
   
 In order to describe the isomorphism $\Psi^{(\Delta, \mathfrak{o}_{\Delta})}$ of Theorem \ref{theorem3} more explicitly, let us recall from \cite{KojuTriangularCharVar} a set of generators for the ring of regular functions of the relative character varieties.
    For $\alpha$ an embedded arc, seen as a path in the fundamental groupoid, and $\varepsilon, \varepsilon' = \pm$, the regular function $F_{\alpha_{\varepsilon \varepsilon'}} \in \mathbb{C}[\mathcal{X}_{\SL_2}(\mathbf{\Sigma})]$ is defined on the class $[\rho]$ of a functor $\rho \in \mathcal{R}_{\SL_2}(\mathbf{\Sigma}_{\mathcal{P}})$ by $\rho(\alpha)= \begin{pmatrix} F_{\alpha_{++}}(\rho) & F_{\alpha_{+-}}(\rho) \\ F_{\alpha_{-+}}(\rho) & F_{\alpha_{--}}(\rho) \end{pmatrix}$. For $\gamma$ a closed curve, represented by an arbitrary path $\gamma_v\in \Pi_1(\Sigma_{\mathcal{P}}, \mathbb{V})$, one defines $F_{\gamma}\in \mathbb{C}[\mathcal{X}_{\SL_2}(\mathbf{\Sigma})]$ by $F_{\gamma}([\rho]):= \tr (\rho(\gamma_v))$. Since the trace is invariant by conjugacy, the value $\tr (\rho(\gamma_v))$ does not depend on the choice of base point $v$ nor on the representative $\rho$ in the class $[\rho]$. The functions $F_{\alpha_{\varepsilon \varepsilon'}}$ and $F_{\gamma}$ generate the algebra $\mathbb{C}[ \mathcal{X}_{\SL_2}(\mathbf{\Sigma})]$. For $\alpha$ an arc, we set $N_{\alpha}:= \begin{pmatrix} F_{\alpha_{++}} & F_{\alpha_{+-}} \\ F_{\alpha_{-+}} & F_{\alpha_{--}} \end{pmatrix}$ the $2\times 2$ matrix with coefficients in $\mathbb{C}[\mathcal{X}_{\SL_2}(\mathbf{\Sigma})]$. Note that $N_{\alpha^{-1}}=  \begin{pmatrix} F_{\alpha_{--}} & -F_{\alpha_{+-}} \\ -F_{\alpha_{-+}} & F_{\alpha_{++}} \end{pmatrix}$.

   \vspace{2mm}
   \par For $\alpha$ an embedded arc and $\varepsilon, \varepsilon' = \pm$, we denote by $\alpha_{\varepsilon \varepsilon'}\in \mathcal{S}_{+1}(\mathbf{\Sigma})$ the class of the arc $\alpha$ with state $\varepsilon$ at $s(\alpha)$ and $\varepsilon'$ at $t(\alpha)$. We write $M_{\alpha}:= \begin{pmatrix} \alpha_{++} & \alpha_{+-} \\ \alpha_{-+} & \alpha_{--}\end{pmatrix}$ the $2\times 2$ matrix with coefficients in $\mathcal{S}_{+1}(\mathbf{\Sigma})$. Note that $M_{\alpha^{-1}}= (M_{\alpha})^{\intercal}=\begin{pmatrix} \alpha_{++} & \alpha_{-+} \\ \alpha_{+-} & \alpha_{--}\end{pmatrix}$. Recall the isomorphism $\Psi^{(\Delta, \mathfrak{o}_{\Delta})}$ of Theorem \ref{theorem3} and recall that $C^{-1}=\begin{pmatrix} 0 & -1 \\ 1 & 0 \end{pmatrix}$. 
   
   \begin{theorem}\label{theorem5}
    For each  embedded arc $\alpha$, one has   
   		\begin{equation}\label{eq_F1}
   		\Psi^{(\Delta, \mathfrak{o}_{\Delta})} (M_{\alpha}) = (-1)^{w(\alpha)} (C^{-1})^{1-\mathfrak{o}(\alpha(0))} N_{\alpha} (C^{-1})^{\mathfrak{o}(\alpha(1))}.  
   		\end{equation}
   		For each closed curve   $\gamma$, one has 
   		\begin{equation}\label{eq_F2}
   		\Psi^{(\Delta, \mathfrak{o}_{\Delta})} (\gamma) = (-1)^{w(\gamma)} F_{\gamma}. 
   		\end{equation}   
   \end{theorem}
   
   \begin{remark} When $\Sigma$ is closed, recall from Remarks \ref{remark_spin1}, \ref{remark_Johnson} that $W^{(\Delta, \mathfrak{o}_{\Delta})}$ is a standard spin structure associated to a quadratic form $\omega$ such that $w(\gamma)=\omega([\gamma])+1$. Hence in the closed case, the isomorphism $\Psi^{(\Delta, \mathfrak{o}_{\Delta})}$ coincides with the "standard" isomorphisms described at the beginning of Section $3.1$.
   \end{remark}
   
   Recall that $\Psi^{(\Delta, \mathfrak{o}_{\Delta})}$ is defined by the following diagram
   
 \begin{equation}\label{eq_Diag}
\begin{tikzcd}
\mathcal{S}_{+1}(\mathbf{\Sigma}) \arrow[r, hook, "i^{\Delta}"] \arrow[d, "\cong", "\Psi^{(\Delta, \mathfrak{o}_{\Delta})}"'] & 
\otimes_{\mathbb{T}} \mathcal{S}_{+1}(\mathbb{T})\arrow[d, "\cong","\otimes_{\mathbb{T}}  \Psi^{\mathfrak{o}_{\mathbb{T}}}"']  \\
\mathbb{C}[\mathcal{X}_{\SL_2}(\mathbf{\Sigma})]  \arrow[r, "i^{\Delta}"]  & 
\otimes_{\mathbb{T}} \mathbb{C}[\mathcal{X}_{\SL_2}(\mathbb{T})]
\end{tikzcd}
\end{equation}
   For $x\in  \mathcal{S}_{+1}(\mathbb{T})$, we still denote by $x$ the element in $\otimes_{\mathbb{T}} \mathcal{S}_{+1}(\mathbb{T})$ having $1$ in the factors $\mathcal{S}_{+1}(\mathbb{T}')$ for $\mathbb{T}'\neq \mathbb{T}$ and $x$ in the factor $\mathcal{S}_{+1}(\mathbb{T})$. Hence for $\delta\in \mathbb{G}^{\Delta}$ a path in $\mathbb{T}$, the matrix $M_{\delta}$ is considered as a $2\times 2$ matrix with coefficients in $\otimes_{\mathbb{T}} \mathcal{S}_{+1}(\mathbb{T})$. Similarly,  the matrix $N_{\delta}$ is considered as a $2\times 2$ matrix with coefficients in $\otimes_{\mathbb{T}} \mathbb{C}[\mathcal{X}_{\SL_2}(\mathbb{T})]$.
   
   \begin{proof}
   We first show that if Equation \eqref{eq_F1} holds for an arc $\alpha$, then it holds for $\alpha^{-1}$. This fact follows from the facts that $w(\alpha^{-1})= w(\alpha)+1$, from the equalities $(C^{-1})^{\intercal}=C$ and  $A^{-1}=-C^{-1}A^{\intercal}C^{-1}$ for $A \in \SL_2(\mathbb{C})$ and the following computation:
   \begin{eqnarray*}
   \Psi (M_{\alpha^{-1}}) &=& \Psi(M_{\alpha}^{\intercal}) = (-1)^{w(\alpha)} C^{\mathfrak{o}(t(\alpha))} (N_{\alpha})^{\intercal} C^{1-\mathfrak{o}(s(\alpha))} \\
   &=& (-1)^{w(\alpha)+1} (C^{-1})^{1-\mathfrak{o}(s(\alpha^{-1}))} \left( -C^{-1}N_{\alpha}^{\intercal} C^{-1}\right) (C^{-1})^{\mathfrak{o}(t(\alpha^{-1}))} \\
    &=& (-1)^{w(\alpha^{-1})} (C^{-1})^{1-\mathfrak{o}(s(\alpha^{-1}))} N_{\alpha^{-1}} (C^{-1})^{\mathfrak{o}(t(\alpha^{-1}))}.
    \end{eqnarray*}
  Next let us prove the theorem for the triangle $\mathbb{T}$. The fact that Equation  \eqref{eq_F1} holds for the arcs  $\alpha_{\mathbb{T}}, \beta_{\mathbb{T}}$ and $\gamma_{\mathbb{T}}$ is an immediate consequence of the definition of $\Psi^{\mathfrak{o}_{\mathbb{T}}}$ in Lemma  \ref{lemma_poisson_triangle} and from the definition of the canonical spin structure in $\mathbb{T}$. By the preceding arguments, Equation \eqref{eq_F1} also holds for the arcs  $\alpha_{\mathbb{T}}^{-1}, \beta_{\mathbb{T}}^{-1}, \gamma_{\mathbb{T}}^{-1}$ and the theorem is proved for $\mathbb{T}$.
  \vspace{2mm}
  \par In the general case, consider an arc $\alpha$ and choose a decomposition
  $$ \alpha = \alpha_1 \ldots, \alpha_n \quad, \alpha_i \in \mathbb{G}^{\Delta}$$
   as before. By the gluing formula for stated skein algebras (\cite[Theorem $3.1$]{LeStatedSkein}), one has $i^{\Delta}(M_{\alpha})=M_{\alpha_1}\ldots M_{\alpha_{n}}$. By definition of the morphism $i$ in Equation \eqref{eqKK}, one has $i^{\Delta}(N_{\alpha})= N_{\alpha_1}\ldots N_{\alpha_n}$. By the preceding case of the triangle, one has 
   $$(\otimes_{\mathbb{T}}\Psi^{\mathfrak{o}_{\mathbb{T}}})(M_{\alpha_i})= (-1)^{w(\alpha_i)} (C^{-1})^{1-\mathfrak{o}_{\mathbb{T}_i}(s(\alpha_i))} N_{\alpha_i} (C^{-1})^{\mathfrak{o}_{\mathbb{T}_i}(t(\alpha)}.$$
   Hence by Lemma \ref{lemma_spinw} one has
   $$(\otimes_{\mathbb{T}}\Psi^{\mathfrak{o}_{\mathbb{T}}})\circ i^{\Delta} (M_{\alpha})= i^{\Delta}\left( (-1)^{w(\alpha)} (C^{-1})^{1-\mathfrak{o}(s(\alpha))} N_{\alpha} (C^{-1})^{\mathfrak{o}(t(\alpha))} \right), $$
   and Equation \eqref{eq_F1} follows from the commutativity of the diagram \eqref{eq_Diag}. The proof for a closed curved is done similarly by taking the trace of the above equality.
   \end{proof}
 
 \subsubsection{Comparison with Costantino-L\^e's isomorphism}\label{sec_comparaisonCL}
   
   Let $\mathbf{\Sigma}$ be a connected punctured surface with non-trivial boundary. Costantino and L\^e defined in \cite{CostantinoLe19} the twisted character variety $\chi(\mathbf{\Sigma})$ as the space of functors $\widehat{\rho} \in \Hom(\Pi_1(U\Sigma_{\mathcal{P}}, \widehat{\mathbb{V}}), \SL_2(\mathbb{C}))$ such that $\widehat{\rho}(\theta^{1/2}_{\vec{v}})=C^{-1}$ for any $\vec{v} \in \widehat{\mathbb{V}}$. Let $\mathscr{S}$ denote the maximal spectrum of $\mathcal{S}_{+1}(\mathbf{\Sigma})$. For $\chi \in \mathscr{S}$, seen as a character $\chi : \mathcal{S}_{+1}(\mathbf{\Sigma}) \rightarrow \mathbb{C}^*$, and for $\alpha$ an oriented arc, write $\chi(\alpha):=\begin{pmatrix} \chi(\alpha_{++}) & \chi(\alpha_{+-}) \\ \chi(\alpha_{+-}) & \chi(\alpha_{--}) \end{pmatrix}$. In \cite[Theorem $8.12$]{CostantinoLe19}, the authors defined an affine isomorphism $\Theta : \mathscr{S} \xrightarrow{\cong} \chi(\mathbf{\Sigma})$ sending a character $\chi$ to a functor $\widehat{\rho}$ such that $\chi(\alpha)=\widehat{\rho}(\widehat{\alpha}^+)$ for any embedded (even immersed) arc and such that $\chi(\gamma)=\tr (\widehat{\rho}(\widehat{\gamma}^+))$ for any closed curve. Composing $\Theta$ with the isomorphism induced by $\Psi^{(\Delta, \mathfrak{o}_{\Delta})}$, one obtains an isomorphism $\mathcal{X}_{\SL_2}(\mathbf{\Sigma}) \cong \chi(\mathbf{\Sigma})$. By Theorem \ref{theorem5}, this isomorphism sends a functor $\rho \in \Hom(\Pi_1(\Sigma_{\mathcal{P}}, \mathbb{V}), \SL_2(\mathbb{C}))$ to a functor $\widehat{\rho} \in \Hom(\Pi_1(U\Sigma_{\mathcal{P}}, \widehat{\mathbb{V}}), \SL_2(\mathbb{C}))$ characterized by the formulas $\widehat{\rho}(\widehat{\alpha}^{\mathfrak{o}})= (-1)^{w(\alpha)} \rho(\alpha)$ for any arc $\alpha$, $\tr(\widehat{\rho}(\widehat{\gamma}^{\mathfrak{o}})= (-1)^{w(\gamma)}\tr(\rho(\gamma))$ for any closed curve $\gamma$ and $\widehat{\rho}(\theta^{1/2}_{\vec{v}})=C^{-1}$ for any $\vec{v} \in \widehat{\mathbb{V}}$.

   \subsection{Classical Shadows}
   
   \par Suppose that $\omega\in \mathbb{C}$ is a root of unity of odd order $N>1$. A \textit{central representation} of the stated skein algebra is a finite dimensional representation $r: \mathcal{S}_{\omega}(\mathbf{\Sigma}) \rightarrow \End(V)$ which sends each element of the image of the morphism $j$ of Theorem \ref{theorem2} to scalar operators. Fix a topological triangulation $\Delta$ of $\mathbf{\Sigma}$ and an orientation $\mathfrak{o}_{\Delta}$ of its edges. Then $r$ induces a character on $\mathcal{S}_{+1}(\mathbf{\Sigma})\xrightarrow[\cong]{\Psi^{(\Delta, \mathfrak{o}_{\Delta})}} \mathbb{C}[\mathcal{X}_{\SL_2}(\mathbf{\Sigma})]$ and this character induces a point in the relative character variety $\mathcal{X}_{\SL_2}(\mathbf{\Sigma})$ that we call the \textit{classical shadow} of $r$, as in \cite{BonahonWong1} in the closed case. By definition, the classical shadow only depends on the isomorphism class of $r$. 
   
   \vspace{2mm}
\par   To motivate the results of this paper, we list three families of central representations. First irreducible representations are obviously central. Then choose for each triangle $\mathbb{T}\in F(\Delta)$ an irreducible representation $r^{\mathbb{T}} : \mathcal{S}_{\omega}(\mathbb{T}) \rightarrow \End(V_{\mathbb{T}})$ and consider the composition:
   $$ r: \mathcal{S}_{\omega}(\mathbf{\Sigma}) \xrightarrow{i^{\Delta}} \otimes_{\mathbb{T}\in F(\Delta)}\mathcal{S}_{\omega}(\mathbb{T}) \xrightarrow{\otimes_{\mathbb{T}} r^{\mathbb{T}}} \End( \otimes_{\mathbb{T}} V_{\mathbb{T}}).$$
   \par Such a representation is central and were called \textit{local representations} in \cite{BonahonBaiLiuLocalRep}. Eventually, consider the balanced Chekhov-Fock algebra $\mathcal{Z}_{\omega}(\mathbf{\Sigma}, \Delta)$ defined in \cite{BonahonWongqTrace} after the original construction of \cite{ChekhovFock}. Given a triangulated marked surface, the authors of \cite{BonahonWongqTrace} defined an algebra morphism (the quantum trace) $\tr : \mathcal{S}_{\omega}(\mathbf{\Sigma}) \rightarrow \mathcal{Z}_{\omega}(\mathbf{\Sigma}, \Delta)$ (see also \cite{LeStatedSkein}). One motivation is the fact that the representation theory of the balanced Chekhov-Fock algebra is easier to study than the one of the skein algebras (see \cite{BonahonLiu, BonahonWong2}). For an irreducible representation $\pi : \mathcal{Z}_{\omega}(\mathbf{\Sigma}, \Delta) \rightarrow \End{V}$ of the balanced Chekhov-Fock algebra, we call \textit{quantum Teichm\"uller representation}, the composition: 
   $$ r: \mathcal{S}_{\omega}(\mathbf{\Sigma}) \xrightarrow{\tr} \mathcal{Z}_{\omega}(\mathbf{\Sigma}, \Delta) \xrightarrow{\pi} \End(V). $$
   Quantum Teichm\"uller representations are central.

   \appendix
   
   \section{Proof of Proposition \ref{proptchebychev} and application}
   
   \subsection{Proof of Proposition \ref{proptchebychev}}
   
   \par We divide the  proof of Proposition \ref{proptchebychev} in five lemmas.
   
   \vspace{2mm}
   \par  {Along this section, we write $A:=\omega^{-2}$.} Denote by $\mathbb{A}=([0,1]\times S^1, \{p, p'\})$ the annulus with punctures $p=\{0\} \times \{1\}$ and $p'=\{1\}\times \{1\}$ in each of its boundary components and denote by $b=\{0\}\times S^1\setminus \{p\}$ and $b'=\{1\}\times S^1\setminus \{p'\}$ its boundary arcs. Let $\gamma\subset [0,1]\times S^1$ be the curve $\{\frac{1}{2}\} \times S^1$. Let  $\delta^{(n)}, \eta^{(n)}\subset [0,1]\times S^1$ be the arcs with endpoints $b$ and $b'$ such that $\delta^{(n)}$ spirals $n$ times in the counterclockwise direction and $\eta^{(n)}$ spirals $n$ times in the clockwise direction while oriented from $b'$ to $b$. The arcs are drawn in Figure \ref{figannulus}. By convention, $\delta^{(0)}$ and $\eta^{(0)}$ represent the empty diagram. Denote by  $\beta$ the arc $[0, 1]\times\{-1\}$. By convention, if $\alpha$ is one of the arcs $\beta, \delta^{(n)}, \eta^{(n)}$, we denote by $\alpha_{\varepsilon \varepsilon'}\in \mathcal{S}_{\omega}(\mathbb{A})$ the class of the corresponding stated tangle with sign $\varepsilon$ in $b$ and $\varepsilon'$ in $b'$. The following lemma and its proof are quite similar, though stated in a different skein algebra, to \cite[Proposition $2.2$]{LeKauffmanBracket}.
   \begin{figure}[!h] 
   	\centerline{\includegraphics[width=8cm]{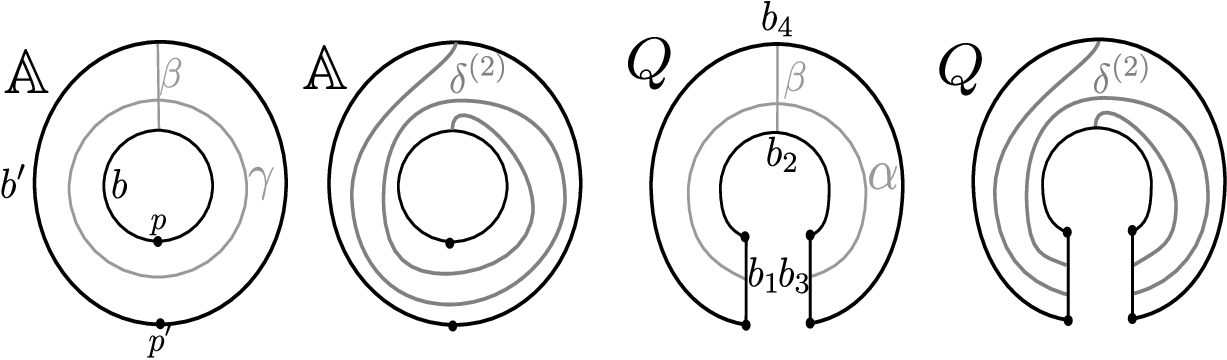} }
   	\caption{The annulus $\mathbb{A}$, the square $Q$ and some arcs and curves.} 
   	\label{figannulus} 
   \end{figure}

   \begin{lemma}\label{lemmaannulus} In $\mathcal{S}_{\omega}(\mathbb{A})$ the elements $T_N(\gamma)$ and $\beta_{\varepsilon \varepsilon'}$ commute. 
   \end{lemma}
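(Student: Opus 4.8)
The plan is to prove this by a direct computation inside the stated skein algebra $\mathcal{S}_\omega(\mathbb{A})$, exploiting the fact that $\gamma$ and $\beta$ live in a particularly simple local picture near the arc $\beta$. First I would isotope $\beta$ so that it meets $\gamma$ transversally in exactly one point; then, cutting the annulus along $\gamma$, the arc $\beta$ decomposes as a product of two arcs in the two halves. More precisely, recall the square $Q$ of Figure~\ref{figannulus}: the curve $\gamma$ together with the arc $\beta$ sits inside a sub-bigon (or sub-square) of $\mathbb{A}$ where the commutation relations are governed by the defining relations of $\mathcal{S}_\omega(\mathbb{B})$ and the height exchange moves \eqref{eq: height exch 1}, \eqref{eq: height exch corr}.

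The key computational step is to express the product $\gamma \cdot \beta_{\varepsilon\varepsilon'}$ in terms of the diagram obtained by overlaying $\gamma$ on top of $\beta$, resolving the single crossing by the Kauffman relation \eqref{eq: skein 1}. Resolving that crossing produces two terms; comparing $\gamma\beta_{\varepsilon\varepsilon'}$ with $\beta_{\varepsilon\varepsilon'}\gamma$ one finds that the two orderings are related by a linear transformation of the (finitely many) diagrams $\beta_{\varepsilon\varepsilon'}$, $\delta^{(1)}_{\varepsilon\varepsilon'}$, $\eta^{(1)}_{\varepsilon\varepsilon'}$ together with scalars that are Laurent monomials in $\omega$. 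In fact the natural statement is that $\gamma$ acts on the span of the $\beta_{\varepsilon\varepsilon'}$'s (and their "spiraled" companions $\delta^{(n)}, \eta^{(n)}$) by a $2\times 2$-matrix-like operator $L$ satisfying $L^2 = (\text{tr}\,L)\, L - (\det L)\,\mathrm{id}$ — indeed $\gamma$ acts essentially as left multiplication by a $2\times 2$ matrix of stated arcs. Then, since $T_N$ is the Chebyshev polynomial normalised by $T_0=2, T_1=X$, one has the classical identity that if $g$ is an element whose adjoint action $\mathrm{ad}_g$ on some module satisfies a quadratic equation with trace $g$ and determinant $1$, then $\mathrm{ad}_{T_N(g)}$ is the $N$-th power of a rotation and equals the identity precisely when $\omega$ has odd order $N$. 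The point is that $\gamma^n \cdot \beta_{\varepsilon\varepsilon'} \cdot \gamma^{-n}$, expanded in the $\{\delta^{(k)}_{\varepsilon\varepsilon'}, \eta^{(k)}_{\varepsilon\varepsilon'}\}$ basis, has coefficients that are trigonometric-type polynomials in $\omega$, and the combination $T_N(\gamma)\beta_{\varepsilon\varepsilon'}T_N(\gamma)^{-1}$ collapses all the spiraling terms because of the "miraculous cancellation" $\omega^N = \pm 1$ forcing $q$-binomial coefficients or $q$-integers to vanish (as in Lemma~\ref{lemma_qbinomial}); here this is the place where oddness of $N$ is used.

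Concretely, I would: (i) set up the local model, writing $\gamma$ as the boundary of a small annular neighbourhood and $\beta$ crossing it once; (ii) use \eqref{eq: skein 1} to obtain the "skein-theoretic matrix" expression for multiplication by $\gamma$ — i.e. show $\gamma\cdot\beta_{\varepsilon\varepsilon'} = \sum a^{\mu\mu'}_{\varepsilon\varepsilon'}\beta_{\mu\mu'} + (\text{spiral terms})$, and likewise on the other side, recording the explicit scalars; (iii) prove the recursion: if $P_n(\gamma)$ denotes the polynomial such that $P_n(\gamma)\beta_{\varepsilon\varepsilon'} = \beta_{\varepsilon\varepsilon'}P_n(\gamma)$ fails by a controlled "error" living in the span of $\delta^{(n)}, \eta^{(n)}$, then the error satisfies the same three-term recursion $E_{n+2} = \gamma E_{n+1} - E_n$ as the Chebyshev polynomials, with $E_0 = E_1 = 0$ in the relevant sense, so that $E_N$ vanishes exactly when $\omega^{2N}=1$ and $N$ odd; (iv) conclude $T_N(\gamma)$ is central with respect to all $\beta_{\varepsilon\varepsilon'}$.

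The main obstacle I anticipate is bookkeeping: carefully tracking the powers of $\omega$ (equivalently $A = \omega^{-2}$) coming from the height-exchange corrections \eqref{eq: height exch 1}--\eqref{eq: height exch corr} when $\beta$ passes a strand of $\gamma$ at the boundary arcs $b, b'$, and verifying that the resulting recursion for the commutator error is \emph{exactly} the Chebyshev recursion with the correct initial data, rather than a twisted version. A secondary subtlety is that $\gamma$ is not invertible in $\mathcal{S}_\omega(\mathbb{A})$ as written, so the "$\gamma^{-1}$" language must be replaced by an honest polynomial identity: one shows directly that $T_N(\gamma)\beta_{\varepsilon\varepsilon'} - \beta_{\varepsilon\varepsilon'}T_N(\gamma) = 0$ by the recursion, never actually dividing. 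Once the recursion is pinned down, the oddness hypothesis enters through the vanishing of the appropriate $q$-integer $[N]_q$ with $q = \omega^{-4}$ an odd-order root of unity, exactly as in the proof of Lemma~\ref{lemma_qbinomial}, and the lemma follows.
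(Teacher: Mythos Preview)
Your overall instinct---use the spiraling arcs $\delta^{(n)}_{\varepsilon\varepsilon'}$, $\eta^{(n)}_{\varepsilon\varepsilon'}$ together with the Chebyshev recursion---is exactly the right one, and is what the paper does. But step~(iii) of your sketch contains a genuine error, and two other points are misconceptions.

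\textbf{The recursion for the error.} You propose that the commutator $E_n := [T_n(\gamma),\beta_{\varepsilon\varepsilon'}]$ satisfies $E_{n+2}=\gamma E_{n+1}-E_n$ with $E_0=E_1=0$ ``in the relevant sense''. The recursion itself is correct (it follows from linearity of $[\,\cdot\,,\beta]$ and the Chebyshev recurrence), and $E_0=[2,\beta]=0$; but $E_1=[\gamma,\beta]$ is \emph{not} zero in general---resolving the single crossing gives $E_1=(A-A^{-1})(\delta^{(1)}_{\varepsilon\varepsilon'}-\eta^{(1)}_{\varepsilon\varepsilon'})$. If both initial values vanished, the homogeneous recursion would force $E_n=0$ for all $n$, contradicting your own claim that $E_N$ vanishes ``exactly when $\omega^{2N}=1$''. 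The recursion on $E_n$ alone therefore cannot close the argument; you must compute something explicitly. The paper does this by computing each side \emph{separately}: one shows by induction that
\[
T_n(\gamma)\cdot\beta_{\varepsilon\varepsilon'}=A^{\,n}\,\delta^{(n)}_{\varepsilon\varepsilon'}+A^{-n}\,\eta^{(n)}_{\varepsilon\varepsilon'}
\quad\text{and}\quad
\beta_{\varepsilon\varepsilon'}\cdot T_n(\gamma)=A^{-n}\,\delta^{(n)}_{\varepsilon\varepsilon'}+A^{\,n}\,\eta^{(n)}_{\varepsilon\varepsilon'},
\]
using only the Kauffman relation $\gamma\cdot\delta^{(n)}=A\,\delta^{(n+1)}+A^{-1}\delta^{(n-1)}$ (and its $\eta$--analogue). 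Subtracting gives $E_N=(A^N-A^{-N})(\delta^{(N)}-\eta^{(N)})$, which vanishes since $A^N=\omega^{-2N}=1$.

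\textbf{Two misconceptions.} First, the height--exchange relations \eqref{eq: height exch 1}--\eqref{eq: height exch corr} play no role here: $\gamma$ is a \emph{closed} curve with no endpoints on $b$ or $b'$, so there is nothing to reorder at the boundary. The entire computation uses only the Kauffman bracket relation \eqref{eq: skein 1}. Second, the vanishing has nothing to do with $q$--binomials or Lemma~\ref{lemma_qbinomial}; it is simply the equality $A^N=A^{-N}$, i.e.\ $\omega^{4N}=1$, which holds because $\omega^N=1$. No oddness of $N$ is actually needed for this particular lemma (oddness enters elsewhere in the section).
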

   
   \begin{proof} First note that a direct application of the Kauffman bracket skein relations implies that $\gamma \cdot \delta^{(n)}_{\varepsilon \varepsilon'}= A \delta^{(n+1)}_{\varepsilon \varepsilon'}+A^{-1} \delta^{(n-1)}_{\varepsilon \varepsilon'}$ and $\gamma\cdot \eta^{(n)}_{\varepsilon \varepsilon'}= A \eta^{(n-1)}_{\varepsilon \varepsilon'}+A^{-1}\eta^{(n+1)}_{\varepsilon \varepsilon'}$ when $n\geq 1$. Next we show by induction on $n\geq 0$ that $T_n(\gamma)\cdot \beta_{\varepsilon \varepsilon'} = A^n \delta^{(n)}_{\varepsilon \varepsilon'} + A^{-n}\eta^{(n)}_{\varepsilon \varepsilon'}$. The statements is an immediate consequence of the definitions when $n=0$ and a direct application of the Kauffman bracket relations when $n=1$. Suppose that the results holds for $n$ and $n+1$. Then:
   	\begin{eqnarray*}
   		T_{n+2}(\gamma)\beta_{\varepsilon \varepsilon'} &=& \gamma\cdot T_{n+1}(\gamma)\cdot \beta_{\varepsilon \varepsilon'} - T_{n}(\gamma)\cdot \beta_{\varepsilon \varepsilon'}
   		\\ &=& \gamma\cdot (A^{n+1}\delta^{(n+1)}_{\varepsilon \varepsilon'} + A^{-(n+1)}\eta^{(n+1)}_{\varepsilon \varepsilon'}) - (A^n\delta^{(n)}_{\varepsilon \varepsilon'} + A^{-n}\eta_{\varepsilon \varepsilon'}^{(n)})
   		\\ &=& A^{n+2} \delta^{(n+2)}_{\varepsilon \varepsilon'} + A^{-(n+2)}\eta^{n+2}_{\varepsilon \varepsilon'},
   	\end{eqnarray*}
   	and the statement follows by induction. Similarly we show that $\beta_{\varepsilon \varepsilon'} \cdot T_n(\gamma)= A^{-n}\delta^{(n)}_{\varepsilon \varepsilon'} + A^n\eta_{\varepsilon \varepsilon'}^{(n)}$. Hence we have:
   	$$ T_N(\gamma)\cdot \beta_{\varepsilon \varepsilon'} - \beta_{\varepsilon \varepsilon'}\cdot T_N(\gamma) = (A^N-A^{-N})(\delta^{(N)}_{\varepsilon \varepsilon'} - \eta^{(N)}_{\varepsilon \varepsilon'}) =0.$$
   	
   \end{proof}
   
   \par  Denote by $Q$ the square, \textit{i.e.} a disc with four punctures on its boundary. Let $b_1, \ldots, b_4$ be its four boundary arcs labelled in the counter-clockwise order. When gluing $b_1$ along $b_3$, we obtain the annulus with $b_2$ sent to $b$ and $b_4$ sent to $b'$. We denote by  $i_{|b_1\#b_3} : \mathcal{S}_{\omega}(\mathbb{A})\hookrightarrow \mathcal{S}_{\omega}(Q)$ the gluing morphism. Let $\alpha, \beta, \delta^{(n)}, \eta^{(n)}\subset Q$ be the arcs which are glue together to form $\gamma, \beta, \delta^{(n)}$ and $\eta^{(n)}$ respectively as in Figure \ref{figannulus}. Fix an arbitrary orientation $\mathfrak{o}$ of the boundary arcs of $Q$ and consider the filtration $(\mathcal{F}_m)_{m\geq 0}$ associated to $S=\{ b_1, b_3\}$ of Definition \ref{def_filtration}. Write $d:\mathcal{S}_{\omega}(Q)\rightarrow \mathbb{Z}^{\geq 0}$ the corresponding map and  $\mathcal{G}_m:=\quotient{\mathcal{F}_m}{\mathcal{F}_{m-1}}$ the corresponding graduation.
   
   \begin{lemma}\label{lemma1}
   	The following holds:
   	$$ \lt\left( (\alpha_{++}+\alpha_{--})^N\right) = \lt \left(T_N(\alpha_{++}+\alpha_{--})\right) = \alpha_{++}^N + \alpha_{--}^N. $$
   \end{lemma}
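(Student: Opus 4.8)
The statement consists of two equalities; the plan is to establish the first and then deduce the second from it by a filtration-degree argument.

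To compute $\lt\big((\alpha_{++}+\alpha_{--})^N\big)$, I would pass to the associated graded algebra $\bigoplus_m \mathcal{G}_m$. The key preliminary step is to check that the leading terms of $\alpha_{++}$ and $\alpha_{--}$ are homogeneous of the same (positive) filtration degree $d(\alpha_{++})=d(\alpha_{--})$ and that they ``$q^2$-commute'' in the graded algebra, i.e. $\lt(\alpha_{--}\alpha_{++}) = q^{2}\,\lt(\alpha_{++}\alpha_{--})$. This is the one genuine computation: expanding the two products $\alpha_{++}\alpha_{--}$ and $\alpha_{--}\alpha_{++}$ in the basis $\mathcal{B}^{\mathfrak{o}}$ by means of the boundary relations \eqref{eq: skein 2} and the height-exchange moves \eqref{eq: height exch 1}--\eqref{eq: height exch corr} (exactly as in the proof of Lemma \ref{lemma_leading_term}), one finds that both leading terms are proportional to the same basis element, with ratio precisely $q^{2}$. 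Granting this, and using that $q^{2}$ is again a primitive $N$-th root of unity since $N$ is odd, Lemma \ref{lemma_qbinomial} applies in the graded algebra to the leading terms of $\alpha_{++}$ and $\alpha_{--}$ and yields that the leading term of $(\alpha_{++}+\alpha_{--})^N$ is the sum of the leading terms of $\alpha_{++}^N$ and $\alpha_{--}^N$. Since $\alpha_{++}^N$ and $\alpha_{--}^N$ are (scalar multiples of) elements of $\mathcal{B}^{\mathfrak{o}}$, they coincide with their own leading terms, so $\lt\big((\alpha_{++}+\alpha_{--})^N\big) = \alpha_{++}^N+\alpha_{--}^N$.

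For the second equality, I would use that $T_N(X)-X^N$ is a polynomial in $X$ of degree at most $N-2$, so that $T_N(\alpha_{++}+\alpha_{--})$ differs from $(\alpha_{++}+\alpha_{--})^N$ only by a linear combination of powers $(\alpha_{++}+\alpha_{--})^k$ with $k\le N-2$. Since $d$ is subadditive on products, $d\big((\alpha_{++}+\alpha_{--})^k\big)\le k\,d(\alpha_{++}) < N\,d(\alpha_{++}) = d\big((\alpha_{++}+\alpha_{--})^N\big)$, using $d(\alpha_{++})\ge 1$. Hence all these lower powers lie in strictly smaller filtration degree and contribute nothing to the leading term, whence $\lt\big(T_N(\alpha_{++}+\alpha_{--})\big)=\lt\big((\alpha_{++}+\alpha_{--})^N\big)=\alpha_{++}^N+\alpha_{--}^N$.

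The main obstacle is the $q^{2}$-commutation relation in the associated graded algebra, that is, pinning down that $\lt(\alpha_{--}\alpha_{++})=q^{2}\,\lt(\alpha_{++}\alpha_{--})$; this requires careful bookkeeping of the boundary and height-exchange relations near the two endpoints of $\alpha$. Everything else — the Gaussian-binomial vanishing, the passage to the graded algebra, and the degree count for the Chebyshev tail — is formal.
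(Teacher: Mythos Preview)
Your proposal is correct and follows essentially the same strategy as the paper's proof: establish the $q^2$-commutation $\lt(\alpha_{--}\alpha_{++})=q^{2}\lt(\alpha_{++}\alpha_{--})$ in the associated graded (the paper simply asserts this in $\mathcal{G}_4$), apply Lemma~\ref{lemma_qbinomial} to get $\lt\big((\alpha_{++}+\alpha_{--})^N\big)=\alpha_{++}^N+\alpha_{--}^N$, and then observe that $T_N(X)-X^N$ has degree $<N$ so the Chebyshev tail lies in strictly lower filtration degree. One small remark: for the application of Lemma~\ref{lemma_qbinomial} you only need $(q^2)^N=1$, which follows immediately from $q^N=1$; the primitivity of $q^2$ is not required.
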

   
   \begin{proof}
   	First note that in $\mathcal{G}_{4}$, we have $\alpha_{--}\alpha_{++}= q^2\alpha_{++}\alpha_{--}$. So it follows from Lemma \ref{lemma_qbinomial} that in $\mathcal{G}_{2N}$, we have $\lt\left( (\alpha_{++}+\alpha_{--})^N\right) = \alpha_{++}^N + \alpha_{--}^N $. Since $T_N(X) - X^N$ is a polynomial of degree strictly smaller that $N$,  the degree of $T_N(\alpha_{++}+\alpha_{--}) - (\alpha_{++}+\alpha_{--})^N$ is strictly smaller than $2N$, thus $ \lt \left(T_N(\alpha_{++}+\alpha_{--})\right) = \lt\left( (\alpha_{++}+\alpha_{--})^N\right) $.
   \end{proof}
   
   \vspace{2mm} Let $\alpha^{(n)}$ be the diagram made of $n$ parallel copies of $\alpha$.  Using the identifications $\partial \delta^{(n)}=\partial \eta^{(n)} = \partial \alpha^{(n)} \cup \partial \beta$, we denote by $\delta^{(n)}_{(s,\varepsilon, \varepsilon')}, \eta^{(n)}_{(s, \varepsilon, \varepsilon')} \in \mathcal{S}_{\omega}(Q)$ the classes of the tangles $\delta^{(n)}$ and $\eta^{(n)}$ with states given by a state $s$ of $\alpha^{(n)}$ and a state $(\varepsilon, \varepsilon')$ of $\beta$.
   
   \begin{lemma}\label{lemma2}
   	For $0<n<N$ and $s$ a state  of $\alpha^{(n)}$, we have:
   	$$\lt \left( \left[ [\alpha^{(n)}, s], \beta_{\varepsilon \varepsilon'} \right] \right) = (A^n-A^{-n})\left(\delta^{(n)}_{(s, \varepsilon, \varepsilon')} - \eta^{(n)}_{(s, \varepsilon, \varepsilon')} \right), $$
   	where we used the notation $[x,y]= xy-yx$.
   \end{lemma}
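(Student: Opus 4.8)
The plan is to compute both products $[\alpha^{(n)},s]\cdot\beta_{\varepsilon\varepsilon'}$ and $\beta_{\varepsilon\varepsilon'}\cdot[\alpha^{(n)},s]$ in $\mathcal{S}_{\omega}(Q)$ modulo basis elements of strictly smaller $d$-degree, by resolving, one at a time via the Kauffman bracket relation \eqref{eq: skein 1}, the $n$ transverse crossings that the arc $\beta$ makes with the $n$ parallel copies of $\alpha$, and then to subtract the two results.

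First I would settle the case $n=1$ by a direct application of \eqref{eq: skein 1}. With $\alpha$ above $\beta$, the unique crossing has two Kauffman smoothings, and these are precisely the two reconnected stated tangles of $Q$ underlying $\delta^{(1)}_{(s,\varepsilon,\varepsilon')}$ and $\eta^{(1)}_{(s,\varepsilon,\varepsilon')}$; reading off the coefficients $\omega^{-2}=A$ and $\omega^{2}=A^{-1}$ from \eqref{eq: skein 1} gives $[\alpha^{(1)},s]\cdot\beta_{\varepsilon\varepsilon'}=A\,\delta^{(1)}_{(s,\varepsilon,\varepsilon')}+A^{-1}\eta^{(1)}_{(s,\varepsilon,\varepsilon')}$, and with $\beta$ above $\alpha$ the over/under data is mirrored, so $\beta_{\varepsilon\varepsilon'}\cdot[\alpha^{(1)},s]=A^{-1}\delta^{(1)}_{(s,\varepsilon,\varepsilon')}+A\,\eta^{(1)}_{(s,\varepsilon,\varepsilon')}$; subtracting proves the lemma for $n=1$ (here both terms already have top $d$-degree, so the leading term equals the whole expression).

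Next I would prove, by induction on $n$, the two congruences
$$[\alpha^{(n)},s]\cdot\beta_{\varepsilon\varepsilon'}\equiv A^{n}\,\delta^{(n)}_{(s,\varepsilon,\varepsilon')}+A^{-n}\,\eta^{(n)}_{(s,\varepsilon,\varepsilon')},\qquad \beta_{\varepsilon\varepsilon'}\cdot[\alpha^{(n)},s]\equiv A^{-n}\,\delta^{(n)}_{(s,\varepsilon,\varepsilon')}+A^{n}\,\eta^{(n)}_{(s,\varepsilon,\varepsilon')}$$
modulo the span of basis elements of strictly smaller $d$-degree. For the inductive step, write $\alpha^{(n)}$ as the union of one copy $\alpha'$ sitting just above $\beta$ together with the remaining $n-1$ copies $\alpha^{(n-1)}$ placed above $\alpha'$, and split $s=(s',s'')$ accordingly, so that $[\alpha^{(n)},s]\cdot\beta_{\varepsilon\varepsilon'}=[\alpha^{(n-1)},s']\cdot\bigl([\alpha',s'']\cdot\beta_{\varepsilon\varepsilon'}\bigr)$. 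Applying the $n=1$ computation and then left-multiplying by $[\alpha^{(n-1)},s']$, which has $d$-degree $0$ and hence preserves the filtration (and the congruence), reduces the problem to identifying the leading terms of $[\alpha^{(n-1)},s']\cdot\delta^{(1)}_{(s'',\varepsilon,\varepsilon')}$ and $[\alpha^{(n-1)},s']\cdot\eta^{(1)}_{(s'',\varepsilon,\varepsilon')}$. These are obtained by the same spiral-building resolution as in the proof of Lemma \ref{lemmaannulus}: the relations $\gamma\cdot\delta^{(k)}=A\delta^{(k+1)}+A^{-1}\delta^{(k)}$ and $\gamma\cdot\eta^{(k)}=A\eta^{(k)}+A^{-1}\eta^{(k+1)}$ used there have exact analogues in $\mathcal{S}_{\omega}(Q)$ modulo lower $d$-degree, and iterating them through the $n-1$ copies produces the factors $A^{\,n-1}$ and $A^{-(n-1)}$. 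Subtracting the two congruences then gives $[[\alpha^{(n)},s],\beta_{\varepsilon\varepsilon'}]\equiv (A^{n}-A^{-n})\bigl(\delta^{(n)}_{(s,\varepsilon,\varepsilon')}-\eta^{(n)}_{(s,\varepsilon,\varepsilon')}\bigr)$ modulo strictly smaller $d$-degree, which is exactly the assertion about $\lt$.

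The delicate point I expect to be the main obstacle is controlling the resolution terms that are \emph{not} of the form $\delta^{(n)}$ or $\eta^{(n)}$. These come from the ``mixed'' smoothings, in which $\beta$ is made to wrap once one way and then once the other way around consecutive copies of $\alpha$; one must verify that each such term, after an isotopy cancelling a pair of opposite wraps and an application of the boundary relations \eqref{eq: skein 2} to delete the resulting trivial boundary return, genuinely drops the $d$-degree (so that it contributes nothing to $\lt$), while keeping careful track of how the state $s$ is distributed along these moves. Everything else is bookkeeping of Kauffman resolutions as in Lemmas \ref{lemmaannulus} and \ref{lemma1}.
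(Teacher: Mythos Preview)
Your inductive approach is correct in outline, but the paper's argument is considerably more direct: it simply resolves all $n$ crossings of the stacked diagram $\alpha^{(n)}$ over $\beta$ simultaneously via \eqref{eq: skein 1}, obtaining $2^n$ terms, and observes that exactly two of them --- the all-$\resneg$ resolution $A^n\delta^{(n)}_{(s,\varepsilon,\varepsilon')}$ and the all-$\respos$ resolution $A^{-n}\eta^{(n)}_{(s,\varepsilon,\varepsilon')}$ --- have the maximal degree~$2n$, while every mixed resolution produces a trivial cup on $b_1$ or $b_3$ and thus has strictly smaller degree after applying \eqref{eq: skein 2}. Repeating this with $\beta$ on top reverses the exponents, and subtracting gives the result. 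So what you flag as ``the delicate point'' is in the paper a one-line topological observation rather than an induction; your recursion through spiral-building relations of the type used in Lemma~\ref{lemmaannulus} works but is unnecessary overhead here.

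One small inconsistency to fix: you assert that $[\alpha^{(n-1)},s']$ has $d$-degree~$0$, but for the lemma (and for Lemmas~\ref{lemma1} and~\ref{lemma3}) to make sense the filtration must count endpoints on $\{b_1,b_3\}$, the arcs touched by $\alpha$, not on $\{b_2,b_4\}$ --- the paper's text contains a typo here. With the correct filtration, $[\alpha^{(n-1)},s']$ has degree $2(n-1)$; your argument still goes through since multiplication by it shifts the filtration uniformly, but the stated reason (``degree~$0$, hence preserves the filtration'') should be corrected. This also resolves the tension in your final paragraph: the trivial boundary returns you want to delete live on $b_1,b_3$, and it is precisely the $\{b_1,b_3\}$-degree that they lower.
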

   
   \begin{proof}
   	The diagram obtained by stacking $\alpha^{(n)}$ on top of $\beta$ has $n$ crossings and thus $2^n$ resolutions using the Kauffman bracket relation. We remark that the resolution obtained by replacing each crossing by $\resneg$ is $A^n \delta^{(n)}_{(s,\varepsilon, \varepsilon')}$ while the resolution obtained by replacing each crossing by $\respos$ is $A^{-n} \eta^{(n)}_{(s, \varepsilon, \varepsilon')}$. These two resolutions have degree $2n$ and all the others resolutions have degrees strictly smaller, thus $\lt \left( [\alpha^{(n)}, s] \cdot \beta_{\varepsilon \varepsilon'} \right)=  A^n \delta^{(n)}_{(s,\varepsilon, \varepsilon')} + A^{-n} \eta^{(n)}_{(s, \varepsilon, \varepsilon')}$. We prove similarly that $\lt \left(  \beta_{\varepsilon \varepsilon'}\cdot [\alpha^{(n)}, s]  \right)=  A^{-n} \delta^{(n)}_{(s,\varepsilon, \varepsilon')} + A^{n} \eta^{(n)}_{(s, \varepsilon, \varepsilon')}$ and conclude by taking the difference.
   \end{proof}
   
   \begin{lemma}\label{lemma3}
   	If $x\in \mathcal{S}_{\omega}(Q)$ is a polynomial in $\mathcal{S}_{\omega}(Q)$ in the elements $\alpha_{\varepsilon \varepsilon'}$ such that $d(x)<2N$ and such that $x$ commutes with all elements $\beta_{\mu,\mu'}$, then $x$ is a constant.
   \end{lemma}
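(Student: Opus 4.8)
The plan is a contradiction argument via leading terms for the filtration $(\mathcal{F}_m)_{m\ge 0}$ attached to $S$. Assume $x$ is \emph{not} a constant. First I would put $x$ in the basis $\mathcal{B}^{\mathfrak{o}}$: being a polynomial in the $\alpha_{\varepsilon\varepsilon'}$, every monomial of $x$ is represented by a stated copy of $\alpha^{(k)}$, and straightening its state to an $\mathfrak{o}$-increasing one (as in the proof that $\mathcal{B}^{\mathfrak{o}}$ is a basis) shows that $x=\sum_{k,s}c_{k,s}\,[\alpha^{(k)},s]$, the inner sum over $\mathfrak{o}$-increasing states $s$ of $\alpha^{(k)}$, with only finitely many $c_{k,s}\neq 0$. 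Since $d([\alpha^{(k)},s])=2k$ (this is already used in the proof of Lemma \ref{lemma2}), the hypothesis $d(x)<2N$ forces $c_{k,s}=0$ for all $k\ge N$. Let $m_0$ be the largest $k$ with $c_{k,s}\neq 0$ for some $s$; as $x$ is not a constant we get $1\le m_0\le N-1$, and $\lt(x)=\sum_{s}c_{m_0,s}\,[\alpha^{(m_0)},s]\neq 0$.

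Next I would compute the leading term of the commutator $\big[x,\beta_{\mu\mu'}\big]:=x\beta_{\mu\mu'}-\beta_{\mu\mu'}x$ for an arbitrary pair $\mu,\mu'\in\{-,+\}$. By bilinearity $\big[x,\beta_{\mu\mu'}\big]=\sum_{k,s}c_{k,s}\,\big[[\alpha^{(k)},s],\beta_{\mu\mu'}\big]$, and Lemma \ref{lemma2} identifies the degree-$2k$ part of each summand as $(A^{k}-A^{-k})\big(\delta^{(k)}_{(s,\mu,\mu')}-\eta^{(k)}_{(s,\mu,\mu')}\big)$ (with $\big[[\alpha^{(k)},s],\beta_{\mu\mu'}\big]$ of degree $\le 2k$, the case $k=0$ giving $0$). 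For $1\le k\le m_0\le N-1$ one has $k\not\equiv 0\pmod{N}$, hence $A^{2k}\neq 1$ (as $q=A^{2}$ has order $N$) and $A^{k}-A^{-k}\neq 0$; in particular for $k<m_0$ the summand lives in degree $<2m_0$. Therefore $\lt\big([x,\beta_{\mu\mu'}]\big)=(A^{m_0}-A^{-m_0})\sum_{s}c_{m_0,s}\big(\delta^{(m_0)}_{(s,\mu,\mu')}-\eta^{(m_0)}_{(s,\mu,\mu')}\big)$, provided the right-hand side is nonzero.

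To see that it is nonzero, I would observe that for $m_0\ge 1$ the spiralling diagrams $\delta^{(m_0)}$ and $\eta^{(m_0)}$ are simple and non-isotopic, and that for $s$ an $\mathfrak{o}$-increasing state of $\alpha^{(m_0)}$ the extension $(s,\mu,\mu')$ is again $\mathfrak{o}$-increasing on $\delta^{(m_0)}$ and on $\eta^{(m_0)}$ (the resolution only alters the interior, so the $\alpha^{(m_0)}$-endpoints and their heights are unchanged, and the two extra endpoints carry a single state each). Hence the elements $\delta^{(m_0)}_{(s,\mu,\mu')}$ and $\eta^{(m_0)}_{(s,\mu,\mu')}$, as $s$ ranges over these states, are pairwise distinct vectors of $\mathcal{B}^{\mathfrak{o}}$, so the displayed combination has $\mathcal{B}^{\mathfrak{o}}$-coordinates $\pm(A^{m_0}-A^{-m_0})c_{m_0,s}$, not all zero, and does not vanish. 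Thus $\big[x,\beta_{\mu\mu'}\big]\neq 0$, contradicting that $x$ commutes with every $\beta_{\mu\mu'}$; so $x$ must be a constant.

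The routine gaps I would still fill in are the basis description of the subalgebra generated by the $\alpha_{\varepsilon\varepsilon'}$ and the equality $d([\alpha^{(k)},s])=2k$, both standard (cf. \cite{LeStatedSkein}) and already implicit in Lemma \ref{lemma2}. The one step needing genuine care is the non-cancellation in the last paragraph, i.e. the linear independence of Lemma \ref{lemma2}'s leading terms over the states $s$; this is precisely what lets $\lt$ of the sum equal the sum of the $\lt$'s, and it follows at once from the fact that distinct simple diagrams, and distinct $\mathfrak{o}$-increasing states on a fixed simple diagram, give distinct basis vectors. Conceptually, the lemma just says that the non-commutation of $\alpha^{(m_0)}$ with $\beta$ survives at the top of the filtration exactly because $m_0\not\equiv 0\pmod{N}$, which is what Lemma \ref{lemma2} encodes.
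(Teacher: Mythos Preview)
Your proof is correct and follows essentially the same approach as the paper: both argue by contradiction on the top degree, write $x$ in the basis of stated parallel copies $[\alpha^{(k)},s]$, apply Lemma~\ref{lemma2} to compute the leading term of $[x,\beta_{\mu\mu'}]$, and use the linear independence of the $\delta^{(m_0)}_{(s,\mu,\mu')}$ and $\eta^{(m_0)}_{(s,\mu,\mu')}$ together with $A^{m_0}-A^{-m_0}\neq 0$ for $0<m_0<N$ to reach a contradiction. Your version is slightly more explicit about why the $\delta$'s and $\eta$'s land in $\mathcal{B}^{\mathfrak{o}}$, but the argument is the same.
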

   
   \begin{proof}
   	Let $x=\sum_{i\in I} x_i [\alpha^{n_i}, s_i]$ be the decomposition in the basis of stated tangles with increasing states $s_i$ and denote by $2n<2N$ its degree. Suppose by contradiction that $n\neq 0$. Let $J=\{ j\in I \mbox{, such that } n_i=n \}\subset I$ so we have $\lt(x) = \sum_{j\in J} x_j [\alpha^{n}, s_j]$. The  hypothesis on $x$ and Lemma \ref{lemma2} imply that:
   	$$ 0 = \lt \left( [x, \beta_{\varepsilon \varepsilon'} ] \right) = \sum_{j\in J} x_j (A^n -A^{-n})(\delta^{(n)}_{(s_j, \varepsilon, \varepsilon')} - \eta^{(n)}_{(s_j, \varepsilon, \varepsilon')} ). $$
   	\par Since the elements $\delta^{(n)}_{(s_j, \varepsilon, \varepsilon')}$ and $\eta^{(n)}_{(s_j, \varepsilon, \varepsilon')}$ are linearly independent for $n\geq 1$, we conclude that $x_j(A^n-A^{-n})=0$ for all $j\in J$. Since $0<n<N$ and $N$ is odd, we obtain that $x_j=0$ for all $j\in J$ thus $\lt(x)=0$. This gives the contradiction.
   \end{proof}
   
   \vspace{2mm}
   \par The set $\mathcal{B}':=\{\alpha_{-+}^a \alpha_{++}^b \alpha_{+-}^c, a,b,c\geq 0\} \cup \{\alpha_{-+}^a \alpha_{--}^b \alpha_{+-}^c, a,b,c\geq 0\}$ forms a basis of the algebra $\mathcal{S}_{\omega}(\mathbb{B})$.  This fact is Exercise $7$ in Chapter $IV$ Section $6$ of \cite{Kassel}, and is proved as follows. Choose an orientation $\mathfrak{o}$ of the boundary arcs of $\mathbb{B}$ such that $b_L$ and $b_R$ points towards different punctures and consider the filtration associated to $S=\{b_L, b_R\}$. For each element of the basis $\mathcal{B}^{\mathfrak{o}}$, there exists exactly one element of   $\mathcal{B}'$  which has the same leading term.  For $x\in \mathcal{S}_{\omega}(\mathbb{B})$, denote by $c(x)\in \mathcal{R}$ the coefficient of $1$ in the decomposition of the basis $\mathcal{B}'$.
   
   \begin{lemma}\label{lemma4}
   	One has the equality: $ c(T_N(\alpha_{++} + \alpha_{--}))= 0$.
   \end{lemma}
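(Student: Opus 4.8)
The plan is to extract the constant coefficient $c(\cdot)$ by a ``restriction to the diagonal torus'' argument. First I would introduce the $\mathcal{R}$-algebra morphism $\phi \colon \mathcal{S}_{\omega}(\mathbb{B}) \to \mathcal{R}[X, X^{-1}]$ into the Laurent polynomial ring, determined on generators by $\phi(\alpha_{++}) = X$, $\phi(\alpha_{--}) = X^{-1}$ and $\phi(\alpha_{+-}) = \phi(\alpha_{-+}) = 0$. Checking that $\phi$ is well defined is a routine inspection of the presentation of $\mathcal{S}_{\omega}(\mathbb{B})$ recalled in Section 2.2: each relation involving $\alpha_{+-}$ or $\alpha_{-+}$ is sent to $0 = 0$, while the two relations $\alpha_{++}\alpha_{--} = 1 + q^{-1}\alpha_{+-}\alpha_{-+}$ and $\alpha_{--}\alpha_{++} = 1 + q\alpha_{+-}\alpha_{-+}$ are both sent to $X X^{-1} = 1$. (This $\phi$ is the quantum analogue of restricting a regular function on $\SL_2$ to the diagonal torus.)

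Next I would record the effect of $\phi$ on the basis $\mathcal{B}'$: for $i,j,k \geq 0$ one has $\phi(\alpha_{-+}^{i}\alpha_{++}^{j}\alpha_{+-}^{k}) = X^{j}$ if $i = k = 0$ and $= 0$ otherwise, and likewise $\phi(\alpha_{-+}^{i}\alpha_{--}^{j}\alpha_{+-}^{k}) = X^{-j}$ if $i = k = 0$ and $= 0$ otherwise. Hence $1$ is the only element of $\mathcal{B}'$ whose image under $\phi$ involves the monomial $X^{0}$, so for every $x \in \mathcal{S}_{\omega}(\mathbb{B})$ the coefficient of $X^{0}$ in $\phi(x)$ equals $c(x)$.

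It then remains to evaluate $\phi$ on $T_{N}(\alpha_{++} + \alpha_{--})$. Since $\phi$ is an algebra morphism, $\phi(T_{N}(\alpha_{++}+\alpha_{--})) = T_{N}(X + X^{-1})$, and an immediate induction from $T_{0}(Y) = 2$, $T_{1}(Y) = Y$, $T_{n+2}(Y) = Y T_{n+1}(Y) - T_{n}(Y)$ gives $T_{n}(X + X^{-1}) = X^{n} + X^{-n}$. As $N \geq 1$, the Laurent polynomial $X^{N} + X^{-N}$ has vanishing coefficient in front of $X^{0}$, and the previous paragraph yields $c(T_{N}(\alpha_{++}+\alpha_{--})) = 0$.

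I do not anticipate a genuine obstacle here: the argument rests entirely on the observation that the torus-restriction morphism $\phi$ singles out the constant term relative to $\mathcal{B}'$. The only point requiring mild care is the bookkeeping of the middle step — verifying that no basis vector other than $1$ contributes to the $X^{0}$-part of $\phi$ — which is precisely why one must send $\alpha_{++}$ and $\alpha_{--}$ to $X$ and $X^{-1}$ (sending both to $1$, i.e. using the counit $\epsilon$, would only produce $T_{N}(2) = 2$ and could not detect $c$).
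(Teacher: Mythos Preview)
Your argument is correct and takes a genuinely different route from the paper. The paper's proof is a parity argument: it shows that for every \emph{odd} $n$ the expansion of $(\alpha_{++}+\alpha_{--})^{n}$ in the basis $\mathcal{B}'$ involves only monomials $\alpha_{-+}^{a}\alpha_{++}^{b}\alpha_{+-}^{a}$ or $\alpha_{-+}^{a}\alpha_{--}^{b}\alpha_{+-}^{a}$ with $2a+b$ odd, hence $b\neq 0$, so the constant term vanishes; then it invokes that $T_{N}$ is an odd polynomial (which uses the ambient hypothesis that $N$ is odd). Your approach instead builds the torus-restriction morphism $\phi$ to $\mathcal{R}[X,X^{-1}]$ and reads off $c(x)$ as the $X^{0}$-coefficient of $\phi(x)$, reducing the computation to the classical identity $T_{N}(X+X^{-1})=X^{N}+X^{-N}$. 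This is cleaner and more conceptual, avoids the somewhat implicit rewriting step in the paper, and has the small bonus of working for every $N\geq 1$ regardless of parity; the paper's argument, by contrast, needs no auxiliary morphism and stays entirely inside the algebra.
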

   
   \begin{proof}
   	Let $n\geq 1$ be an odd integer and let us show that $c\left( (\alpha_{++} + \alpha_{--})^n \right) =0$. The proof will then follow from the fact that $T_N(X)$ is an odd polynomial, thus is a linear combination of such elements, and the fact that $c$ is linear. The product $\left( (\alpha_{++} + \alpha_{--})^n \right)$ develops as  a sum of terms of the form $x=x_1\ldots x_n$ where $x_i$ is either $\alpha_{++}$ or $\alpha_{--}$. Using the defining relations of $\mathcal{S}_{\omega}(\mathbb{B})$, we can further develop each term $x$ as a linear combination of terms of the form 
   	$\alpha_{-+}^a \alpha_{++}^b \alpha_{+-}^a$ and $ \alpha_{-+}^a \alpha_{--}^b\alpha_{+-}^a$ where $2a+b$ has the same parity than $n$. Since $n$ is odd, each of these summands satisfies $b\neq 0$ so $c(x)=0$. 
   	
   \end{proof}

   \begin{proof}[Proof of Proposition \ref{proptchebychev}]
   	Consider the element $x:= T_N(\alpha_{++}+\alpha_{--}) - \alpha_{++}^N - \alpha_{--}^N \in \mathcal{S}_{\omega}(Q)$. By Lemma \ref{lemma1}, its degree is strictly smaller that $2N$.  By Lemma \ref{lemmaannulus}, in $\mathcal{S}_{\omega}(\mathbb{A})$ the elements $T_N(\gamma)$ and $\beta_{\varepsilon \varepsilon'}$ commute. The image through the algebra  morphism $i_{|b_1\#b_3} : \mathcal{S}_{\omega}(\mathbb{A})\hookrightarrow \mathcal{S}_{\omega}(Q)$  of $T_N(\gamma)$ and $\beta_{\varepsilon \varepsilon'}$ are respectively $T_N(\alpha_{++} + \alpha_{--})$ and $\beta_{\varepsilon \varepsilon'}$, thus they commute. By Lemma \ref{lemma_baleze}, the elements $\alpha_{++}^N$ and $\alpha_{--}^N$ also commute with $\beta_{\varepsilon \varepsilon'}$ so $x$ commutes with each element $\beta_{\varepsilon \varepsilon'}$. Lemma \ref{lemma3} implies that $x$ is a constant and Lemma \ref{lemma4} implies that this constant is null. This concludes the proof.
   \end{proof}
   
   \subsection{A generalization of a theorem of Bonahon}
   
   Proposition \ref{proptchebychev} provides the following generalization of the main theorem of \cite{BonahonMiraculous}. Let $\mathcal{A}$ be an $\mathcal{R}$-algebra and $\rho : \mathbb{C}_q[\SL_2]^{\otimes k} \rightarrow \mathcal{A}$ be a morphism of algebras. Let $\rho_i $ be the $i$-th component of $\rho$. For $1\leq i \leq k$, consider the following two matrices with coefficients in $\mathcal{A}$:
   \begin{eqnarray*}
   	A_i := \begin{pmatrix} \rho_i(\alpha_{++}) & \rho_i(\alpha_{+-}) \\ \rho_i(\alpha_{-+}) & \rho_i(\alpha_{--}) \end{pmatrix} & 
   	A_i^{(N)} := \begin{pmatrix} \rho_i(\alpha_{++})^N & \rho_i(\alpha_{+-})^N \\ \rho_i(\alpha_{-+})^N & \rho_i(\alpha_{--})^N \end{pmatrix}. 
   \end{eqnarray*}
   
   The following proposition was proved in \cite[Theorem $1$]{BonahonMiraculous} in the particular case where $\rho_i(\alpha_{+-})\rho_i(\alpha_{-+})=0$ for each $i\in \{1, \ldots, k\}$.
   
   \begin{proposition}
   	If $q$ is a root of unity of odd order $N>1$, then one has:
   	$$ T_N\left( \tr (A_1\ldots A_k)\right) = \tr\left( A_1^{(N)} \ldots A_k^{(N)} \right). $$
   \end{proposition}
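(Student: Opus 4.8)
The plan is to reduce the statement to Proposition \ref{proptchebychev} by realizing the trace $\tr(A_1 \cdots A_k)$ as the image of a single closed curve in a stated skein algebra under a suitable algebra morphism, and then transporting the Chebyshev identity through that morphism. Concretely, I would work in the stated skein algebra of the annulus $\mathbb{A}$ (or, equivalently, in $\mathcal{S}_\omega(Q)$ after the gluing $i_{|b_1\#b_3}$), since $\mathbb{C}_q[\SL_2] \cong \mathcal{S}_\omega(\mathbb{B})$ and the $k$-fold tensor power $\mathbb{C}_q[\SL_2]^{\otimes k}$ is the stated skein algebra of a disjoint union of $k$ bigons. Gluing these $k$ bigons cyclically in a chain produces an annulus, and the closed curve $\gamma$ running once around the annulus has $i^{\text{glue}}(\gamma) = \sum (\alpha^{(1)}_{\varepsilon_1 \varepsilon_2})\otimes\cdots\otimes(\alpha^{(k)}_{\varepsilon_k \varepsilon_1})$, which is exactly the "formal trace" $\tr(M_1 \cdots M_k)$ of the $k$ matrices $M_i$ of bigon generators. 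This is the skein-theoretic incarnation of the identity $\Delta^{(k-1)}(\alpha_{++} + \alpha_{--}) = \tr(M \otimes \cdots \otimes M)$ for the quantum $\SL_2$ coalgebra.

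With this picture, the key steps run as follows. First, I would record that the comultiplication is compatible with Chebyshev polynomials in the precise sense that $\Delta^{(k-1)}(T_N(\alpha_{++}+\alpha_{--})) = T_N(\tr(M\otimes\cdots\otimes M))$ simply because $\Delta^{(k-1)}$ is an algebra map and sends the group-like-ish element $\alpha_{++}+\alpha_{--}$ to $\tr(M\otimes\cdots\otimes M)$. Second, and this is the substantive input, I would invoke Proposition \ref{proptchebychev}: in $\mathcal{S}_\omega(\mathbb{B})$ one has $T_N(\alpha_{++}+\alpha_{--}) = \alpha_{++}^N + \alpha_{--}^N$. Third, I must check that $\Delta^{(k-1)}$ intertwines "$N$-th powers of matrix entries" with "products of matrices of $N$-th powers", i.e. that $\Delta^{(k-1)}(\alpha_{++}^N + \alpha_{--}^N) = \tr(M^{(N)}\otimes\cdots\otimes M^{(N)})$ where $M^{(N)}$ has entries $\alpha_{\varepsilon\varepsilon'}^N$. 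This is where Lemma \ref{lemma_qbinomial} enters: when one expands $\Delta^{(k-1)}(\alpha_{\varepsilon\varepsilon'}^N) = (\sum \alpha_{\varepsilon\mu}\otimes\cdots)^N$, the cross terms are killed because the relevant pairs of monomials $q$-commute with $q$ an $N$-th root of unity (for the trace of a product, the off-diagonal terms appearing in $\Delta(\alpha_{\varepsilon\varepsilon'})$ satisfy exactly the commutation relation $xy = q^{\pm 2}yx$ needed, and $q^{2N}=1$). Finally, composing with the algebra morphism $\rho : \mathbb{C}_q[\SL_2]^{\otimes k} \to \mathcal{A}$ and noting $\rho \circ \Delta^{(k-1)}$ is still an algebra morphism, the identity $T_N(\tr(A_1\cdots A_k)) = \tr(A_1^{(N)}\cdots A_k^{(N)})$ follows, where $A_i = \rho_i(M)$ and $A_i^{(N)} = \rho_i(M^{(N)})$.

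I expect the main obstacle to be the bookkeeping in the third step: one must verify carefully that every pair of monomials that appears when expanding $(\Delta(\alpha_{\varepsilon\varepsilon'}))^N$ inside $\mathbb{C}_q[\SL_2]^{\otimes k}$ actually satisfies the hypotheses of Lemma \ref{lemma_qbinomial}, and that the resulting diagonal terms reassemble precisely into $\tr(M^{(N)}\otimes\cdots\otimes M^{(N)})$ rather than into some twisted version. The cleanest route is probably to avoid iterating $\Delta$ abstractly and instead argue directly in $\mathcal{S}_\omega(Q)$ after the annulus gluing, using the same $N$-parallel-copies construction as in the proof of Lemma \ref{lemma_baleze} and Lemma \ref{lemma_square}: cutting the closed curve $\gamma^{(N)}$ (its $N$-th Chebyshev image $T_N(\gamma) = \gamma^{(N)}$ roughly) along the $k$ gluing edges and summing over states, then matching against the product of the $M_i^{(N)}$ via repeated application of Lemma \ref{lemma_qbinomial} at each edge, exactly as in the argument showing $j_{(Q,\Delta_Q)}(\alpha_{\varepsilon\varepsilon'}) = \alpha_{\varepsilon\varepsilon'}^N$. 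That said, since Proposition \ref{proptchebychev} is already available and does all the genuinely hard "miraculous cancellation" work, the remaining argument is essentially formal, and I would present it in the abstract-coalgebra language above for brevity, relegating the $q$-commutation checks to a sentence citing Lemma \ref{lemma_qbinomial}.
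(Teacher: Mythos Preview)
Your approach is correct and matches the paper's: apply the algebra morphism $\rho\circ\Delta^{(k-1)}$ to the identity of Proposition~\ref{proptchebychev}, then identify the two sides as $T_N(\tr(A_1\cdots A_k))$ and $\tr(A_1^{(N)}\cdots A_k^{(N)})$. The one place where the paper is slightly cleaner is your ``third step'': rather than redoing the $q$-binomial expansion via Lemma~\ref{lemma_qbinomial}, the paper simply observes that $\Delta^{(k-1)}(\alpha_{\varepsilon\varepsilon'}^N)$ equals the $(\varepsilon,\varepsilon')$-entry of $M^{(N)}\otimes\cdots\otimes M^{(N)}$ because $j_{\mathbb{B}}(\alpha_{\varepsilon\varepsilon'})=\alpha_{\varepsilon\varepsilon'}^N$ is a morphism of Hopf algebras (Lemma~\ref{lemma_center_bigone}) and hence commutes with $\Delta^{(k-1)}$ --- this packages exactly the computation you describe, so you can drop the geometric annulus/gluing scaffolding entirely.
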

   
   \begin{proof} By Proposition \ref{proptchebychev} and using that both $\rho$ and the $(k-1)$-th coproduct $\Delta^{(k-1)} : \mathbb{C}_q[\SL_2] \rightarrow \mathbb{C}_q[\SL_2]^{\otimes k}$ are morphisms of algebras, one has:
   	$$ T_N \circ \rho \circ \Delta^{(k-1)} (\alpha_{++} + \alpha_{--}) = \rho \circ \Delta^{(k-1)} (\alpha_{++}^N +\alpha_{--}^N).$$
   	We conclude by remarking that $ \rho \circ \Delta^{(k-1)} (\alpha_{++} + \alpha_{--}) =   \tr (A_1\ldots A_k)$ and $ \rho \circ \Delta^{(k-1)} (\alpha_{++}^N +\alpha_{--}^N)= \tr\left( A_1^{(N)} \ldots A_k^{(N)} \right)$, where the second equality follows from the fact that $j_{\mathbb{B}}$ is a morphism of Hopf algebras (Lemma \ref{lemma_center_bigone}) hence commutes with $\Delta^{(k-1)}$.
   \end{proof}

\bibliographystyle{alpha}
\bibliography{biblio}

\def\cprime{$'$}
\begin{thebibliography}{GHJW97}

\bibitem[AB83]{AB}
M.~F. Atiyah and R.~Bott.
\newblock The {Y}ang-{M}ills equations over {R}iemann surfaces.
\newblock {\em Philos. Trans. Roy. Soc. London Ser. A}, 308(1505):523--615,
  1983.

\bibitem[AKSM02]{AlekseevKosmannMeinrenken}
A.Y. Alekseev, Y.~Kosmann-Schwarzbach, and E.~Meinrenken.
\newblock Quasi-{P}oisson manifolds.
\newblock {\em Canad. J. Math.}, 54:3--29, 2002.

\bibitem[AM95]{AlekseevMalkin_PoissonCharVar}
A.Y. Alekseev and A.Z. Malkin.
\newblock Symplectic structure of the moduli space of flat connection on a
  {R}iemann surface.
\newblock {\em Communications in Mathematical Physics}, 169:99--119, 1995.

\bibitem[AMM98]{AlekseevMalkinMeinrenken_LieGroupMomentMap}
A.Y. Alekseev, A.Z. Malkin, and E.~Meinrenken.
\newblock Lie group valued moment maps.
\newblock {\em J. Differential Geom.}, 48(3):445--495, 1998.

\bibitem[Bar99]{Barett}
J.W. Barett.
\newblock Skein spaces and spin structures.
\newblock {\em Math. Proc. Cambridge}, 126(2):267--275, 1999.

\bibitem[BBL07]{BonahonBaiLiuLocalRep}
H.~Bai, F.~Bonahon, and X.~Liu.
\newblock Local representations of the quantum {T}eichm\"uller spaces.
\newblock arXiv:0707.2151 [math.GT], 2007.

\bibitem[BG02]{BrownGoodearl}
K.A. Brown and K.R. Goodearl.
\newblock {\em Lectures on Algebraic Quantum Groups}.
\newblock Basel, Birkhauser, 2002.

\bibitem[BL]{BloomquistLe}
W.~Bloomquist and T.T.Q. {Le}.
\newblock {The {C}hebyshev-{F}robenius homomorphism for stated skein modules of
  $3$-manifolds}.
\newblock arXiv:2011.02130[math.GT].

\bibitem[BL07]{BonahonLiu}
F.~Bonahon and X.~Liu.
\newblock Representations of the quantum {T}eichm\"uller spaces and invariants
  of surface diffeomorphisms.
\newblock {\em Geom. Topol.}, 11:889--938, 2007.

\bibitem[{Bon}17]{BonahonMiraculous}
F.~{Bonahon}.
\newblock {Miraculous cancellations for quantum \$SL\_2\$}.
\newblock {\em To appear in Ann. Fac. Sci. Toulouse}, 2017.
\newblock arXiv:1708.07617 [math.QA].

\bibitem[Bul97]{Bullock}
D.~Bullock.
\newblock Rings of $sl_2(\mathbb{C})$-characters and the {K}auffman bracket
  skein module.
\newblock {\em Comentarii Math. Helv.}, 72(4):521--542, 1997.

\bibitem[BW11]{BonahonWongqTrace}
F.~Bonahon and H.~Wong.
\newblock Quantum traces for representations of surface groups in
  $\mathrm{SL}_2(\mathbb{C})$.
\newblock {\em Geom. Topol.}, 15:1569--1615, 2011.

\bibitem[BW16]{BonahonWong1}
F.~{Bonahon} and H.~{Wong}.
\newblock {Representations of the {K}auffman bracket skein algebra {I}:
  invariants and miraculous cancellations}.
\newblock {\em Inventiones Mathematicae}, 204:195--243, 2016.

\bibitem[BW17]{BonahonWong2}
F.~{Bonahon} and H.~{Wong}.
\newblock {Representations of the {K}auffman bracket skein algebra {II}:
  punctured surfaces}.
\newblock {\em Alg. Geom. Topology}, 17:3399--3434, 2017.

\bibitem[CF99]{ChekhovFock}
L.O. Chekhov and V.~Fock.
\newblock Quantum {T}eichm\"uller spaces.
\newblock {\em Theor.Math.Phys.}, 120:1245--1259, 1999.

\bibitem[CL19]{CostantinoLe19}
F.~Costantino and T.T.Q. L\^e.
\newblock {Stated skein algebras of surfaces}.
\newblock ArXiv:1907.11400[math.GT], 2019.

\bibitem[CM09]{ChaMa}
L.~Charles and J.~March\'e.
\newblock Multicurves and regular functions on the representation variety of a
  surface in {SU}(2).
\newblock {\em Comentarii Math. Helv.}, 87:409--431, 2009.

\bibitem[CP95]{ChariPressley}
V.~{Chari} and A.~N. {Pressley}.
\newblock {\em {A Guide to Quantum Groups}}.
\newblock Cambridge University Press, October 1995.

\bibitem[CS83]{CullerShalenCharVar}
M.~Culler and P.B. Shalen.
\newblock Varieties of group representations and splittings of $3$-manifolds.
\newblock {\em Ann. of Math.}, 117(2):109--146, 1983.

\bibitem[{Dri}83]{DrinfeldrMatrix}
V.G. {Drinfel'd}.
\newblock {On constant quasiclassical solutions of the {Y}ang-{B}axter quantum
  equation }.
\newblock {\em Soviet. Math. Dokl}, 28:667--71, 1983.

\bibitem[FR99]{FockRosly}
V.V. Fock and A.A. Rosly.
\newblock Poisson structure on moduli of flat connections on {R}iemann surfaces
  and the r-matrix.
\newblock {\em Moscow Seminar in Mathematical Physics, Amer. Math. Soc. Transl.
  Ser. 2}, 191:67--86, 1999.

\bibitem[GHJW97]{GHJW_ModSpacesParBd}
K.~Guruprasad, J.~Huebschmann, L.~Jeffrey, and A.~Weinstein.
\newblock Group systems, groupoids, and moduli spaces of parabolic bundles.
\newblock {\em , Duke Math. J.}, 89(2):377--412, 1997.

\bibitem[GJS]{GanevJordanSafranov_FrobeniusMorphism}
I.~Ganev, D.~Jordan, and P.~Safronov.
\newblock The quantum {F}robenius for character varieties and multiplicative
  quiver varieties.
\newblock arXiv:1901.11450.

\bibitem[Gol86]{Goldman86}
W.M. Goldman.
\newblock Invariant functions on {L}ie groups and {H}amiltonian flows of
  surface groups representations.
\newblock {\em Invent. math.}, 85:263--302, 1986.

\bibitem[GRS05]{GRS_QuantizationDeformation}
S.~Gutt, J.~Rawnsley, and D.~Sternheimer.
\newblock {\em Poisson geometry, deformation quantisation and group
  representations}.
\newblock Cambridge University Press, 2005.

\bibitem[HP92]{HP92}
J.~Hoste and J.~Przytycki.
\newblock A survey of skein modules of {$3$}-manifolds.
\newblock In {\em Knots 90 ({O}saka, 1990)}, pages 363--379. de Gruyter,
  Berlin, 1992.

\bibitem[HPS09]{HPS_CohochschildHom}
K.~Hess, P.E. Parent, and J.~Scott.
\newblock Co{H}ochschild homology of chain coalgebras.
\newblock {\em J. Pure and Applied Algebra}, 213:536--556, 2009.

\bibitem[Joh80]{Johnson_SpinStructures}
D.~Johnson.
\newblock Spin structures and quadratic forms on surfaces.
\newblock {\em J. London Math. Soc.}, s2-22:365--373, 1980.

\bibitem[Kas95]{Kassel}
C.~Kassel.
\newblock {\em Quantum groups}.
\newblock Springer-Verlag, New York, 1995.
\newblock Graduate Texts in Mathematics, No. 155.

\bibitem[Kon03]{KontsevichQuantizationPoisson}
M.~Kontsevich.
\newblock Quantization of {P}oisson manifolds.
\newblock {\em Lett. Math. Phys.}, 66(3):157--216, 2003.

\bibitem[{Kor}19]{KojuTriangularCharVar}
J.~{Korinman}.
\newblock {Triangular decomposition of character varieties}.
\newblock arXiv:1904.09022, 2019.

\bibitem[Lab14]{LabourieCharVar}
F.~Labourie.
\newblock {\em Lectures on representations of surface groups}.
\newblock Zurich Lectures in Advanced Mathematics. European Mathematic Society,
  2014.

\bibitem[Le15]{LeKauffmanBracket}
T.T.Q. Le.
\newblock {On {K}auffman bracket skein modules at roots of unity}.
\newblock {\em Algebraic Geometric Topology}, 15(2):1093--1117, 2015.

\bibitem[{Le}18]{LeStatedSkein}
T.T.Q. {Le}.
\newblock {Triangular decomposition of skein algebras}.
\newblock {\em Quantum Topology}, 9:591--632, 2018.

\bibitem[LP19]{LePaprocki2018}
T.T.Q. {Le} and J.~{Paprocki}.
\newblock {On Kauffman bracket skein modules of marked 3-manifolds and the
  Chebyshev-Frobenius homomorphism}.
\newblock {\em Algebraic and Geometric Topology}, 19(7):3453--3509, 2019.

\bibitem[Lus90]{Lusztig_QGroupsRoots1}
G.~Lusztig.
\newblock Quantum groups at roots of 1.
\newblock {\em Geom. Dedicata}, 35:89--113, 1990.

\bibitem[Lus93]{Lusztig_Book}
G.~Lusztig.
\newblock {\em Introduction to quantum groups}, volume 110 of {\em Progress in
  Mathematics}.
\newblock Birkh\"auser Boston, 1993.

\bibitem[M\"16]{Muller}
G.~M\"uller.
\newblock Skein algebras and cluster algebras of marked surfaces.
\newblock {\em Quantum topology}, 7(3):435--503, 2016.

\bibitem[Man88]{Manin_QGroups}
Y.I. Manin.
\newblock {\em Quantum groups and non-commutative geometry}.
\newblock Universit\'e de Montr\'eal - Centre de Recherche de Math\'ematiques,
  1988.

\bibitem[Mar09]{MarcheCours09}
J.~March{\'e}.
\newblock Geometry of representation spaces in {SU}$(2)$.
\newblock To appear in Strasbourg Master-class of geometry,
  ArXiv:1001.2408[math.GT], 2009.

\bibitem[{Mar}16]{MarcheCharVarSkein}
J.~{March{\'e}}.
\newblock {Character varieties in $\mathrm{SL}_2$ and {K}auffman skein
  algebras}.
\newblock {\em Topology, Geometry and Algebra of low dimensional manifold-RIMS
  Kokyuroku}, No.1991:27--42, 2016.

\bibitem[Mil63]{MilnorSpinStructure}
J.~Milnor.
\newblock {Spin structure on manifolds}.
\newblock {\em Enseign. Math.}, 9:198--203, 1963.

\bibitem[Neg18]{Negron_Frobenius}
C.~Negron.
\newblock Log-modular quantum groups at even roots of unity and the quantum
  {F}robenius i.
\newblock arXiv:1812.02277[math.QA], 2018.

\bibitem[NR15]{NovakRunkel}
S.~Novak and I.~Runkel.
\newblock State sum construction of two-dimensional topological quantum field
  theories on spin surfaces.
\newblock {\em Journal of knot theory and its ramifications}, 24(5), 2015.

\bibitem[PS00]{PS00}
J.H Przytycki and S.~Sikora.
\newblock On skein algebras and $\mathrm{SL}_2(\mathbb{C})$-character
  varieties.
\newblock {\em Topology}, 39(1):115--148, 2000.

\bibitem[PS19]{PrzytyckiSikora_SkeinDomain}
J.H Przytycki and S.~Sikora.
\newblock Skein algebras of surfaces.
\newblock {\em Trans. Amer. Math. Soc.}, (371):1309--1332, 2019.

\bibitem[Tur88]{Tu88}
V.~G. Turaev.
\newblock The {C}onway and {K}auffman modules of a solid torus.
\newblock {\em Zap. Nauchn. Sem. Leningrad. Otdel. Mat. Inst. Steklov. (LOMI)},
  167(Issled. Topol. 6):79--89, 190, 1988.

\bibitem[Tur91]{Turaev91}
V.~G. Turaev.
\newblock Skein quantization of {P}oisson algebras of loops on surfaces.
\newblock {\em Ann. Sci. Ecole norm.}, 24(4):2799--2805, 1991.

\bibitem[{Yua}15]{Yuasa}
W.~{Yuasa}.
\newblock {Poisson algebras of curves on bordered surfaces and skein
  quantization}.
\newblock arXiv:1504.00174 [math.GT], 2015.

\end{thebibliography}

\end{document}